\documentclass[12pt,a4paper]{amsart}
\usepackage[top=30truemm,bottom=30truemm,left=25truemm,right=25truemm]{geometry}
\usepackage{amsfonts,amssymb,amscd,amsmath,latexsym,amsbsy, mathrsfs, amsxtra, mathdots, fancyhdr, color, tikz}
\usepackage{eucal}
\usetikzlibrary{arrows.meta, positioning}
\usepackage[all]{xy}
\usepackage{float}

\numberwithin{equation}{section}
\theoremstyle{plain}
\newtheorem{thm}{Theorem}[section]

\newtheorem{prop}[thm]{Proposition}

\newtheorem{defi}[thm]{Definition}
\newtheorem{lem}[thm]{Lemma}
\newtheorem{cor}[thm]{Corollary}

\theoremstyle{remark}
\newtheorem{rema}[thm]{Remark}

\newcommand{\ad}{{\mbox{\upshape{ad}}}}

\newcommand{\bc}{{\mathbf{c}}}

\newcommand{\bs}{{\mathbf{s}}}

\newcommand{\barx}{\overline{\phantom{x}}}

\newcommand{\N}{{\mathbb N}}

\newcommand{\cA}{{\mathcal A}}

\newcommand{\cB}{{\mathcal B}}

\newcommand{\cF}{{\mathcal F}}

\newcommand{\cG}{{\mathcal G}}
\newcommand{\cH}{{\mathcal H}}
\newcommand{\cI}{{\mathcal I}}

\newcommand{\cJ}{{\mathcal J}}

\newcommand{\cL}{{\mathcal L}}

\newcommand{\cO}{{\mathcal O}}
\newcommand{\cP}{{\mathcal P}}
\newcommand{\cR}{{\mathcal R}}

\newcommand{\cW}{{\mathcal W}}

\newcommand{\cop}{\mathrm{cop}}

\newcommand{\End}{\mbox{End}}
\newcommand{\Etil}{\widetilde{E}}

\newcommand{\field}{k}

\newcommand{\gfrak}{{\mathfrak g}}

\newcommand{\gr}{{\mathrm{gr}}}

\newcommand{\Hom}{{\mathrm{Hom}}}

\newcommand{\id}{{\mathrm{id}}}

\newcommand{\kfrak}{{\mathfrak k}}
\newcommand{\kow}{{\varDelta}}
\newcommand{\lact}{{\triangleright}}

\newcommand{\mutil}{\tilde{\mu}}

\newcommand{\lfrak}{{\mathfrak l}}

\newcommand{\obar}{\overline{\phantom{o}}}

\newcommand{\Oint}{\mathcal{O}_{\mathrm{int}}}

\newcommand{\ot}{\otimes}

\newcommand{\poly}{{\mathrm{poly}}}

\newcommand{\qfield}{\mathbb{K}}

\newcommand{\rfrak}{{\mathfrak r}}

\newcommand{\ract}{\triangleleft}

\newcommand{\scrU}{\mathscr{U}}

\newcommand{\sfrak}{\mathfrak{s}}

\newcommand{\ubar}{\overline{u}}

\newcommand{\Uq}{U}

\newcommand{\uqg}{{U_q(\mathfrak{g})}}

\newcommand{\uqgX}{U_q(\mathfrak{g}_X)}

\newcommand{\vep}{\varepsilon}

\newcommand{\Xfrak}{\mathfrak{X}}

\newcommand{\Z}{{\mathbb Z}}

\title{Short star products for quantum symmetric pairs and applications}

\dedicatory{To the memory of Gail Letzter}

\author[Stefan Kolb]{Stefan Kolb}
\address{School of Mathematics, Statistics and Physics,
Newcastle University, Newcastle upon Tyne NE1 7RU, United Kingdom}
\email{stefan.kolb@newcastle.ac.uk}
\author[Milen Yakimov]{Milen Yakimov}
\address{
Department of Mathematics, Northeastern University, Boston, MA 02115, U.S.A. and International Center for Mathematical Sciences, Institute of Mathematics and Informatics \\
Bulgarian Academy of Sciences \\ 
Acad. G. Bonchev Str., Bl. 8 \\
Sofia 1113, Bulgaria
}
\email{m.yakimov@northeastern.edu}
\keywords{Quantum symmetric pairs, short star products on $\N$-graded non-commutative algebras, bar involution, quasi $K$-matrix}

\subjclass[2020]{Primary: 17B37, Secondary: 16T05, 17B67}

\begin{document}
%%%%%%%%%%%%%%%%%%%%%%%%%%%%%%%%%%%%%%
\begin{abstract}
    We prove that the star product for quantum symmetric pair coideal subalgebras is short. We apply this result to obtain new conceptual proofs, from first principles, of several fundamental facts about quantum symmetric pairs. In particular, we establish the existence of the algebra anti-automorphism $\sigma_\tau$ and of the bar involution, without making use of the quasi $K$-matrix. We give a new elementary proof of a conjecture by Balagovi\'c and Kolb, sometimes referred to as the fundamental lemma for quantum symmetric pairs. We obtain a conceptual formula expressing the tensor quasi $K$-matrix in terms of the much studied quasi $R$-matrix and the Letzter map. This also allows for a new independent proof of the intertwiner property of the quasi $K$-matrix.  
\end{abstract}
\maketitle
%%%%%%%%%%%%%%%%%%%%%%%%%%%%%%%%%%%%%%
\section{Introduction}
%%%%%%%%%%%%%%%%%%%%%%%%%%%%%%%%%%%%%%
%%%%%%%%%%%%%%%%%%%%%%%%%%%%%%%%%%%%%%
\subsection{Short star products} The concept of short star products for commutative graded Poisson algebras originated in the context of conformal field theory in the work of Beem, Peelaers and Rastelli \cite{a-BPR17} and was further developed mathematically by Etingof and Stryker \cite{a-ES}. Any $\N$-graded algebra $A=\bigoplus_{k\in \N}A_n$ is filtered with $\cF_n(A)=\bigoplus_{k=0}^n A_k$. A star product on $A$ is an additional associative algebra structure $\ast:A\ot A\rightarrow A$ which turns $(A,\cF,\ast)$ into a filtered algebra such that the associated graded algebra $\gr_\cF(A,\ast)$ is isomorphic to $A$ under the canonical isomorphism of vector spaces $\gr_\cF(A)\cong A$. 
Recall that if $(B,\cG)$ is a filtered algebra then the associated graded algebra $\gr_\cG(B)$, if commutative, naturally carries a Poisson structure. In this case $(B,\cG)$ is called a filtered quantization of the commutative graded Poisson algebra $\gr_\cG(B)$. Star products on commutative graded algebras $A$ are hence particular examples of filtered quantizations of $A$.

The papers \cite{a-BPR17} and \cite{a-ES} identify a crucial concept for star products: A star product $\ast$ on $A$ is called short if
\begin{align}\label{eq:short}
  A_m \ast A_n \subseteq \bigoplus_{k=|m-n|}^{m+n} A_k \qquad \mbox{for all $m,n\in \N$.}
\end{align}
The nonzero lower bound in the direct sum in \eqref{eq:short} poses a strong restriction on the possible filtered deformations. In \cite{a-ES} short star products on commutative, graded Poisson algebras were identified as a new, important structure in representation theory.

Star products on $A$ can be obtained via quantization maps. Let $(B,\cG)$ be a filtered associative algebra. A quantization map is an isomorphism of filtered vector spaces $\phi:A\rightarrow B$ such that the associated graded map $\gr(\phi):\gr_\cF(A)\cong A \rightarrow \gr_\cG(B)$ is an isomorphism of graded algebras. If $\phi$ is a quantization map then
\begin{align*}
  a\ast b := \phi^{-1}(\phi(a) \phi(b)) \qquad \mbox{for all $a,b\in A$}
\end{align*}
defines a star product on $A$, see \cite[Section 2.2]{a-ES}.
%%%%%%%%%%%%%%%%%%%%%%%%%%%%%%%%%%%%%%
\subsection{Quantum symmetric pairs.}
%%%%%%%%%%%%%%%%%%%%%%%%%%%%%%%%%%%%%%
A comprehensive theory of quantum symmetric pairs was developed for semisimple complex Lie algebras by Letzter \cite{a-Letzter99a}, \cite{MSRI-Letzter} and extended to the Kac-Moody case in \cite{a-Kolb14}. Let $\gfrak$ be a symmetrizable Kac-Moody algebra with generalized Cartan matrix $(a_{ij})_{i,j\in I}$ for some index set $I$. Recall that a Satake diagram $(X,\tau)$ consists of a subset $X\subset I$ of finite type and an involutive diagram automorphism $\tau:I\rightarrow I$, satisfying certain compatibility conditions, see e.g.~\cite[Definition 2.3]{a-Kolb14}. Each Satake diagram determines an involutive Lie algebra automorphism $\theta=\theta(X,\tau):\gfrak \rightarrow \gfrak$. Let $\kfrak=\{x\in \gfrak\,|\,\theta(x)=x\}$ be the corresponding fixed Lie subalgebra.

We consider the quantized enveloping algebra $\Uq=\uqg$ of $\gfrak$ with generators $E_i, F_i, K_i^{\pm 1}$ for $i\in I$ defined over the field $\qfield=k(q)$ where $k$ is a field of characteristic zero. To each Satake diagram $(X,\tau)$ and suitable parameters $\bc=(c_i)_{i\in I\setminus X}\in (\qfield^{\times})^{I\setminus X}$ the theory of quantum symmetric pairs associates a subalgebra $\cB_\bc\subset \uqg$ which is a quantum group analogue of the universal enveloping algebra $U(\kfrak)$. Crucially, $\cB_\bc$ is a right coideal subalgebra, that is, the coproduct $\kow$ of $\Uq$ satisfies the relation
\begin{align*}
  \kow(\cB_\bc)\subset \cB_\bc\ot \Uq.
\end{align*}
We will refer to the algebra $\cB_\bc$ as a quantum symmetric pair coideal subalgebra (or QSP-subalgebra) of $\Uq$. By definition, the QSP-subalgebra is generated by a Hopf subalgebra $\cH_{X,\tau}$ of $\Uq$ and additional generators $B_i$ for $i\in I\setminus X$, see Section \ref{sec:QSP} for a precise definition. Hence, the QSP-subalgebra $\cB_\bc$ is contained in the subbialgebra $U^\poly\subset \Uq$ generated by $\cH_{X,\tau}$ and the elements $F_i, K_i^{-1}, E_iK_i^{-1}$ for all $i\in I\setminus X$. We call the subalgebra $\cA_{X,\tau}^-\subset U^\poly$ generated by $\cH_{X,\tau}$ and $F_i$ for all $i\in I\setminus X$ the quantum horospherical subalgebra. As shown in \cite{a-KY21}, the vector space $U^\poly$ decomposes into a direct sum
\begin{align}\label{eq:Upoly=AJ}
  U^\poly =\cA_{X,\tau}^- \oplus \cJ_{X,\tau}
\end{align}
where $\cJ_{X,\tau}$ denotes the left ideal of $U^\poly$ generated by $\ad_r(\cH_{X,\tau})(E_iK_i^{-1})$ and $K_i^{-1}$ for all $i\in I\setminus X$. The Letzter map $\psi:\cB_\bc\rightarrow \cA_{X,\tau}^-$ is defined as the composition of the inclusion $\cB_\bc\hookrightarrow U^\poly$ with the projection onto $\cA_{X,\tau}^-$ with respect to the direct sum decomposition \eqref{eq:Upoly=AJ}. We showed in \cite{a-KY21} that the Letzter map is a filtered isomorphism of vector spaces and the associated graded map is an isomorphism of graded algebras. This means that the inverse of the Letzter map is a quantization map in the sense of \cite{a-ES} and hence gives rise to a star product on $\cA_{X,\tau}^-$ defined by
\begin{align}\label{eq:Bstar}
  a \ast b =\psi(\psi^{-1}(a) \psi^{-1}(b)) \qquad \mbox{for all $a,b\in \cA_{X,\tau}^-$.}
\end{align}
As the algebra $\cA_{X,\tau}^-$ is generated in degrees $0$ and $1$, all the information about the star product on $\cA_{X,\tau}^-$ is contained in either family of maps $\mu_i^L, \mu_i^R:\cA_{X,\tau}^-\rightarrow \cA_{X,\tau}^-$ for $i\in I\setminus X$ defined by
\begin{align*}
   \mu_i^L(a)=F_i\ast a - F_ia, \qquad  \mu_i^R(a)=a\ast F_i - aF_i \qquad \mbox{for all $a\in \cA_{X,\tau}^-$.}
\end{align*}
In \cite[Lemma 3.3]{a-KY21} we obtained an explicit formula for the maps $\mu_i^L$, see also Equation \eqref{eq:muiL}. However, a similar formula for $\mu_i^R$ remained so far elusive if $X\neq \emptyset$.
%%%%%%%%%%%%%%%%%%%%%%%%%%%%%%%%%%%%%
\subsection{Goal}
%%%%%%%%%%%%%%%%%%%%%%%%%%%%%%%%%%%%%
With the present paper we argue that short star products are a powerful tool also for non-commutative, graded algebras. While we lose the underlying Poisson structure, the concepts of filtered deformations, quantization maps and shortness equally make sense. We demonstrate the pivotal role of the shortness property in the case of quantum symmetric pairs. Hence, the first main goal of the present paper is to show that the star product \eqref{eq:Bstar} on the quantum horospherical subalgebra $\cA_{X,\tau}^-$ is indeed short.

In the second half of the paper we show that the perspective of quantum symmetric pairs as short filtered deformations allows us to conceptualize various fundamental notions and results in this theory and to simplify their proofs. A similar perspective emerged previously in \cite{a-KY20} in a more general context of Nichols algebras of diagonal type. However, that paper only considered quantum symmetric pairs of quasi split type,  i.e.~$X=\emptyset$. The quasi-split case is substantially easier and hence the shortness property did not feature explicitly in \cite{a-KY20}. In the present paper, we develop the theory for arbitrary (generalized) Satake diagrams $(X,\tau)$ in the setting of Drinfeld-Jimbo quantum groups.

As applications of the shortness of the star product, we obtain a new, conceptual construction of the anti-automorphism $\sigma_\tau:\cB_\bc \rightarrow \cB_\bc$ first observed in \cite{a-WangZhang23}, and of the bar involution $\obar^B:\cB_\bc \rightarrow \cB_\bc$ as formulated in \cite{a-Kolb22}. Moreover, we give a new, elementary proof of a conjecture by Balagovi\'c and Kolb \cite[Conjecture 2.7]{a-BalaKolb15}, \cite{a-BaoWang21}. Most importantly, and as the second main goal of the paper, we obtain a new, conceptual formula for the tensor quasi $K$-matrix for quantum symmetric pairs. In this approach, the intertwiner property of the quasi $K$-matrix is obtained by rewriting the intertwiner property of the quasi $R$-matrix in terms of the star product.
%%%%%%%%%%%%%%%%%%%%%%%%%%%%%%%%%%%%%%
\subsection{Results}
%%%%%%%%%%%%%%%%%%%%%%%%%%%%%%%%%%%%%%
Let $\cW_{X,\tau}$ be the quotient of $U^\poly$ by the two sided ideal generated by all $K_i^{-1}$ for $i\in I\setminus X$. We call $\cW_{X,\tau}$ the horospherical Heisenberg double corresponding to the (generalized) Satake diagram $(X,\tau)$. We obtain the commutative diagram
\begin{align*}
  \xymatrix{
    \cB_\bc \ar@{->}_{\psi|_{\cB_\bc}}[ddr] \ar@{^{(}->}[rr] \ar@{->}^{\pi|_{\cB_\bc}}[dr]& & U^\poly \ar@{->}^{\psi}[ddl] \ar@{->}_{\pi}[dl]\\
    & \cW_{X,\tau} \ar@{->}^{\eta}[d]\\
    &\cA_{X,\tau}^-&
    }
\end{align*}
where $\pi:U^\poly\rightarrow \cW_{X,\tau}$ and $\eta:\cW_{X,\tau}\rightarrow \cA_{X,\tau}^-$ denote the canonical projections. Both $U^\poly$ and $\cW_{X,\tau}$ have natural vector space gradings such that the projection $\pi$ is a graded map, see \eqref{eq:gradings}. The QSP-subalgebra $\cB_\bc$ is not a graded subspace of $U^\poly$. However, we have the following crucial result.

\medskip

\noindent{\bf Theorem A.} [Theorem \ref{thm:Bc-graded}] \textit{The image $\pi(\cB_\bc)$ is a graded subspace of $\cW_{X,\tau}$.}

\medskip
The projection $\eta$ turns $\cA_{X,\tau}^-$ into a left $\cW_{X,\tau}$-module which can be thought of as a Fock space for the horospherical Heisenberg double. The algebra $\cW_{X,\tau}$ has a triangular decomposition
\begin{align*}
  \cW_{X,\tau}\cong \cR_X^{-,r}\ot \cH_{X,\tau}\ot \cR_X^{+,r}
\end{align*}
where $\cR_X^{-,r}$ and $\cR_X^{+,r}$ are quantum nilradicals of the negative and positive standard parabolics corresponding to the subset $X$. The elements of $\cR_X^{-,r}$ and $\cR_X^{+,r}$ act as lowering and raising operators, respectively, on the $\N$-graded vector space $\cA_{X,\tau}^-$. This perspective, together with Theorem A, allows us the prove the central result of this paper.

\medskip

\noindent{\bf Theorem B.} [Theorem \ref{thm:short}] \textit{The star product on $\cA_{X,\tau}^-$ defined by Equation \eqref{eq:Bstar} is short.}

\medskip

In the sequel, we will give several major applications of Theorem B.
%%%%%%%%%%%%%%%%%%%%%%%%%%%%%%
\subsubsection{Application 1: The algebra automorphism $\sigma_\tau$}
%%%%%%%%%%%%%%%%%%%%%%%%%%%%%%
Theorem B implies that the lower order term maps $\mu_i^L$ and $\mu_i^R$ for $i\in I\setminus X$ are both homogeneous of degree $-1$. This allows us to compare graded components of iterated star products, see for example Proposition \ref{prop:muR-muL}. By an inductive argument, we obtain an explicit formula for the lower order term map $\mu_i^R$. Recall the algebra anti-isomorphism $\sigma:\Uq\rightarrow \Uq$ defined in \cite[3.1.3]{b-Lusztig94}, see also \eqref{eq:sigma-def}. See Section \ref{sec:background} for the notation in Equation \eqref{eq:muiR-explicit}.

\medskip

\noindent{\bf Theorem C.} [Lemma \ref{lem:mutilRi}, Theorem \ref{thm:muiR}] \textit{For all $i\in I\setminus X$ we have
\begin{align*}
  \mu_i^R = \sigma\circ \tau \circ \mu_i^L \circ \sigma \circ \tau,
\end{align*}
or explicitly, for any $u\in \sigma(\cR_X^{-,r})$, we have
\begin{align}\label{eq:muiR-explicit}
  \mu_i^R(u) = -c_{\tau(i)} \frac{q^{-(\alpha_i,\theta(\alpha_i))}}{q_i-q_i^{-1}}  T^{-1}_{w_X} \circ \partial^R_{\tau(i)}\circ T_{w_X}(u)K_{\alpha_i+\theta(\alpha_i)}.
\end{align}
}

\medskip

Theorem C has the following immediate consequence.

\medskip

\noindent{\bf Theorem D. } [Theorem \ref{thm:sigmatau-anti}, Corollary \ref{cor:sigmatauB}] \textit{The map $\sigma\circ \tau:\cA_{X,\tau}^-\rightarrow \cA_{X,\tau}^-$ is an algebra anti-automorphism of the star product algebra $(\cA_{X,\tau}^-,\ast)$. Hence, the linear map $\sigma_\tau:\cB_\bc\rightarrow \cB_\bc$ defined by $\sigma_\tau=\psi^{-1}\circ \sigma\circ \tau \circ \psi$ is an algebra anti-automorphism which satisfies $\sigma_\tau|_{\cH_{X,\tau}}=\sigma\circ \tau|_{\cH_{X,\tau}}$ and $\sigma_\tau(B_i)=B_{\tau(i)}$ for all $i\in I\setminus X$.}

\medskip

 The existence of the algebra anti-automorphism $\sigma_\tau:\cB_\bc\rightarrow \cB_\bc$ was first observed by Wang and Zhang in \cite[Proposition 3.14]{a-WangZhang23}. Their construction relies on Appel and Vlaar's improvement \cite[Section 7]{a-AppelVlaar22} of the construction of the quasi $K$-matrix in \cite[Section 6]{a-BalaKolb19}. The construction of the anti-automorphism $\sigma_\tau$ in the present paper does not rely on the prior construction of the quasi $K$-matrix. Moreover, the star-product interpretation of quantum symmetric pairs clarifies the conceptual origin of the map $\sigma_\tau$.
%%%%%%%%%%%%%%%%%%%%%%%%%%%%%%

%%%%%%%%%%%%%%%%%%%%%%%%%%%%%%
\subsubsection{Application 2: The bar involution for $\cB_\bc$}
%%%%%%%%%%%%%%%%%%%%%%%%%%%%%%
Theorem D provides a new proof of the existence of the bar involution for $\cB_\bc$ for suitable parameters. Indeed, let $\obar:\Uq\rightarrow \Uq$ denote the bar involution for $\Uq$, see \eqref{eq:obar-def}. For parameters $\bc=(c_i)_{i\in I\setminus X}$ satisfying the condition
\begin{align}\label{eq:ci-bar-condition}
  \overline{c_i}=(-1)^{2\alpha_i(\rho_X^\vee)} q^{(2\rho_X-\theta(\alpha_i),\alpha_i)} c_{\tau(i)} \qquad \mbox{for all $i\in I\setminus X$}
\end{align}
one sees directly that the map $\sigma\circ\tau\circ\obar:\Uq\rightarrow \Uq$ restricts to an anti-automorphism of $\cB_\bc$ which maps $B_i$ to $B_{\tau(i)}$, see Lemma \ref{lem:BBt}. Now one obtains the bar involution for $\cB_\bc$ by composition with $\sigma_\tau$.

\medskip

\noindent{\bf Theorem E.} [Corollary \ref{cor:barB}] \textit{If \eqref{eq:ci-bar-condition} holds then the map
\begin{align*}
  \obar^B:=\sigma_\tau\circ \sigma \circ \tau \circ \obar:\cB_\bc\rightarrow \cB_\bc, \qquad b\mapsto \overline{b}^B
\end{align*}
defines a $k$-algebra automorphism of $\cB_\bc$ such that $\obar^B|_{\cH_{X,\tau}}=\obar|_{\cH_{X,\tau}}$ and $\overline{B_i}^B=B_i$ for all $i\in I\setminus X$.
}
\medskip  

The existence of the bar involution of $\cB_\bc$ for parameters satisfying \eqref{eq:ci-bar-condition} had previously been proved in \cite{a-Kolb22} as a consequence of the improved construction of the quasi $K$-matrix in \cite{a-AppelVlaar22}. Again, the proof of Theorem E in the present paper is much simplified and does not rely on the prior construction of the quasi $K$-matrix.
%%%%%%%%%%%%%%%%%%%%%%%%%%%%%%%%%%%%%%%%%
\subsubsection{Application 3: The fundamental lemma for quantum symmetric pairs}\label{sec:intro-fundamental}
%%%%%%%%%%%%%%%%%%%%%%%%%%%%%%%%%%%%%%%%%
The following formula was conjectured in \cite[Conjecture 2.7]{a-BalaKolb15}:
\begin{align}\label{eq:fund-intro}
   \sigma\circ \tau(\partial^R_i(T_{w_X}(E_i)))=\partial^R_i(T_{w_X}(E_i)) \qquad \mbox{for all $i\in I\setminus X$,}
\end{align}
see Section \ref{sec:background} for notation. Up to a sign $\nu_i\in \{\pm 1\}$, relation \eqref{eq:fund-intro} was proved in \cite[Proposition 2.5]{a-BalaKolb15} for generalized Satake diagrams in the Kac-Moody setting, see also \cite[Remark 2.6]{a-BalaKolb15}. For $\gfrak$ of finite type, relation \eqref{eq:fund2} was proved in \cite[Proposition 2.3]{a-BalaKolb15} by casework. In \cite[Section 4]{a-BaoWang21} a sophisticated argument involving canonical bases was used to prove that $\nu_i=1$ also in the Kac-Moody setting.
In \cite[1.3]{a-BaoWang21}, formula \eqref{eq:fund-intro} was identified as a `\textit{major technical assumption ... toward the existence of bar involution, quasi-$K$-matrix and universal $K$-matrix, and as well as for the constructions of $\imath$-canonical bases in}'  \cite{a-BaoWang21}. For this reason, Bao and Wang referred to Equation \eqref{eq:fund-intro} as the \textit{fundamental lemma of QSP}. Relation \eqref{eq:fund-intro} also features in the improved construction of the quasi $K$-matrix in \cite{a-AppelVlaar22}.

In the present paper (Proposition \ref{prop:fundamental}, Corollary \ref{cor:fundamental}) we show that relation \eqref{eq:fund-intro} follows directly from the relation
\begin{align*}
  \mu^L_{\tau(i)}(F_i)=\mu^R_{i}(F_{\tau(i)})
\end{align*}
which we can write down explicitly thanks to Theorem C.
This shows that the star-product interpretation of quantum symmetric pairs provides a natural setting for an independent, elementary proof of relation \eqref{eq:fund-intro}. The present proof does not rely on results of either \cite{a-BalaKolb15} or \cite{a-BaoWang21}.
%%%%%%%%%%%%%%%%%%%%%%%%%%%%%%
\subsubsection{Application 4: The quasi $K$-matrix for $\cB_\bc$}
%%%%%%%%%%%%%%%%%%%%%%%%%%%%%%
The concept of the ordinary quasi $K$-matrix $\Xfrak$ and the tensor quasi $K$-matrix $\Theta^B$ was introduced into the theory of quantum symmetric pairs, ingeniously, by Bao and Wang in \cite{a-BaoWang18a}. The element $\Xfrak$ was defined by the intertwiner relation
\begin{align}\label{eq:intertwiner-intro}
  \overline{b}^B \Xfrak =\Xfrak \overline{b} \qquad \mbox{for all $b\in \cB_\bc$}
\end{align}
which is natural in the context of canonical bases for $\cB_\bc$-modules obtained by restriction from $\Uq$-modules. However, then hard work is required to prove the existence of $\Xfrak$, see \cite{a-BaoWang18a}, \cite{a-BalaKolb19} and the improvement in \cite{a-AppelVlaar22}. In \cite{a-BaoWang18a} the tensor quasi $K$-matrix $\Theta^B$ was introduced as a secondary object. However, $\Theta^B$ reflects the coideal property which may be considered more fundamental than the fact that $\cB_\bc$ is a subalgebra. For this reason it is desirable to have a closed formula for the tensor quasi $K$-matrix. Such a formula can be obtained via the Letzter map. Let $\Theta$ be the quasi $R$-matrix for $\Uq$ and define
\begin{align}\label{eq:quasiK-intro}
  \Theta^B=(\psi^{-1}\ot \id)(\Theta).
\end{align}  
The following Theorem shows that $\Theta^B$ satisfies the intertwiner property of the tensor quasi $K$-matrix.

\medskip
\noindent {\bf Theorem F.} [Theorem \ref{thm:intertwiner2}]
  The element $\Theta^B$ defined by \eqref{eq:quasiK-intro} satisfies the relation
  \begin{align}\label{eq:ThetaB-intertwiner-intro}
    \kow(\sigma_\tau(b)) \cdot\Theta^B = \Theta^B\cdot (\sigma_\tau\ot(\sigma\circ\tau))\circ \kow(b) \qquad \mbox{for all $b\in \cB_\bc$.}
  \end{align}
  
The conceptual construction \eqref{eq:quasiK-intro} of the quasi $K$-matrix was already observed in the quasi-split case in \cite{a-KY20}. However, the general case is technically substantially more involved. In particular, we will need to revisit the coproduct of the generators $B_i$ of $\cB_\bc$. This will lead us to the elements
\begin{align*}
  M_i = q^{-(\theta(\alpha_i),\alpha_i)}(K_i^{-1}\ot 1)\big(\kow(T_{w_X}(E_{\tau(i)}))- T_{w_X}(E_{\tau(i)})\ot 1\big) \qquad \mbox{for $i\in I\setminus X$.}
\end{align*}  
We will show that the elements $M_i$ can be used to compactly rewrite the lower order term maps $\mu_i^L$ and $\mu_i^R$ in terms of the quasi $R$-matrix.

\medskip

\noindent{\bf Theorem G.} [Corollary \ref{cor:muiTheta}] 
   \textit{For all $i\in I\setminus X$ the following relations hold:
  \begin{align*}
    (\mu_i^L\ot \id)(\Theta) &= c_i M_i\cdot\Theta,&
    (\mu_i^R\ot \id)(\Theta) &=c_{\tau(i)} \Theta\cdot (\sigma \ot \sigma)(M_i).
  \end{align*}
  }

With these relations, the intertwiner property \eqref{eq:ThetaB-intertwiner-intro} for $\Theta^B$ can be read off directly from the intertwiner property
\begin{align}\label{eq:ThetaF}
 (F_i\ot K_i^{-1}+1 \ot F_i) \cdot \Theta = \Theta \cdot (F_i\ot K_i +1 \ot F_i)
\end{align}
for the quasi $R$-matrix, by translating the ordinary multiplication in the first tensor factor of \eqref{eq:ThetaF} into the star product multiplication.

At the very end of Section \ref{sec:RtoK} we explain how to extend our construction of the tensor quasi $K$-matrix to non-standard quantum symmetric pairs, involving two families of parameters $\bc=(c_i)_{i\in I\setminus X}$ and $\bs=(s_i)_{i\in I\setminus X}$, see \cite[Definition 5.6]{a-Kolb14}. This follows a line of argument first developed in \cite[3.5]{a-DobKol19} and also used in \cite{a-AppelVlaar22}.

Figure \ref{fig:implications} lays out the logical dependencies between the results proved in this paper. It illustrates the pivotal role played by the shortness property of the star product. 

\medskip

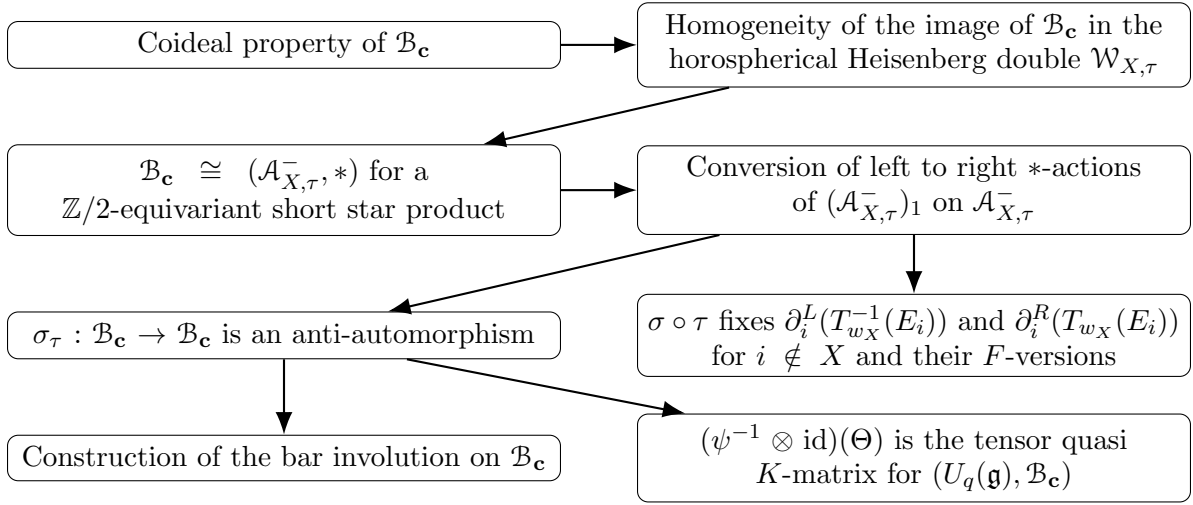
\begin{figure}
\centering

\begin{tikzpicture}[    node distance=1cm and 1cm,    box/.style={        draw,        rounded corners,        align=center,        font=\small,        text width=7cm,        fill=white    },    arrow/.style={        -{Latex[length=3mm]}, thick    }]
% Nodes
\node[box] (A) {Coideal property of $\cB_\bc$};\node[box, right=of A] (B) {Homogeneity of the image of $\cB_\bc$ in the horospherical Heisenberg double $\cW_{X,\tau}$};\node[box, below=of A] (C) {$\cB_\bc \cong (\cA^-_{X,\tau}, *)$ for a \\ $\Z/2$-equivariant short star product};\node[box, right=of C] (D) {Conversion of left to right $*$-actions \\ of $(\cA^-_{X,\tau})_1$ on $\cA^-_{X,\tau}$};\node[box, below=of C] (E) {$\sigma_\tau : \cB_\bc \rightarrow \cB_\bc$ is an anti-automorphism};\node[box, right=of E] (F) {$\sigma \circ \tau$ fixes $\partial^L_i(T_{w_X}^{-1}(E_i))$ and 
$\partial^R_i(T_{w_X}(E_i))$ for $i \notin X$ and their $F$-versions};\node[box, below=of E] (G) {Construction of the bar involution on $\cB_\bc$};\node[box, right=of G] (H) {$(\psi^{-1} \otimes \mathrm{id})(\Theta)$ is the tensor quasi \\$K$-matrix for $(\uqg,\cB_\bc)$};
% Arrows
\draw[arrow] (A) -- (B);
\draw[arrow] (B) -- (C);
\draw[arrow] (C) -- (D);
\draw[arrow] (D) -- (E);
\draw[arrow] (D) -- (F);
\draw[arrow] (E) -- (G);
\draw[arrow] (E) -- (H);
\end{tikzpicture}

\caption{Implications between main results}
\label{fig:implications}
\end{figure}

%%%%%%%%%%%%%%%%%%%%%%%%%%%%%%%%%%%%%%
\subsection{Organization}
%%%%%%%%%%%%%%%%%%%%%%%%%%%%%%%%%%%%%%
The main body of this paper is organized in four sections. In Section \ref{sec:letzter} we fix notation for quantized enveloping algebras and quantum symmetric pairs. We spell out the properties of various versions $\cR_X^{\pm,r}$, $\cR_X^{\pm,l}$ of quantum parabolic nilradicals which are obtained as coinvariants for suitable coactions on quantum standard parabolics. The interplay between the different versions of quantum parabolic nilradicals will be crucial in later sections. We discuss the relation between QSP-subalgebras and quantum horospherical subalgebras and revisit the Letzter map from \cite{a-KY21}. We then define the horospherical Heisenberg double $\cW_{X,\tau}$ and prove Theorem A as a consequence of the coideal property of the QSP-subalgebra $\cB_\bc$.

In Section \ref{sec:shortstar}, we revisit the general concept of short star products for (non-commutative) graded algebras and prove Theorem B. The remaining two sections are devoted to applications. Section \ref{sec:sigmatau} begins with a detailed discussion of the lower order term maps $\mu^L_i$ and $\mu^R_i$ for $i\in I\setminus X$, which eventually leads to a proof of Theorem C. As a consequence, we obtain the algebra anti-automorphisms $\sigma\circ \tau:(\cA_{X,\tau}^-,\ast)\rightarrow (\cA_{X,\tau}^-,\ast)$ and $\sigma_\tau:\cB_\bc\rightarrow \cB_\bc$, hence proving Theorem D. In Section \ref{sec:Bbar} we will revisit the bar involution for $\cB_\bc$, hence proving Theorem E, and in Section \ref{sec:noteworthy} we will give the elementary proof the the fundamental lemma for QSP.

The final Section \ref{sec:quasiK} is devoted to the discussion of the tensor quasi $K$-matrix. We first revisit relative versions of the skew Hopf pairing between the negative and the positive quantum Borel subalgebra. This leads to a non-degenerate pairing between the quantum nilradicals $\cR_X^{-,r}$ and $\cR_X^{+,l}$ in Proposition \ref{prop:rhrh}. This pairing allows us to give an alternative description of the lower order term maps $\mu_i^L$ in Theorem \ref{thm:muiLa-new}, which leads to a proof of Theorem G. Finally, Theorem F will be obtained in Theorem \ref{thm:intertwiner2} as a consequence of Theorem G and the intertwiner property of the quasi $R$-matrix.

\medskip

\noindent{\bf Acknowledgements.} The authors are grateful to Maarten van Pruijssen for valuable comments on horospherical subgroups and varieties. This project started out when S.K.~was visiting Radboud Universiteit Nijmegen in April 2024 and progressed steeply during a three months visit to MPIM Bonn in spring 2025. S.K.~is grateful to both institutions for the hospitality and the excellent environment. The research of M.Y. was supported by the Bulgarian Science Fund grant KP-06-N92/5 and the Ministry of Education and Science grant DO1-239/10.12.2024, the Simons Foundation grant SFI-MPS-T-Institutes-00007697 and NSF grant DMS–2200762.
%%%%%%%%%%%%%%%%%%%%%%%%%%%%%%%%%%%%%%
\section{The Letzter map}\label{sec:letzter}
%%%%%%%%%%%%%%%%%%%%%%%%%%%%%%%%%%%%%%
We introduce background and notation for quantum groups $\uqg$ and quantum symmetric pairs $(\uqg, \cB_\bc)$ and recall the Letzter map. We will see that the Letzter map factors over the horospherical Heisenberg double $\cW_{X,\tau}$. The algebra $\cW_{X,\tau}$ has a natural vector space grading. As a first main result we will show in Theorem \ref{thm:Bc-graded} that the image of $\cB_\bc$ in $\cW_{X,\tau}$ is a graded subspace. This fact will be a crucial ingredient when proving the shortness of the star product in Section \ref{sec:short-star} 
%%%%%%%%%%%%%%%%%%%%%%%%%%%%%%%%%%%%%%
\subsection{Background on quantized enveloping algebras}\label{sec:background}
Throughout the paper we fix a symmetrizable Kac--Moody algebra $\gfrak$ and denote by $\gfrak'=[\gfrak,\gfrak]$ its derived subalgebra. Let $(a_{ij})_{i,j\in I}$ be the generalized Cartan matrix of $\gfrak$, where $I$ is a finite set, and let $\{d_i\,|\,i\in I\}$ be a collection of relatively prime positive integers such that the matrix $(d_ia_{ij})$ is symmetric. Denote by $\Pi=\{\alpha_i\,|\,i\in I\}$ and $Q=\Z\Pi$ the set of simple roots for $\gfrak$ and its root lattice, respectively. Set $\N=\{0,1,2,\dots\}$, and let $Q^+=\N\Pi$ be the positive cone of $Q$. For any subset $X \subseteq I$ we write 
\[
Q_X=\sum_{j\in X} \Z \alpha_j \quad \mbox{and} \quad Q_X^+=\sum_{j\in X}\N \alpha_j.
\]
Let $(\cdot,\cdot):Q\times Q \rightarrow \Z$ be the symmetric bilinear form defined by $(\alpha_i,\alpha_j)=d_ia_{ij}$ for all $i,j\in I$. Denote by $W$ the Weyl group of $\gfrak$ with simple reflections $s_i$ for $i\in I$. 

Throughout the paper we fix a field $k$ of characteristic 0 and denote $\qfield=k(q)$. The quantized enveloping algebra $U=U_q(\gfrak')$ is the $\qfield$-algebra with generators $E_i, F_i, K_i^{\pm 1}$ for $i \in I$ and defining relations given in \cite[Section 3.1]{b-Lusztig94}. It is a Hopf algebra with coproduct $\kow$ given by
 \begin{align}\label{eq:kow-def}
    \kow(E_i){=}E_i\ot 1{+} K_i\ot E_i, \; \kow(F_i){=}F_i\ot K_i^{-1}{+}1 \ot F_i, \; \kow(K_i){=}K_i\ot K_i \;\;  
\mbox{for all $i \in I$,}
 \end{align}
antipode $S$ given by 
\begin{equation}
\label{eq:antipode}
S(K_i)= K_i^{-1}, \; \; S(E_i)=-K_i^{-1}E_i, \; \; 
S(F_i) =-F_i K_i \; \;  
\mbox{for all $i \in I$}
\end{equation}
and counit $\vep$. Using Sweedler notation for the coproduct we write $\kow(u)=u_{(1)}\ot u_{(2)}$ for $u\in \Uq$.
For $\beta=\sum_{i\in I}n_i \alpha_i\in Q$ we write $K_\beta=\prod_{i\in I}K_i^{n_i}$. Set $q_i = q^{d_i}$ for all $i \in I$. 

We will make frequent use of the following involutive (anti-)automorphisms of $\Uq$, see \cite[3.1.3, 3.1.12]{b-Lusztig94}. 
\begin{enumerate}
  \item The Chevalley involution is the $\qfield$-algebra automorphism  $\omega:\Uq\rightarrow\Uq$ defined by
  \begin{align}\label{eq:omega-def}
    \omega(E_i)=-F_i, \quad \omega(F_i)=-E_i, \quad \omega(K_i)=K_i^{-1} \qquad\mbox{for all $i\in I$.}
  \end{align}
    The map $\omega$ is a coalgebra anti-automorphism of $\Uq$, i.e.~$\kow(\omega(u))=\omega(u_{(2)})\ot\omega(u_{(1)})$ for all $u\in U$. Our definition of $\omega$ differs from \cite[3.1.3]{b-Lusztig94} by a sign.
    \item The map $\sigma:\Uq\rightarrow \Uq$ is the $\qfield$-algebra anti-automorphism defined by
    \begin{align}\label{eq:sigma-def}
      \sigma(E_i)=E_i, \quad \sigma(F_i)=F_i, \quad \sigma(K_i)=K_i^{-1} \qquad\mbox{for all $i\in I$.}
    \end{align}
Here, saying that $\sigma$ is a $\qfield$-algebra anti-automorphism means that $\sigma$ is a $\qfield$-linear automorphism such that $\sigma(u'u)=\sigma(u)\sigma(u')$ for all $u,u'\in \Uq$.  
  \item The bar involution $\obar:\Uq\rightarrow\Uq$, $u\mapsto \overline{u}$, is the $\field$-algebra automorphism defined by
    \begin{align}\label{eq:obar-def}
    \overline{E_i}=E_i, \quad \overline{F_i}=F_i, \quad \overline{K_i}=K_i^{-1}, \quad \overline{f(q)}=f(q^{-1}) \qquad\mbox{for all $i\in I, f(q) \in k(q)$.}
    \end{align}
 \end{enumerate}   
Observe that $\sigma$ and the bar involution $\obar:\Uq \rightarrow \Uq$ satisfy the following relation
\begin{align}\label{eq:sigmakow}
  (\sigma \ot \sigma)\circ \kow(u)= (\obar \ot \obar)\circ\kow(\sigma(\overline{u}))\qquad \mbox{for all $u\in \Uq$.}
\end{align}
Indeed, this relation is checked on the generators $E_i, F_i, K_i$ and $q$, and then follows from the fact that the maps on both sides of the equation are algebra anti-homomorphisms.

Denote by $U^\pm$ the unital subalgebras of $U$ generated by $\{E_i\,|\,i\in I\}$ and $\{F_i\,|\,i\in I\}$, respectively. Let $U^0$ be the unital subalgebra of $U$ generated by $\{K_i^{\pm 1}\,|\,i\in I\}$; it is a Laurent polynomial ring in those generators. The algebras $U$ and $U^\pm$ are $Q$-graded by setting $\deg(E_i) = \alpha_i$, $\deg(F_i) = -\alpha_i$ and $\deg (K_i^{\pm1})=0$ for all $i \in I$. The corresponding graded components will be denoted by $U_\mu$ and $U^\pm_{\mu}$ for $\mu \in Q$.  

For $i\in I$, let $T_i$ be the algebra automorphism of $U$ denoted by $T''_{i,1}$ in \cite[Section 37.1]{b-Lusztig94}. These automorphisms define an action of the braid group of $W$ on $U$ by setting 
$T_{s_{i_1} \ldots s_{i_l}} = T_{s_{i_1}} \ldots T_{s_{i_l}}$ for all reduced words $s_{i_1} \ldots s_{i_l} \in W$. 

 We recall the  Lusztig--Kashiwara skew derivations $\partial_i^L,\partial_i^R: U^\pm \rightarrow U^\pm$ for $i\in I$ which can be defined by
\begin{align}
  E_i y - y E_i &= \frac{K_i \partial_i^L(y)-\partial_i^R(y)K_i^{-1}}{q_i-q_i^{-1}},\label{eq:EyyE}\\
  F_i x - x F_i &= \frac{K_i^{-1} \partial_i^L(x)-\partial_i^R(x)K_i}{q_i-q_i^{-1}}\label{eq:FxxF}
\end{align}
for $x\in U^+$ and $y\in U^-$,  see \cite[Proposition 3.1.6]{b-Lusztig94}. Up to a sign, the maps $\partial_i^{L,R}:U^-\rightarrow U^-$ and $\partial_i^{L,R}:U^+\rightarrow U^+$ are related by conjugation by the Chevalley involution $\omega:U^\pm\rightarrow U^\mp$ given by \eqref{eq:omega-def}, that is
\begin{align*}
  \omega(\partial_i^{L,R}(x))=-\partial_i^{L,R}(\omega(x)) \qquad \mbox{for all $x\in U^+$}.
\end{align*}
The maps $\partial^R_i,\partial^L_i:U^-\rightarrow U^-$ satisfy the relations
$\partial^R_i(F_j)=\delta_{ij}=\partial^L_i(F_j)$ for all $i,j\in I$. Moreover, Equation \eqref{eq:EyyE} implies that
\begin{equation}\label{eq:skew-der}
\begin{aligned}
  \partial_i^L(f_\mu f_\nu)=\partial^L_i(f_\mu)f_\nu + q^{(\alpha_i,\mu)}f_\mu \partial^L_i(f_\nu),\\
  \partial_i^R(f_\mu f_\nu) = q^{(\alpha_i,\nu)}\partial_i^R(f_\mu) f_\nu + f_\mu \partial_i^R(f_\nu)
\end{aligned}
\end{equation}
for all $f_\mu\in U^-_{-\mu}$, $f_\nu\in U^-_{-\nu}$. Similarly, the maps $\partial^R_i,\partial^L_i:U^+\rightarrow U^+$ satisfy the relations
$\partial^R_i(E_j)=\delta_{ij}=\partial^L_i(E_j)$ for all $i,j\in I$ and Equation \eqref{eq:FxxF} implies that
\begin{equation}\label{eq:skew-der-+}
\begin{aligned}
  \partial_i^L(e_\mu e_\nu)=\partial^L_i(e_\mu)e_\nu + q^{(\alpha_i,\mu)}e_\mu \partial^L_i(e_\nu),\\
  \partial_i^R(e_\mu e_\nu) = q^{(\alpha_i,\nu)}\partial_i^R(e_\mu) e_\nu + e_\mu \partial_i^R(e_\nu)
\end{aligned}
\end{equation}
for all $e_\mu\in U^+_{\mu}$, $e_\nu\in U^+_{\nu}$. The skew derivation properties \eqref{eq:skew-der} and \eqref{eq:skew-der-+} determine the maps $\partial^R_i,\partial^L_i:U^\pm\rightarrow U^\pm$ uniquely from their actions on the generators of $U^\pm$. Hence, the relations
\eqref{eq:skew-der-+} together with the definition of the coproduct \eqref{eq:kow-def} imply that
  \begin{align}
    \kow(x)&=K_\mu\ot x + E_i K_{\mu-\alpha_i}\ot \partial_i^L(x) + \mbox{L.l.o.t}, \label{eq:cop-partial-L}\\
    \kow(x)&=x\ot 1 + \partial_i^R(x)K_i \ot E_i + \mbox{R.l.o.t}\label{eq:cop-partial-R}
\end{align}
for $x\in U^+_\mu$ where $\mbox{L.l.o.t}\in  \sum_{\nu\neq 0,\alpha_i} U^+_\nu K_{\mu-\nu}\ot U^+_{\mu-\nu}$ and $\mbox{R.l.o.t}\in  \sum_{\nu\neq 0,\alpha_i} U^+_{\mu-\nu} K_{\nu}\ot U^+_{\nu}$. Applying $\omega\ot \omega$ to \eqref{eq:cop-partial-L} and \eqref{eq:cop-partial-R} we obtain
\begin{align}
  \kow(y) &= y\ot K_{-\mu} + \partial_i^L(y)\ot F_i K_{-(\mu-\alpha_i)} +  \mbox{L.l.o.t} \label{eq:cop-partial2-L},\\
   \kow(y) &= 1\ot y + F_i\ot \partial_i^R(y) K_i^{-1}  +  \mbox{R.l.o.t} \label{eq:cop-partial2-R}
\end{align}  
for $y\in U^-_{-\mu}$ where $\mbox{L.l.o.t}\in  \sum_{\nu\neq 0,\alpha_i} U^-_{-(\mu-\nu)}\ot U^-_{-\nu} K_{-(\mu-\nu)}$ and $\mbox{R.l.o.t}\in  \sum_{\nu\neq 0,\alpha_i} U^-_{-\nu}\ot U^-_{-(\mu-\nu)} K_{-\nu}^{-1}$.

In the sequel, we will occasionally work with completions $\scrU$ and $\scrU^{(2)}$ of $\Uq$ and $\Uq\ot \Uq$, respectively,
defined as the algebras of functorial endomorphisms of canonical fiber functors. We refer the reader to 
\cite[Section 3]{a-BalaKolb19} for details. In brief, let $\Oint$ denote the category of integrable $\Uq$-modules in category $\cO$, let $\mathcal{Vect}$ be the category of $\qfield$-vector spaces and let $\mathcal{For}:\Oint\rightarrow \mathcal{Vect}$ be the forgetful functor. Then
\begin{align}\label{eq:scrU}
   \scrU = \End(\mathcal{For}) 
\end{align}
is the algebra of natural transformations of $\mathcal{For}$ to itself. The action of $\Uq$ on objects in $\Oint$ allows us to consider $\Uq$ as a subalgebra of $\scrU$. The algebra $\scrU$ contains a homomorphic image of the braid group corresponding to $W$, realized by Lusztig's braid group action on modules. More generally, for any $n\ge 1$ we define
\begin{align}\label{eq:scrUn}
    \scrU^{(n)} = \End(\mathcal{For}^{(n)}) 
\end{align}
where $\mathcal{For}^{(n)}:\Oint^n\rightarrow \mathcal{Vect}$ is the composition of $(n-1)$-fold tensor products and the forgetful functor. Again, $\scrU^{(n)}$ is an algebra with multiplication $\cdot$ given by composition of natural transformations. Any natural transformation $\varphi\in \scrU$ can be restricted to all $\mathcal{For}(M\ot N)$ for $M,N\in \Oint$. In this way we obtain an algebra homomorphism $\kow:\scrU\rightarrow \scrU^{(2)}$ which extends the coproduct on $\Uq$. Similarly, one obtains algebra homomorphisms $\kow\ot \id, \id\ot \kow:\scrU^{(2)}\rightarrow \scrU^{(3)}$, see \cite[Section 3]{a-BalaKolb19}.
%%%%%%%%%%%%%%%%%%%%%%%%%%%%%%%%%%%%%%
\subsection{Radford's biproduct for parabolic subalgebras}
%%%%%%%%%%%%%%%%%%%%%%%%%%%%%%%%%%%%%%
For any proper subset $X\subset I$ let $\cP_X^\pm$ be the corresponding quantum parabolic subalgebras of $\uqg$. More precisely, we set
\begin{align*}
  \cP^+_X&=\qfield\langle  E_i, K_i^{\pm 1}, F_j\,|\,i\in I, j\in X\rangle,\\
  \cP^-_X&=\qfield\langle F_i, K_i^{\pm 1}, E_j\,|\,i\in I, j\in X\rangle 
\end{align*}  
and let
\begin{align}\label{eq:LX-def}
  \cL_X=\qfield\langle  K_i^{\pm 1}, E_j, F_j\,|\,i\in I, j\in X\rangle
\end{align}
be the corresponding Levi factor. Let $U^+_X$ and $U^-_X$ be the unital subalgebras of $\Uq$ generated by $\{E_j\,|\,j\in X\}$ and $\{F_j\,|\,j\in X\}$, respectively. Then $\cL_X$ has a triangular decomposition
\begin{align}\label{eq:triang_LX}
  U_X^-\ot U^0\ot U^+_X\cong\cL_X.
\end{align}  
We need left and right versions of Radford's construction in \cite{a-Radford85} applied to both $\cP^+_X$ and $\cP^-_X$. The left version coincides with \cite{a-Radford85}. The right version was also spelled out in \cite{a-KY21} for $\cP^-_X$. 

Consider the surjective Hopf algebra homomorphisms $\pi_X^\pm:\cP_X^\pm\rightarrow \cL_X$ defined by $\pi_X^\pm|_{\cL_X}=\id_{\cL_X}$ and $\pi_X^+(E_i)=0$, $\pi_X^-(F_i)=0$ for all $i\in I\setminus X$. We define right and left coactions
\begin{align*}
  \kow^{\pm,r}_{\cL_X}&=(\id \ot \pi^\pm_X)\circ \kow:\cP_X^\pm\rightarrow \cP_X^\pm\ot \cL_X,\\
  \kow^{\pm,l}_{\cL_X}&=(\pi^\pm_X\ot \id)\circ \kow:\cP_X^\pm\rightarrow \cL_X\ot \cP_X^\pm.
\end{align*}
Consider the corresponding subalgebras of coinvariants
\begin{align*}
   \cR_X^{\pm,r}=\{a\in \cP_X^\pm\,|\,\kow^{\pm,l}_{\cL_X}(a)=1\ot a\},\quad
  \cR_X^{\pm,l}=\{a\in \cP_X^\pm\,|\,\kow^{\pm,r}_{\cL_X}(a)=a\ot 1\},
\end{align*}
where the notational swap of right and left is intentional. 
We collect known properties of $\cR_X^{\pm,r}$ and $\cR_X^{\pm,l}$. To emphasize that $\cL_X$ is the underlying Hopf algebra, we write $H=\cL_X$.
%%%%%%%%%%%%%%%%%%%%%%%%%%%%%%%%%%%%%%%%%%%%%%%%
\begin{prop}[{\cite[Theorem 3]{a-Radford85}}]
  The algebra $\cR_X^{\pm, l}$ is a left $H$-module under the left adjoint action, and a left $H$-comodule under $\kow^{\pm,l}_H$. With these two structures, $\cR_X^{\pm,l}$ is a Hopf algebra in the category of left Yetter-Drinfeld modules over $H$. We can form Radford's biproduct $\cR_X^{\pm, l}\times H$ which coincides with $\cR_X^{\pm, l}\otimes H$ as a vector space. The multiplication map $\cR_X^{\pm, l}\times H\rightarrow \cP_X^\pm$ is an isomorphism of Hopf algebras.
\end{prop}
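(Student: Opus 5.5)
This is an instance of Radford's projection theorem. The plan is to check its hypotheses and then invoke it: a Hopf algebra $A$ with Hopf maps $\iota:H\to A$ and $\pi:A\to H$ satisfying $\pi\circ\iota=\id_H$.

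\emph{Setting up the projection.} Take $A=\cP_X^\pm$, $H=\cL_X$, $\iota$ the inclusion and $\pi=\pi_X^\pm$. I first check that $\pi_X^\pm$ is a well-defined Hopf algebra map. The defining relations of $\cP_X^\pm$ are the quantum Serre relations, the relations of $K_i^{\pm1}$ with $E_j,F_j$, and $[E_j,F_k]=\delta_{jk}(K_j-K_j^{-1})/(q_j-q_j^{-1})$ for $j\in X$. All of these are homogeneous in the generators $E_i$ (respectively $F_i$) with $i\notin X$, so sending those generators to $0$ is compatible with the relations; here I use the triangular decomposition $\cP_X^+\cong U^-_X\ot U^0\ot U^+$ to see that $\cP_X^\pm$ has this presentation. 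Compatibility with $\kow$, $\vep$ and $S$ holds on generators by \eqref{eq:kow-def} and \eqref{eq:antipode}, hence everywhere. Since $\pi_X^\pm|_{\cL_X}=\id$, we have $\pi\circ\iota=\id_H$.

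\emph{The coinvariants and the Yetter--Drinfeld structure.} With the right $H$-coaction $\kow^{\pm,r}_{\cL_X}=(\id\ot\pi)\kow$, the coinvariants are exactly $\cR_X^{\pm,l}$. This explains the notational swap. Radford's theorem then gives the following:
\begin{itemize}
\item the map $\Pi(a)=a_{(1)}\,S(\pi(a_{(2)}))$ is a projection of $A$ onto $\cR_X^{\pm,l}$;
\item $\cR_X^{\pm,l}$ is stable under $\ad(h)(a)=h_{(1)}aS(h_{(2)})$;
\item $\cR_X^{\pm,l}$ is a left $H$-comodule via $(\pi\ot\id)\kow$, which is the map $\kow^{\pm,l}_H$;
\item these two structures satisfy the Yetter--Drinfeld compatibility;
\item $\cR_X^{\pm,l}$ is a Hopf algebra in ${}^H_H\mathcal{YD}$, with coproduct $\underline{\kow}(a)=a_{(1)}S(\pi(a_{(2)}))\ot a_{(3)}$.
\end{itemize}
For the first point, one checks $(\id\ot\pi)\kow\Pi(a)=\Pi(a)\ot1$ using $\pi S=S\pi$ and coassociativity. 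The key identity here is $\Pi(ah)=\Pi(a)\vep(h)$ together with $a=\Pi(a_{(1)})\pi(a_{(2)})$.

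\emph{The isomorphism with the biproduct.} The multiplication map $\cR_X^{\pm,l}\ot H\to A$, $r\ot h\mapsto rh$, has inverse $a\mapsto \Pi(a_{(1)})\ot\pi(a_{(2)})$. Both composites are checked with the identities above; in particular, for $r$ coinvariant, $\Pi(rh_{(1)})\ot\pi(h_{(2)})=r\ot h$. Finally, transporting the Hopf structure of $A$ along this bijection gives exactly the smash product algebra $(r\#h)(r'\#h')=r\,\ad(h_{(1)})(r')\#h_{(2)}h'$ and the smash coproduct built from $\underline{\kow}$ and the left coaction. So the multiplication map is a Hopf algebra isomorphism from $\cR_X^{\pm,l}\times H$.

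\emph{Main obstacle.} All the conceptual content is in Radford's theorem \cite[Theorem 3]{a-Radford85}, so the only step requiring care is verifying that $\pi_X^\pm$ is well defined. That is a presentation argument: the relations are homogeneous in the killed generators, and for $\cP^-_X$ the argument is the same with the roles of $E$ and $F$ exchanged. Everything else is a direct citation, with the left/right conventions matched to those used in \cite{a-Radford85}.
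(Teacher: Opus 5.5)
Your proposal is correct and takes the same approach as the paper, which proves this statement only by citing Radford's theorem for the split Hopf projection $\pi_X^\pm:\cP_X^\pm\to\cL_X$. Your added check that $\pi_X^\pm$ is a well-defined Hopf algebra map is something the paper takes for granted, and it is a reasonable supplement. One small correction there: for $\cP_X^+$ the mixed relations are $[E_i,F_k]=\delta_{ik}(K_i-K_i^{-1})/(q_i-q_i^{-1})$ with $i\in I$, $k\in X$. This includes $E_iF_k=F_kE_i$ for $i\notin X$, and those relations are likewise homogeneous of positive degree in the killed generators, so your argument still goes through.
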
  
%%%%%%%%%%%%%%%%%%%%%%%%%%%%%%%%%%%%%%%%%%%%%%%%
Similarly, $\cR_X^{\pm, r}$ is a right $H$-module under the right adjoint action, and a right $H$-comodule under $\kow^{\pm, r}_H$. One can form a right version of Radford's biproduct, and the multiplication map $H\otimes \cR_X^{\pm,r}\rightarrow \cP_X^{\pm}$ is an isomorphism.

The four algebras $\cR_X^{\pm,s}$ for $s\in \{r,l\}$ are closely related. Recall the antipode $S$ defined by \eqref{eq:antipode} and the Chevalley involution $\omega$ defined by \eqref{eq:omega-def}.
%%%%%%%%%%%%%%%%%%%%%%%%%%%%%%55
\begin{lem}\label{lem:omS}
  The following relations hold:
  \begin{align}
    \omega(\cR_X^{+,l})&=\cR_X^{-,r}, &  \omega(\cR_X^{+,r})&=\cR_X^{-,l},\label{eq:omegaR}\\
     S(\cR_X^{+,l})&=\cR_X^{+,r}, &  S(\cR_X^{-,r})&=\cR_X^{-,l}.\label{eq:SR}
  \end{align}  
\end{lem}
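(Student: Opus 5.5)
Since $\cR_X^{\pm,r}$ and $\cR_X^{\pm,l}$ are defined as coinvariants for the coactions $\kow^{\pm,l}_{\cL_X}$ and $\kow^{\pm,r}_{\cL_X}$, I would prove each identity by checking how $\omega$ and $S$ interact with the coproduct and with the projections $\pi_X^\pm$. That the inclusions are equalities then follows by applying the same argument to the inverse map.

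For \eqref{eq:omegaR}, I would use three facts. First, $\omega$ is an algebra automorphism with $\omega(\cP_X^+)=\cP_X^-$, since it swaps $E_i$ and $F_i$ up to sign, inverts $K_i$, and preserves the index conditions $i\in I$, $j\in X$. Second, $\omega(\cL_X)=\cL_X$. Third, $\pi_X^-\circ\omega=\omega\circ\pi_X^+$ on $\cP_X^+$; both sides are algebra maps, so it suffices to check this on generators. Now take $a\in\cR_X^{+,l}$, so $(\id\ot\pi_X^+)\kow(a)=a\ot 1$. Since $\omega$ is a coalgebra anti-automorphism, $\kow(\omega(a))=\omega(a_{(2)})\ot\omega(a_{(1)})$. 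Hence
\begin{align*}
(\pi_X^-\ot\id)\kow(\omega(a))=(\omega\ot\omega)\big(\pi_X^+(a_{(2)})\ot a_{(1)}\big)=\omega(1)\ot\omega(a)=1\ot\omega(a),
\end{align*}
where the middle equality applies the flip to $(\id\ot\pi_X^+)\kow(a)=a\ot 1$. This shows $\omega(a)\in\cR_X^{-,r}$. The same computation with $+$ and $-$ exchanged, using $\omega^{-1}$ (which has the same properties, since $\omega^2$ is the identity up to the sign on $E_i,F_i$, and $\omega^2$ preserves everything), gives the reverse inclusion. The second identity in \eqref{eq:omegaR} is proved identically with the roles of $l$ and $r$ swapped.

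For \eqref{eq:SR}, the argument runs the same way, using that $S$ is an algebra and coalgebra anti-homomorphism: $\kow(S(a))=S(a_{(2)})\ot S(a_{(1)})$. It preserves $\cP_X^\pm$ and $\cL_X$, and since $\pi_X^\pm$ is a Hopf map, $\pi_X^\pm\circ S=S\circ\pi_X^\pm$. For $a\in\cR_X^{+,l}$ this gives
\begin{align*}
(\pi^+_X\ot\id)\kow(S(a))=S(\pi_X^+(a_{(2)}))\ot S(a_{(1)})=1\ot S(a),
\end{align*}
so $S(a)\in\cR_X^{+,r}$. Similarly $S(\cR_X^{-,r})\subseteq\cR_X^{-,l}$.

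The main obstacle is the reverse inclusions. $S$ is bijective on $\Uq$ but not involutive, so I would apply the same computation to $S^{-1}$, which is also an algebra and coalgebra anti-automorphism preserving $\cP_X^\pm$ and commuting with $\pi_X^\pm$. This yields $S^{-1}(\cR_X^{+,r})\subseteq\cR_X^{+,l}$, and hence equality. The only genuinely non-formal checks are these preservation and compatibility statements on generators (for instance $S^{-1}(E_i)=-E_iK_i^{-1}$), and each is immediate.
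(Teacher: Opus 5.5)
Your proposal is correct and follows the paper's proof. The paper also argues that $\omega$ restricts to coalgebra anti-isomorphisms $\cP_X^\pm\to\cP_X^\mp$ intertwining $\pi_X^\pm$ with $\pi_X^\mp$, and that $S$ restricts to coalgebra anti-automorphisms of $\cP_X^\pm$ commuting with $\pi_X^\pm$; you spell out these steps and the reverse inclusions in more detail, with one harmless slip: $\omega^2=\id$ exactly, not merely up to sign, so $\omega^{-1}=\omega$.
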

%%%%%%%%%%%%%%%%%%%%%%%%%%%%%%%%%
\begin{proof}
  Relations \eqref{eq:omegaR} follow from the fact that $\omega$ restricts to coalgebra anti-isomorphisms $\omega:\cP_X^\pm\rightarrow \cP_X^\mp$ such that $\omega\circ \pi^\pm_X=\pi_X^\mp\circ \omega$.
  Relations \eqref{eq:SR} follow from the fact that $S$ restricts to a coalgebra anti-automorphism $S:\cP_X^\pm\rightarrow \cP_X^\pm$ which commutes with $\pi_X^\pm$.
\end{proof}  
%%%%%%%%%%%%%%%%%%%%%%%%%%%%%%%%%
The algebras $\cR_X^{\pm, s}$ for $s\in \{r,l\}$ have several alternative descriptions. Let $G^+$ and $G^-$ be the unital subalgebras of $\Uq$ generated by the elements of the sets $\{E_iK_i^{-1}\,|\,i\in I\}$ and $\{F_iK_i\,|\,i\in I\}$, respectively; they are given by
\[
G^\pm = S(U^{\pm}),
\]
cf. \eqref{eq:antipode}.
The following corollary is proved for $\cR_X^{-,r}$ in \cite[Corollary 2.3]{a-KY21}. The other cases are proved analogously or by application of Lemma \ref{lem:omS}.
%%%%%%%%%%%%%%%%%%%%%%%%%%%%%%%%%%%%%%%%%%%%%%%%
\begin{cor}\label{cor:adH-gen}
  The following hold:
  \begin{enumerate}
  \item The algebra $\cR_X^{+,l}$ is generated by the union of subspaces $\bigcup_{i\in I\setminus X}\ad_l(H)(E_i)$.
  \item The algebra $\cR_X^{+,r}$ is generated by the union of subspaces $\bigcup_{i\in I\setminus X}\ad_r(H)(E_iK_i^{-1})$.
  \item The algebra $\cR_X^{-,l}$ is generated by the union of subspaces $\bigcup_{i\in I\setminus X}\ad_l(H)(F_iK_i)$.
  \item The algebra $\cR_X^{-,r}$ is generated by the union of subspaces $\bigcup_{i\in I\setminus X}\ad_r(H)(F_i)$.
  \end{enumerate}
  In particular, we have  $\cR_X^{+,l}\subset U^+$,  $\cR_X^{+,r}\subset G^+$,  $\cR_X^{-,l}\subset G^-$, $\cR_X^{-,r}\subset U^-$.
\end{cor}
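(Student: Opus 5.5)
The paper says case (4) is \cite[Corollary 2.3]{a-KY21}, so I take it as given and derive the other three cases from it using Lemma \ref{lem:omS}, without redoing the Radford-type argument.

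\emph{Case (1).} By \eqref{eq:omegaR} we have $\cR_X^{+,l}=\omega^{-1}(\cR_X^{-,r})=\omega(\cR_X^{-,r})$, since $\omega$ is an involution. As $\omega$ is an algebra automorphism, case (4) gives that $\cR_X^{+,l}$ is generated by $\omega(\ad_r(H)(F_i))$ for $i\in I\setminus X$. We have $\omega(H)=H$, and $\omega$ is an algebra automorphism and a coalgebra anti-automorphism; hence $\omega\circ S=S\circ\omega$. Therefore
\begin{align*}
  \omega(\ad_r(h)(F_i)) &= \omega\big(S(h_{(1)})F_ih_{(2)}\big)\\
  &= S(\omega(h)_{(2)})\,\omega(F_i)\,\omega(h)_{(1)}.
\end{align*}
This is a twisted adjoint action, $\ad_r$ with respect to the opposite coproduct. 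To return to the ordinary $\ad_l$, I would instead use the antipode relation $S(\cR_X^{+,l})=\cR_X^{+,r}$. The cleanest route I see is to combine $\omega$ with $S$. The composite $\omega\circ S^{-1}$ is an algebra anti-automorphism and a coalgebra automorphism. It maps $F_i$ to a scalar multiple of $E_iK_i^{-1}$, and it intertwines $\ad_r$ with $\ad_r$ up to the anti-automorphism. Concretely, for an anti-algebra, coalgebra map $\phi$,
\begin{align*}
  \phi\big(S(h_{(1)})xh_{(2)}\big)=\phi(h_{(2)})\,\phi(x)\,S(\phi(h_{(1)})),
\end{align*}
which is $\ad_l$ of $\phi(h)$ if $\phi$ also commutes with $S$. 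I will choose the combination of $\omega$, $S$ and $S^{-1}$ that produces the correct adjoint type in each case.

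\emph{Cases (2) and (3).} For (3), use $\cR_X^{-,l}=S(\cR_X^{-,r})$. Since $S$ is an algebra and coalgebra anti-automorphism, the generating set $\ad_r(H)(F_i)$ is sent to $\ad_l(S(H))(S(F_i))=\ad_l(H)(F_iK_i)$ up to sign. This uses $S(S(h_{(1)})xh_{(2)})=S(h_{(2)})S(x)S^2(h_{(1)})=\ad_l(S(h))(S(x))$, and $S(H)=H$. Case (1) then follows from $\cR_X^{+,l}=\omega(\cR_X^{-,l})$ by \eqref{eq:omegaR}. The map $\omega$ commutes with $S$ and reverses the coproduct, so $\omega(\ad_l(h)(x))=\omega(h_{(1)})\omega(x)S(\omega(h_{(2)}))$, and this equals $\ad_l$ of $\omega(h)$ with respect to $\kow^{\cop}$. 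Case (2) follows from $\cR_X^{+,r}=S(\cR_X^{+,l})$ in the same way as (3).

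\emph{Main obstacle and containments.} The only delicate step is the $\kow$ versus $\kow^{\cop}$ mismatch in the $\omega$-step. To resolve it, I would note that $\ad_l(H)(x)$ with respect to $\kow^{\cop}$ is the span of $h_{(2)}xS^{-1}(h_{(1)})$. Since $H$ is a Hopf algebra with bijective antipode and $S^2$ acts on $H$ by conjugation with a $K_{2\rho}$-type element, this span equals the $\ad_l(H)$-submodule generated by $x$: the operators generated by $h_{(2)}\,\cdot\,S^{-1}(h_{(1)})$ and by $h_{(1)}\,\cdot\,S(h_{(2)})$ coincide, up to the invertible rescalings $K$-conjugation, on weight vectors. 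Alternatively, one can simply repeat the proof of \cite[Corollary 2.3]{a-KY21} verbatim in each case. The final containments are then immediate from the generators: $\ad_l(H)(E_i)\subset U^+$, $\ad_r(H)(E_iK_i^{-1})\subset G^+$, $\ad_l(H)(F_iK_i)\subset G^-$ and $\ad_r(H)(F_i)\subset U^-$. These hold because $\ad(K_j)$ rescales, and $\ad(E_j),\ad(F_j)$ for $j\in X$ preserve $U^\pm$ and $G^\pm$ respectively; for example, $\ad_r(F_j)(F_iK_i)$ lies in $G^-$ by the relations in $U$.
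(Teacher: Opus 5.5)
Your plan for (1)--(3), transporting (4) along $S$ and $\omega$ via Lemma~\ref{lem:omS}, is the route the paper indicates. Your $S$-steps are correct: $S(\ad_r(h)(x))=\ad_l(S(h))(S(x))$ and $S(\ad_l(h)(x))=\ad_r(S(h))(S(x))$. The $\omega$-step, however, fails as written, for three reasons.

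\begin{enumerate}
\item Since $\omega$ is an algebra automorphism and a coalgebra anti-automorphism, it intertwines $S$ with the antipode of the co-opposite Hopf algebra. So $\omega\circ S=S^{-1}\circ\omega$, not $S\circ\omega$; compare $\omega(S(E_i))=K_iF_i$ with $S(\omega(E_i))=F_iK_i$.
\item \eqref{eq:omegaR} gives $\omega(\cR_X^{-,l})=\cR_X^{+,r}$. So applying $\omega$ to (3) yields (2), not (1).
\item Your proposed repair is false. We have $h_{(2)}xS^{-1}(h_{(1)})=\ad_r(S^{-1}(h))(x)$, so that span is $\ad_r(H)(x)$. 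For $x=E_i$ this differs from $\ad_l(H)(E_i)$: e.g.\ $\ad_r(E_j)(E_i)=-K_j^{-1}(E_jE_i-E_iE_j)\notin U^+$ for $j\in X$, $i\in I\setminus X$ with $a_{ij}\neq 0$.
\end{enumerate}

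With the correct identity the obstacle disappears. One gets exactly $\omega(\ad_r(h)(x))=\ad_l\big(S^{-1}(\omega(h))\big)(\omega(x))$, hence $\omega(\ad_r(H)(F_i))=\ad_l(H)(E_i)$. Then (1) follows directly from (4) and $\omega(\cR_X^{+,l})=\cR_X^{-,r}$.

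The final containments are the only part the paper actually proves, and your argument does not justify them. The claim that $\ad(E_j),\ad(F_j)$ for $j\in X$ preserve $U^\pm$ and $G^\pm$ is false. For instance, $\ad_l(F_j)(E_j)=-(K_j^2-1)/(q_j-q_j^{-1})\notin U^+$. Your example is also wrong: $\ad_r(F_j)(F_iK_i)=q^{-(\alpha_i,\alpha_j)}(F_iF_j-F_jF_i)K_i\notin G^-$ when $a_{ij}\neq 0$.

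The missing idea is that each generator is an extremal vector. For $j\in X$ and $i\in I\setminus X$, since $[E_i,F_j]=0$, one has
\[
\ad_l(F_j)(E_i)=0,\quad \ad_r(F_j)(E_iK_i^{-1})=0,\quad \ad_l(E_j)(F_iK_i)=0,\quad \ad_r(E_j)(F_i)=0.
\]
Combined with the triangular decomposition of $H$, this gives e.g.\ $\ad_l(H)(E_i)=\ad_l(U^+_X)(E_i)$. Only then does one use that $\ad_l(E_j)$ preserves $U^+$ (respectively, $\ad_r(E_j)$ preserves $G^+$, $\ad_l(F_j)$ preserves $G^-$, $\ad_r(F_j)$ preserves $U^-$). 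This is exactly the paper's proof.

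Alternatively, once $\ad_r(H)(F_i)\subset U^-$ is known, the other three containments follow by transport, since $\omega(U^-)=U^+$, $S(U^-)=G^-$ and $S(U^+)=G^+$. The base case still needs the extremal-vector observation.
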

%%%%%%%%%%%%%%%%%%%%%%%%%%%%%%%%%%%%%%%%%%%%%%%%
\begin{proof}
  To verify the final statement of the corollary, note that $\ad_l(F_j)(E_i)=0$ for all $j\in X$, $i\in I\setminus X$. Hence $E_i$ is a lowest weight vector for the left adjoint action of $H=\cL_X$  and therefore
  \begin{align*}
     \ad_l(H)(E_i)=\ad_l(U^+_X)(E_i)\subseteq U^+.
  \end{align*}
  The other inclusions follow from the relations
  \begin{align*}
     \ad_r(F_j)(E_iK_i^{-1})=0, \quad\ad_l(E_j)(F_iK_i)=0,\quad \ad_r(E_j)(F_i)=0 \qquad \mbox{for $j\neq i$}
  \end{align*}
  in a similar way.
\end{proof}  
%%%%%%%%%%%%%%%%%%%%%%%%%%%%%%%%%%%%%%%%%%%%%%%%
The algebra $\cR_X^{+,l}$ is $\N$-graded via the degree function given by
\begin{align}\label{eq:deg-R}
  \deg(\ad_l(h)(E_i))=1 \qquad \mbox{for all $i\in I\setminus X, h\in H$.}
\end{align}
We define similar gradings on $\cR_X^{+,r}$, $\cR_X^{-,r}$, $\cR_X^{-,l}$ by requesting the linear isomorphisms $\omega$ and $S$ in Lemma \ref{lem:omS} to be graded. For any $n\in \N$ and $s\in \{r,l\}$ let $\cR_{X,n}^{\pm,s}$ denote the homogeneous component of degree $n$.

The equation
      \begin{align}\label{eq:kow-adhE}
         \kow(\ad_l(h)(E_i))=\ad_l(h)(E_i)\ot 1 + h_{(1)}K_i S(h_{(3)})\ot \ad_l(h_{(2)})(E_i)
      \end{align}
      for $h\in H$ and $i\in I\setminus X$ implies that $\kow(\cR^{+,l}_X)\subset U^+ U^0 \ot \cR_X^{+,l}$. Hence $\cR^{+,l}_X$ is a left coideal subalgebra of the Hopf algebra $U^+U^0$. Similarly, $\cR_X^{-,l}$ is a left coideal subalgebra of $U^- U^0$, and $\cR_X^{+,r}\subset \Uq^+ U^0$ and $\cR^{-,r}_X\subset \Uq^-U^0$ are right coideal subalgebras.

      The subalgebras $\cR_X^{\pm, s}$ for $s\in \{r,l\}$ can also be described in terms of Lusztig's braid group automorphisms, if $X$ is of finite type. In this case let $w_X\in W$ be the longest element in the parabolic subgroup $W_X\subset W$.
%%%%%%%%%%%%%%%%%%%%%%%%%%%%%%%%%%%%%%%%%%%%%%%%
  \begin{prop}\label{prop:R-UTU}
    Assume that $X\subset I$ is of finite type. The following hold:
    \begin{align}
      \cR_X^{-,r}&=U^-\cap T_{w_X}(U^-), & \cR_X^{-,l}&=G^-\cap T^{-1}_{w_X}(G^-),\label{eq:UTU1}\\
      \cR_X^{+,l}&=U^+\cap T_{w_X}(U^+), &\cR_X^{+,r}&=G^+\cap T^{-1}_{w_X}(G^+).\label{eq:UTU2}
    \end{align}  
  \end{prop}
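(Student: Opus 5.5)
One relation, $\cR_X^{+,l}=U^+\cap T_{w_X}(U^+)$, is proved directly; the other three are then transported by Lemma \ref{lem:omS}. The transport rules I would use are: $\omega$ commutes with $T_{w_X}$ up to the usual sign/twist, and $\omega(U^+)=U^-$. Moreover $S(U^\pm)=G^\pm$, and $S\circ T_{w}=T_{w}^{-1}\circ S$ up to a diagonal rescaling by a torus character, which preserves $U^\pm$ and $G^\pm$. These rules give the middle and right-hand equalities from the left-hand ones. I would first check them on generators $T_i$ (Lusztig, 37.2.4), which is routine.

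For $U^+\cap T_{w_X}(U^+)$ I would use the PBW/Levendorskii--Soibelman theory. Let $\Delta^+$ be the positive roots and fix a reduced expression of $w_X$. Lusztig's results (Proposition 38.1.6 / 40.2.1 type) give $U^+\cap T_{w_X}(U^+)=U^+[w_X]'$, the subalgebra of $U^+$ whose weights avoid being killed. Equivalently, multiplication gives
\[U^+\cong \bigl(U^+\cap T_{w_X}(U^+)\bigr)\otimes U^+_X,\]
since $U^+_X=U^+\cap T_{w_X}(U^-)K$-twisted (Lusztig 38.2, valid in Kac--Moody type for finite $X$, see also Heckenberger--Kolb). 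Separately, Radford's biproduct gives $U^+U^0 \cong \cR_X^{+,l}\otimes U_X^+U^0$ as the restriction of $\cR_X^{+,l}\times H\cong \cP_X^+$. This uses $\cR_X^{+,l}\subset U^+$ from Corollary \ref{cor:adH-gen} and compatibility with the $Q$-grading. Comparing weight-space dimensions of these two factorizations, it suffices to prove one inclusion, say $\cR_X^{+,l}\subseteq T_{w_X}(U^+)$.

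For that inclusion I use Corollary \ref{cor:adH-gen}(1). Since $\cR_X^{+,l}$ is generated by $\ad_l(U_X^+)(E_i)$ for $i\in I\setminus X$, and $T_{w_X}(U^+)$ is a subalgebra, it suffices to show $\ad_l(U^+_X)(E_i)\subseteq T_{w_X}(U^+)$. The space $\ad_l(\cL_X)(E_i)$ is a finite-dimensional $\cL_X$-module. It is irreducible with lowest weight vector $E_i$ (killed by $\ad_l F_j$, $j\in X$). Its highest weight vector is, up to scalar, $T_{w_X}(E_i)$ — Lusztig's braid operators map lowest weight vectors to highest weight vectors in integrable modules, and $\ad$ is compatible with $T_{w_X}$. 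Applying $T_{w_X}$, the whole module equals $T_{w_X}(\ad_l(\cL_X)(E_i))$ up to the $\omega$-twist, which lies in $T_{w_X}(\ad_l(U_X^-)\cdots)$. I would rather argue more cleanly. Since $T_{w_X}$ normalizes $\cL_X$, $T_{w_X}^{-1}$ maps the module to one generated by $T_{w_X}^{-1}(E_i)$, which lies in $U^+$ because $w_X^{-1}(\alpha_i)>0$ and $\ell(w_Xs_i)>\ell(w_X)$. It is then spanned by $\ad_l(U^+_X)$-translates, again inside $U^+$. This last step is the main obstacle: making the compatibility of $\ad_l$ with $T_{w_X}$ (which twists $E_j\mapsto -F_{j'}K_{j'}$ for $j\in X$) precise. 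I would handle it by an explicit computation on the generators $E_j$, $j\in X$, using $T_{w_X}(E_j)=-F_{\tau_0(j)}K_{\tau_0(j)}$ with $\tau_0$ the diagram involution of $X$.

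The dimension count then finishes the argument, and the remaining three equalities follow as described above.
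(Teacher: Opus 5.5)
Your transport step matches the paper's proof exactly. One identity is the base case, and the other three follow by applying $\omega$ (Lusztig 37.2.4) and $S$ (via $S(T_{w_X}(U^\pm))=T_{w_X}^{-1}(G^\pm)$) together with Lemma~\ref{lem:omS}. The difference is the base case. The paper quotes $\cR_X^{-,r}=U^-\cap T_{w_X}(U^-)$ from [KY21, Corollary 2.7], whereas you try to prove $\cR_X^{+,l}=U^+\cap T_{w_X}(U^+)$ directly. Your reduction to the single inclusion $\cR_X^{+,l}\subseteq T_{w_X}(U^+)$, by comparing weight-space dimensions of two factorizations of $U^+$, is sound. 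However, the factorization $U^+\cong (U^+\cap T_{w_X}(U^+))\otimes U^+_X$ is itself a nontrivial external theorem of comparable depth, and your justification for it is garbled. The real gap is the remaining inclusion $\ad_l(U^+_X)(E_i)\subseteq T_{w_X}(U^+)$: you defer it to an unspecified computation, and the mechanism you sketch for it is false.

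$T_{w_X}$ is not a coalgebra map, so it does not intertwine $\ad_l$. Using $T_{w_X}^{-1}(E_j)=-K_{\tau(j)}^{-1}F_{\tau(j)}$ and $T_{w_X}^{-1}(K_j)=K_{\tau(j)}^{-1}$, one finds for $j\in X$ and $y\in U^+$
\[
T_{w_X}^{-1}\big(\ad_l(E_j)(T_{w_X}(y))\big)=-K_{\tau(j)}^{-1}\big(F_{\tau(j)}y-yF_{\tau(j)}\big).
\]
This is a lowering operator, and by \eqref{eq:FxxF} it lands in $U^+$ only if $\partial^L_{\tau(j)}(y)=0$. In particular, $T_{w_X}^{-1}(\ad_l(U^+_X)(E_i))$ is not spanned by $\ad_l(U^+_X)$-translates of $T_{w_X}^{-1}(E_i)$. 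That vector has the top weight $w_X(\alpha_i)$, so its translates have weights in $w_X(\alpha_i)+Q_X^+$ and never reach $\alpha_i$, unless the module is one-dimensional. But $T_{w_X}^{-1}(\ad_l(U^+_X)(E_i))$ has the same $W_X$-stable weight set as $\ad_l(U^+_X)(E_i)$. Nor does $T_{w_X}$ act on this module like the module braid operator: for $X=\{j\}$ and $a_{ij}=a_{ji}=-1$, one has $T_j^2(E_i)\notin U^+$.

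A workable repair runs the argument from the top weight. For $k\in X$ and $y\in U^+$ of weight $\mathrm{wt}(y)$, one checks $T_{w_X}^{-1}(\ad_l(F_k)(T_{w_X}(y)))=yE_{\tau(k)}-q^{(\alpha_{\tau(k)},\mathrm{wt}(y))}E_{\tau(k)}y\in U^+$. So it suffices to know that $T_{w_X}(E_i)$ lies in $\cR_X^{+,l}$, because it then spans the highest weight space of the simple module $\ad_l(U^+_X)(E_i)$ and generates it under $\ad_l(U^-_X)$. That fact needs an independent proof; the paper deduces it from this very proposition in the proof of Lemma~\ref{lem:Mi}. One route is Lusztig's Chapter 38 characterization of $U^+\cap T_k(U^+)$ as the kernel of a skew derivation. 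Apply it to a reduced expression of $w_X$ beginning with $s_k$, and combine with the $\partial^R$-description of $\cR_X^{+,l}$. Without such an input, your argument does not establish the base identity.
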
  
%%%%%%%%%%%%%%%%%%%%%%%%%%%%%%%%%%%%%%%%%%%%%%%%
  \begin{proof}
    The first relation in \eqref{eq:UTU1} is proved in \cite[Corollary 2.7]{a-KY21}. The first relation in \eqref{eq:UTU2} follows by application of $\omega$ to the first relation in \eqref{eq:UTU1}, Lemma \ref{lem:omS} and the relations in \cite[37.2.4]{b-Lusztig94}. Finally, the second relations in both \eqref{eq:UTU1} and \eqref{eq:UTU2} follow from the first relations by application of the antipode $S$ using Lemma \ref{lem:omS} and the fact that $S(T_{w_X}(U^\pm))=T^{-1}_{w_X}(S(U^\pm))=T^{-1}_{w_X}(G^\pm)$.
  \end{proof}  
%%%%%%%%%%%%%%%%%%%%%%%%%%%%%%%%%%%%%%%%%%%%%%%%
  The subalgebras $\cR_X^{+,l}\subset U^+$ and $\cR_X^{-,r}\subset U^-$ can also be described in terms of the Lusztig-Kashiwara skew-derivations $\partial_j^R$ on $U^+$ and $U^-$, respectively. The second formula of the following lemma was proved in \cite[Lemma 2.4, (2.10)]{a-KY21} and the first formula follows by application of $\omega$.
%%%%%%%%%%%%%%%%%%%%%%%%%%%%%%%%%%%%%%%%%%%%%%%%
  \begin{lem}
    The following relations hold:
    \begin{align*}
        \cR_X^{+,l}&=\{u\in U^+\,|\,\partial_j^R(u)=0\quad \mbox{for all $j\in X$}\},\\ 
        \cR_X^{-,r}&=\{u\in U^-\,|\,\partial_j^R(u)=0\quad \mbox{for all $j\in X$}\}.
    \end{align*}    
  \end{lem}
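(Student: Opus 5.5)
The second relation is \cite[Lemma 2.4, (2.10)]{a-KY21}, so I will take it as given and derive the first relation from it by applying the Chevalley involution $\omega$.

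\emph{Step 1: $\omega$ identifies the two algebras.} By Lemma \ref{lem:omS}, $\omega(\cR_X^{+,l})=\cR_X^{-,r}$. Since $\omega$ is an involution, also $\omega(\cR_X^{-,r})=\cR_X^{+,l}$. The map $\omega$ sends $E_i\mapsto -F_i$, so it restricts to a linear isomorphism $U^+\to U^-$ with inverse $\omega|_{U^-}$.

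\emph{Step 2: $\omega$ intertwines the skew derivations.} From Section \ref{sec:background} we have
\[
\omega(\partial_j^R(x))=-\partial_j^R(\omega(x)) \qquad \mbox{for all } x\in U^+,
\]
where on the right $\partial_j^R$ is the skew derivation on $U^-$. Hence, for $u\in U^+$, we get $\partial_j^R(u)=0$ if and only if $\partial_j^R(\omega(u))=0$, because $\omega$ is injective.

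\emph{Step 3: combine.} Applying $\omega$ to the second description gives
\[
\cR_X^{+,l}=\omega(\cR_X^{-,r})=\{\omega(v)\,|\,v\in U^-,\ \partial_j^R(v)=0 \ \mbox{for all } j\in X\}.
\]
Writing $u=\omega(v)\in U^+$ and using Step 2, this set equals $\{u\in U^+\,|\,\partial_j^R(u)=0 \ \mbox{for all } j\in X\}$, which is the first relation.

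\emph{Possible obstacle.} The only point needing care is whether the sign convention $\omega(E_i)=-F_i$ affects the intertwining relation in Step 2. It does not matter: the relation holds up to the scalar $-1$, and a nonzero scalar does not change the vanishing condition.

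If one wanted to prove the second relation from scratch rather than cite it, I would use two facts:
\begin{itemize}
\item $\cR_X^{-,r}$ is defined by $(\pi_X^-\ot\id)\kow(a)=1\ot a$;
\item formula \eqref{eq:cop-partial2-R} shows that the terms $F_j\ot\partial_j^R(y)K_j^{-1}$, for $j\in X$, are exactly the first-order obstructions to coinvariance.
\end{itemize}
Vanishing of all $\partial^R_j$ with $j\in X$ then forces vanishing of the higher $U_X^-$-components by induction on height, using the skew derivation rules \eqref{eq:skew-der} and nondegeneracy of the pairing on $U^-_X$. That induction would be the hard step.
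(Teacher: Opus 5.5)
Your proposal is correct and is exactly the paper's argument: the paper cites \cite[Lemma 2.4, (2.10)]{a-KY21} for the description of $\cR_X^{-,r}$ and obtains the description of $\cR_X^{+,l}$ by applying $\omega$. The ingredients are the same as yours, namely $\omega(\cR_X^{+,l})=\cR_X^{-,r}$ from Lemma \ref{lem:omS} and the relation $\omega\circ\partial_j^R=-\partial_j^R\circ\omega$; your closing sketch of a from-scratch proof of the second relation is not needed.
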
  
%%%%%%%%%%%%%%%%%%%%%%%%%%%%%%%%%%%%%%%%%%%%%%%%  
  The triangular decomposition of $\Uq$ leads to a triangular decomposition for $\cP^\pm_X$. Combining these triangular decompositions, one obtains that the multiplication map gives a linear isomorphism
  \begin{align}\label{eq:triang_PX}
     \cR_X^{-,s}\ot H \ot \cR_X^{+,t}\cong \Uq \qquad \mbox{for all $s,t\in \{r,l\}$.}
  \end{align}
  In particular, the multiplication map gives linear isomorphisms
  \begin{align}\label{eq:triang_Upm}
    \cR_X^{-,r}\ot U^-_X\cong U^-, \qquad \cR_X^{+,l}\ot U^+_X \cong U^+.
  \end{align}
%%%%%%%%%%%%%%%%%%%%%%%%%%%%%%%%%%%%%%%%%%%%%%%
  \subsection{Quantum symmetric pairs}\label{sec:QSP}
%%%%%%%%%%%%%%%%%%%%%%%%%%%%%%%%%%%%%%%%%%%%%%%
  We recall the construction of quantum symmetric pairs, as originally defined for finite dimensional $\gfrak$ by G.~Letzter \cite{a-Letzter99a} and as extended to the Kac-Moody case in \cite{a-Kolb14}. We will work in the slightly more general setting of generalized Satake diagrams proposed in \cite{a-RV20}. Recall that a generalized Satake diagram is a pair $(X,\tau)$ consisting of a subset $X\subset I$ of finite type and a diagram automorphism $\tau:I\rightarrow I$ satisfying the following three properties:
  \begin{enumerate}
    \item $\tau^2=\id_I$;
    \item\label{eq:GS2} $\alpha_{\tau(j)}=-w_X(\alpha_j)$ for all $j\in X$, in particular $\tau(X)=X$;
    \item for $i\in I\setminus X$ and $j\in X$ the following implication holds:  
\begin{align}\label{eq:generalizedSatake}
  \tau(i)=i \mbox{ and } a_{ji}=-1 \quad \Longrightarrow \quad w_X(\alpha_i)\neq \alpha_i+\alpha_j,
\end{align}
\end{enumerate}
where $w_X$, as before, denotes the longest element in the parabolic subgroup $W_X$. Let $\theta:Q\rightarrow Q$ be the involutive automorphism defined by $\theta=-w_X\circ \tau$ and set $Q^\theta=\{\beta\in Q\,|\,\theta(\beta)=\beta\}$. By property \eqref{eq:GS2} of the definition of a generalized Satake diagram we have $\theta(\alpha_j)=\alpha_j$ for all $j\in X$.

We associate several subalgebras of $\Uq$ to the generalized Satake diagram $(X,\tau)$. Define $U^0_\theta=\qfield \langle K_\beta\,|\,\beta\in Q^\theta\rangle$. Since $Q^\theta$ is generated by the elements of the set $\{\alpha_i-\alpha_{\tau(i)},\alpha_j\,|\, i\in I\setminus X, j\in X\}$, we have that
\begin{align}\label{eq:U0theta}
  U^0_\theta=\qfield\langle K_j^{\pm 1}, K_i K_{\tau(i)}^{-1}\,|\,j\in X, i\in I\setminus X\rangle.
\end{align}
Define
\begin{align*}
  \cH_{X,\tau}=\qfield\langle F_j, E_j, K_\beta\,|\, j \in X, \beta\in Q^\theta\rangle.
\end{align*}
Let $\cH_{X,\tau}^\ge$ be the subalgebra of $\cH_{X,\tau}$ generated by $\{E_j,K_\beta\,|\,j\in X, \beta\in Q^\theta\}$.
Let $\bc=(c_i)_{i\in I\setminus X}\in (\qfield^\ast)^{I\setminus X}$ be a set of parameters satisfying the condition
\begin{align*}
   c_i=c_{\tau(i)} \quad \mbox{if} \quad  (\alpha_i,\theta(\alpha_i))=0.
\end{align*}
Let $\cB_\bc\subset \Uq$ be the subalgebra generated by $\cH_{X,\tau}$ and the elements
\begin{align}\label{eq:Bi-def}
  B_i=F_i - c_i T_{w_X}(E_{\tau(i)}) K_i^{-1} \qquad \mbox{for all $i\in I\setminus X$.}
\end{align}
We refer to $\cB_\bc$ as the (standard) \textit{quantum symmetric pair coideal subalgebra} (or QSP-subalgebra) corresponding to the generalized Satake diagram $(X,\tau)$ and the set of parameters $\bc$. The QSP-subalgebra $\cB_\bc$ is a right coideal subalgebra of $\Uq$, that is
\begin{align}\label{eq:kowB}
  \kow(\cB_\bc)\subset \cB_\bc\ot \Uq,
\end{align}
see \cite[Corollary 4.2]{a-Letzter99a}, \cite[Theorem 7.2]{MSRI-Letzter}, \cite[Proposition 5.2]{a-Kolb14}. We reformulate the proof of the coideal property, as this allows us to introduce notation crucial in the discussion of the quasi $K$-matrix in Section \ref{sec:quasiK}.
Indeed, $\cH_{X,\tau}$ is a Hopf subalgebra of $\Uq$, and the generators $B_i$ defined by Equation \eqref{eq:Bi-def} satisfy the relation
\begin{align}\label{eq:kow-Bi}
  \kow(B_i)=B_i\ot K_i^{-1} + 1 \ot F_i - c_i(1\ot K_i^{-1}) M_i
\end{align}
where
\begin{align}\label{eq:Mi-def}
M_i = q^{-(\theta(\alpha_i),\alpha_i)}(K_i^{-1}\ot 1)\big(\kow(T_{w_X}(E_{\tau(i)}))- T_{w_X}(E_{\tau(i)})\ot 1\big)
\end{align}
for all $i\in I\setminus X$. The coideal property \eqref{eq:kowB} now follows from the first statement of the following lemma. Both properties of the lemma will be used in Section \ref{sec:quasiK}.
%%%%%%%%%%%%%%%%%%%%%%%%%%%%%%%%%%%%%
\begin{lem}\label{lem:Mi}
  For any $i\in I\setminus X$ the element $M_i$ defined by \eqref{eq:Mi-def} has the following properties:
  \begin{enumerate}
    \item $M_i\in \cH_{X,\tau}^\ge \ot \ad_l(\cH_{X,\tau})(E_{\tau(i)})$;
    \item $ (\id \ot \kow)(M_i)= M_i\ot 1 + (1\ot K_i\ot 1)(\kow \ot \id)(M_i)$.
  \end{enumerate}  
\end{lem}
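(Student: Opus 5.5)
Property (2) is a formal consequence of coassociativity, so I will prove it first; property (1) is the substantive part. Write $e=T_{w_X}(E_{\tau(i)})$ and $\gamma$ for its weight, so that $M_i=q^{-(\theta(\alpha_i),\alpha_i)}(K_i^{-1}\ot 1)(\kow(e)-e\ot 1)$. Since $\theta(\alpha_i)=-w_X(\alpha_{\tau(i)})$, we have $\gamma=w_X(\alpha_{\tau(i)})=-\theta(\alpha_i)$. Set $N=\kow(e)-e\ot 1$. Then
\begin{align*}
(\id\ot\kow)(N)&=(\kow\ot\id)\kow(e)-e\ot 1\ot 1,\\
(\kow\ot\id)(N)&=(\kow\ot \id)\kow(e)-\kow(e)\ot 1,
\end{align*}
so $(\id\ot\kow)(N)=N\ot 1+(\kow\ot\id)(N)$. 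Now multiply by $K_i^{-1}\ot 1\ot 1$. On the left this is just $(\id\ot\kow)$ applied to $(K_i^{-1}\ot 1)N$. On the right, $(\kow\ot\id)\big((K_i^{-1}\ot1)N\big)=(K_i^{-1}\ot K_i^{-1}\ot 1)(\kow\ot\id)(N)$, so
\[
(K_i^{-1}\ot 1\ot 1)(\kow\ot \id)(N)=(1\ot K_i\ot 1)(\kow\ot\id)\big((K_i^{-1}\ot 1)N\big).
\]
Multiplying through by the scalar $q^{-(\theta(\alpha_i),\alpha_i)}$ gives (2).

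For (1), the first step is to locate $e$. Since $\tau(i)\notin X$, the vector $E_{\tau(i)}$ is a lowest weight vector for $\ad_l(U_X)$, as in the proof of Corollary \ref{cor:adH-gen}. Hence $e=T_{w_X}(E_{\tau(i)})$ is, up to a nonzero scalar, the highest weight vector of the finite-dimensional $\ad_l(U_X)$-module $\ad_l(U_X^+)(E_{\tau(i)})\subset \cR_X^{+,l}$. One can see this with Proposition \ref{prop:R-UTU}, or with the standard formula $T_{w_X}(E_k)=\ad_l(\text{divided powers of }E_j, j\in X)(E_k)$.

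The next step is the coproduct. Formula \eqref{eq:kow-adhE} with $h\in U_X^+$ gives
\[
\kow(e)=e\ot 1+h_{(1)}K_{\tau(i)}S(h_{(3)})\ot \ad_l(h_{(2)})(E_{\tau(i)}).
\]
Every $h_{(k)}$ lies in $U_X^+U^0_X$, so the first factor of $N$ lies in $U^+_XU^0$ and the second in $\ad_l(U_X^+U_X^0)(E_{\tau(i)})\subset\ad_l(\cH_{X,\tau})(E_{\tau(i)})$. It remains to show that the first factor of $(K_i^{-1}\ot 1)N$ lies in $\cH^{\ge}_{X,\tau}$, i.e.\ that the torus parts have weights in $Q^\theta$. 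For this I will track weights. A term of $N$ has the form $x K_{\gamma-\nu}\ot y$ with $x\in U^+_{X,\nu}$ and $y$ of weight $\gamma-\nu$; this comes from $Q$-homogeneity and the shape of $\kow$ on $U^+$. So $K_i^{-1}xK_{\gamma-\nu}$ lies in $U^+_X K_{-\alpha_i+\gamma-\nu}$, and we need $-\alpha_i+\gamma-\nu\in Q^\theta$. Since $\nu\in Q_X$ is $\theta$-fixed, this reduces to $\theta(\gamma-\alpha_i)=\gamma-\alpha_i$. This holds because $\theta(\gamma)=-\alpha_i$ (recall $\gamma=-\theta(\alpha_i)$ and $\theta$ is an involution), so $\theta(\gamma-\alpha_i)=-\alpha_i+\gamma$.

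The main obstacle is this bookkeeping: we must check that every torus factor is exactly $K_{\gamma-\nu}$ with $\nu\in Q_X^+$, so that no stray $K_j$ with $j\notin X$ appears. The decomposition \eqref{eq:cop-partial-L} together with the Yetter--Drinfeld form \eqref{eq:kow-adhE} should settle this.
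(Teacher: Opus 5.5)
Your proof is correct and follows essentially the same route as the paper: (2) is the identical coassociativity computation, and for (1) you likewise write $T_{w_X}(E_{\tau(i)})=\ad_l(h)(E_{\tau(i)})$ with $h\in U_X^+$ and read $M_i$ off from \eqref{eq:kow-adhE}. The weight bookkeeping you flag as the main obstacle is valid but not needed. The first tensor leg is, up to a scalar, $K_i^{-1}h_{(1)}K_{\tau(i)}S(h_{(3)})$ with $h_{(1)},S(h_{(3)})\in U_X^+U_X^0$ and $K_i^{-1}K_{\tau(i)}\in U^0_\theta$ by \eqref{eq:U0theta}, so it lies in $\cH_{X,\tau}^\ge$ immediately.
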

%%%%%%%%%%%%%%%%%%%%%%%%%%%%%%%%%%%%%
\begin{proof} (1) By Corollary \ref{cor:adH-gen}.(1) and Equation \eqref{eq:UTU1} we have $T_{w_X}(E_{\tau(i)})=\ad_l(h)(E_{\tau(i)})$ for some $h\in U_X^+$. Hence the relation $M_i\in \cH_{X,\tau}^\ge \ot \ad_l(\cH_{X,\tau})(E_{\tau(i)})$ follows from Equation \eqref{eq:kow-adhE}.\\
  (2) Using Equation \eqref{eq:Mi-def} and coassociativity we obtain
  \begin{align}
    (\id\ot \kow)(M_i)=&q^{-(\theta(\alpha_i),\alpha_i)}(K_i^{-1}\ot 1\ot 1)\cdot\nonumber\\
    &\cdot\big((\kow \ot \id)\circ\kow(T_{w_X}(E_{\tau(i)}))- T_{w_X}(E_{\tau(i)})\ot 1\ot 1\big)\label{eq:idkowMi}
  \end{align}
  Using $\kow (T_{w_X}(E_{\tau(i)}))=T_{w_X}(E_{\tau(i)})\ot 1 +q^{(\theta(\alpha_i),\alpha_i)}(K_i\ot 1)M_i$ we obtain
  \begin{align*}
    (\kow \ot \id)\circ&\kow(T_{w_X}(E_{\tau(i)}))- T_{w_X}(E_{\tau(i)})\ot 1\ot 1=\\
    =&\big(\kow(T_{w_X}(E_{\tau(i)}))- T_{w_X}(E_{\tau(i)})\ot 1\big)\ot 1
     +q^{(\theta(\alpha_i),\alpha_i)}(K_i\ot K_i\ot 1)(\kow\ot \id)(M_i).
  \end{align*}
  Inserting the above expression into Equation \eqref{eq:idkowMi} and using Equation \eqref{eq:Mi-def} again, we obtain the desired formula.
\end{proof}
%%%%%%%%%%%%%%%%%%%%%%%%%%%%%%%%%%%%%
Nonstandard QSP-subalgebras were introduced in the finite case by Letzter \cite[Variation 2, after (7.25)]{MSRI-Letzter} and in the Kac-Moody case in \cite[Definition 5.6]{a-Kolb14}. Conceptually, nonstandard QSP-subalgebras are best understood via twisting of $\cB_\bc$ by characters. Let $\chi:\cB_\bc\rightarrow \qfield$ be a character, i.e.~a unital $\qfield$-algebra homomorphism, and consider
\begin{align*}
  \cB_{\bc,\chi}=\{\chi(b_{(1)})b_{(2)}\,|\,b\in \cB_\bc\}.
\end{align*}
By the coassociativity of the coproduct the subalgebra $\cB_{\bc,\chi}\subset \Uq$ is a right coideal subalgebra. Characters of $\cB_\bc$ and the coideal subalgebras $\cB_{\bc,\chi}$ have been discussed in \cite[Section 10]{a-Kolb23}. Let $\bs=(s_i)_{i\in I\setminus X}\in \qfield^{I\setminus X}$, and assume that there exists a character $\chi^\bc_\bs:\cB_\bc\rightarrow \qfield$ such that
\begin{align}\label{eq:chics}
  \chi^\bc_\bs|_{\cH_{X,\tau}}=\vep|_{\cH_{X,\tau}}, \qquad \chi^\bc_\bs(B_i)=s_i \qquad \mbox{for all $i\in I\setminus X$.}
\end{align}
In this case it follows from Equation \eqref{eq:kow-Bi} and Lemma \ref{lem:Mi}.(1) that $\cB_{\bc,\bs}:=\cB_{\bc,\chi^\bc_\bs}$ is the subalgebra of $\Uq$ generated by $\cH_{X,\tau}$ and the elements
\begin{align*}
  B_i^{\bc, \bs}=F_i- c_i T_{w_X}(E_{\tau(i)}) K_i^{-1} + s_i K_i^{-1}.
\end{align*}
Moreover, in this case, the map
\begin{align}\label{eq:rho-cs}
  \rho_\bs^\bc:\cB_\bc\rightarrow \cB_{\bc,\bs}, \qquad b\mapsto \chi^\bc_\bs(b_{(1)})b_{(2)}
\end{align}
is an isomorphism of right $\Uq$-comodule algebras such that
\begin{align*}
  \rho_\bs^\bc|_{\cH_{X,\tau}}=\id_{\cH_{X,\tau}}, \qquad \rho_{\bs}^\bc(B_i)=B_i^{\bc, \bs},
\end{align*}
see \cite[Proposition 10.5]{a-Kolb23}. We refer to the right coideal subalgebras $\cB_{\bc,\bs}$ with $\bs\neq \mathbf{0}$ obtained in this way as nonstandard QSP-subalgebras. By \cite[Proposition 10.6, Remark 10.7]{a-Kolb23} the class of nonstandard QSP-subalgebras obtained in this way is slightly larger than the class considered in \cite[Definition 5.6]{a-Kolb14}. However,  
via the isomorphism \eqref{eq:rho-cs} all desirable properties of $\cB_{\bc,\bs}$ from \cite{a-Kolb14} also hold for this slightly larger class.
%%%%%%%%%%%%%%%%%%%%%%%%%%%%%%%%%%%%%%%%%%%%%%%%%
\subsection{The quantum horospherical subalgebra}
%%%%%%%%%%%%%%%%%%%%%%%%%%%%%%%%%%%%%%%%%%%%%%%%%
As in Section \ref{sec:QSP} let $(X,\tau)$ be a generalized Satake diagram. Let $\cA_{X,\tau}^-$ be the subalgebra of $\Uq$ generated by $\cH_{X,\tau}$ and $U^-$. We call $\cA_{X,\tau}^-$ the \textit{quantum horospherical subalgebra} corresponding to $(X,\tau)$. The quantum horospherical subalgebra $\cA_{X,\tau}^-$ is $\N$-graded via the degree function given by
\begin{align*}
  \deg(F_i)&=1 \qquad \mbox{for all $i\in I\setminus X$,}\\
  \deg(h)&=0 \qquad \mbox{for all $h\in \cH_{X,\tau}$.}
\end{align*}
For any $n\in \N$ we write $\cA_{X,\tau,n}^-$ to denote the homogeneous component of degree $n$. Note that $\cR^{-,r}_X$ with the grading defined after Equation \eqref{eq:deg-R} is a graded subalgebra of $\cA^-_{X,\tau}$ and that the multiplication maps
\begin{align}
  \cR_X^{-,r}\ot\cH_{X,\tau} &\rightarrow \cA^-_{X,\tau} \label{eq:A-triang-1},\\
  U^-\ot U^0_\theta\ot U^+_X &\rightarrow \cA_{X,\tau}^- \label{eq:A-triang}
\end{align}
are linear isomorphisms. The QSP-subalgebra $\cB_\bc$ is filtered by a degree function defined by
\begin{align*}
   \deg(B_i)&=1 \qquad \mbox{for all $i\in I\setminus X$,}\\
  \deg(h)&=0 \qquad \mbox{for all $h\in \cH_{X,\tau}$.}
\end{align*}
We write $\cF_m(\cB_\bc)$ for the $m$-th filtered component, and $\gr(\cB_\bc)=\gr_\cF(\cB_\bc)$ for the associated graded algebra.
The following Lemma is implicit in \cite[Section 7]{MSRI-Letzter} for $\gfrak$ of finite type and in \cite[Sections 6, 7]{a-Kolb14}. The Lemma is stated explicitly in \cite[Section 2.4]{a-KY21}, \cite[Theorem 7.2]{a-Kolb23}.
%%%%%%%%%%%%%%%%%%%%%%%%%%%%%%%%%%%%%%%%
\begin{lem}
  There exists an isomorphism of graded algebras
  \begin{align}\label{eq:varphi}
    \varphi:\gr(\cB_\bc)\rightarrow \cA_{X,\tau}^-
  \end{align}
  such that $\varphi(B_i)=F_i$ for all $i\in I\setminus X$ and $\varphi(h)=h$ for all $h\in \cH_{X,\tau}$.
\end{lem}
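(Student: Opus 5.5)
The plan is to build $\varphi$ from a surjection $\cA_{X,\tau}^-\to\gr(\cB_\bc)$ given by a presentation, and then to show that the composite with a map back is the identity.

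\textbf{Step 1: the surjection.} The algebra $\cA_{X,\tau}^-$ is generated by $\cH_{X,\tau}$ and $F_i$ for $i\in I\setminus X$. A presentation is:
\begin{itemize}
\item the relations of $\cH_{X,\tau}$;
\item the commutation relations $K_\beta F_i K_\beta^{-1}=q^{-(\beta,\alpha_i)}F_i$ and $E_jF_i=F_iE_j$ for $j\in X$, $i\in I\setminus X$;
\item $F_jF_i$-type relations and the quantum Serre relations among the $F$'s.
\end{itemize}
This follows from the triangular decomposition \eqref{eq:A-triang}.

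In $\gr(\cB_\bc)$ the images $\bar B_i$ satisfy these same relations. The relations with $\cH_{X,\tau}$ hold because $B_i-F_i=-c_iT_{w_X}(E_{\tau(i)})K_i^{-1}$, and commutators with $\cH_{X,\tau}$ preserve the form $\cH\cdot B_i+\cH$. The lower-order terms therefore lie in $\cF_0$, and these relations hold exactly in degree $1$.

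For the quantum Serre relations, $S_{ij}(B_i,B_j)$ equals a lower-order element of $\cB_\bc$. This is the standard computation of Letzter and Kolb \cite[Section 7]{a-Kolb14}. Hence we obtain a surjective graded algebra map $\cA_{X,\tau}^-\to\gr(\cB_\bc)$ with $F_i\mapsto\bar B_i$ and $h\mapsto h$.

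\textbf{Step 2: the inverse.} For injectivity I would use the Letzter-type projection. Let $\cF_m(U^\poly)$ be the filtration in which $F_i$, $E_iK_i^{-1}$ and $K_i^{-1}$ have degree $\le 1$, with $i\notin X$. Take the leading term with respect to the ``$F$-part'' grading. Concretely, write $b\in\cF_m(\cB_\bc)$ in the basis coming from \eqref{eq:triang_PX}, and project onto the component lying in $U^-_{(m)}U^0_\theta U^+_X$. Here $U^-_{(m)}$ denotes the span of monomials containing exactly $m$ letters $F_i$ with $i\notin X$.

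The top component of a product of $m$ elements $B_i$ and elements of $\cH$ is the corresponding product of the $F_i$ and $h$. Hence this projection induces a well-defined graded map $\gr(\cB_\bc)\to\cA_{X,\tau}^-$. This map is multiplicative on leading terms and inverts the map of Step 1 on generators. The composite $\cA^-_{X,\tau}\to\gr\to\cA^-_{X,\tau}$ is therefore the identity, which makes Step 1 injective; its inverse is $\varphi$.

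\textbf{Main obstacle.} The hard step is well-definedness in Step 2. One must show that the part of $b\in\cF_m(\cB_\bc)$ of $F$-degree $m$ vanishes whenever $b\in\cF_{m-1}(\cB_\bc)$. Equivalently, the correction terms $T_{w_X}(E_{\tau(i)})K_i^{-1}$ never contribute to the top $F$-degree. I would handle this by using the grading in which $E_iK_i^{-1}$-type terms and $F_i$ both have degree $1$ but lie in distinct components of \eqref{eq:triang_PX}.

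The risk is in the commutation relations. Commuting $F$'s past $T_{w_X}(E_{\tau(j)})K_j^{-1}$ produces Cartan terms, but these strictly lower the $F$-degree, so the argument should survive. If this becomes messy, I would instead cite the explicit statements in \cite[Section 2.4]{a-KY21} and \cite[Theorem 7.2]{a-Kolb23}, which establish exactly this lemma.
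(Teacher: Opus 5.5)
Your proposal is correct. It is essentially the standard argument behind the references the paper cites in place of a proof (Letzter; Sections 6--7 of Kolb's Kac--Moody paper; Kolb--Yakimov, Section 2.4; Kolb, Theorem 7.2): the quantum Serre relations with lower-order terms give the surjection $\cA_{X,\tau}^-\to\gr(\cB_\bc)$, and the leading-term projection inverts it.

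The step you flag as the main obstacle is in fact routine. Expanding $B_i=F_i-c_iT_{w_X}(E_{\tau(i)})K_i^{-1}$ and straightening with $E_jF_l-F_lE_j=\delta_{jl}(K_j-K_j^{-1})/(q_j-q_j^{-1})$ never increases the number of letters $F_i$ with $i\in I\setminus X$. The genuine input is the cited lower-order-term result for the Serre relations in Step 1.
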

%%%%%%%%%%%%%%%%%%%%%%%%%%%%%%%%%%%%%%%%%%
\begin{rema}
  We follow the conventions of \cite{a-Kolb23}. In \cite{a-KY21} we used the symbol $\varphi$ to denote the inverse of the map \eqref{eq:varphi}.
\end{rema}
%%%%%%%%%%%%%%%%%%%%%%%%%%%%%%%%%%%%%%%%%%
\begin{rema}
  In \cite[Section 2]{a-Knop90} Knop introduced the notion of the horospherical subgroup associated to a $G$-variety. Horospherical subgroups appear as stabilizer subgroups of the horospherical variety associated to a $G$-variety which had previously been constructed by Popov \cite{a-Popov87}.

Assume that the Lie algebra $\gfrak$ is complex, finite dimensional, semisimple.
Let $G$ be the connected, simply connected complex affine algebraic group with Lie algebra $\gfrak$. Each Satake diagram $(X,\tau)$ determines an involutive automorphism $\theta=\theta(X,\tau)$ of $G$ and $\gfrak$. Let $K=G^\theta$ be the corresponding fixed subgroup with Lie algebra $\kfrak=\gfrak^\theta$. In this case $G/K$ is a symmetric variety, and (up to conjugation) the corresponding horospherical subgroup has Lie algebra
\begin{align*}
  \sfrak=(\lfrak_X\cap \kfrak) \oplus \rfrak_X^-
\end{align*}
where $\lfrak_X\subset \gfrak$ is the standard Levi factor corresponding to $X\subset I$, while $\rfrak_X^-$ is the nilradical of the corresponding negative standard parabolic. This shows that the algebra $\cA_{X,\tau}^-$ is a $q$-analogue of the universal enveloping algebra $U(\sfrak)$ and explains why we call $\cA_{X,\tau}^-$ the quantum horospherical subalgebra corresponding to the Satake diagram $(X,\tau)$.

In \cite[Section 2.3]{a-KY21} we referred to the algebra $\cA_{X,\tau}^-$ as the partial parabolic subalgebra corresponding to $(X,\tau)$. The name quantum horospherical subalgebra is preferable.
\end{rema}  
%%%%%%%%%%%%%%%%%%%%%%%%%%%%%%%%%%%%%%%%%%%%%%%%%
  \subsection{The Letzter map}
%%%%%%%%%%%%%%%%%%%%%%%%%%%%%%%%%%%%%%%%%%%%%%%%%
  Let $U^\poly$ denote the subalgebra of $\Uq$ generated by $\cA^-_{X,\tau}$ and the elements $\Etil_i=E_iK_i^{-1}$, $K_i^{-1}$ for all $i\in I\setminus X$. We have
  \begin{align}\label{eq:EtilF}
    q^{-(\alpha_i,\alpha_j)}\Etil_i F_j - F_j \Etil_i = \delta_{i,j}\frac{1-K_i^{-2}}{q_i-q_i^{-1}} \qquad \mbox{for all $i,j\in I$.}
  \end{align}  
  Define $\cL_{X}^\poly=\cL_{X}\cap U^\poly$. The triangular decomposition \eqref{eq:triang_PX} implies that the multiplication map gives rise to a linear isomorphism
  \begin{align}\label{eq:Upoly-triang}
     \cR_X^{-,r}\ot \cL_{X}^\poly \ot \cR_X^{+,r}\cong U^\poly.
  \end{align}
  Let $I_\tau$ be a set of representatives of the $\tau$-orbits in $I\setminus X$ and consider the polynomial ring $U^0_\tau=\qfield\langle K_i^{-1}\,|\,i\in I_\tau\rangle$. By construction, multiplication defines a linear isomorphism
  \begin{align}\label{eq:Lpoly-Hpoly}
    \cH_{X,\tau}\ot U^0_\tau\cong \cL_X^{\poly}.
  \end{align}
  Combining \eqref{eq:A-triang-1}, \eqref{eq:Upoly-triang} and \eqref{eq:Lpoly-Hpoly} we obtain a triangular decomposition
  \begin{align}\label{eq:Upoly-triang-2}
    \cA_{X,\tau}^-\ot U^0_\tau\ot \cR_{X}^{+,r}\cong U^\poly.
  \end{align}  
  Consider the ideal  $U^0_{\tau,+}$ of $U^0_\tau$ generated by $\{K_i^{-1}\,|\,i\in I_\tau\}$, and consider the augmentation ideal
  $\cR_{X,+}^{+,r}=\cR_X^{+,r}\cap\ker(\vep)$ of $\cR_X^{+,r}$. Let $\cJ_{X,\tau}$ be the left ideal of $U^\poly$ generated by $\cR_{X,+}^{+,r}$ and $U^0_{\tau,+}$. The triangular decomposition \eqref{eq:Upoly-triang-2} gives us a vector space decomposition
  \begin{align}\label{eq:Upoly-decomp}
     U^\poly = \cA^-_{X,\tau}\oplus \cJ_{X,\tau},
  \end{align}
  see also \cite[Section 2.5]{a-KY21}. Let $\psi:U^\poly\rightarrow \cA^-_{X,\tau}$ denote the $\qfield$-linear projection with respect to the direct sum decomposition \eqref{eq:Upoly-decomp}.
  By definition, $\cB_\bc$ is a subalgebra of $U^\poly$ and hence we can restrict the map $\psi$ to $\cB_\bc$. We call this restriction
  \begin{align}\label{eq:Letzter-def}
     \psi:\cB_\bc\rightarrow \cA^-_{X,\tau}
  \end{align}
  the \textit{Letzter map}. Recall that $\cB_\bc$ is a filtered algebra, and that $\cA^-_{X,\tau}$ is graded, and therefore also filtered. We summarize essential properties of the Letzter map given in \cite[Section 2.5]{a-KY21}. Recall the isomorphism $\varphi:\gr(\cB_\bc)\rightarrow \cA_{X,\tau}^-$ given by \eqref{eq:varphi}. 
%%%%%%%%%%%%%%%%%%%%%%%%%%%%%%%%%
  \begin{lem}\label{lem:Letzter-props}
    \cite[Lemmas 2.10 and 2.11]{a-KY21}
    The Letzter map $\psi:\cB_\bc\rightarrow \cA^-_{X,\tau}$ is a linear isomorphism of filtered vector spaces with associated graded map $\gr(\psi)=\varphi$. Moreover, the following hold:
    \begin{enumerate}
      \item[(i)] $\psi(ab)=\psi(a\psi(b))$ for all $a,b\in \cB_\bc$.
      \item[(ii)] $\psi(h_1bh_2) = h_1\psi(b) h_2$ for all $b\in \cB_\bc$, $h_1,h_2\in \cH_{X,\tau}$.
      \item[(iii)] $\psi(T_i(b))=T_i(\psi(b))$ for all $i\in X$.
      \item[(iv)] $\psi(ab)-\psi(a)\psi(b)\in \bigoplus_{0\le k< m+n}\cA^-_{X,\tau,k}$ for all $a\in \cF_m(\cB_\bc)$, $b\in \cF_n(\cB_\bc)$.
    \end{enumerate}
  \end{lem}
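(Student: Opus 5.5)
The statement is Lemma \ref{lem:Letzter-props}. I would prove everything from the decomposition \eqref{eq:Upoly-decomp} and the fact that $\cJ_{X,\tau}$ is a left ideal of $U^\poly$.

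\textbf{Property (i).} Write $b=\psi(b)+j$ with $j\in\cJ_{X,\tau}$. Then $ab=a\psi(b)+aj$. Since $a\in U^\poly$ and $\cJ_{X,\tau}$ is a left ideal, $aj\in\cJ_{X,\tau}$. Hence $\psi(ab)=\psi(a\psi(b))$.

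\textbf{Property (ii).} Left multiplication by $h_1\in\cH_{X,\tau}$ preserves both summands of \eqref{eq:Upoly-decomp}, so it commutes with $\psi$. For right multiplication by $h_2$ I would show $\cJ_{X,\tau}h_2\subseteq \cJ_{X,\tau}$. This reduces to checking that $\cR_{X,+}^{+,r}h$ and $U^0_{\tau,+}h$ lie in $\cJ_{X,\tau}$ for generators $h$ of $\cH_{X,\tau}$.
\begin{itemize}
\item For $\cR_{X,+}^{+,r}$ I would use the right adjoint action: $xh=h_{(1)}\,\ad_r(h_{(2)})(x)$. Here $\cR_X^{+,r}$ is stable under $\ad_r(H)$, the action preserves the augmentation ideal, and $h_{(1)}\in\cH_{X,\tau}$ because $\cH_{X,\tau}$ is a Hopf subalgebra.
\item For $U^0_{\tau,+}$, the $K_i^{-1}$ commute with $\cH_{X,\tau}$ up to scalars, since $K_i^{-1}E_j=q^{\ast}E_jK_i^{-1}$.
\end{itemize}

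\textbf{Property (iii).} For $i\in X$, $T_i$ preserves $\cA^-_{X,\tau}$. It also preserves $\cJ_{X,\tau}$: $T_i$ preserves $\cR_X^{+,r}$ and fixes $K_j^{-1}K_{\tau(j)}$-type elements up to $U^0_\theta$. So $T_i$ commutes with $\psi$. Here $T_i$ is understood on $U^\poly$, after checking $T_i(U^\poly)\subseteq U^\poly$.

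\textbf{Filtration and $\gr(\psi)=\varphi$.} First note $\psi(B_i)=F_i$. This holds because $T_{w_X}(E_{\tau(i)})K_i^{-1}$ lies in $\cJ_{X,\tau}$: it equals an element of $\ad_r(H)(\Etil)$ times a $K$-factor, and I would rewrite it using $T_{w_X}(E_{\tau(i)})\in\cR_X^{+,l}$ together with $S$ and Lemma \ref{lem:omS}.

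Then I would argue by induction on $m$ that
\[
\psi(B_{i_1}\cdots B_{i_m}h)-F_{i_1}\cdots F_{i_m}h\in\cF_{m-1}(\cA^-_{X,\tau}).
\]
By (i), $\psi(B_i b)=\psi(B_i\psi(b))$. It then suffices to show that $\psi(T_{w_X}(E_{\tau(i)})K_i^{-1}a)$ has degree at most $\deg a-1$ for homogeneous $a$. To do this, commute the $\Etil$-type factor past $a$ using \eqref{eq:EtilF} and the skew derivations. Each commutator either drops a factor $F_j$ or lands in $\cJ_{X,\tau}$. This gives both (iv) and $\gr(\psi)=\varphi$.

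Bijectivity follows because $\varphi$ is an isomorphism and the filtrations are exhaustive and bounded below.

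\textbf{Main obstacle.} The hard step is the degree-lowering estimate for $\psi(T_{w_X}(E_{\tau(i)})K_i^{-1}a)$ when $X\neq\emptyset$. There $T_{w_X}(E_{\tau(i)})$ is a complicated element, and one must carefully track commutation with $\cH_{X,\tau}$ and with $\cR_X^{-,r}$.
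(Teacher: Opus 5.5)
The paper does not prove this lemma; it imports it from \cite{a-KY21}, so I judge your outline on its own. Parts (i) and (ii) are correct as written. The overall architecture is also sound: show $\psi(B_i)=F_i$ and a degree-lowering estimate, induct using (i), then read off $\gr(\psi)=\varphi$, (iv) and bijectivity from the isomorphism $\varphi$. You should run the induction over arbitrary words in the $B_i$ and elements of $\cH$, using (ii) for the $\cH$-letters, because you have not shown that $\cF_m(\cB_\bc)$ is spanned by the $B_{i_1}\cdots B_{i_k}h$. However, the structural claims you use for (iii) and for $\psi(B_i)=F_i$ are false, and the step you call the main obstacle is left open.

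\textbf{The false claims.} For $j\in X$, the automorphism $T_j$ does \emph{not} preserve $\cR_X^{+,r}$. Likewise, $T_{w_X}(E_{\tau(i)})K_i^{-1}$ is \emph{not} an element of $\ad_r(H)(\Etil_{\tau(i)})$ times a $K$-factor. Take type $A_3$ with $X=\{j\}=\{2\}$, $\tau$ the flip, $i=1$ and $k=\tau(i)=3$, so $a_{jk}=-1$.
\begin{itemize}
\item Then $T_j(E_k)=E_jE_k-q_j^{-1}E_kE_j$. So both $T_j(\Etil_k)$ and $T_{w_X}(E_k)K_i^{-1}$ are multiples of $E_jE_k-q_j^{-1}E_kE_j$ times Cartan elements.
\item But $(\cR_X^{+,r})_{\alpha_j+\alpha_k}$ is one-dimensional, spanned by $\ad_r(E_j)(\Etil_k)=K_j^{-1}(q_jE_kE_j-E_jE_k)K_k^{-1}$.
\end{itemize}
What Lemma \ref{lem:omS} actually gives is $S(T_{w_X}(E_{\tau(i)}))\in\cR_X^{+,r}$, which is a different element.

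\textbf{The fix: a weight argument.} Use the right Radford decomposition $G^+=G^+_X\,\cR_X^{+,r}$, with $\cR_X^{+,r}$ on the right as in \eqref{eq:Upoly-triang}. It gives two inclusions:
\begin{itemize}
\item $G^+_\gamma\subseteq G^+_X\,\cR^{+,r}_{X,+}\subseteq\cJ$ for every $\gamma\in Q^+\setminus Q_X$;
\item $G^+_\gamma\subseteq\cH\,\cR^{+,r}_{X,1}\subseteq\sum_k\cH\,\Etil_k\,\cH$ when $\gamma$ has height one outside $X$, using $\ad_r(h)(\Etil_k)=S(h_{(1)})\Etil_k h_{(2)}$.
\end{itemize}
Since $w_X(\alpha_{\tau(i)})-\alpha_i=-\alpha_i-\theta(\alpha_i)\in Q^\theta$, we have $T_{w_X}(E_{\tau(i)})K_i^{-1}\in U^0_\theta\,G^+_{w_X(\alpha_{\tau(i)})}$. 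This gives $\psi(B_i)=F_i$.

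\textbf{Your main obstacle.} For $a\in\cA_n$, the same inclusions reduce $\psi(T_{w_X}(E_{\tau(i)})K_i^{-1}a)$ to $\cH\,\psi(\Etil_k\cA_n)$. Then \eqref{eq:EtilF} gives $\Etil_k\cA_n\subseteq\cA_{n-1}+\cJ$, exactly as in the proof of Lemma \ref{lem:deg-shift}. This is cleaner than commuting all of $T_{w_X}(E_{\tau(i)})$ past $a$. In that approach, commutators of $E_j$ ($j\in X$) with $F_j$ neither drop the degree nor land in $\cJ$, contrary to your bookkeeping.

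\textbf{Repairing (iii).} By right $\cH$-stability, $\cJ=\sum_k U^\poly\Etil_k\cH+\sum_k U^\poly K_k^{-1}$. Combine this with $T_j(\Etil_k)\in G^+_{s_j(\alpha_k)}\subseteq\cJ$, $T_j(K_k^{-1})\in\cH K_k^{-1}$ and $T_j(\cH)=\cH$ to get $T_j(\cJ)\subseteq\cJ$. Note also that the later use of (iii) in Lemma \ref{lem:muLR-1} needs $T_j(\cB_\bc)=\cB_\bc$, which your version of (iii) on $U^\poly$ does not address.
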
   
%%%%%%%%%%%%%%%%%%%%%%%%%%%%%%%%%  
\subsection{The horospherical Heisenberg double}
%%%%%%%%%%%%%%%%%%%%%%%%%%%%%%%%%%%%%%%%%%%%%%%%%  
Let $\cI_{X,\tau}$ be the left ideal of $U^\poly$ generated by $U^0_{\tau,+}$. Note that $\cI_{X,\tau}\subset U^\poly$ is a two-sided ideal because the elements $K_i^{-1}$ and the $U^\poly$ generators $q$-commute. 
We call
\begin{align*}
  \cW_{X,\tau}:=U^\poly\big/ \cI_{X,\tau}
\end{align*}
the \textit{horospherical Heisenberg double} associated to the generalized Satake diagram $(X,\tau)$.
Equation \eqref{eq:Lpoly-Hpoly} implies that
\begin{align*}
  \cL_X^\poly\Big/ (\cI_{X,\tau}\cap \cL_X^\poly) \cong \cH_{X,\tau}
\end{align*}  
and it follows from the isomorphism \eqref{eq:Upoly-triang} that the multiplication map gives rise to a linear isomorphism
\begin{align}\label{eq:W-triang}
   \cR_X^{-,r}\ot \cH_{X,\tau} \ot \cR_X^{+,r} \cong \cW_{X,\tau}. 
\end{align}
The ideal $\cI_{X,\tau}\subset U^\poly$ satisfies $\kow(\cI_{X,\tau})\subset \cI_{X,\tau}\ot \cI_{X,\tau}$ and hence $\cI_{X,\tau}$ is a subbialgebra of $U^\poly$. In particular, $\cI_{X,\tau}$ is a right coideal of $U^\poly$ and hence $\cW_{X,\tau}$ is a right $U^\poly$-comodule algebra.
  \begin{rema}
     Note that $\cW_{X,\tau}$ is not a bialgebra, as the counit $\vep$ is not defined on $\cW_{X,\tau}$.
  \end{rema}
We can give a presentation of $\cW_{X,\tau}$ in terms of generators and relations.
%%%%%%%%%%%%%%%%%%%%%%%%%%%%%%%%%%%%%%%%%%%%%%%
  \begin{lem}
    The algebra $\cW_{X,\tau}$ is generated by its subalgebras $U^-, G^+, U^0_\theta$, subject only to the additional commutation relations
    \begin{align}
      K_\beta F_i = q^{-(\alpha_i,\beta)} F_i K_\beta, \qquad
      K_\beta E_i &= q^{(\alpha_i,\beta)} E_i K_\beta \qquad \mbox{for all $\beta\in Q^\theta, i\in I$,}\label{eq:Wrel1}\\
      q^{-(\alpha_i,\alpha_j)} \Etil_i F_j - F_j \Etil_i &= \frac{\delta_{i,j}}{q_i-q_i^{-1}} \begin{cases} 1- K_i^{-2} &\mbox{if $i\in X$,}\\ 1&\mbox{if $i\in I\setminus X$.}\end{cases} \label{eq:Wrel2}
    \end{align}   
  \end{lem}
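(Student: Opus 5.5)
We first check that the stated relations hold in $\cW_{X,\tau}$. Then we show they present it, by comparing the universal algebra defined by them with the triangular decomposition \eqref{eq:W-triang}.

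\textbf{Generation and validity of the relations.} The algebra $U^\poly$ is generated by $U^-$, $\cH_{X,\tau}$, the elements $\Etil_i$ and the elements $K_i^{-1}$ for $i\in I\setminus X$. Modulo $\cI_{X,\tau}$ the elements $K_i^{-1}$ for $i\in I\setminus X$ vanish. Moreover, $E_j=\Etil_jK_j$ for $j\in X$ with $K_j\in U^0_\theta$. Hence $\cW_{X,\tau}$ is generated by the images of $U^-$, $G^+$ and $U^0_\theta$.

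The relations \eqref{eq:Wrel1} already hold in $\Uq$. Relation \eqref{eq:Wrel2} is \eqref{eq:EtilF}. For $i\in I\setminus X$ the term $K_i^{-2}$ lies in $\cI_{X,\tau}$, which produces the case distinction.

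\textbf{Universal algebra and spanning.} Let $\widehat\cW$ be the algebra generated by copies of $U^-$, $G^+$ and $U^0_\theta$ (each with its own internal relations), subject only to \eqref{eq:Wrel1} and \eqref{eq:Wrel2}. By the previous step there is a surjection $\widehat\cW\to\cW_{X,\tau}$. Using \eqref{eq:Wrel1} and \eqref{eq:Wrel2} as rewriting rules, we can move every $\Etil_i$ to the right of every $F_j$ and every $K_\beta$ into the middle. This shows that multiplication gives a surjection
\[
U^-\ot U^0_\theta\ot G^+\to\widehat\cW .
\]
The rewriting is legitimate because each commutation of $\Etil_i$ past $F_j$ yields a term of strictly smaller total length in $U^-$ and $G^+$, and this term lies in $U^0_\theta$ (note $K_i^{-2}\in U^0_\theta$ for $i\in X$).

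\textbf{Injectivity, the main step.} It remains to show that the composite
\[
U^-\ot U^0_\theta\ot G^+\to\cW_{X,\tau}
\]
is injective. We would deduce this from \eqref{eq:Upoly-triang-2}, or more transparently from the decomposition $U^-\ot U^0\ot G^+\cong\Uq$. Restricted to $U^\poly$, the latter decomposition identifies $U^\poly$ with
\[
U^-\ot \big(U^0_\theta\ot U^0_\tau\big)\ot G^+ .
\]
Here the middle factor is the subalgebra generated by $U^0_\theta$ and $K_i^{-1}$ for $i\in I\setminus X$, and it is isomorphic to $U^0_\theta\ot U^0_\tau$ by \eqref{eq:U0theta}. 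Since the $K_i^{-1}$ $q$-commute with $U^-$ and $G^+$, the ideal $\cI_{X,\tau}$ corresponds exactly to
\[
U^-\ot \big(U^0_\theta\ot U^0_{\tau,+}\big)\ot G^+ .
\]
Therefore the quotient is $U^-\ot U^0_\theta\ot G^+$, which gives injectivity.

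The main obstacle is verifying this description of $U^\poly$ as $U^-\ot(U^0_\theta\ot U^0_\tau)\ot G^+$. One must check that every element of $\cH_{X,\tau}$, in particular every $E_j$ with $j\in X$, is expressible in $G^+$ times elements of $U^0_\theta$. This holds via $E_j=\Etil_jK_j$. Combined with $\cR_X^{+,r}\subset G^+$ and the decomposition \eqref{eq:triang_Upm}, this yields the required description.
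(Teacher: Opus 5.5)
Your proof is correct and follows the paper's strategy. The presented algebra surjects onto $\cW_{X,\tau}$, the relations make multiplication $U^-\ot U^0_\theta\ot G^+\to(\mbox{presented algebra})$ surjective, and injectivity follows because the composite multiplication map $U^-\ot U^0_\theta\ot G^+\to\cW_{X,\tau}$ is a linear isomorphism. The only difference is how you get that isomorphism: you restrict the triangular decomposition $U^-\ot U^0\ot G^+\cong\Uq$ to $U^\poly$ and identify $\cI_{X,\tau}$ with $U^-\ot(U^0_\theta\ot U^0_{\tau,+})\ot G^+$, whereas the paper reads it off from \eqref{eq:W-triang} combined with \eqref{eq:triang_Upm} and $\cR_X^{+,r}\ot G^+_X\cong G^+$.
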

%%%%%%%%%%%%%%%%%%%%%%%%%%%%%%%%5
  \begin{proof}
    Define $G^+_X=S(U^+_X)$. The triangular decomposition \eqref{eq:W-triang} and the decompositions \eqref{eq:triang_Upm} and $\cR_X^{+,r}\ot G_X^+\cong G^+$ imply that multiplication defines an isomorphism
    \begin{align*}
      m:U^-\ot U^0_\theta \ot G^+\rightarrow \cW_{X,\tau}.
    \end{align*}
    On the other hand, let $\cW_{X,\tau}'$ denote the algebra generated by $U^-$, $G^+$ and $U^0_\theta$ subject only to the relations \eqref{eq:Wrel1} and \eqref{eq:Wrel2}. The relations \eqref{eq:Wrel1}, \eqref{eq:Wrel2} also hold in $\cW_{X,\tau}$ by definition, and hence there is a surjective algebra homomorphism
    \begin{align*}
       \pi':\cW'_{X,\tau} \rightarrow \cW_{X,\tau}
    \end{align*}
    mapping the generators of $\cW'_{X,\tau}$ to the corresponding generators of $\cW_{X,\tau}$. Moreover, the relations \eqref{eq:Wrel1} and \eqref{eq:Wrel2} imply that the multiplication map
    \begin{align*}
      m':U^-\ot U^0_\theta \ot G^+\rightarrow \cW'_{X,\tau}
    \end{align*}
    is surjective. As $\pi'\circ m'=m$ is a linear isomorphism and $m'$ is surjective, it follows that $\pi'$ is an isomorphism. 
  \end{proof}  
%%%%%%%%%%%%%%%%%%%%%%%%%%%%%%%%%%%%%%%%%%%%%%%
\begin{rema}
  Let $U^{\le, \cop}$ be the Hopf algebra $U^\le=U^-U^0$ with the opposite coproduct. There is a Hopf pairing $\langle\,,\,\rangle:U^{\le,\cop}\ot U^\ge\rightarrow \qfield$, see \cite[Proposition 1.2.3]{b-Lusztig94}, also \eqref{eq:pairing}. With respect to the action 
  $f\lact e = e_{(1)} \langle f,e_{(2)} \rangle$ the algebra $U^\ge$ is a left $U^{\le,\cop}$-module algebra. Hence, we can form the smash product $U^\ge\# U^{\le,\cop}$, commonly referred to as Heisenberg double, see \cite[4.1.10]{b-Montg93}. 
  If $X=\emptyset$ and $\tau=\id$ then the algebra $\cW_{\emptyset,\id}$ is isomorphic to the subalgebra of $U^\ge\# U^{\le,\cop}$ generated by $\Etil_i, F_i$ for all $i\in I$. The algebra $\cW_{\emptyset,\id}$ is also isomorphic to Kashiwara's bosonic algebra $\mathscr{B}_q(\gfrak)$, which plays an important role in the study of crystal bases, see \cite[Section 3.3]{a-Kashiwara91}.
\end{rema}
%%%%%%%%%%%%%%%%%%%%%%%%%%%%%%%%%%%%%%%%%%%%%%%%  
  We use the gradings of $\cR_X^{-,r}$ and $\cR_X^{+,r}$ defined after Equation \eqref{eq:deg-R} to define vector space gradings on $U^\poly$ and $\cW_{X,\tau}$. More precisely, using the triangular decompositions \eqref{eq:Upoly-triang} and \eqref{eq:W-triang}, we set
  \begin{align}
    U^\poly_n&=\sum_{\ell=0}^n \cR_{X,\ell}^{-,r}\ot \cL_{X}^\poly \ot \cR_{X,n-\ell}^{+,r}, \label{eq:Un-def}\\
    \cW_{X,\tau,n}&= \sum_{\ell=0}^n \cR_{X,\ell}^{-,r}\ot \cH_{X,\tau} \ot \cR_{X,n-\ell}^{+,r}\label{eq:Wn-def}
  \end{align}  
  for all $n\in \N$. We thus obtain vector space gradings
  \begin{align}\label{eq:gradings}
    U^\poly= \bigoplus_{n\in \N} U^\poly_n,\qquad \cW_{X,\tau}=\bigoplus_{n\in \N} \cW_{X,\tau,n},
  \end{align}
and the canonical projection $\pi:U^\poly\rightarrow \cW_{X,\tau}$ is a graded linear map. 

We want to describe the right $U^\poly$-coaction on the graded component $\cW_{X,\tau,n}$. To this end, we introduce a second vector space grading of $U^\poly$. For any $n\in \N$ let $U^0_\tau[n]$ denote the degree $n$ component of the polynomial ring $U^0_\tau$ and define
\begin{align*}
  U^\poly[n]=\cR_X^{-,r} \cH_{X,\tau} U^0_\tau[n] \cR_X^{+,r}.
\end{align*}
The triangular decomposition \eqref{eq:Upoly-triang} and Equation \eqref{eq:Lpoly-Hpoly} imply that
\begin{align}\label{eq:Upoly-grading2}
   U^\poly = \bigoplus_{n\in \N} U^\poly[n].
\end{align}
Recall that $\cR_X^{\pm,r}$ are right coideals of $\Uq^\pm U^0$. By definition of the coproduct \eqref{eq:kow-def} we have
\begin{align}
  \kow(\cR_{X,\ell}^{+,r})\subset \sum_{m=0}^\ell \cR_{X,m}^{+,r}\ot \cH_{X,\tau} U^0_\tau[m] \cR_{X,\ell-m}^{+,r} ,\label{eq:kowRX+}\\
  \kow(\cR_{X,\ell}^{-,r})\subset \sum_{m=0}^\ell \cR_{X,m}^{-,r}\ot \cR_{X,\ell-m}^{-,r} \cH_{X,\tau} U^0_\tau[m], \label{eq:kowRX-}
\end{align}
and hence the definition \eqref{eq:Wn-def} of $\cW_{X,\tau,n}$ implies that
\begin{align}\label{eq:kowWn}
  \kow(\cW_{X,\tau,n}) \subset \sum_{\ell=0}^n \cW_{X,\tau,\ell} \ot U^\poly[\ell] \qquad \mbox{for all $n\in \N$.}
\end{align}  
The restriction $\pi|_{\cB_\bc}:\cB_\bc\rightarrow \cW_{X,\tau}$ is injective, because  by Lemma \ref{lem:Letzter-props} the Letzter map $\psi:\cB_\bc\rightarrow \cA^-_{X,\tau}$ is an injective map which factors through $\pi$ by construction.
%%%%%%%%%%%%%%%%%%%%%%%%%%%%%%%%%%%%%%%%%%%%
\begin{thm}\label{thm:Bc-graded}
  The image $\pi(\cB_\bc)$ is a graded subspace of $\cW_{X,\tau}$ with respect to the grading \eqref{eq:gradings}. 
\end{thm}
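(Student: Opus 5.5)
The plan is to extract the homogeneous components of an element of $\pi(\cB_\bc)$ using the right $U^\poly$-coaction on $\cW_{X,\tau}$ together with the second grading \eqref{eq:Upoly-grading2}. Two facts about coproducts are the input. First, $\kow(\cB_\bc)\subset \cB_\bc\ot U^\poly$: this follows from \eqref{eq:kowB}, since $\cB_\bc\subset U^\poly$ and $U^\poly$ is a subbialgebra. Second, $\kow(\cI_{X,\tau})\subset\cI_{X,\tau}\ot\cI_{X,\tau}$. Together they give
\begin{align*}
  \kow(\pi(b))\in \pi(\cB_\bc)\ot U^\poly \qquad \mbox{for all $b\in \cB_\bc$,}
\end{align*}
where $\kow$ now denotes the coaction $(\pi\ot\id)\circ\kow$ of $U^\poly$ on $\cW_{X,\tau}$.

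Next, for each $N\in\N$ I will construct a linear map $\Phi_N=(\id\ot f\circ p_N)\circ\kow:\cW_{X,\tau}\to\cW_{X,\tau}$. Here $p_N:U^\poly\to U^\poly[N]$ is the projection for the decomposition \eqref{eq:Upoly-grading2}. The functional $f:U^\poly\to\qfield$ is defined via the decomposition $U^\poly\cong\cR_X^{-,r}\ot\cH_{X,\tau}\ot U^0_\tau\ot\cR_X^{+,r}$ obtained from \eqref{eq:Upoly-triang} and \eqref{eq:Lpoly-Hpoly}, as
\begin{align*}
  f=\vep\ot\vep\ot \mathrm{ev}\ot\vep, \qquad \mathrm{ev}(K_i^{-1})=1 \quad \mbox{for all $i\in I_\tau$.}
\end{align*}
I want to show two things.
\begin{enumerate}
  \item $\Phi_N$ vanishes on $\cW_{X,\tau,n}$ for $n<N$.
  \item $\Phi_N$ restricts to the identity on $\cW_{X,\tau,N}$.
\end{enumerate}
Granting these, take $w=\pi(b)=\sum_{n\le N}w_n$ with $w_n\in\cW_{X,\tau,n}$. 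By (1), (2) and \eqref{eq:kowWn}, applied to each $w_n$ with $n\le N$, we get $\Phi_N(w)=w_N$. On the other hand $\Phi_N(w)\in\pi(\cB_\bc)$, because $\kow(\pi(b))\in\pi(\cB_\bc)\ot U^\poly$. Hence $w_N\in\pi(\cB_\bc)$, and $w-w_N\in\pi(\cB_\bc)$ has strictly smaller top degree, so induction on $N$ finishes the argument.

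Property (1) is immediate from \eqref{eq:kowWn}, since for $n<N$ the coaction lands in $\sum_{\ell\le n}\cW_{X,\tau,\ell}\ot U^\poly[\ell]$, which $p_N$ kills. For (2), I will compute the $U^\poly[N]$-component of $\kow(r^-hr^+)$ for $r^-\in\cR^{-,r}_{X,\ell}$, $h\in\cH_{X,\tau}$, $r^+\in\cR^{+,r}_{X,N-\ell}$. The model is the generator level. Since $\kow(F_i)=F_i\ot K_i^{-1}+1\ot F_i$ and $\kow(E_iK_i^{-1})=E_iK_i^{-1}\ot K_i^{-1}+1\ot E_iK_i^{-1}$, the part of top $U^0_\tau$-degree is exactly the term with the full element in the first factor and a monomial in the $K_j^{-1}$, $j\in I\setminus X$, in the second. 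Using \eqref{eq:kowRX+}, \eqref{eq:kowRX-} and the right adjoint description from Corollary~\ref{cor:adH-gen}, I expect the $U^\poly[N]$-component of $\kow(r^-hr^+)$ to have the form
\begin{align*}
  \sum r^-h_{(1)}r^+\ot h'\,K_{-\mu}\,h_{(2)}\,h'' \qquad \mbox{with } h',h''\in\cH_{X,\tau}\cap\ker(\vep)+\qqfield,
\end{align*}
up to reordering, where $\mu$ is the relevant $(I\setminus X)$-weight.

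To evaluate $f$ on such terms, I will rewrite $K_{-\mu}$ as an element of $U^0_\theta$ times a monomial in $U^0_\tau$, using $K_{\tau(i)}^{-1}=K_i^{-1}\cdot K_iK_{\tau(i)}^{-1}$. Then $\mathrm{ev}$ and $\vep$ give exactly $1$, and $(\id\ot\vep)\kow(h)=h$ recovers $r^-hr^+$.

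The main obstacle is making this top-component computation rigorous for general elements of $\cR_X^{\pm,r}$ and $\cH_{X,\tau}$, rather than for generators. Concretely, two points need checking.
\begin{itemize}
  \item The $q$-commutations needed to reorder the second tensor factor into the form $\cR^{-,r}\cH U^0_\tau\cR^{+,r}$ must only produce scalars that are killed by, or compatible with, $f$.
  \item The $\cH_{X,\tau}$-parts appearing in \eqref{eq:kowRX+} and \eqref{eq:kowRX-} in the top-degree term $m=\ell$ must have counit $1$.
\end{itemize}
I would first handle these via the coinvariance defining $\cR^{\pm,r}_X$, namely $(\pi_X^\pm\ot\id)\kow(a)=1\ot a$. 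The aim is to show that the degree-$\ell$ piece of $\kow(a)$ for $a\in\cR^{\pm,r}_{X,\ell}$ is $a\ot K_{-\mu}$ times elements of $U^0$ only.

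If $\Phi_N$ turns out to be identity only up to an invertible scalar depending on the weight, I would instead refine $f$ to a weight-dependent functional. Since all the maps involved respect the $Q$-grading, that rescaling would be harmless.
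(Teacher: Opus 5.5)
Your architecture is the paper's: use the right $U^\poly$-coaction on $\cW_{X,\tau}$, project the second tensor factor onto $U^\poly[N]$, apply a functional, and induct on the top degree. The coideal input and your property (1) are handled correctly. Your $f$ is in fact just the counit. Indeed $\mathrm{ev}=\vep|_{U^0_\tau}$ and $\vep$ is multiplicative, so $f(r^-hkr^+)=\vep(r^-hkr^+)$, and $\Phi_N=(\id\ot\vep\circ p_N)\circ\kow$ is exactly the map used in the paper.

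The gap is property (2). You leave it open, and the route you sketch for it rests on a false claim. The top-degree piece of $\kow(a)$ for $a\in\cR^{-,r}_{X,\ell}$ is \emph{not} of the form $a\ot u$ with $u\in U^0$, nor of your expected shape $\sum r^-h_{(1)}r^+\ot\cdots$. For instance, take $i\in I\setminus X$ and $j\in X$ with $(\alpha_i,\alpha_j)\neq 0$. Then $a=\ad_r(F_j)(F_i)=F_iF_j-q^{-(\alpha_i,\alpha_j)}F_jF_i\in\cR^{-,r}_{X,1}$ satisfies
\begin{align*}
  \kow(a)=a\ot K_i^{-1}K_j^{-1}+\big(q^{(\alpha_i,\alpha_j)}-q^{-(\alpha_i,\alpha_j)}\big)F_i\ot F_jK_i^{-1}+1\ot a.
\end{align*}
So the degree-one piece contains a term with first factor $F_i\neq a$ and second factor $F_jK_i^{-1}\notin U^0$.

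The missing observation is that no explicit computation of the top component is needed, because the counit axiom does the work. Take $w=\pi(r^-hr^+)$ with $r^-\in\cR^{-,r}_{X,a}$, $h\in\cH_{X,\tau}$ and $r^+\in\cR^{+,r}_{X,N-a}$. Expand $\kow(r^-)\kow(h)\kow(r^+)$ using \eqref{eq:kowRX-} and \eqref{eq:kowRX+}. Every resulting term whose second factor lies in $U^\poly[\ell]$ with $\ell<N$ has second factor in $\cR^{-,r}_{X,k}\cH_{X,\tau}U^0_\tau[\ell]\cR^{+,r}_{X,k'}$ with $k+k'=N-\ell>0$. Such a factor lies in $\ker\vep$, because positive-degree elements of $\cR^{-,r}_X\subset U^-$ and of $\cR^{+,r}_X\subset G^+$ have counit zero. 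Therefore $\Phi_N(w)=(\pi\ot\vep)\circ\kow(r^-hr^+)=w$, which is exactly (2) on a spanning set of $\cW_{X,\tau,N}$. Extra terms such as $F_i\ot F_jK_i^{-1}$ above are simply killed by $\vep$. No reordering scalars or weight-dependent rescaling of $f$ ever arise. With this observation inserted, your argument becomes precisely the paper's proof.
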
  
%%%%%%%%%%%%%%%%%%%%%%%%%%%%%%%%%%%%%%%%%%%%
\begin{proof}
  Let $b\in \pi(\cB_\bc)\setminus \{0\}$ and write $b$ as a sum
  \begin{align*}
    b=\sum_{\ell=0}^n b_\ell \qquad \mbox{with $b_\ell\in \cW_{X,\tau,\ell}$ and $b_n\neq 0$}
  \end{align*}  
    for some $n\in \N$. By induction it suffices to show that $b_n\in\pi(\cB_\bc)$. Let $\pi_n:U^\poly\rightarrow U^{\poly}[n]$ be the projection with respect to the direct sum decomposition \eqref{eq:Upoly-grading2}. As $\pi(\cB_\bc)$ is a right $\Uq^\poly$-subcomodule of $\cW_{X,\tau}$, Equation \eqref{eq:kowWn} implies that
    \begin{align}\label{eq:Step}
      (\id \ot \pi_n)\circ \kow(b_n)=(\id\ot \pi_n)\circ \kow(b)\in \pi(\cB_\bc)\ot U^\poly[n].
    \end{align}
    On the other hand, Equations \eqref{eq:kowRX+} and \eqref{eq:kowRX-} imply that
    \begin{align*}
      (\id \ot (\vep \circ \pi_n))\circ \kow(b_n)=(\id \ot \vep)\circ \kow(b_n)=b_n.
    \end{align*}
    Hence, application of the counit $\vep$ to the second tensor factor of Equation \eqref{eq:Step} gives us $b_n\in \pi(\cB_\bc)$, as desired. 
\end{proof}  
%%%%%%%%%%%%%%%%%%%%%%%%%%%%%%%%%%%%%%%%%%%%%%
\section{Short star products}\label{sec:shortstar}
%%%%%%%%%%%%%%%%%%%%%%%%%%%%%%%%%%%%%%%%%%%%%%
We recall the notion of a short star product as developed by Etingof and Stryker in \cite{a-ES}. In \cite{a-ES} star products are only considered for commutative, graded Poisson algebras but the concept of a star-product in \cite{a-ES} also makes sense for non-commutative algebras and filtered deformations of this sort were studied by various authors before, see e.g. \cite{Rains22} for twisted homogeneous coordinate rings. 
We will see that the Letzter map is a quantization map in the sense of \cite{a-ES}. Our main goal is to show that the corresponding star product on the quantum horospherical algebra is short.
%%%%%%%%%%%%%%%%%%%%%%%%%%%%%%%%%%%%%%%%%%%%%%%
\subsection{Star products}\label{sec:star}
%%%%%%%%%%%%%%%%%%%%%%%%%%%%%%%%%%%%%%%%%%%%%%
Assume that $A$ is an $\N$-graded $\qfield$-algebra $A=\bigoplus_{j\in \N} A_j$. Throughout Section \ref{sec:star} we allow $\qfield$ to be an arbitrary field. For all $n, m \in \N$, we denote 
\[
A_{[n,m]} : = 
\begin{cases}
A_n \oplus \ldots \oplus A_m, &\mbox{if} \; \; n \leq m
\\
0, &\mbox{otherwise}
\end{cases}
\]
and $A_{<m} := A_{[0, m-1]}$, $A_{\le m}:= A_{[0,m]}$. 
%%%%%%%%%%%%%%%%%%%%%%%%%%%%%%%%%%%%%%%%%%%%%%%%%%%%%%%%%%%%%%%%
\begin{defi}\label{def:star-prod}
\cite[Section 5.1]{a-KY20}, \cite[Section 2.1]{a-ES}
Let $A$ be an $\N$-graded $\qfield$-algebra.
\begin{enumerate}
\item[(i)] A star product on $A$ is an associative bilinear operation $* : A \times A \to A$, $(a,b)\mapsto a\ast b$
such that
\begin{align}\label{eq:a*b-ab}
  a * b - ab \in A_{< m+ n} \qquad \mbox{for all $a \in A_m, b \in A_n$.}
\end{align}
\item[(ii)] A star product $*$ on $A$ is called short if 
\[
a * b \in A_{[|m-n|, m+n]} \qquad \mbox{for all $a \in A_m, b\in A_n$.} 
\]
\item[(iii)] A star product $\ast$ on $A$ is called $0$-equivariant
if
\[
a * h = ah \quad \mbox{and} \quad  h * a= h a \qquad \mbox{for all $h \in A_0, a \in A$.} 
\]
\item[(iv)] A star product $\ast$ on $A$ is called $\Z/2$-equivariant if 
\begin{equation}
\label{Z2cond}
a * b \in \bigoplus_{k=0}^{\lfloor \frac{m+n}{2} \rfloor} A_{m+n-2 k} \qquad \mbox{for all $a \in A_m, b \in A_n$.} 
\end{equation}
\end{enumerate} 
\end{defi}
%%%%%%%%%%%%%%%%%%%%%%%%%%%%%%%%%%%
\begin{rema} 
\hfill
  \begin{enumerate}
     \item Every short star product is 0-equivariant: if $a \in A_m$ and $h \in A_0$, then by the shortness assumption $a * h \in A_m$, and by \eqref{eq:a*b-ab}, $a * h = ah$; similarly $h* a = ha$. 
     \item Any $\Z/2$-equivariant $\ast$-product on the graded algebra $A$ is uniquely determined by a family of maps $C_k:A\ot A \rightarrow A$ of degree $-2k$ such that
     \begin{align*}
         a\ast b = ab + \sum_{k=1}^{\lfloor \frac{m+n}{2}\rfloor} C_k(a,b).
     \end{align*}
     The linear map $C_1:A\ot A \rightarrow A$ of degree $-2$ defines a Hochschild $2$-cocycle, that is, $a C_1(b,c) - C_1(ab,c) + C_1(a,bc) - C_1(a,b)c=0$ for all $a,b,c\in A$. If the graded algebra $A$ is commutative, then
     \[
         \{a,b\}:=C_1(a,b)-C_1(b,a) \qquad \mbox{for all $a, b\in A$}
     \]
     defines a Poisson structure on $A$ and we think of the algebra $(A, *)$ as of a quantization of the Poisson algebra $(A, \{.,.\})$.
     \item In \cite{a-ES} all star products are assumed to satisfy the condition \eqref{Z2cond} and the term $\Z/2$-equivariant is used in relation to filtered quantizations equipped with quantization maps, see \cite[Proposition 2.5]{a-ES}.
     \item Nondegenerate short star products on connected graded algebras are classified in terms nondegenerate twisted traces on filtered quantizations \cite[Proposition 3.3]{a-ES}. In this paper we are concerned with the case where $A$ is noncommutative and $\dim A_0 >1$.
  \end{enumerate}
\end{rema}  
%%%%%%%%%%%%%%%%%%%%%%%%%%%%%%%%%%%%%%%%5
If $*$ is a star product on an $\N$-graded algebra $A$ then $(A,\ast)$ is a filtered algebra with 
\begin{align} \label{eq:FA}
  \cF_m (A) := A_{\leq m} \qquad \mbox{ for all $m\in \N$.}
\end{align}
It follows from \eqref{eq:a*b-ab} that the associated graded algebra satisfies
\[
\gr_\cF (A, *) \cong A.
\]
Conversely, a filtered linear isomorphism between $A$ and a filtered algebra gives rise to a $*$-product on $A$. Here we consider $A$ as a filtered vector space with filtration given by \eqref{eq:FA}.
%%%%%%%%%%%%%%%%%%%%%%%
\begin{defi}{\cite[Section 2.2]{a-ES}} \label{def:quant-map}
   Let $A$ be an $\N$-graded algebra and $B=\bigcup_{m\in \N}\cF_m(B)$ a filtered algebra. A filtered  isomorphism of vector spaces $\phi:A\rightarrow B$ is called a quantization map if the associated graded map $\gr(\phi): A \rightarrow \gr_\cF(B)$ is an isomorphism of graded algebras.
\end{defi}
%%%%%%%%%%%%%%%%%%%%%%%%
\begin{lem}{\cite[Section 2.2]{a-ES}} \label{lem:quant-star}
  Let $A$ be an $\N$-graded algebra and $B$ a filtered algebra, and let $\phi:A\rightarrow B$ be a quantization map. Then 
  \begin{align}\label{eq:aastb}
a * b := \phi^{-1} (\phi(a) \phi(b)) \qquad\mbox{for all $a,b\in A$} 
\end{align}
defines a $\ast$-product on $A$.
\end{lem}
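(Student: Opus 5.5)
The plan is to verify the three requirements directly: associativity, bilinearity, and the filtration condition \eqref{eq:a*b-ab}.

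\emph{Bilinearity and associativity.} The map $\phi$ is a linear isomorphism and multiplication in $B$ is bilinear and associative. Hence the transported operation $a*b=\phi^{-1}(\phi(a)\phi(b))$ is bilinear and associative, and $\phi:(A,*)\to B$ is an algebra isomorphism. Concretely, both $(a*b)*c$ and $a*(b*c)$ equal $\phi^{-1}(\phi(a)\phi(b)\phi(c))$.

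\emph{The condition \eqref{eq:a*b-ab}.} Take $a\in A_m$ and $b\in A_n$, and write $\overline{x}^{(k)}$ for the class of $x\in\cF_k(B)$ in $\cF_k(B)/\cF_{k-1}(B)$.

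1. Since $\phi$ is filtered, $\phi(a)\in\cF_m(B)$ and $\phi(b)\in\cF_n(B)$, so $\phi(a)\phi(b)\in\cF_{m+n}(B)$.

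2. Since $\phi$ is a filtered isomorphism (so $\phi^{-1}$ is also filtered), we get $a*b\in A_{\le m+n}$.

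3. By definition of the associated graded map, $\gr(\phi)(a)=\overline{\phi(a)}^{(m)}$. Because $\gr(\phi)$ is an algebra homomorphism,
\[
\gr(\phi)(ab)=\gr(\phi)(a)\gr(\phi)(b)=\overline{\phi(a)\phi(b)}^{(m+n)}.
\]
Here $ab\in A_{m+n}$, since $A$ is graded.

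4. On the other hand, $\phi(a*b)=\phi(a)\phi(b)$. Therefore $\phi(a*b-ab)$ lies in $\cF_{m+n-1}(B)$.

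5. Applying $\phi^{-1}$ gives $a*b-ab\in A_{<m+n}$.

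\emph{Main subtlety.} The one point needing care is step 5. It uses $\phi^{-1}(\cF_{k}(B))=A_{\le k}$, which is exactly what "filtered isomorphism" means: an isomorphism whose inverse is also filtered, equivalently $\phi(A_{\le k})=\cF_k(B)$ for all $k$. If only $\phi$ were assumed filtered, this would instead follow from $\gr(\phi)$ being bijective, by induction on $k$.
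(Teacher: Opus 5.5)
Your proof is correct and follows the same route as the paper: associativity is transported along $\phi$, and the condition $a\ast b-ab\in A_{<m+n}$ comes from $\gr(\phi)$ being an algebra homomorphism together with $\phi^{-1}$ being filtered. You make explicit the step the paper compresses into ``$\gr_\cF(A,\ast)=A$'', and your remark that filteredness of $\phi^{-1}$ follows by induction from surjectivity of $\gr(\phi)$ is also correct.
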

%%%%%%%%%%%%%%%%%%%%%%%
\begin{proof}
  As $B$ is an associative algebra, so is $(A,\ast)$. Moreover, $(A,\ast)$ is filtered with $\cF_m (A)= A_{\leq m}$ for all $m\in \N$ because $\phi$ is a filtered linear map. As $\gr(\phi)$ is an isomorphism of graded algebras we get $\gr_\cF(A,\ast)=A$ by \eqref{eq:aastb}. Hence $a\ast b-ab\in A_{<m+n}$ for all $a\in A_m, b\in A_n$.
\end{proof}
%%%%%%%%%%%%%%%%%%%%%%%
If the graded algebra $A$ is generated in degrees 0 and 1, then every star product algebra structure $(A,*)$ is also generated in degrees 0 and 1. The first two statements of the following Lemma were proved in \cite[Lemma 5.2]{a-KY20}.
\begin{lem} 
\label{lem:sprod-isom}
Let $A$ be an $\N$-graded $\qfield$-algebra generated in degrees 0 and 1. 
\begin{enumerate}
\item[(i)] Any 0-equivariant star product on $A$ is uniquely determined by the $\qfield$-linear map $\mu^L:A_1\to \End_{\qfield}(A)$, $f\mapsto \mu^L_f$ defined by
\[
\mu^L_f(a) = f * a  - f a \qquad \mbox{for all $f \in A_1, a \in A$.}
\]
\item[(ii)] If $\cR$ is a graded subalgebra of $A$ such that $A_0 \cR = \cR A_0 = A$, then every 0-equivariant star product on $A$ is uniquely determined by the $\qfield$-linear map $\mu^L : \cR_1 \to \Hom_{\qfield}(\cR,A)$, $f\mapsto \mu^L_f$ defined by
\begin{align}\label{eq:muLU-def}
   \mu^L_f(b) = f*b - fb \qquad \mbox{for all $f \in \cR_1, b \in \cR$.}
\end{align}
\item[(iii)] Retain the setting of (ii). Assume, moreover, that $A_0$ is a Hopf algebra, that $\cR$ is invariant under the right adjoint action of $A_0$, and that there exist $f_1,f_2,\dots,f_\ell\in \cR_1$ such that $\cR_1=\mathrm{span}_{\qfield}\{\ad_r(A_0)(f_i)\,|\,i=1,\dots,\ell\}$. Then every $0$-equivariant star-product on $A$ is uniquely determined by the $\qfield$-linear maps $\mu_{f_i}^L\in \Hom_{\qfield}(\cR,A)$ defined by \eqref{eq:muLU-def} for $i=1,\dots,\ell$.
\end{enumerate}
\end{lem}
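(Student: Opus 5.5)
Since (i) and (ii) were established in \cite[Lemma 5.2]{a-KY20}, the plan is to recall their mechanism briefly and then reduce (iii) to (ii). For (i), we use induction on degree. Any element of $A_{\le n}$ is a sum of products $h_0 f_1 h_1 f_2\cdots f_k h_k$ with $h_j\in A_0$, $f_j\in A_1$, $k\le n$. By $0$-equivariance, $h\ast a=ha$ and $a\ast h=ah$. The identity $f\ast a = fa+\mu^L_f(a)$ then lets us rewrite $(f_1 h_1 f_2\cdots)$ as $f_1\ast(h_1f_2\cdots)-\mu^L_{f_1}(h_1f_2\cdots)$. The first term is a $\ast$-product of degree-one data with a shorter word, and the second term is known from $\mu^L$. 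Hence every element $a$ has an expression as a $\ast$-polynomial in elements of $A_0\cup A_1$ whose coefficients are determined by $\mu^L$ alone. The structure constants $a\ast b$ are therefore determined by associativity together with $\mu^L$, by induction on the degree of $a$.

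For (ii), it suffices to recover $\mu^L_f$ on all of $A$ for every $f\in A_1$ from the data on $\cR_1\times\cR$. Since $A_0\cR=\cR A_0=A$ and gradings are respected, $A_1=A_0\cR_1$ and $A=\cR A_0$. For $f=hf'$ with $h\in A_0$, $f'\in \cR_1$, and $a=bh'$ with $b\in\cR$, $h'\in A_0$, $0$-equivariance gives
\begin{align*}
f\ast a = h\ast f'\ast b\ast h' = h\,(f'b+\mu^L_{f'}(b))\,h'.
\end{align*}
Thus $\mu^L_f(a)=h\mu^L_{f'}(b)h'$, so the full map $\mu^L$ is determined, and (i) applies.

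For (iii), I would reduce to (ii) by expressing $\mu^L_{\ad_r(x)(f_i)}$ through $\mu^L_{f_i}$ for $x\in A_0$. Write $\ad_r(x)(f)=S(x_{(1)})fx_{(2)}$. Using $0$-equivariance,
\begin{align*}
\ad_r(x)(f)\ast b &= S(x_{(1)})\ast f\ast (x_{(2)} b) = S(x_{(1)})\ast f\ast\big(\ad_r(S^{-1}(x_{(2)}))(b)\, x_{(3)}\big)\\
&= S(x_{(1)})\Big(f\,\ad_r(S^{-1}(x_{(2)}))(b)+\mu^L_f\big(\ad_r(S^{-1}(x_{(2)}))(b)\big)\Big)x_{(3)}.
\end{align*}
Here I rewrite $x b$ as a sum of terms $\ad_r(\cdot)(b)\cdot(\cdot)$ using the Hopf identity $x b=\ad_r(S^{-1}(x_{(2)}))(b)\,x_{(1)}$ in a suitable Sweedler ordering. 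I would verify that identity first; it requires bijectivity of $S$, which holds for the Hopf algebras relevant here, and I would add that as an implicit hypothesis if needed.

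Since $\cR$ is $\ad_r(A_0)$-stable, the inner argument lies in $\cR$. The map $\mu^L_{\ad_r(x)(f)}$ on $\cR$ is therefore determined by $\mu^L_f$ on $\cR$. By linearity in $f$ and the spanning hypothesis, this determines $\mu^L$ on all of $\cR_1$, and (ii) finishes the argument. The main obstacle is getting the Sweedler bookkeeping of the commutation $x b\leftrightarrow \ad_r(\cdot)(b)\,\cdot$ correct. Conceptually nothing beyond $0$-equivariance is needed.
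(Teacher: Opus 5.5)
Your proposal is correct and follows the paper's route: (i) and (ii) are quoted from \cite{a-KY20}, and (iii) is reduced to (ii) via $0$-equivariance and the identity $\mu^L_{\ad_r(x)(f)}(b)=S(x_{(1)})\,\mu^L_f\bigl(\ad_r(S^{-1}(x_{(3)}))(b)\bigr)\,x_{(2)}$, which is exactly the paper's formula. One small fix: in your display the Sweedler indices are swapped, since applying your own (correct) identity $xb=\ad_r(S^{-1}(x_{(2)}))(b)\,x_{(1)}$ to $x_{(2)}b$ gives $\ad_r(S^{-1}(x_{(3)}))(b)\,x_{(2)}$, not $\ad_r(S^{-1}(x_{(2)}))(b)\,x_{(3)}$; this does not affect the conclusion.
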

%%%%%%%%%%%%%%%%%%%%%%%%%%%%%%%%%%%%%%
\begin{proof}[Proof of (iii)] 
  For any $0$-equivariant star product on $A$ and any $a\in A_0$, $b\in \cR$ we have
  \begin{align*}
    \mu^L_{\ad_r(a)(f_i)}(b) = S(a_{(1)}) \mu^L_{f_i} (\ad_r(S^{-1}(a_{(3)}))(b)) a_{(2)}.
  \end{align*}  
  Hence (iii) follows from (ii).
\end{proof}
%%%%%%%%%%%%%%%%%%%%%%%%%%%%%%%%%%%%%%
\subsection{The star product interpretation of QSP-subalgebras}\label{sec:star-Bc}
%%%%%%%%%%%%%%%%%%%%%%%%%%%%%%%%%%%%%%
From now on we assume the generalized Satake diagram $(X,\tau)$ to be fixed and drop the subscripts $X,\tau$ to simplify notation. Hence we will write $\cH$, $\cW$, $\cI$, $\cJ$ instead of $\cH_{X,\tau}$, $\cW_{X,\tau}$, $\cI_{X,\tau}$, $\cJ_{X,\tau}$ etc. Moreover, also dropping the superscript, we will denote the quantum horospherical subalgebra $\cA^-_{X,\tau}$ by $\cA$.

Lemma \ref{lem:Letzter-props} implies that the inverse of the Letzter map $\psi:\cB_\bc\rightarrow \cA$ is a quantization map in the sense of Definition \ref{def:quant-map}. By Lemma \ref{lem:quant-star}
\begin{align}\label{eq:ast-def}
  a\ast b = \psi(\psi^{-1}(a)\psi^{-1}(b)) \qquad \mbox{for all $a,b\in \cA$.}
\end{align}
defines a star product on $\cA$. It follows from Lemma \ref{lem:Letzter-props}.(ii) that this star product is 0-equivariant.
By construction, the Letzter map $\psi:\cB_\bc\rightarrow (\cA,\ast)$ is an isomorphism of $\qfield$-algebras such that $\psi|_{\cH}=\id_\cH$ and $\psi(B_i)=F_i$ for all $i\in I\setminus X$. We use the notation
\begin{align}\label{eq:mui-def}
  \mu_i^L(a)=F_i\ast a - F_ia, \qquad \mu_i^R(a)=a\ast F_i - aF_i \qquad \mbox{for all $i\in I\setminus X$, $a\in \cA$.}
\end{align}
 For $\cR=\cR_X^{-,r}$ the conditions of Lemma \ref{lem:sprod-isom}.(iii) are satisfied. Hence, the maps $\mu_i^L$ for $i\in I\setminus X$ determine the $0$-equivariant star product \eqref{eq:ast-def} uniquely. 
 The $0$-equivariance of the star-product \eqref{eq:ast-def} and \cite[Lemma 3.3]{a-KY21} imply that, if $u=ah$ with $h\in \cH$ and $a\in \cR_X^{-,r}$, then we have
\begin{align}\label{eq:muiL}
  \mu_i^L(u)=\mu_i^L(a)h = - c_i\frac{q^{-(\alpha_i,\theta(\alpha_i))}}{q_i-q_i^{-1}} K_{-\alpha_i-\theta(\alpha_i)} T_{w_X} \circ \partial^L_{\tau(i)}\circ T^{-1}_{w_X}(a)h 
\end{align}
for all $i\in I\setminus X$.
By Equation \eqref{eq:muiL} the map $\mu_i^L:\cA\rightarrow \cA$ is homogeneous of degree $-1$. We will show in Corollary \ref{cor:muiR} that $\mu_i^R$ is also homogeneous of degree $-1$, but we do not know this yet. By Equations \eqref{eq:muiL} and \eqref{eq:skew-der} the map $\mu^L_i:\cR_X^{-,r}\rightarrow\cA$ is a skew-derivation in the following sense
%%%%%%%%%%%%%%%%%%%%%%%%%%%%%%%%%%%%%%%%%%%%
\begin{align}\label{eq:muiL-skew}
  \mu_i^L(ab) = \mu_i^L(a) b + q^{(\alpha_i,\mu)} a \mu_i^L(b) \qquad \mbox{for all $a\in (\cR_X^{-,r})_{-\mu}$, $b\in \cR_X^{-,r}$.}
\end{align}  
%%%%%%%%%%%%%%%%%%%%%%%%%%%%%%%%%%%%%%%%%%%%
For later use we note the following invariance of $\mu_i^L$ and $\mu_i^R$ under the right adjoint action of $U^+_X$.
%%%%%%%%%%%%%%%%%%%%%%%%%%%%%%%%%%%%%%%%%%%%
\begin{lem}
  For all $i\in I\setminus X$, $j\in X$ and $a\in \cA$ we have
  \begin{align}
    \mu_i^L(\ad_r(E_j)(a)) &= q^{-(\alpha_i,\alpha_j)}\ad_r(E_j)(\mu_i^L(a)),\label{eq:muL-adr}\\
    \mu_i^R(\ad_r(E_j)(a)) &= \ad_r(E_j)(\mu_i^R(a)).\label{eq:muR-adr}
  \end{align}
\end{lem}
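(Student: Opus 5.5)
The plan is to transport the invariance of $\cB_\bc$ under a suitable operator on $\Uq$ through the Letzter map. We then read off the relations from the definitions \eqref{eq:mui-def} of $\mu_i^L$ and $\mu_i^R$.

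For $j\in X$, consider the operator $\ad_r(E_j)(u)=S(E_j)u+ \dots$ on $\Uq$. With $\kow(E_j)=E_j\ot 1+K_j\ot E_j$ we get
\begin{align*}
  \ad_r(E_j)(u)=S(E_j)u + S(K_j)uE_j = -K_j^{-1}E_ju+K_j^{-1}uE_j = K_j^{-1}(uE_j-E_ju).
\end{align*}
So $\ad_r(E_j)$ is a commutator with $E_j\in\cH$, twisted by $K_j^{-1}\in\cH$.

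The first step is to show that $\psi$ intertwines $\ad_r(E_j)$ on $\cB_\bc$ and on $\cA$. Since $E_j,K_j^{-1}\in\cH$, Lemma \ref{lem:Letzter-props}(ii) gives $\psi(\ad_r(E_j)(b))=\ad_r(E_j)(\psi(b))$ for $b\in\cB_\bc$. Equivalently, by 0-equivariance, $\ad_r(E_j)$ computed with $\ast$ coincides with $\ad_r(E_j)$ computed with the ordinary product on $\cA$. Both are given by the same formula $K_j^{-1}(aE_j-E_ja)$, and $h\ast a=ha$, $a\ast h=ah$ for $h\in\cH$.

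The second step is a twisted Leibniz rule for the star product. Because $\ast$ is associative and $0$-equivariant, for $a,b\in\cA$ we have
\begin{align*}
  \ad_r(E_j)(a\ast b)=K_j^{-1}(a\ast b\ast E_j-E_j\ast a\ast b).
\end{align*}
Inserting $E_j\ast b\ast\ldots$ in the middle, together with $K_j^{-1}$ conjugation, gives
\begin{align*}
  \ad_r(E_j)(a\ast b)=\ad_r(E_j)(a)\ast\ad_r(K_j)(b)+a\ast\ad_r(E_j)(b).
\end{align*}
This is the standard identity $\ad_r(x)(ab)=\ad_r(x_{(1)})(a)\ad_r(x_{(2)})(b)$, valid in $(\cA,\ast)$ since $\cH\subset(\cA,\ast)$ as a subalgebra. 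The same identity holds for the ordinary product on $\cA$.

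The third step applies this to $a\ast F_i$ and $F_i\ast a$. We have $\ad_r(E_j)(F_i)=K_j^{-1}(F_iE_j-E_jF_i)=0$ for $i\ne j$, and $\ad_r(K_j)(F_i)=K_j^{-1}F_iK_j=q^{-(\alpha_i,\alpha_j)}F_i$.

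For $\mu_i^R$ the rule gives
\begin{align*}
  \ad_r(E_j)(a\ast F_i)=\ad_r(E_j)(a)\ast F_i\,\cdot q^{-(\alpha_i,\alpha_j)}\cdots
\end{align*}
Here I must be careful about the order of the Sweedler factors: with $\ad_r(x)(ab)=\ad_r(x_{(1)})(a)\ad_r(x_{(2)})(b)$ and $\kow(E_j)=E_j\ot1+K_j\ot E_j$, we get
\begin{align*}
  \ad_r(E_j)(ab)=\ad_r(E_j)(a)\,b+\ad_r(K_j)(a)\ad_r(E_j)(b).
\end{align*}
With $b=F_i$ this gives $\ad_r(E_j)(a\ast F_i)=\ad_r(E_j)(a)\ast F_i$, and likewise for the ordinary product. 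Subtracting yields \eqref{eq:muR-adr}.

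With $a=F_i$ in the first slot, $\ad_r(E_j)(F_i\ast b)=\ad_r(K_j)(F_i)\ast\ad_r(E_j)(b)=q^{-(\alpha_i,\alpha_j)}F_i\ast\ad_r(E_j)(b)$, and likewise for the ordinary product. Subtracting gives $\ad_r(E_j)(\mu_i^L(b))=q^{-(\alpha_i,\alpha_j)}\mu_i^L(\ad_r(E_j)(b))$. This is \eqref{eq:muL-adr} up to moving the scalar, and I would check the exponent sign against this computation.

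The main obstacle is verifying that the Leibniz rule for $\ad_r$ holds for the star product. It requires only associativity of $\ast$ and that $\cH$ embeds as a subalgebra of $(\cA,\ast)$ with $h\ast a=ha$, $a\ast h=ah$. The second point is Lemma \ref{lem:Letzter-props}(ii) and the first is immediate. I would then double-check the sign and exponent conventions, namely $S(E_j)=-K_j^{-1}E_j$ and $\ad_r(x)(u)=S(x_{(1)})ux_{(2)}$.
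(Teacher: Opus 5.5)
Your strategy is the same as the paper's. By $0$-equivariance, $(\cA,\ast)$ is a right $\cH$-module algebra under $\ad_r$, and $\ad_r$ is the same whether computed with $\ast$ or with the ordinary product. You then apply $\ad_r(E_j)(ab)=\ad_r(E_j)(a)\,b+\ad_r(K_j)(a)\ad_r(E_j)(b)$ together with $\ad_r(E_j)(F_i)=0$, and subtract the undeformed identity. The detour through $\psi$ in your first step is harmless but unnecessary. Your $\mu_i^R$ argument is correct with the Leibniz rule in the corrected form of your third step. Delete the version in your second step, $\ad_r(E_j)(a)\ast\ad_r(K_j)(b)+a\ast\ad_r(E_j)(b)$, which corresponds to the coproduct $E_j\ot K_j+1\ot E_j$. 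Also delete the line with the spurious factor $q^{-(\alpha_i,\alpha_j)}$ in $\ad_r(E_j)(a\ast F_i)$.

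There is a concrete sign error in the $\mu_i^L$ part, and your conclusion there is not what you claim. Since $K_jF_i=q^{-(\alpha_i,\alpha_j)}F_iK_j$ (cf.\ \eqref{eq:Wrel1}), one has $\ad_r(K_j)(F_i)=K_j^{-1}F_iK_j=q^{(\alpha_i,\alpha_j)}F_i$, not $q^{-(\alpha_i,\alpha_j)}F_i$. With your value you obtain $\ad_r(E_j)(\mu_i^L(b))=q^{-(\alpha_i,\alpha_j)}\mu_i^L(\ad_r(E_j)(b))$. Moving the scalar gives $\mu_i^L(\ad_r(E_j)(b))=q^{(\alpha_i,\alpha_j)}\ad_r(E_j)(\mu_i^L(b))$, which has the opposite exponent to \eqref{eq:muL-adr}. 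So it is not ``\eqref{eq:muL-adr} up to moving the scalar''; it contradicts the statement whenever $(\alpha_i,\alpha_j)\neq 0$. With the correct value, your computation gives $\ad_r(E_j)(F_i\ast b)=q^{(\alpha_i,\alpha_j)}F_i\ast\ad_r(E_j)(b)$, and likewise for the ordinary product. Hence $\ad_r(E_j)(\mu_i^L(b))=q^{(\alpha_i,\alpha_j)}\mu_i^L(\ad_r(E_j)(b))$, which is exactly \eqref{eq:muL-adr}. With this fix your proof coincides with the paper's.
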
  
%%%%%%%%%%%%%%%%%%%%%%%%%%%%%%%%%%%%%%%%%%%%
\begin{proof}
  By the $0$-equivariance of the star product, the algebra $(\cA,\ast)$ is a right $\cH$-module algebra under the right adjoint action. Hence, using $\ad_r(E_j)(F_i)=0$, we obtain
  \begin{align*}
    \ad_r(E_j)(F_i\ast a)&=\ad_r(K_j)(F_i)\ast \ad_r(E_j)(a)=q^{(\alpha_i,\alpha_j)}F_i\ast \ad_r(E_j)(a),\\
     \ad_r(E_j)(a\ast F_i)&=\ad_r(E_j)(a)\ast F_i
  \end{align*}
  which implies the relations \eqref{eq:muL-adr} and \eqref{eq:muR-adr}.
\end{proof}  
%%%%%%%%%%%%%%%%%%%%%%%%%%%%%%%%%%%%%%%%%%%%
For any $m\in \N$ we write
\begin{align*}
  \cA_1^{\ast m}=\underbrace{\cA_1\ast\cA_1\ast \cdots\ast \cA_1}_{\mbox{$m$ factors}}
\end{align*}
and we set $\cA_1^{\ast 0}=\cA_0$.
The fact that $\mu_i^L$ is homogeneous of degree $-1$ implies by induction that
\begin{align}\label{eq:Am-in-Aastm}
  \cA_m\subset \sum_{k=0}^{\lfloor m/2\rfloor} \cA_1^{\ast(m-2k)}.
\end{align}  
%%%%%%%%%%%%%%%%%%%%%%%%%%%%%%%%%%%%%%%%
\begin{lem}\label{lem:Z2-ast}
  The star product $\ast$ on $\cA$ is $\Z/2$-equivariant.
\end{lem}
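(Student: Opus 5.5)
We have to show that for $a\in \cA_m$ and $b\in \cA_n$ the product $a\ast b$ lies in $\bigoplus_k \cA_{m+n-2k}$. The natural tool is \eqref{eq:Am-in-Aastm} together with the fact that $\mu_i^L$ is homogeneous of degree $-1$. The first step is a parity statement for iterated star products of degree one elements. Using $0$-equivariance and Lemma \ref{lem:sprod-isom}, every element of $\cA_1$ can be written as a sum of terms $h\ast F_i\ast h'$ with $h,h'\in\cA_0$ and $i\in I\setminus X$. Hence $\cA_1^{\ast r}$ is spanned by elements of the form
\[
h_0\ast F_{i_1}\ast h_1\ast \cdots \ast F_{i_r}\ast h_r .
\]
We show by induction on $r$ that
\begin{equation}\label{eq:parity-claim}
\cA_1^{\ast r}\subseteq \bigoplus_{k\ge 0}\cA_{r-2k}.
\end{equation}
For the inductive step, take $c\in \cA_{r-2k}$. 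Then
\[
F_i\ast c = F_ic+\mu_i^L(c),
\]
where $F_ic\in\cA_{r-2k+1}$, and by \eqref{eq:muiL} the term $\mu_i^L(c)$ lies in $\cA_{r-2k-1}$. Both degrees have the parity of $r+1$. Multiplication by elements of $\cA_0$ under $\ast$ agrees with ordinary multiplication by $0$-equivariance, so it preserves degree. This proves \eqref{eq:parity-claim}.

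The second step combines the parity statement with \eqref{eq:Am-in-Aastm}. Since $\cA_m\subseteq\sum_k \cA_1^{\ast(m-2k)}$ and $\cA_n\subseteq\sum_l \cA_1^{\ast(n-2l)}$, associativity gives
\[
a\ast b\in\sum_{k,l}\cA_1^{\ast(m+n-2k-2l)}.
\]
By \eqref{eq:parity-claim} each summand lies in the span of the $\cA_j$ with $j\equiv m+n \pmod 2$ and $0\le j\le m+n$, which is exactly the $\Z/2$-equivariance condition \eqref{Z2cond}.

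The one point needing care is the reduction of $\cA_1$ to products $h\ast F_i\ast h'$. We have $\cA_1=\cR_{X,1}^{-,r}\cH$, and $\cR^{-,r}_{X,1}$ is spanned by the elements $\ad_r(h)(F_i)$ by Corollary \ref{cor:adH-gen}. Each such element is a combination of products $S(h_{(1)})F_ih_{(2)}$ with $h_{(1)},h_{(2)}\in\cH$, and by $0$-equivariance these products equal the corresponding $\ast$-products. This step is routine. The only genuine input is the degree $-1$ homogeneity of $\mu_i^L$, which is already available from \eqref{eq:muiL}, so we expect no serious obstacle.
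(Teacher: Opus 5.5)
Your proof is correct and follows the paper's argument: the paper also combines \eqref{eq:Am-in-Aastm} with associativity and the degree $-1$ homogeneity of $\mu_i^L$, and phrases the conclusion as the statement that $s(a)=(-1)^m a$ for $a\in\cA_m$ is an algebra automorphism of $(\cA,\ast)$. Your explicit induction showing $\cA_1^{\ast r}\subseteq\bigoplus_{k\ge 0}\cA_{r-2k}$ supplies exactly the parity step that the paper's terse proof leaves implicit.
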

%%%%%%%%%%%%%%%%%%%%%%%%%%%%%%%%%%%%%%%%
\begin{proof}
  Indeed, by \eqref{eq:Am-in-Aastm} we have
  \begin{align*}
      \cA_m\ast \cA_n\subset \sum_{k=0}^{\lfloor (m+n)/2\rfloor} \cA_1^{\ast(m+n-2k)}.
  \end{align*}
  This shows that the linear map $s:\cA\rightarrow \cA$ defined by $s(a)=(-1)^ma$ for all $a\in \cA_m$ is an algebra automorphism of the star product algebra $(\cA,\ast)$. 
\end{proof}  
%%%%%%%%%%%%%%%%%%%%%%%%%%%%%%%%%%%%%%%%

%%%%%%%%%%%%%%%%%%%%%%%%%%%%%%%%%%%%%%%%
\subsection{Shortness of the star product on $\cA$}\label{sec:short-star}
%%%%%%%%%%%%%%%%%%%%%%%%%%%%%%%%%%%%%%%%
Consider $V:=U^\poly/\cJ$ as a left $U^\poly$-module. Let
\begin{align*}
  \eta: V\rightarrow \cA
\end{align*}
be the linear isomorphism given by the direct sum decomposition \eqref{eq:Upoly-decomp} and let $v_0\in V$ denote the right coset of $1\in U^\poly$, in other words $\eta(v_0)=1$. The following properties of the projection $\psi:U^\poly\rightarrow \cA$ follow directly from the definitions
\begin{align}
  \psi(u)&=\eta(uv_0) \qquad \mbox{for all $u\in U^\poly$,}\label{eq:psiu}\\
  \psi^{-1}(a)v_0&=a v_0 \qquad \mbox{for all $a\in \cA$ where $\psi^{-1}(a)\in \cB_\bc$.}\label{eq:psiav}
\end{align}  
Hence, for any $a,b\in \cA$ we have
\begin{align}\label{eq:ast-eta}
  a\ast b = \psi(\psi^{-1}(a)\psi^{-1}(b))
  \stackrel{\eqref{eq:psiu}}{=}\eta(\psi^{-1}(a)\psi^{-1}(b)v_0)
  \stackrel{\eqref{eq:psiav}}{=}\eta(\psi^{-1}(a)b v_0).
\end{align}
We use this formula to spell out the degree shift effected by homogeneous elements in $\cR_X^{\pm, r}$.
%%%%%%%%%%%%%%%%%%%%%%%%%%%%%%%%%%%%%%
\begin{lem}\label{lem:deg-shift}
  Let $m,n\in \N$. For all $x\in \cR_{X,m}^{+,r}$ and $y\in \cR_{X,m}^{-,r}$ and $a\in \cA_n$ we have
  \begin{align}
    \eta(ya v_0)&\in \cA_{n+m},\label{eq:deg-shift-y}\\
    \eta(xa v_0)&\in \cA_{n-m}.\label{eq:deg-shift-x}
  \end{align}
  In particular, if $m>n$ then $\eta(xav_0)=0$.
\end{lem}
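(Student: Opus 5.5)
The plan is to pass from $U^\poly$ to the horospherical Heisenberg double $\cW$, where there is an honest algebra $\Z$-grading. We then read off degrees from the triangular decomposition \eqref{eq:W-triang}.

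\emph{Step 1: $V$ is a $\cW$-module.} The left ideal $\cJ$ contains $U^0_{\tau,+}$, hence contains the two-sided ideal $\cI=U^\poly U^0_{\tau,+}$. So the left $U^\poly$-module $V=U^\poly/\cJ$ factors through $\pi$ and is a left $\cW$-module. Under $\eta$ it becomes the Fock space: for $y''\in\cR_X^{-,r}$, $h''\in\cH$ and $x''\in\cR_X^{+,r}$ we get
\begin{align*}
  \eta(y''h''x''v_0)=\vep(x'')\,y''h''.
\end{align*}
This holds because $\cR_{X,+}^{+,r}v_0=0$ by definition of $\cJ$, and because $y''h''\in\cA$.

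\emph{Step 2: a $\Z$-grading on $\cW$.} I would use the presentation of $\cW$ by the subalgebras $U^-$, $G^+$, $U^0_\theta$ and the relations \eqref{eq:Wrel1}, \eqref{eq:Wrel2}. Assign $\Z$-degrees
\begin{align*}
  \deg F_i=1,\qquad \deg\Etil_i=-1 \quad (i\in I\setminus X),
\end{align*}
and degree $0$ to $F_j$, $\Etil_j$ for $j\in X$ and to $U^0_\theta$. The Serre relations inside $U^-$ and $G^+$ are homogeneous. The relations \eqref{eq:Wrel1} are clearly homogeneous. In \eqref{eq:Wrel2}, for $i\in I\setminus X$ the left side has degree $0$ and the right side is the constant $\delta_{ij}/(q_i-q_i^{-1})$, which also has degree $0$. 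For $i\in X$ the term $1-K_i^{-2}$ has degree $0$. This is precisely where passing to the quotient by $\cI$ is needed: in $U^\poly$ the relation involves $K_i^{-2}$ with $i\notin X$.

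With this grading, $\cH$ sits in degree $0$. By Corollary \ref{cor:adH-gen}, $\cR^{-,r}_{X,m}$ is spanned by products of $m$ elements of the form $\ad_r(H)(F_i)$, and $\cR^{+,r}_{X,m}$ by products of $m$ elements $\ad_r(H)(\Etil_i)$. Hence these spaces lie in $\Z$-degrees $m$ and $-m$ respectively. I expect the main (though mild) checking to lie here: the right adjoint action of $\cH$ preserves the $\Z$-degree, and the elements of $H=\cL_X$ that occur effectively act through $\cH$ modulo $\cI$. It follows that the $\Z$-degree $d$ component of $\cW$ is
\begin{align*}
  \bigoplus_{k-l=d}\cR^{-,r}_{X,k}\,\cH\,\cR^{+,r}_{X,l}.
\end{align*}

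\emph{Step 3: conclusion.} Let $a\in\cA_n$ and write $a=\sum y'h$ with $y'\in\cR^{-,r}_{X,n}$ and $h\in\cH$, using \eqref{eq:A-triang-1}.

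For $y\in\cR^{-,r}_{X,m}$ the product $ya$ already lies in $\cA$, and $\cR^{-,r}_X$ is a graded subalgebra of $\cA$. Hence $\eta(yav_0)=ya\in\cA_{n+m}$, which gives \eqref{eq:deg-shift-y}.

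For $x\in\cR^{+,r}_{X,m}$, the element $\pi(xa)\in\cW$ has $\Z$-degree $n-m$. Expand it via \eqref{eq:W-triang} as a sum of terms $y''h''x''$ with $\deg y''-\deg x''=n-m$. By Step 1, only the terms with $x''\in\cR^{+,r}_{X,0}=\qfield$ survive after applying $\eta(\,\cdot\,v_0)$. These terms have $y''\in\cR^{-,r}_{X,n-m}$, so $\eta(xav_0)\in\cA_{n-m}$, which gives \eqref{eq:deg-shift-x}. If $m>n$ there are no such terms, so $\eta(xav_0)=0$.
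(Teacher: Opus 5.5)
Your proof is correct, but it is organized differently from the paper's.

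\emph{The paper's route.} The paper argues locally. It reduces \eqref{eq:deg-shift-x} to $m=1$ and then, up to multiplication by elements of $\cH$, to $x=\Etil_i$. It then reads off $\Etil_i\cA_n\subset\cA_{n-1}+\cJ$ from the decomposition \eqref{eq:A-triang} and the commutation relation \eqref{eq:EtilF}.

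\emph{Your route.} You argue globally. You put a $\Z$-grading on $\cW$ in which the summand $\cR^{-,r}_{X,k}\cH\cR^{+,r}_{X,l}$ of \eqref{eq:W-triang} has degree $k-l$. You then observe that $w\mapsto\eta(wv_0)$ is just the projection of \eqref{eq:W-triang} onto its $l=0$ part.

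\emph{Comparison.} Your approach handles all $m$ at once and needs neither the induction nor the explicit commutation. It therefore sidesteps details that the paper's one-line check leaves implicit, such as $\Etil_iU^+_X\subset\cJ$ and the fact that the $K_i^{-2}$-term lies in $\cJ$. It is also the same picture that drives Lemma \ref{lem:WA-short}. The cost is that you rely on the full triangular decomposition \eqref{eq:W-triang} rather than only on \eqref{eq:A-triang}.

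\emph{A correction to one side remark.} The quotient by $\cI$ is not what makes your grading exist. Your degree is the $Q$-weight composed with the functional $\sum_i\beta_i\alpha_i\mapsto-\sum_{i\in I\setminus X}\beta_i$, and $K_i^{-2}$ has weight $0$. So \eqref{eq:EtilF} is already homogeneous in $U^\poly$, and the grading exists there too. Passing to $\cW$ is a convenience, because it lets you use \eqref{eq:W-triang}, not a necessity.

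\emph{The check you flag in Step 2.} The weight description settles it immediately. $\ad_r(\cL_X)$ shifts weights only by elements of $Q_X$, so it preserves the degree. By Corollary \ref{cor:adH-gen} we have $\cR^{-,r}_X\subset U^-$ and $\cR^{+,r}_X\subset G^+$. Hence $\cR^{-,r}_{X,l}$ and $\cR^{+,r}_{X,l}$ visibly sit in degrees $l$ and $-l$, with no need to argue about which elements of $\cL_X$ act through $\cH$ modulo $\cI$.
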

%%%%%%%%%%%%%%%%%%%%%%%%%%%%%%%%%%%%%%
\begin{proof}
  The relation \eqref{eq:deg-shift-y} follows directly from the inclusion $\cR^{-,r}_{X,m}\subset \cA_m$. It suffices to prove the relation \eqref{eq:deg-shift-x} for $m=1$. As left multiplication by elements in $\cH$ preserves degree, we may assume that $x=\Etil_i$ for some $i\in I\setminus X$. Using the decomposition \eqref{eq:A-triang} and the commutation relations \eqref{eq:EtilF} we see that
  \begin{align*}
  \Etil_i \cA_n \subset \cA_{n-1} + \cJ. 
  \end{align*}
  This implies that $\eta(\Etil_iav_0)\in \cA_{n-1}.$
\end{proof}
%%%%%%%%%%%%%%%%%%%%%%%%%%%%%%%%%%%%%%
Recall that $\cI$ is a two-sided ideal in $U^\poly$ with $\cI\subset \cJ$. As $\cW=U^\poly/\cI$, we may consider $V$ as a $\cW$-module. Recall also the definition \eqref{eq:Wn-def} of $\cW_m=\cW_{X,\tau,m}$ for $m\in \N$.
%%%%%%%%%%%%%%%%%%%%%%%%%%%%%%%%%%%%%%
\begin{lem}\label{lem:WA-short}
  Let $u\in \cW_m$ and $a\in \cA_n$ for some $m,n\in \N$ with $m\ge n$. Then
  \begin{align*}
     \eta(ua v_0)\in \bigoplus_{\ell=m-n}^{m+n} \cA_\ell.
  \end{align*}  
\end{lem}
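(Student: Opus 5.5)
By linearity and the definition \eqref{eq:Wn-def} of $\cW_m$, it suffices to treat
$$u = y\,h\,x \qquad \text{with } y\in\cR_{X,\ell}^{-,r},\; h\in\cH,\; x\in\cR_{X,m-\ell}^{+,r},\; 0\le\ell\le m.$$
Here we view $\cW_m$ as the image of $U^\poly_m$, which is legitimate since $\cI\subset\cJ$ and $V$ is a $\cW$-module. We then track degrees as we apply $x$, then $h$, then $y$ to $av_0$.

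The first step is to apply $x$. By Lemma \ref{lem:deg-shift}, \eqref{eq:deg-shift-x}, we have $\eta(xav_0)\in\cA_{n-(m-\ell)}$. This vanishes if $m-\ell>n$. Write $a'\in\cA_{n-m+\ell}$ for this element, so that $xav_0=a'v_0$; this uses that $\eta$ is the isomorphism $V\cong\cA$ and that $a'v_0$ is the coset of $a'$.

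The second step is to apply $h$. Left multiplication by $h\in\cH$ preserves the grading of $\cA$, and $\cA$ is a subalgebra, so $hxav_0=(ha')v_0$ with $ha'\in\cA_{n-m+\ell}$.

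The third step is to apply $y$. By \eqref{eq:deg-shift-y}, $\eta(y\,ha'\,v_0)\in\cA_{n-m+2\ell}$.

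Now we read off the resulting bounds. A nonzero contribution requires $m-\ell\le n$, i.e. $\ell\ge m-n$, and $0\le\ell\le m$. Then the degree
$$d=n-m+2\ell$$
satisfies $d\ge n-m+2(m-n)=m-n$, using $\ell\ge m-n$ together with $m\ge n$. It also satisfies $d\le n-m+2m=m+n$. So every contribution lies in $\bigoplus_{\ell=m-n}^{m+n}\cA_\ell$, as claimed.

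The only subtle point is justifying that the operators $x$, $h$, $y$ act on $V$ compatibly with the identification $\eta$, i.e. that $\eta(zbv_0)$ depends only on $\eta$ of $bv_0$. This holds because $V$ is a left module and $bv_0$ is determined by $\eta(bv_0)$ via $\eta^{-1}$, so iterating Lemma \ref{lem:deg-shift} is valid. The hypothesis $m\ge n$ is used exactly to turn the constraint $\ell\ge m-n$ into the lower bound $m-n$. I do not expect a genuine obstacle beyond this bookkeeping.
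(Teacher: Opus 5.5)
Your proposal is correct and follows the paper's proof. You reduce to $u=yhx$ with $y\in\cR_{X,\ell}^{-,r}$, $h\in\cH$, $x\in\cR_{X,m-\ell}^{+,r}$ and apply Lemma \ref{lem:deg-shift} to get vanishing when $m-\ell>n$ and degree $n-m+2\ell\in[m-n,m+n]$ otherwise; the paper writes $p=m-\ell$ and gets degree $m-2p+n$, and you just spell out the intermediate steps on $V$ that the paper leaves implicit.
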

%%%%%%%%%%%%%%%%%%%%%%%%%%%%%%%%%%%%%%
\begin{proof}
  By the definition of $\cW_m$ and linearity we may assume that $u\in \cR_{X,m-p}^{-,r} \cH \cR_{X,p}^{+,r}$ for some $0\le p\le m$. By Lemma \ref{lem:deg-shift} we obtain $\eta(uav_0)=0$ if $p>n$ and $\eta(uav_0)\in \cA_{m-2p+n}$. Hence we may assume that $p\le n$, and in this case the claim follows as $m-n\le m-2p+n\le m+n$.
\end{proof}  
%%%%%%%%%%%%%%%%%%%%%%%%%%%%%%%%%%%%%%
With these preparations we are ready to establish the main goal of this section.
%%%%%%%%%%%%%%%%%%%%%%%%%%%%%%%%%%%%%%
\begin{thm}\label{thm:short}
  The star product of $\cA$ defined by Equation \eqref{eq:ast-def} is short.
\end{thm}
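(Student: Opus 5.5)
We must show $a\ast b\in\cA_{[|m-n|,m+n]}$ for $a\in\cA_m$, $b\in\cA_n$. The upper bound $a\ast b\in\cA_{\le m+n}$ already holds because $\ast$ is a star product (Lemma \ref{lem:quant-star}), so only the lower bound $|m-n|$ needs proof. The plan is to use formula \eqref{eq:ast-eta}, $a\ast b=\eta(\psi^{-1}(a)bv_0)$, and to replace $\psi^{-1}(a)$ by a homogeneous element of $\cW$ via Theorem \ref{thm:Bc-graded}. Then Lemma \ref{lem:WA-short} bounds degrees when $m\ge n$. The case $m<n$ will be handled by a symmetry argument.

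\emph{Case $m\ge n$.} Since $\cI\subset\cJ$, the action of $U^\poly$ on $V$ factors through $\cW$, so $a\ast b=\eta(\pi(\psi^{-1}(a))bv_0)$. By Theorem \ref{thm:Bc-graded}, write $\pi(\psi^{-1}(a))=\sum_\ell w_\ell$ with $w_\ell\in\cW_\ell\cap\pi(\cB_\bc)$. Each $w_\ell$ equals $\pi(b_\ell)$ for some $b_\ell\in\cB_\bc$. Applying $\eta(\,\cdot\,v_0)$ and using $\psi=\eta(\,\cdot\,v_0)$ on $U^\poly$, we get $a=\psi(\psi^{-1}(a))=\sum_\ell\eta(w_\ell v_0)$. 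By Lemma \ref{lem:deg-shift}, only the $\cR^{-,r}_{X,\ell}\cH\cR^{+,r}_{X,0}$ part of $w_\ell$ survives on $v_0$, so $\eta(w_\ell v_0)\in\cA_\ell$.

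Now $\psi$ is injective on $\cB_\bc$ and $\psi(b_\ell)=\eta(w_\ell v_0)$. Hence $\psi^{-1}(a)=\sum_\ell b_\ell$, where each $b_\ell$ is the $\psi$-preimage of the degree $\ell$ component of $a$. Since $a\in\cA_m$, we get $b_\ell=0$ for $\ell\ne m$, and therefore $\pi(\psi^{-1}(a))\in\cW_m$. Lemma \ref{lem:WA-short} then gives $a\ast b\in\bigoplus_{\ell=m-n}^{m+n}\cA_\ell$. The same argument shows the following key fact: for homogeneous $a\in\cA_m$ one has $\pi(\psi^{-1}(a))\in\cW_m$.

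\emph{Case $m<n$.} This is the main obstacle, because Lemma \ref{lem:WA-short} needs $m\ge n$. The lower bound $n-m$ should come from degree lowering by the $\cR^{+,r}$ part, which can only reduce the degree of $b$ by at most $m$. I will argue directly. Write $u=\pi(\psi^{-1}(a))\in\cW_m$ as a sum of terms in $\cR^{-,r}_{X,m-p}\cH\cR^{+,r}_{X,p}$ with $0\le p\le m$. By Lemma \ref{lem:deg-shift}, each such term sends $bv_0$ into $\cA_{n+m-2p}$. The smallest possible degree is $n-m$, attained at $p=m$, and this is $\ge 0$ and $=|m-n|$ since $p\le m<n$. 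So the argument of Lemma \ref{lem:WA-short} already works without the hypothesis $m\ge n$, provided we use $p\le m$ in place of $p\le n$.

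Combining both cases: every term lands in $\cA_{m+n-2p}$ with $0\le p\le\min(m,n)$, since terms with $p>n$ vanish. Hence $a\ast b\in\cA_{[|m-n|,m+n]}$. The only genuinely delicate point is establishing $\pi(\psi^{-1}(a))\in\cW_m$ from the gradedness of $\pi(\cB_\bc)$ and the injectivity of $\psi$, as done in the first case.
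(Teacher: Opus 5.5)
Your proof is correct. For $m\ge n$ it coincides with the paper's argument: you obtain $\pi(\psi^{-1}(a))\in\cW_m$ from Theorem \ref{thm:Bc-graded} and the injectivity of $\psi$, and then apply Lemma \ref{lem:WA-short}. For $m<n$ your route is genuinely different. The paper does not use the Heisenberg double in that case. It combines two facts: $\mu_i^L$ is homogeneous of degree $-1$, which comes from the explicit formula \eqref{eq:muiL}, and the inclusion \eqref{eq:Am-in-Aastm}. Together these show that left $\ast$-multiplication by at most $m$ degree-one factors moves the degree of $b$ by at most $m$. You instead observe that the proof of Lemma \ref{lem:WA-short} never actually needs the hypothesis $m\ge n$. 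A summand of $\pi(\psi^{-1}(a))$ lying in $\cR^{-,r}_{X,m-p}\cH\cR^{+,r}_{X,p}$ sends $bv_0$ into $\cA_{m+n-2p}$, or to $0$ if $p>n$. Since $p\le\min(m,n)$, this forces $m+n-2p\ge|m-n|$. This gives a uniform proof that uses only the coideal property (through Theorem \ref{thm:Bc-graded}) and Lemma \ref{lem:deg-shift}, with no input from the explicit form of $\mu_i^L$. It also gives more than shortness: every contribution lies in some $\cA_{m+n-2p}$, so $\Z/2$-equivariance (Lemma \ref{lem:Z2-ast}) follows at the same time. The paper's split is natural in its exposition, because the left lower order term maps are already known explicitly, so only the $m\ge n$ half seems to need the new homogeneity result. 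Your version shows that this half of the argument alone already suffices for both cases.
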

%%%%%%%%%%%%%%%%%%%%%%%%%%%%%%%%%%%%%%
\begin{proof}
  Let $a\in \cA_m$ and $b\in \cA_n$. Recall that the maps $\mu_i^L$ are homogeneous of degree $-1$ for all $i\in I\setminus X$. Moreover, by Equation \eqref{eq:Am-in-Aastm} we have $a\in \sum_{k=0}^{\lfloor m/2 \rfloor} \cA_1^{\ast(m-2k)}$. These two facts together imply that
  \begin{align*}
     a\ast b \in \bigoplus_{\ell=n-m}^{n+m} \cA_\ell \qquad \mbox{if $m\le n$.} 
  \end{align*}
  Now assume that $m\ge n$. By Equation \eqref{eq:ast-eta} we have
  \begin{align*}
     a\ast b = \eta(\psi^{-1}(a) bv_0)=\eta(\pi(\psi^{-1}(a))bv_0)
  \end{align*}
  with $\psi^{-1}(a)\in \cB_\bc$. By Theorem \ref{thm:Bc-graded} we can write $\pi(\psi^{-1}(a))=\sum_\ell \beta_\ell$ with $\beta_\ell \in \pi(\cB_\bc)\cap \cW_\ell$. Recall that the projection map $\psi:U^\poly\rightarrow \cA$ is graded with respect to the grading \eqref{eq:gradings} on $U^\poly$. As $a\in \cA_m$ and $\psi|_{\cB_\bc}:\cB_\bc\rightarrow \cA$ is an isomorphism, we have $\beta_\ell=0$ for all $\ell\neq m$, that is $\pi(\psi^{-1}(a))\in \cW_m$. We now use Lemma \ref{lem:WA-short} to obtain
  \begin{align*}
     a\ast b \in \bigoplus_{\ell=m-n}^{m+n} \cA_\ell  
  \end{align*}
  which completes the proof in the case $m\ge n$.
  \end{proof}  
%%%%%%%%%%%%%%%%%%%%%%%%%%%%%%%%%%%%%%
  As an immediate consequence of Theorem \ref{thm:short} we obtain the outstanding homogeneity of the map $\mu^R_i$.
%%%%%%%%%%%%%%%%%%%%%%%%%%%%%%%%%%%%%%
\begin{cor}\label{cor:muiR}
  Let $i\in I\setminus X$. The map $\mu_i^R:\cA\rightarrow \cA$ defined by $\mu_i^R(a)=a\ast F_i-aF_i$ is homogeneous of degree $-1$.
\end{cor}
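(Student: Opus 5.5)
We combine the shortness of Theorem \ref{thm:short} with the $\Z/2$-equivariance of Lemma \ref{lem:Z2-ast}. By linearity it suffices to treat a homogeneous element $a\in\cA_m$. Since $F_i\in\cA_1$, Theorem \ref{thm:short} gives
\begin{align*}
  a\ast F_i\in \bigoplus_{\ell=|m-1|}^{m+1}\cA_\ell .
\end{align*}
Lemma \ref{lem:Z2-ast} says that $a\ast F_i$ lies in $\bigoplus_{k}\cA_{m+1-2k}$, so only the components of degree $m+1$ and $m-1$ can survive; when $m=0$ only degree $1$ survives.

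Next we identify the top component. By the star product property \eqref{eq:a*b-ab}, $a\ast F_i-aF_i\in\cA_{<m+1}$. Since $aF_i\in\cA_{m+1}$, the degree $m+1$ component of $a\ast F_i$ is exactly $aF_i$. Hence $\mu_i^R(a)=a\ast F_i-aF_i$ lies in $\cA_{m-1}$ for $m\ge1$, and $\mu_i^R(a)=0$ for $m=0$. This agrees with $0$-equivariance, since $h\ast F_i=hF_i$ for $h\in\cA_0$.

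So $\mu_i^R$ maps $\cA_m$ into $\cA_{m-1}$, with $\cA_{-1}=0$, and is therefore homogeneous of degree $-1$. There is no real obstacle: all the difficulty is contained in Theorem \ref{thm:short}, specifically the lower bound $|m-1|$, which removes every component below degree $m-1$.
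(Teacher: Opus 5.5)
Your proposal is correct and is essentially the paper's proof: shortness (Theorem~\ref{thm:short}) confines $a\ast F_i$ to degrees $m-1$, $m$, $m+1$, the star product axiom \eqref{eq:a*b-ab} identifies the degree $m+1$ component as $aF_i$, and the $\Z/2$-equivariance of Lemma~\ref{lem:Z2-ast} kills the degree $m$ component. Your separate treatment of the case $m=0$ is a harmless extra detail that the paper leaves implicit.
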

%%%%%%%%%%%%%%%%%%%%%%%%%%%%%%%%%%%%%
\begin{proof}
  Let $a\in \cA_n$. By the shortness of the star product we have $\mu_i^R(a)\in \cA_n\oplus \cA_{n-1}$. However, as the star product is $\Z/2$-equivariant by Lemma \ref{lem:Z2-ast}, we have
  \begin{align*}
    a\ast F_i\subset \cA_{n+1}\oplus \cA_{n-1}\oplus \cA_{n-3} \oplus \cdots.
  \end{align*}
Hence $\mu_i^R(a)\in \cA_{n-1}$.
\end{proof}
%%%%%%%%%%%%%%%%%%%%%%%%%%%%%%%%%%%%%
\begin{rema}
   It is noteworthy that the statement of Corollary \ref{cor:muiR} (together with the analogous statement for $\mu_i^L$) is equivalent to the shortness of the $\Z/2$-equivariant star product on the graded algebra $\cA$. 
\end{rema}
%%%%%%%%%%%%%%%%%%%%%%%%%%%%%%%%%%%%%
\section{The algebra anti-automorphism $\sigma_\tau$ for QSP-subalgebras}\label{sec:sigmatau}
%%%%%%%%%%%%%%%%%%%%%%%%%%%%%%%%%%%%%
As a first application of the shortness of the star product of $\cA$ we now show that the map $\sigma\circ \tau$ defines an algebra anti-automorphism of $(\cA,\ast)$ and hence of $\cB_\bc$. This will allow us to give new and straightforward proofs of the existence of the bar involution of $\cB_\bc$ and of \cite[Conjecture 2.7]{a-BalaKolb15}.
%%%%%%%%%%%%%%%%%%%%%%%%%%%%%%%%%%%%%
\subsection{The left and right lower order term maps}
%%%%%%%%%%%%%%%%%%%%%%%%%%%%%%%%%%%%%
%Recall the lower order term maps $\mu_i^L, \mu_i^R:\cA\rightarrow \cA$ defined for $i\in I\setminus X$ by \eqref{eq:mui-def}. 
We aim to understand the relation between $\mu_i^L$ and $\mu^R_i$. The following lemma is a first elementary instance of this relation.
%%%%%%%%%%%%%%%%%%%%%%%%%%%%
\begin{lem}\label{lem:muLR-1}
  For all $i, \ell \in I\setminus X$ we have
  \begin{align*}
     T_{w_X}^{-1}(\mu^L_\ell(T_{w_X}(F_i))) = \mu_i^R(T_{w_X}^{-1}(F_\ell))=-\delta_{\ell,\tau(i)}c_i \frac{q^{-(\alpha_i,\theta(\alpha_i))}}{q_i-q_i^{-1}} K_{\alpha_i+\theta(\alpha_i)}.
  \end{align*}
\end{lem}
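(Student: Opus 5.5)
We will prove the first equality by transporting the star product along the braid group automorphism $T_{w_X}$. We will then obtain the explicit value by evaluating the left hand side with the known formula \eqref{eq:muiL} for $\mu^L_\ell$.

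\emph{Step 1: $T_{w_X}$ is an automorphism of $(\cA,\ast)$.} By Lemma \ref{lem:Letzter-props}.(iii), for every $j\in X$ and $b\in\cB_\bc$ we have $\psi(T_j(b))=T_j(\psi(b))$. Implicit in this statement is that $T_j(\cB_\bc)\subseteq\cB_\bc$, and since $T_j^{-1}$ is handled in the same way, $T_j(\cB_\bc)=\cB_\bc$. As $\psi:\cB_\bc\to\cA$ is bijective, it follows that $T_j(\cA)=\cA$ for all $j\in X$. Because $T_{w_X}$ is a composition of such $T_j$ (along a reduced word for $w_X$), we get $\psi\circ T_{w_X}=T_{w_X}\circ\psi$ on $\cB_\bc$ and $T_{w_X}(\cA)=\cA$.

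Since $T_{w_X}$ is an algebra automorphism of $\Uq$, the definition \eqref{eq:ast-def} gives
\begin{align*}
  T_{w_X}(a\ast b)=T_{w_X}(a)\ast T_{w_X}(b)\qquad\mbox{for all $a,b\in\cA$.}
\end{align*}
The same identity holds for $T_{w_X}^{-1}$.

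\emph{Step 2: the first equality.} Both $T_{w_X}(F_i)$ and $T_{w_X}^{-1}(F_\ell)$ lie in $\cA$ by Step 1. We compute
\begin{align*}
 T_{w_X}^{-1}\big(\mu^L_\ell(T_{w_X}(F_i))\big)
 &=T_{w_X}^{-1}\big(F_\ell\ast T_{w_X}(F_i)-F_\ell T_{w_X}(F_i)\big)\\
 &=T_{w_X}^{-1}(F_\ell)\ast F_i-T_{w_X}^{-1}(F_\ell)F_i\\
 &=\mu_i^R(T_{w_X}^{-1}(F_\ell)).
\end{align*}
The second line uses Step 1 for the star product and the fact that $T_{w_X}^{-1}$ is multiplicative for the ordinary product. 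This is the first equality.

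\emph{Step 3: the explicit value.} By Proposition \ref{prop:R-UTU}, $T_{w_X}(F_i)\in U^-\cap T_{w_X}(U^-)=\cR_X^{-,r}$ for $i\in I\setminus X$. Formula \eqref{eq:muiL} therefore applies with $a=T_{w_X}(F_i)$ and $h=1$. Since $T^{-1}_{w_X}(a)=F_i$ and $\partial^L_{\tau(\ell)}(F_i)=\delta_{i,\tau(\ell)}$, we obtain
\begin{align*}
 \mu^L_\ell(T_{w_X}(F_i))=-\delta_{\ell,\tau(i)}\,c_\ell\,\frac{q^{-(\alpha_\ell,\theta(\alpha_\ell))}}{q_\ell-q_\ell^{-1}}K_{-\alpha_\ell-\theta(\alpha_\ell)}.
\end{align*}
It remains to apply $T_{w_X}^{-1}$ when $\ell=\tau(i)$.

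\begin{itemize}
\item \emph{The Cartan part.} We use $T_{w_X}^{\pm1}(K_\beta)=K_{w_X(\beta)}$ and $\theta=-w_X\circ\tau$. Then $\alpha_\ell+\theta(\alpha_\ell)=\alpha_\ell-w_X(\alpha_i)$, so $w_X(\alpha_\ell+\theta(\alpha_\ell))=w_X(\alpha_\ell)-\alpha_i=-\theta(\alpha_i)-\alpha_i$. Hence $T_{w_X}^{-1}(K_{-\alpha_\ell-\theta(\alpha_\ell)})=K_{\alpha_i+\theta(\alpha_i)}$.
\item \emph{The $q$-powers.} Since $\tau$ preserves the form and commutes with $\theta$, we have $(\alpha_{\tau(i)},\theta(\alpha_{\tau(i)}))=(\alpha_i,\theta(\alpha_i))$ and $q_{\tau(i)}=q_i$.
\end{itemize}

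The step I expect to be the main obstacle is matching the parameter. The computation in Step 3 naturally yields $c_{\tau(i)}$, while the statement has $c_i$. If $\tau(i)=i$ there is nothing to check. Otherwise I would re-derive \eqref{eq:muiL} directly from $B_\ell=F_\ell-c_\ell T_{w_X}(E_{\tau(\ell)})K_\ell^{-1}$. For this I would compute $\psi(B_\ell T_{w_X}(F_i))$ using \eqref{eq:EyyE} for the commutator of $T_{w_X}(E_i)$ with $T_{w_X}(F_i)$. If this direct computation again produces $c_{\tau(i)}$, I would conclude that the constant in the statement should read $c_{\tau(i)}$, consistent with Theorem C, or else locate the convention under which $c_i=c_{\tau(i)}$ is imposed.
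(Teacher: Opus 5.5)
Your proposal is correct and is essentially the paper's proof: the first equality comes from $T_{w_X}$ being an automorphism of $(\cA,\ast)$ via Lemma~\ref{lem:Letzter-props}.(iii), and the explicit value comes from evaluating \eqref{eq:muiL} at $a=T_{w_X}(F_i)$. Your worry about the constant is justified and needs no further re-derivation: \eqref{eq:muiL} (equivalently $B_{\tau(i)}=F_{\tau(i)}-c_{\tau(i)}T_{w_X}(E_i)K_{\tau(i)}^{-1}$) gives $c_{\tau(i)}$, which is also the constant in Lemma~\ref{lem:mutilRi} and the one the proof of Theorem~\ref{thm:muiR} needs, so the $c_i$ in the statement is a typo. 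It is harmless when $\tau(i)=i$ or $(\alpha_i,\theta(\alpha_i))=0$, since then $c_i=c_{\tau(i)}$ by the standing assumption on $\bc$.
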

%%%%%%%%%%%%%%%%%%%%%%%%%%%%
\begin{proof}
  We prove the first equality. By definition of $\mu^R_i$ we have
  \begin{align*}
    T_{w_X}^{-1}(F_\ell)\ast F_i=T_{w_X}^{-1}(F_{\ell}) F_i + \mu^R_i(T_{w_X}^{-1}(F_\ell)).
  \end{align*}
  On the other hand, by Lemma \ref{lem:Letzter-props}.(iii) the map $T_{w_X}$ is an algebra automorphism of $(\cA,\ast)$. Hence we have
  \begin{align*}
    T_{w_X}^{-1}(F_\ell)\ast F_i&= T_{w_X}^{-1}(F_\ell \ast T_{w_X}(F_i))=T_{w_X}^{-1}(F_\ell T_{w_X}(F_i)) + T_{w_X}^{-1}(\mu_\ell^L(T_{w_X}(F_i))).
  \end{align*}
  Comparison of the two equations proves the first equality. The second equality now follows from the explicit expression for $\mu_\ell^L$ given by Equation \eqref{eq:muiL}. 
\end{proof}  
%%%%%%%%%%%%%%%%%%%%%%%%%%%%
Recall from Section \ref{sec:star-Bc} and Corollary \ref{cor:muiR} that $\mu_i^L$ and $\mu_i^R$ are homogeneous maps of degree $-1$. Equation \eqref{eq:muiL} gives us an explicit description of the map $\mu_i^L$. The second statement of the following proposition provides a main ingredient used to find a similar description for $\mu_i^R$.
%%%%%%%%%%%%%%%%%%%%%%%%%%%%%
\begin{prop}\label{prop:muR-muL}
  The maps $\mu_\ell^L, \mu_i^R:\cA\rightarrow \cA$ for $i,\ell\in I\setminus X$ satisfy the following relations:
   \begin{enumerate}
    \item  $\mu_i^R \circ \mu_\ell^L = \mu_\ell^L\circ \mu_i^R$,
    \item $\mu_i^R \circ T_{w_X}^{-1}\circ \mu_\ell^L\circ T_{w_X} = T_{w_X}^{-1}\circ\mu_\ell^L\circ T_{w_X}\circ \mu_i^R $.
  \end{enumerate}  
\end{prop}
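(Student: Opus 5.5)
Both relations come from associativity of $\ast$, read off in a single graded component.

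\textbf{Relation (1).} For $a\in\cA_n$ expand $(F_\ell\ast a)\ast F_i=F_\ell\ast(a\ast F_i)$ using \eqref{eq:mui-def}. On the left,
\begin{align*}
(F_\ell a+\mu_\ell^L(a))\ast F_i = F_\ell a F_i + \mu_i^R(F_\ell a) + \mu_\ell^L(a)F_i + \mu_i^R(\mu_\ell^L(a)).
\end{align*}
On the right,
\begin{align*}
F_\ell\ast(aF_i+\mu_i^R(a)) = F_\ell aF_i + \mu_\ell^L(aF_i) + F_\ell\mu_i^R(a) + \mu_\ell^L(\mu_i^R(a)).
\end{align*}
Now use degrees. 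By \eqref{eq:muiL} the map $\mu_\ell^L$ is homogeneous of degree $-1$, and by Corollary \ref{cor:muiR} so is $\mu_i^R$.
\begin{itemize}
\item The terms $F_\ell aF_i$ lie in degree $n+2$.
\item The terms $\mu_i^R(F_\ell a)$, $\mu_\ell^L(a)F_i$, $\mu_\ell^L(aF_i)$ and $F_\ell\mu_i^R(a)$ lie in degree $n$.
\item The double compositions $\mu_i^R\mu_\ell^L(a)$ and $\mu_\ell^L\mu_i^R(a)$ lie in degree $n-2$.
\end{itemize}
Comparing the components of degree $n-2$ gives $\mu_i^R\mu_\ell^L(a)=\mu_\ell^L\mu_i^R(a)$. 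Both sides are linear in $a$, so (1) holds on all of $\cA=\bigoplus_n\cA_n$.

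\textbf{Relation (2).} Here I would use that $T_{w_X}$ is an algebra automorphism of $(\cA,\ast)$. Lemma \ref{lem:Letzter-props}.(iii) gives $\psi\circ T_j=T_j\circ\psi$ for $j\in X$, and $T_j$ preserves $\cB_\bc$ (as used in Lemma \ref{lem:muLR-1}). I would also check that $T_{w_X}$ preserves the grading of $\cA$. It fixes the degree-$0$ part $\cH$, and it sends $F_i$ for $i\in I\setminus X$ into $\cR_{X,1}^{-,r}\cH\subset\cA_1$. So it is graded.

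Set $G=T_{w_X}^{-1}(F_\ell)\in\cA_1$. Then for all $a$,
\begin{align*}
G\ast a = T_{w_X}^{-1}\big(F_\ell\ast T_{w_X}(a)\big) = Ga + \nu(a), \qquad \nu:=T_{w_X}^{-1}\circ\mu_\ell^L\circ T_{w_X}.
\end{align*}
Since $T_{w_X}$ is an algebra automorphism for the ordinary product, $\nu$ is homogeneous of degree $-1$. Repeat the associativity argument with $(G\ast a)\ast F_i=G\ast(a\ast F_i)$. The degree $n-2$ components give $\mu_i^R\circ\nu=\nu\circ\mu_i^R$, which is (2).

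\textbf{Main obstacle.} The only delicate points are bookkeeping. First, the degree-$n$ terms must not contaminate degree $n-2$. This is guaranteed only because Corollary \ref{cor:muiR} makes $\mu_i^R$ exactly homogeneous of degree $-1$, which is where shortness enters. Second, one must verify that $T_{w_X}$ respects both $\ast$ and the grading; I expect both to follow directly from the results cited above.
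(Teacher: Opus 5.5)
Your proposal is correct and follows essentially the same route as the paper: expand $F_\ell\ast a\ast F_i$ (respectively $T_{w_X}^{-1}(F_\ell)\ast a\ast F_i$, using that $T_{w_X}$ is an automorphism of $(\cA,\ast)$ by Lemma \ref{lem:Letzter-props}.(iii)) in two ways via associativity. Then compare the components in $\cA_{n-2}$, using that $\mu_\ell^L$ and, by Corollary \ref{cor:muiR}, $\mu_i^R$ are homogeneous of degree $-1$.
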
  
%%%%%%%%%%%%%%%%%%%%%%%%%%%%%%%%%%%%%
\begin{proof}
  Let $a\in \cA_n$ for some $n\in \N$. Using the associativity of the star product, we obtain two expressions for $F_\ell\ast a\ast F_i$, namely
  \begin{align*}
    (F_\ell\ast a)\ast F_i&=(F_\ell \ast a)F_i+\mu^R_i(F_\ell\ast a)\\
    &=F_\ell a F_i +\mu^L_\ell(a)F_i + \mu^R_i(F_\ell a)+\mu^R_i(\mu^L_\ell(a)),\\
     F_\ell\ast (a\ast F_i)&=F_\ell (a\ast F_i)+\mu^L_\ell(a \ast F_i)\\
    &=F_\ell a F_i +F_\ell \mu^R_i(a) + \mu^L_\ell(a F_i)+\mu^L_\ell(\mu^R_i(a)).
  \end{align*}  
  By Equation \eqref{eq:muiL} the map $\mu^L_\ell$ is homogeneous of degree $-1$, and by Corollary \ref{cor:muiR} so is the map $\mu^R_i$. Using this, we obtain the relation  $\mu_i^R (\mu_\ell^L(a)) = \mu_\ell^L(\mu_i^R (a))$ by comparison of the components in $\cA_{n-2}$ in the above expressions for $F_\ell\ast a\ast F_i$. This proves the first formula in the Proposition. The second formula is obtained analogously starting from $T_{w_X}^{-1}(F_\ell)\ast a \ast F_i$. Indeed, we obtain
    \begin{align}
    (T_{w_X}^{-1}(F_\ell)\ast a)\ast F_i=&(T_{w_X}^{-1}(F_\ell) \ast a)F_i+\mu^R_i(T_{w_X}^{-1}(F_\ell)\ast a)\nonumber\\
      =&T_{w_X}^{-1}(F_\ell) a F_i +T_{w_X}^{-1}(\mu^L_\ell(T_{w_X}(a)))F_i + \mu^R_i(T_{w_X}^{-1}(F_\ell) a)\label{eq:associative-1}\\
      &+\mu^R_i(T_{w_X}^{-1}(\mu^L_\ell(T_{w_X}(a)))),\nonumber\\
     T_{w_X}^{-1}(F_\ell)\ast (a\ast F_i)=&T_{w_X}^{-1}(F_\ell) (a\ast F_i)+T_{w_X}^{-1}(\mu^L_\ell (T_{w_X}(a \ast F_i)))\nonumber\\
     =&T_{w_X}^{-1}(F_\ell) a F_i +T_{w_X}^{-1}(F_\ell) \mu^R_i(a) + T_{w_X}^{-1}(\mu^L_\ell (T_{w_X}(a F_i)))\label{eq:associative-2}\\
     &+T_{w_X}^{-1}(\mu^L_\ell(T_{w_X}(\mu^R_i(a))))\nonumber
    \end{align}
    and again comparison of components in $\cA_{n-2}$ gives the desired formula.
\end{proof}
%%%%%%%%%%%%%%%%%%%%%%%%%%%%%%%%%%%%%%
\subsection{Transforming $\mu_i^L$ from left to right}
%%%%%%%%%%%%%%%%%%%%%%%%%%%%%%%%%%%%%%
Recall the $\qfield$-linear involutive algebra anti-automorphism $\sigma:\Uq\rightarrow \Uq$ defined by \eqref{eq:sigma-def}. For brevity we write
\begin{align*}
  \sigma\tau=\sigma \circ \tau:\Uq\rightarrow \Uq.
\end{align*}
The algebra anti-automorphisms $\sigma$ and $\sigma\tau$ restrict to the quantum horospherical subalgebra $\cA$. By \cite[37.2.4]{b-Lusztig94} we know that $\sigma \circ T_i \circ \sigma = T_i^{-1}$ for all $i\in I$ and therefore
\begin{align}\label{eq:sigmaTwX}
  \sigma\tau\circ T_{w_X} = T_{w_X}^{-1}\circ\sigma\tau.
\end{align}
Hence, Proposition \ref{prop:R-UTU} implies that
\begin{align}\label{eq:sigmaRX-}
  \sigma\tau(\cR_X^{-,r})= \sigma(\cR_X^{-,r})=\sigma(U^-\cap T_{w_X}(U^-))=U^-\cap T_{w_X}^{-1}(U^-)=T_{w_X}^{-1}(\cR_X^{-,r}).
\end{align}
Moreover, the skew-derivation properties \eqref{eq:skew-der}, \eqref{eq:skew-der-+} of $\partial_i^L$ and $\partial_i^R$ imply the relation $\sigma\circ \partial_i^L\circ \sigma=\partial_i^R$ on $U^\pm$ and hence
\begin{align}\label{eq:sigma-tau-partial}
  \sigma\tau\circ \partial_i^L\circ \sigma\tau=\partial_{\tau(i)}^R.
\end{align}  
For all $i\in I\setminus X$ define
\begin{align}\label{eq:mutil-def}
  \mutil_i^R=\sigma\tau \circ \mu_{\tau(i)}^L\circ \sigma\tau: \cA \rightarrow \cA.
\end{align}
We will show in Theorem \ref{thm:muiR} that $\mutil_i^R=\mu_i^R$. The following lemma serves as preparation.
%%%%%%%%%%%%%%%%%%%%%%%%%%%%%%%%%%%%%
\begin{lem}\label{lem:mutilRi}
  (1) Let $i\in I\setminus X$ and $a=hv$ with $h\in \cH$ and $v\in T_{w_X}^{-1}(\cR_X^{-,r})$. Then
  \begin{align}\label{eq:mutilRi}
    \mutil_i^R(a)=h\mutil_i^R(v)= -c_{\tau(i)} \frac{q^{-(\alpha_i,\theta(\alpha_i))}}{q_i-q_i^{-1}} h T^{-1}_{w_X} \circ \partial^R_{\tau(i)}\circ T_{w_X}(v)K_{\alpha_i+\theta(\alpha_i)}.
  \end{align}
  (2) For all $i\in I\setminus X$, $j\in X$ and $a\in \cA$ we have
  \begin{align}\label{eq:adr-mutil}
    \ad_r(E_j)(\mutil_i^R(a)) = \mutil_i^R(\ad_r(E_j)a).
  \end{align}  
\end{lem}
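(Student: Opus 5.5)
Both parts come from transporting the corresponding properties of $\mu_{\tau(i)}^L$ through the anti-automorphism $\sigma\tau$, using the definition \eqref{eq:mutil-def}, the explicit formula \eqref{eq:muiL} and the relations \eqref{eq:sigmaTwX}, \eqref{eq:sigmaRX-} and \eqref{eq:sigma-tau-partial}.

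For (1), write $a=hv$ with $h\in\cH$ and $v\in T_{w_X}^{-1}(\cR_X^{-,r})$. Then $\sigma\tau(a)=\sigma\tau(v)\,\sigma\tau(h)$.
\begin{itemize}
\item Since $\sigma\tau$ is involutive, \eqref{eq:sigmaRX-} gives $\sigma\tau(T_{w_X}^{-1}(\cR_X^{-,r}))=\cR_X^{-,r}$, so $\sigma\tau(v)\in\cR_X^{-,r}$.
\item Both $\sigma$ and $\tau$ preserve $\cH$ (for $\tau$ one uses $\tau(X)=X$ and $\tau(Q^\theta)=Q^\theta$), so $\sigma\tau(h)\in\cH$.
\item Formula \eqref{eq:muiL} for $\mu^L_{\tau(i)}$ therefore applies to $\sigma\tau(v)\sigma\tau(h)$ and gives $\mu^L_{\tau(i)}(\sigma\tau(v))\,\sigma\tau(h)$. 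Here $\mu^L_{\tau(i)}(\sigma\tau(v))$ equals $-c_{\tau(i)}\frac{q^{-(\alpha_{\tau(i)},\theta(\alpha_{\tau(i)}))}}{q_i-q_i^{-1}}K_{-\alpha_{\tau(i)}-\theta(\alpha_{\tau(i)})}$ multiplied on the right by $T_{w_X}\partial^L_{i}T_{w_X}^{-1}(\sigma\tau(v))$. We use $\tau^2=\id$ and $q_{\tau(i)}=q_i$.
\end{itemize}

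Now apply $\sigma\tau$ to this expression.
\begin{itemize}
\item The factor $\sigma\tau(h)$ moves back to the left as $h$.
\item The Cartan factor moves to the right. Since $\sigma(K_\beta)=K_{-\beta}$ and $\tau(\alpha_{\tau(i)}+\theta(\alpha_{\tau(i)}))=\alpha_i+\theta(\alpha_i)$ (because $\theta$ commutes with $\tau$), it becomes $K_{\alpha_i+\theta(\alpha_i)}$.
\item The scalar is unchanged, since $(\alpha_{\tau(i)},\theta(\alpha_{\tau(i)}))=(\alpha_i,\theta(\alpha_i))$ by $\tau$-invariance of the form.
\item For the middle factor, use $\sigma\tau\circ T_{w_X}=T_{w_X}^{-1}\circ\sigma\tau$ twice together with \eqref{eq:sigma-tau-partial} in the form $\sigma\tau\circ\partial^L_i\circ\sigma\tau=\partial^R_{\tau(i)}$. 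This gives $\sigma\tau T_{w_X}\partial^L_iT^{-1}_{w_X}\sigma\tau=T_{w_X}^{-1}\partial^R_{\tau(i)}T_{w_X}$.
\end{itemize}
This yields \eqref{eq:mutilRi}. The first equality $\mutil^R_i(a)=h\mutil_i^R(v)$ is read off from the same computation.

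The main point needing care is the bookkeeping of scalars. One has to check that $\theta\tau=\tau\theta$, which holds since $w_X$ commutes with $\tau$ by the Satake axiom (2). One also has to check that $T_{w_X}$ commutes with $\tau$, since $\tau$ restricts to a diagram automorphism of $X$. Finally, \eqref{eq:sigma-tau-partial} is applied on $U^-$, where $T_{w_X}^{-1}(\sigma\tau(v))\in U^-$ by \eqref{eq:UTU1}.

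For (2), conjugate \eqref{eq:muL-adr} by $\sigma\tau$. Write $\ad_r(E_j)(b)=S(E_j)b+\cdots$ explicitly, i.e. $\ad_r(E_j)(b)=K_j^{-1}bE_j-K_j^{-1}E_jK_j^{-1}bK_j$ up to the paper's convention. Then compute $\sigma\tau\circ\ad_r(E_j)\circ\sigma\tau$: it is a multiple of $\ad_r(E_{\tau(j)})$ twisted by a power of $\ad(K_{\tau(j)})$. Combine \eqref{eq:muL-adr} with the weight behaviour of $\mu^L_{\tau(i)}$; it shifts weight by $\alpha_{\tau(i)}+\theta(\alpha_{\tau(i)})$-dependent factors, so $\ad(K)$ acts on outputs by $q^{(\alpha_j,\cdot)}$ times the input factor. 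The factor $q^{-(\alpha_{\tau(i)},\alpha_{\tau(j)})}$ should cancel against the $K$-twist, leaving \eqref{eq:adr-mutil}.

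If this cancellation proves messy, I would instead verify (2) directly from the formula in (1). Write $\cA=\cH\cdot T_{w_X}^{-1}(\cR_X^{-,r})$ and note that $T_{w_X}^{-1}(\cR_X^{-,r})=\sigma(\cR_X^{-,r})$ is stable under the relevant adjoint action. Then $\ad_r(E_j)$ commutes with $T_{w_X}^{-1}\partial^R_{\tau(i)}T_{w_X}$ up to exactly the $K_{\alpha_i+\theta(\alpha_i)}$ twist, because $(\alpha_j,\alpha_i+\theta(\alpha_i))=0$ for $j\in X$. That identity holds since $\theta(\alpha_j)=\alpha_j$ gives $(\alpha_j,\theta(\alpha_i))=(\theta(\alpha_j),\alpha_i)=(\alpha_j,\alpha_i)$, so the sum equals $2(\alpha_j,\alpha_i)$. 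Hence the twist must instead cancel against the $q$-factor in the skew-derivation rule \eqref{eq:skew-der}, and I expect this scalar check to be the main obstacle.
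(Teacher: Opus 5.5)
Your part (1) is the paper's argument. You use $\sigma\tau(v)\in\cR_X^{-,r}$ by \eqref{eq:sigmaRX-} and $\sigma\tau(h)\in\cH$, apply \eqref{eq:muiL} to $\mu^L_{\tau(i)}$, and conjugate back with \eqref{eq:sigmaTwX} and \eqref{eq:sigma-tau-partial}. Your scalar and Cartan bookkeeping there is correct.

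For (2), your first plan, conjugating \eqref{eq:muL-adr} by $\sigma\tau$, is also the paper's route. As written, however, it is not a proof: the cancellation is only asserted, and the concrete formulas you give are off. With the paper's conventions $\ad_r(E_j)(b)=S(E_j)b+S(K_j)bE_j=K_j^{-1}(bE_j-E_jb)$, not $K_j^{-1}bE_j-K_j^{-1}E_jK_j^{-1}bK_j$. A direct computation then gives
\[
\sigma\tau\bigl(\ad_r(E_j)(a)\bigr)=-K_{\tau(j)}\,\ad_r(E_{\tau(j)})\bigl(\sigma\tau(a)\bigr)\,K_{\tau(j)} .
\]
So the twist is two-sided multiplication by $K_{\tau(j)}$, not a power of $\ad(K_{\tau(j)})$.

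The missing step is moving this two-sided multiplication through $\mu^L_{\tau(i)}$.
\begin{itemize}
\item By $0$-equivariance, $\mu^L_{\tau(i)}$ is right $\cH$-linear.
\item Also $F_{\tau(i)}K_{\tau(j)}=q^{(\alpha_i,\alpha_j)}K_{\tau(j)}F_{\tau(i)}$, so $0$-equivariance and associativity give $\mu^L_{\tau(i)}(K_{\tau(j)}c)=q^{(\alpha_i,\alpha_j)}K_{\tau(j)}\mu^L_{\tau(i)}(c)$ for all $c\in\cA$.
\item Your weight idea also yields this, once you write $KcK=\ad(K)(c)K^2$. Here $\mu^L_{\tau(i)}$ shifts $Q$-weight by $-\theta(\alpha_{\tau(i)})$, and $(\alpha_{\tau(j)},\theta(\alpha_{\tau(i)}))=(\alpha_i,\alpha_j)$ because $\theta$ is an involutive isometry fixing $\alpha_{\tau(j)}$.
\end{itemize}
Hence
\[
\mutil_i^R\bigl(\ad_r(E_j)(a)\bigr)=-q^{(\alpha_i,\alpha_j)}\,\sigma\tau\Bigl(K_{\tau(j)}\,\mu^L_{\tau(i)}\bigl(\ad_r(E_{\tau(j)})(\sigma\tau(a))\bigr)\,K_{\tau(j)}\Bigr).
\]
Now \eqref{eq:muL-adr} for the pair $(\tau(i),\tau(j))$ contributes exactly $q^{-(\alpha_i,\alpha_j)}$. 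Reading the first displayed identity backwards, with $a$ replaced by $\mutil_i^R(a)$, gives $\ad_r(E_j)(\mutil_i^R(a))$.

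Your fallback should be discarded. You claim $(\alpha_j,\alpha_i+\theta(\alpha_i))=0$ and then correctly compute it to be $2(\alpha_j,\alpha_i)$, which is generally nonzero. Moreover, $\ad_r(E_j)$ does not preserve $T_{w_X}^{-1}(\cR_X^{-,r})$ on the nose; it introduces Cartan factors, since $\ad_r(E_j)(T_{w_X}^{-1}(u))=-K_j^{-1}T_{w_X}^{-1}(\ad_r(F_{\tau(j)})(u))K_j^{-1}$.
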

%%%%%%%%%%%%%%%%%%%%%%%%%%%%%%%%%%%%%%
\begin{proof}
  (1)  By Equation \eqref{eq:sigmaRX-} we have $\sigma \tau(v)\in \cR_X^{-,r}$. Hence we may apply Equation \eqref{eq:muiL} to obtain
  \begin{align*}
    \mutil_i^R(a)&=\sigma \tau \circ\mu_{\tau(i)}^L(\sigma \tau(v)\sigma \tau(h))\\
    &= - c_{\tau(i)}\frac{q^{-(\alpha_i,\theta(\alpha_i))}}{q_i-q_i^{-1}}\sigma \tau\Big( K_{-\alpha_{\tau(i)}-\theta(\alpha_{\tau(i)})} T_{w_X} \circ \partial^L_{i}\circ T^{-1}_{w_X}(\sigma \tau(v))\sigma \tau(h)\Big).
  \end{align*}
  Now Equation \eqref{eq:mutilRi} follows from Equations \eqref{eq:sigmaTwX} and \eqref{eq:sigma-tau-partial}.\\
  (2) For all $a\in \cA$ we have
  \begin{align}\label{eq:sigma-adr}
     \sigma \tau(\ad_r(E_j)(a))=\sigma \tau\big(K_j^{-1}(aE_j-E_ja)\big)= -K_{\tau(j)}\ad_r(E_{\tau(j)})(\sigma \tau(a))K_{\tau(j)}.
  \end{align}
  Moreover, note that
  \begin{align}\label{eq:Kj-left}
     \mu_i^L(K_j a)=q^{(\alpha_i,\alpha_j)}K_j\mu_i^L(a) \qquad \mbox{for all $j\in X$.}
  \end{align}  
  Hence we obtain
  \begin{align*}
     \mutil_i^R(\ad_r(E_j)(a))&\stackrel{\eqref{eq:sigma-adr}}{=} -\sigma \tau\circ \mu_{\tau(i)}^L
     \Big( K_{\tau(j)}\ad_r(E_{\tau(j)})(\sigma \tau(a))K_{\tau(j)} \Big)\\
     &\stackrel{\eqref{eq:Kj-left}}{=} -q^{(\alpha_i,\alpha_j)}\sigma \tau\Big(K_{\tau(j)} \mu_{\tau(i)}^L \big(\ad_r(E_{\tau(j)})(\sigma \tau(a))\big) K_{\tau(j)}\Big)\\
     &\stackrel{\eqref{eq:muL-adr}}{=} -K_j^{-1}\sigma \tau\Big( \ad_r(E_{\tau(j)})\big(\mu_{\tau(i)}^L(\sigma \tau(v))\big)\Big) K_j^{-1}\\
     &\stackrel{\eqref{eq:sigma-adr}}{=} \ad_r(E_j)\big(\sigma \tau\circ \mu_{\tau(i)}^L\circ \sigma \tau(a)\big) 
  \end{align*}  
  which completes the proof of Equation \eqref{eq:adr-mutil}.  
\end{proof}  
%%%%%%%%%%%%%%%%%%%%%%%%%%%%%%%%%%%%%%
\subsection{An explicit expression for $\mu_i^R$}
%%%%%%%%%%%%%%%%%%%%%%%%%%%%%%%%%%%%%%
The following technical lemma will simplify the proof of Theorem \ref{thm:muiR}.
%%%%%%%%%%%%%%%%%%%%%%%%%%%%%%%%%%%%%%
\begin{lem}\label{lem:adr}
  For any $n\in \N$ we have
  \begin{align}\label{eq:TwXR}
    T_{w_X}^{-1}(\cR_{X,n+1}^{-,r})\subset \sum_{\ell\in I\setminus X} U_X^0\ad_r(U_X^+)\big(T_{w_X}^{-1}(F_\ell \cR_{X,n}^{-,r})\big).
  \end{align}  
  \end{lem}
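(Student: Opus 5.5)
The plan is to first show the untwisted inclusion
\begin{align}\label{eq:Rn+1-gen}
  \cR_{X,n+1}^{-,r}\subset \sum_{\ell\in I\setminus X} \ad_r(\cH)\big(F_\ell\, \cR_{X,n}^{-,r}\big),
\end{align}
then apply $T_{w_X}^{-1}$, and finally rewrite the result in terms of $U^0_X$ and $\ad_r(U^+_X)$.

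For \eqref{eq:Rn+1-gen}, Corollary \ref{cor:adH-gen}.(4) shows that $\cR_X^{-,r}$ is generated by the spaces $\ad_r(H)(F_i)$. Since $F_i$ is a highest weight vector for the right adjoint action of $\cL_X$ (because $\ad_r(E_j)(F_i)=0$ for $j\in X$), we get
\begin{align*}
  \ad_r(H)(F_i)=U^0\ad_r(U^-_X)(F_i)\subset U^0\ad_r(\cH)(F_i).
\end{align*}
Hence $\cR_{X,n+1}^{-,r}$ is spanned by products $y_1y'$ with $y_1\in\ad_r(U^-_X)(F_\ell)$ and $y'\in \cR_{X,n}^{-,r}$. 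I would then induct on the length of the monomial $h\in U_X^-$ in $y_1=\ad_r(h)(F_\ell)$, using the module-algebra identity
\begin{align*}
  \ad_r(F_j)(y_1)\,y' = \ad_r(F_j)(y_1y') - (\text{terms involving } y_1\ad_r(F_j)(y'), \text{ with } K\text{-factors}).
\end{align*}
The correction term lies in $F_\ell$-type monomials of shorter $h$-length multiplied by $\cR_{X,n}^{-,r}$. This is possible because $\cR_{X,n}^{-,r}$ is $\ad_r(\cH)$-stable and graded, and $\ad_r$ preserves degree. Induction then gives \eqref{eq:Rn+1-gen}.

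Next, apply $T_{w_X}^{-1}$. By Lemma \ref{lem:Letzter-props}.(iii) and the fact that $T_{w_X}$ normalizes $\cH$, conjugation by $T_{w_X}$ intertwines $\ad_r(\cH)$ with itself. More precisely, $T_{w_X}^{-1}\circ\ad_r(h)=\ad_r(T_{w_X}^{-1}(h))\circ T_{w_X}^{-1}$ holds up to the usual coproduct twist, and $T_{w_X}^{-1}$ maps $U^-_X$ to $U^+_XU^0_X$ up to $K$-factors. So I would instead directly use the restricted form: $\cH$-action on the $X$-part is generated by $U^0_X$, $U^+_X$, and $U^-_X$.

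The key point is that $T_{w_X}^{-1}(F_\ell\cR_{X,n}^{-,r})$ may not be $\ad_r(U^-_X)$-stable. However, $T_{w_X}^{-1}(F_\ell)$ is a lowest weight vector for $\ad_r$ of $\cL_X$, since $T_{w_X}^{-1}$ swaps highest and lowest weights. So the module generated from it only requires $\ad_r(U^+_X)$ and $U^0_X$. Concretely, I would redo the induction of the first step directly on the twisted side. The generators of $T_{w_X}^{-1}(\cR_X^{-,r})=\sigma(\cR_X^{-,r})$, by \eqref{eq:sigmaRX-}, are $\ad_r(U^+_X)(T_{w_X}^{-1}(F_\ell))$ up to $U^0_X$, and the same Leibniz-rule induction (now with $E_j$, $j\in X$) moves the $\ad_r(E_j)$ from the first factor to the whole product. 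Here I use that $T_{w_X}^{-1}(\cR_{X,n}^{-,r})$ is stable under $\ad_r(U^+_X)$ and $U^0_X$-conjugation.

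The main obstacle is verifying this stability and the lowest weight claim. For stability, I would use $T_{w_X}^{-1}(\cR_X^{-,r})=\sigma(\cR_X^{-,r})$ together with \eqref{eq:sigma-adr}, which converts $\ad_r(E_j)$ into $\ad_r(E_{\tau(j)})$ up to $K$-factors. This reduces the question to the $\ad_r(\cH)$-invariance of $\cR_X^{-,r}$. I would also handle carefully the $K_j$-factors produced by $\kow(E_j)$ in the Leibniz rule; they are absorbed into the $U^0_X$ prefactor.
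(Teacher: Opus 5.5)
\textbf{Verdict: genuine gap.} Your first step is correct and is the paper's starting point. Because $\ad_r(E_j)(F_\ell)=0$ for $j\in X$, your Leibniz-rule induction gives
\[
\cR_{X,n+1}^{-,r}=\sum_{\ell\in I\setminus X}\ad_r(U_X^-)(F_\ell\,\cR_{X,n}^{-,r}),
\]
and only $U_X^-$ is needed.

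The gap is the passage through $T_{w_X}^{-1}$. Your Step 2 stops at ``up to the usual coproduct twist''. Your Step 3 rests on the claim that $T_{w_X}^{-1}(F_\ell)$ is a lowest weight vector for $\ad_r(\cL_X)$, and that claim is false.

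\emph{Counterexample.} Take $X=\{j\}$ and $\ell\in I\setminus X$ with $a_{j\ell}=-1$, $d_j=1$; this occurs in type $A_3$ with $X=\{2\}$ and $\tau$ nontrivial. Then $T_{w_X}^{-1}(F_\ell)\in U^-\cap\ker\partial^L_j$ is a nonzero multiple of $y=F_\ell F_j-q^{-1}F_jF_\ell$. The Serre relation gives $F_jy=q^{-1}yF_j$, hence
\begin{align*}
  \ad_r(F_j)(y)=-q^{-1}F_jy+yF_j=(1-q^{-2})\,yF_j\neq 0,
\end{align*}
while $\ad_r(E_j)(y)=K_j^{-2}F_\ell$. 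So $y$ is neither highest nor lowest weight, and its $\ad_r(\cL_X)$-orbit leaves $U^-$.

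The vector that really is annihilated by all $\ad_r(F_j)$, $j\in X$, is $T_{w_X}(F_{\tau(\ell)})\in\cR_{X,1}^{-,r}$. The map $\sigma\tau$ sends it to $T_{w_X}^{-1}(F_\ell)$, and you have conflated the two. In the same way, $T_{w_X}^{-1}(\cR_{X,n}^{-,r})$ is not $\ad_r(U_X^+)$-stable. It is only stable up to factors $K_j^{-1}(\cdot)K_j^{-1}$, which is exactly what \eqref{eq:sigma-adr} gives you.

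\emph{The missing idea} is the exact twisting identity. From $T_{w_X}(E_j)=-F_{\tau(j)}K_{\tau(j)}$ and $T_{w_X}^{-1}(K_{\tau(j)})=K_j^{-1}$ one gets, for $u\in\cR_X^{-,r}$ and $j\in X$,
\begin{align*}
  \ad_r(E_j)\big(T_{w_X}^{-1}(u)\big)=K_j^{-1}T_{w_X}^{-1}\big([u,T_{w_X}(E_j)]\big)=-K_j^{-1}\,T_{w_X}^{-1}\big(\ad_r(F_{\tau(j)})(u)\big)\,K_j^{-1}.
\end{align*}
Since $\cR_X^{-,r}$ is $\ad_r(U_X^-)$-stable, this identity can be iterated. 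The $K_j$-factors are absorbed into $U_X^0$ on the left. That is all that matters in Theorem \ref{thm:muiR}, where both maps are left $\cH$-linear and commute with $\ad_r(E_j)$. Applying $T_{w_X}^{-1}$ to the identity from your Step 1 then yields \eqref{eq:TwXR} at once; this is the paper's proof.

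With this identity, or with the correct extremal vector $T_{w_X}(F_{\tau(\ell)})$ transported by $\sigma\tau$ via \eqref{eq:sigma-adr}, your twisted-side Leibniz induction could be completed, but it becomes unnecessary. Without it, the generator claim in your Step 3 has no valid justification.
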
  
%%%%%%%%%%%%%%%%%%%%%%%%%%%%%%%%%%%%%%%%  
\begin{proof}
  For any $\ell\in I\setminus X$ the element $F_\ell$ is a highest weight vector for the right adjoint action of $U_X$. Hence, in view of Corollary \ref{cor:adH-gen}.(4), we know that
  \begin{align}\label{eq:RXn+1}
      \sum_{\ell\in I\setminus X} \ad_r(U_X^-)(F_\ell \cR_{X,n}^{-,r})=\cR_{X,n+1}^{-,r}.
  \end{align}
  Now observe that for any $u\in \cR_X^{-,r}$ and $j\in X $ we have
  \begin{align}\label{eq:adrEjTwX-1}
     \ad_r(E_j)(T_{w_X}^{-1}(u)) = - K_j^{-1} T_{w_X}^{-1}\big(\ad_r(F_{\tau(j)})(u)\big) K_j^{-1}.
  \end{align}  
   Indeed, using the relation $T_{w_X}(E_j)=-F_{\tau(j)}K_{\tau(j)}$, see e.g.~\cite[Lemma 3.4]{a-Kolb14}, we obtain
  \begin{align*}
    \ad_r(E_j)(T_{w_X}^{-1}(u)) &= - K_j^{-1} E_j T_{w_X}^{-1}(u) + K_j^{-1} T_{w_X}^{-1}(u)E_j\\
    &=K_j^{-1} T_{w_X}^{-1}\big([u, T_{w_X}(E_j)]\big)\\
    &= K_j^{-1} T_{w_X}^{-1}\big( F_{\tau(j)} K_{\tau(j)} u - u F_{\tau(j)} K_{\tau(j)}\big)\\
    &=- K_j^{-1} T_{w_X}^{-1}\Big( \ad_r(F_{\tau(j)})(u)\Big) K_j^{-1}.
  \end{align*}
  Applying $T_{w_X}^{-1}$ to Equation \eqref{eq:RXn+1} and using Equation \eqref{eq:adrEjTwX-1} gives the inclusion \eqref{eq:TwXR}.
  \end{proof}  
%%%%%%%%%%%%%%%%%%%%%%%%%%%%%%%%%%%%%%
  Recall the definition of the map $\mutil_i^R:\cA\rightarrow \cA$ from Equation \eqref{eq:mutil-def} and the explicit description of $\mutil^R_i$ given in Lemma \ref{lem:mutilRi}.
  By definition of $\mutil_i^R$, Equation \eqref{eq:muiL-skew} gives us
  \begin{align}\label{eq:mutilR-skew}
      \mutil_i^R(v'v) =  v' \mutil_i^R(v)  +  q^{(\alpha_i,\nu)} \mutil_i^R(v')v \qquad \mbox{for all $v\in \big(T_{w_X}^{-1}(\cR_X^{-,r})\big)_{-\nu}$, $v' \in T_{w_X}^{-1}(\cR_X^{-,r})$.}
  \end{align}  
We are now ready to give the desired explicit expression for $\mu_i^R$.
%%%%%%%%%%%%%%%%%%%%%%%%%%%%%%%%%%%%%
\begin{thm}\label{thm:muiR}
  For all $i\in I\setminus X$ we have $\mu_i^R=\mutil_i^R$.
\end{thm}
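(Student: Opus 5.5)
We will show $\mu_i^R(a)=\mutil_i^R(a)$ for all $a\in\cA$ by induction on the degree of $a$.

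\emph{Reduction to $T_{w_X}^{-1}(\cR_X^{-,r})$.} By \eqref{eq:sigmaRX-} and the triangular decomposition \eqref{eq:A-triang-1} (after applying $\sigma\tau$), every element of $\cA$ is a sum of products $hv$ with $h\in\cH$ and $v\in T_{w_X}^{-1}(\cR_X^{-,r})$. By $0$-equivariance of the star product, $\mu_i^R(hv)=h\mu_i^R(v)$. Lemma \ref{lem:mutilRi}(1) gives the same identity for $\mutil_i^R$. So it suffices to treat $v\in T_{w_X}^{-1}(\cR_{X,n}^{-,r})$, inducting on $n$.

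\emph{Base cases.} For $n=0$ both maps vanish: $\mu_i^R$ and $\mutil_i^R$ are homogeneous of degree $-1$ (Corollary \ref{cor:muiR} and \eqref{eq:muiL}). For $n=1$ we have $v=T_{w_X}^{-1}(F_\ell)$ up to the $\ad_r(U_X^+)$ and $U_X^0$ actions. Lemma \ref{lem:muLR-1} computes $\mu_i^R(T_{w_X}^{-1}(F_\ell))$, and \eqref{eq:mutilRi} yields the same value, since $\partial^R_{\tau(i)}(F_\ell)=\delta_{\ell,\tau(i)}$. The two maps commute with $\ad_r(E_j)$ by \eqref{eq:muR-adr} and \eqref{eq:adr-mutil}, and they transform in the same way under left multiplication by $K_j$ (again by $0$-equivariance). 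Hence agreement on $T_{w_X}^{-1}(F_\ell)$ propagates to all of $T_{w_X}^{-1}(\cR_{X,1}^{-,r})$.

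\emph{Inductive step.} By Lemma \ref{lem:adr}, and by the same compatibility with $U_X^0$ and $\ad_r(U_X^+)$, it suffices to treat elements $v=T_{w_X}^{-1}(F_\ell)\,w$ with $w\in T_{w_X}^{-1}(\cR_{X,n}^{-,r})$. For $\mutil_i^R$ the skew-derivation rule \eqref{eq:mutilR-skew} gives
$\mutil_i^R(v)=T_{w_X}^{-1}(F_\ell)\mutil_i^R(w)+q^{(\alpha_i,\nu)}\mutil_i^R(T_{w_X}^{-1}(F_\ell))w$.
For $\mu_i^R$ we compare the degree-$(n-1)$ components of the two bracketings of $T_{w_X}^{-1}(F_\ell)\ast w\ast F_i$, that is, equations \eqref{eq:associative-1} and \eqref{eq:associative-2} with $a=w$. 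Write $\lambda_\ell=T_{w_X}^{-1}\circ\mu_\ell^L\circ T_{w_X}$. The degree-$(n+1)$ components give
$\mu_i^R(T_{w_X}^{-1}(F_\ell)w)=T_{w_X}^{-1}(F_\ell)\mu_i^R(w)+\lambda_\ell(wF_i)-\lambda_\ell(w)F_i$.
By the skew-derivation property \eqref{eq:muiL-skew} of $\mu_\ell^L$, applied to $T_{w_X}(w)T_{w_X}(F_i)$ with both factors in $\cR_X^{-,r}$, the last two terms reduce to a $q$-power times $w\,\lambda_\ell(F_i)$. This is a scalar multiple of $wK_{\alpha_i+\theta(\alpha_i)}$, by Lemma \ref{lem:muLR-1}.

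The main obstacle is matching this expression with $q^{(\alpha_i,\nu)}\mutil_i^R(T_{w_X}^{-1}(F_\ell))w$. Both are multiples of $\delta_{\ell,\tau(i)}$ times $K_{\alpha_i+\theta(\alpha_i)}$ placed on opposite sides of $w$. The weight of $w$ gives the $q$-power needed to commute $K_{\alpha_i+\theta(\alpha_i)}$ past $w$, and we must check that it equals the product of the $q^{(\alpha_i,\nu)}$ factor and the $q$-power from \eqref{eq:muiL-skew}. This reduces to the identity $(\theta(\alpha_i),\cdot)$-weights transform under $T_{w_X}$, which we will verify using $\theta=-w_X\tau$. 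With this identity in hand, the induction hypothesis $\mu_i^R(w)=\mutil_i^R(w)$ finishes the step.
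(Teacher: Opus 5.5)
Your proposal is correct and is essentially the paper's proof: the same reduction via left $\cH$-linearity, Lemma \ref{lem:adr} and the $\ad_r(E_j)$-equivariance to $a=T_{w_X}^{-1}(F_\ell)w$; the same comparison of the two bracketings \eqref{eq:associative-1}, \eqref{eq:associative-2}; the skew-derivation rule for $\mu_\ell^L$ together with Lemma \ref{lem:muLR-1}; and the $K$-commutation identity $(\alpha_{\tau(i)},w_X(\nu))+(\alpha_i+\theta(\alpha_i),\nu)=(\alpha_i,\nu)$. Your explicit degree-one base case usefully fills in a step the paper leaves implicit (the constant computed from \eqref{eq:muiL} is $c_{\tau(i)}$, which is what matches \eqref{eq:mutilRi}), and the only thing to fix is a labeling slip: the components to compare are those of degree $n=\deg w$, not $n-1$ or $n+1$, and they are isolated using the homogeneity of $\mu_i^R$ and $\mu_\ell^L$.
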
  
%%%%%%%%%%%%%%%%%%%%%%%%%%%%%%%%%%%%%
\begin{proof}
  We verify the equation
  \begin{align}\label{eq:muRi}
    \mu_i^R(a)=\mutil_i^R(a)
  \end{align}
  for $a\in \cA_n$ by induction on $n$. For $a\in \cA_0=\cH$ we have $\mu_i^R(a)=0=\mutil_i^R(a)$ by the 0-equivariance of the star product. Assume now that Equation \eqref{eq:muRi} holds true for all $a\in \cA_{\le n}$. By the left $\cH$-linearity of $\mu_i^R$ and $\mutil_i^R$ it suffices to prove Equation \eqref{eq:muRi} for all $a\in T_{w_X}^{-1}(\cR^{-,r}_{X,n+1})$.
Moreover, by Lemma \ref{lem:adr} and Equations \eqref{eq:muR-adr}, \eqref{eq:adr-mutil}, it suffices to verify Equation \eqref{eq:muRi} for $a=T_{w_X}^{-1}(F_{\ell})v$ where $v\in T_{w_X}^{-1}(\cR^{-,r}_{X,n})$.

    Subtracting Equation \eqref{eq:associative-2} from Equation \eqref{eq:associative-1} for $a=v$ and using the second statement of Proposition \ref{prop:muR-muL}, we obtain
    \begin{align}\label{eq:step1}
       \mu_i^R(T_{w_X}^{-1}(F_\ell) v)&=T_{w_X}^{-1}(F_\ell) \mu^R_i(v) + T_{w_X}^{-1}(\mu^L_\ell (T_{w_X}(v F_i))) - T_{w_X}^{-1}(\mu^L_\ell(T_{w_X}(v)))F_i. 
    \end{align}  
    By Equation \eqref{eq:muiL-skew} for any $v\in (T_{w_X}^{-1}(\cR_X^{-,r}))_{-\nu}$ the relation
    \begin{align*}
      \mu^L_\ell(T_{w_X}(v F_i))=\mu^L_\ell(T_{w_X}(v)) T_{w_X}(F_i) + q^{(\alpha_\ell,w_X(\nu))} T_{w_X}(v)\mu^L_\ell(T_{w_X}(F_i))
    \end{align*}
    holds. Inserting this into Equation \eqref{eq:step1} and using Lemma \ref{lem:muLR-1} we obtain
    \begin{align*}
      \mu_i^R(T_{w_X}^{-1}(F_\ell) v)&=T_{w_X}^{-1}(F_\ell) \mu^R_i(v) +  q^{(\alpha_\ell,w_X(\nu))} v \mu^R_i(T_{w_X}^{-1}(F_\ell))\\
      &=T_{w_X}^{-1}(F_\ell) \mu^R_i(v) +  q^{(\alpha_{i},\nu)}  \mu^R_i(T_{w_X}^{-1}(F_\ell)) v.
    \end{align*}
    By induction hypothesis and the skew derivation property \eqref{eq:mutilR-skew} for $\mutil_i^R$, the above equation implies that $\mu_i^R(T_{w_X}^{-1}(F_\ell) v)=\mutil_i^R(T_{w_X}^{-1}(F_\ell) v)$ which completes the proof.
\end{proof}  
%%%%%%%%%%%%%%%%%%%%%%%%%%%%%%%%%%%%%%
\begin{thm}\label{thm:sigmatau-anti}
  The map $\sigma\tau:\cA\rightarrow \cA$ is an algebra anti-automorphism of $(\cA,\ast)$.
\end{thm}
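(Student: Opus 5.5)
We need to show $\sigma\tau(a\ast b)=\sigma\tau(b)\ast\sigma\tau(a)$ for all $a,b\in\cA$. Since $\sigma\tau$ is a linear bijection of $\cA$ (an involutive anti-automorphism for the ordinary product), it suffices to show that $a\mapsto \sigma\tau(a)$ transports the star product to the opposite star product. Define a new product $a\star b=\sigma\tau\big(\sigma\tau(b)\ast\sigma\tau(a)\big)$. We will show $\star=\ast$.

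First we check that $\star$ is a $0$-equivariant star product on $\cA$. Associativity is inherited from $\ast$. The map $\sigma\tau$ preserves the grading of $\cA$: it fixes $\cH$ and maps $F_i$ to $F_{\tau(i)}$. Hence $\star$ is a star product, because $\sigma\tau$ is an anti-automorphism of the ordinary product and so $a\star b-ab\in\cA_{<m+n}$. Its $0$-equivariance follows from that of $\ast$, since $\sigma\tau(\cH)=\cH$.

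By Lemma \ref{lem:sprod-isom}.(i), a $0$-equivariant star product on $\cA$ is determined by its left lower order maps. Those are in turn determined by the maps for the generators of $\cA_1$: using $0$-equivariance, $h\ast F_i\ast h'$ reduces everything to $F_i\ast(\cdot)$ for $i\in I\setminus X$, since $\cA_1=\cH\,\mathrm{span}\{F_i\}\,\cH$. Thus it suffices to show
\begin{align*}
  F_i\star a=F_i\ast a\qquad\mbox{for all $i\in I\setminus X$, $a\in\cA$.}
\end{align*}
Computing,
\begin{align*}
  F_i\star a=\sigma\tau\big(\sigma\tau(a)\ast F_{\tau(i)}\big)=\sigma\tau\big(\sigma\tau(a)F_{\tau(i)}+\mu^R_{\tau(i)}(\sigma\tau(a))\big)=F_ia+\sigma\tau\circ\mu^R_{\tau(i)}\circ\sigma\tau(a).
\end{align*}
By Theorem \ref{thm:muiR} we have $\mu^R_{\tau(i)}=\mutil^R_{\tau(i)}=\sigma\tau\circ\mu^L_{i}\circ\sigma\tau$, by definition \eqref{eq:mutil-def} with $\tau^2=\id$. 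Since $\sigma\tau$ is involutive, the last term equals $\mu_i^L(a)$. Hence $F_i\star a=F_ia+\mu_i^L(a)=F_i\ast a$, as required.

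The main point to check carefully is the uniqueness reduction: that two $0$-equivariant star products agreeing on $F_i\ast(\cdot)$ for all $i\in I\setminus X$ coincide. This follows since $(\cA,\ast)$ is generated by $\cH$ and the $F_i$. Indeed, \eqref{eq:Am-in-Aastm}, or the graded generation argument in Lemma \ref{lem:sprod-isom}, shows that every element is a sum of star-monomials in these generators. Left star multiplication by such monomials is then computed iteratively from the $F_i\ast(\cdot)$ and $0$-equivariance. One also needs that $\sigma\tau$ is involutive on $\cA$, which holds because $\sigma$ and $\tau$ commute and are involutions. Finally, since $\star=\ast$, we get $\sigma\tau(a\ast b)=\sigma\tau(b)\ast\sigma\tau(a)$, proving the theorem.
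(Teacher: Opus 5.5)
Your proof is correct and is essentially the paper's argument. The paper likewise reduces, via generation of $(\cA,\ast)$ by $\cH\cup\{F_\ell\}$, to checking $\sigma\tau(F_\ell\ast a)=\sigma\tau(a)\ast F_{\tau(\ell)}$, which is exactly your identity $F_\ell\star a=F_\ell\ast a$, and then uses Theorem \ref{thm:muiR} to identify $\sigma\tau\circ\mu^L_\ell\circ\sigma\tau$ with $\mu^R_{\tau(\ell)}$; your transported product $\star$ together with Lemma \ref{lem:sprod-isom} is just a repackaging of that generator check (one small wording fix: $\sigma\tau$ preserves $\cH$ rather than fixing it pointwise).
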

%%%%%%%%%%%%%%%%%%%%%%%%%%%%%%%%%%%%%%
\begin{proof}
  As the algebra $(\cA,\ast)$ is generated by $\cH\cup\{F_\ell\,|\,\ell\in I\setminus X\}$, it suffices to check the relation 
  \begin{align}\label{eq:sigmatau-anti}
    \sigma\tau(b\ast a)=\sigma\tau(a)\ast \sigma\tau(b)
  \end{align}
  for $b\in \cA_0=\cH$ and for $b=F_\ell$ with $\ell\in I\setminus X$ and for all $a\in \cA$. For $b\in \cH$ relation \eqref{eq:sigmatau-anti} follows from the $0$-equivariance of the star product and from the fact that $\sigma\tau$ is an algebra anti-automorphism of $\cA$ with the undeformed multiplication. For $\ell\in I\setminus X$ Theorem \ref{thm:muiR} gives us
  \begin{align*}
      \sigma\tau(F_\ell\ast a)&=\sigma\tau(F_\ell a) + \sigma\tau(\mu_\ell^L(a)))\\
      &=\sigma\tau(a)F_{\tau(\ell)} + \sigma\tau\circ \mu^L_\ell\circ\sigma\tau(\sigma\tau(a))\\
      &=\sigma\tau(a)F_{\tau(\ell)} + \mu_{\tau(\ell)}^R(\sigma\tau(a))\\
      &=\sigma\tau(a)\ast \sigma\tau(F_{\ell})
  \end{align*}
  which proves Equation \eqref{eq:sigmatau-anti} also for $b=F_\ell$.
\end{proof}
%%%%%%%%%%%%%%%%%%%%%%%%%%%%%%%%%%%%%%
We now use the Letzter map $\psi:\cB_\bc\rightarrow \cA$ to transport the algebra anti-automorphism $\sigma\tau=\sigma\circ\tau$ of $(\cA,\ast)$ to the QSP-subalgebra $\cB_\bc$. 
%%%%%%%%%%%%%%%%%%%%%%%%%%%%%%%%%%%%%%
\begin{cor}\label{cor:sigmatauB}
  The linear map $\sigma_\tau:\cB_\bc\rightarrow \cB_\bc$ defined by $\sigma_\tau=\psi^{-1} \circ \sigma\circ \tau\circ \psi$ is an algebra anti-automorphism which satisfies $\sigma_\tau|_\cH=\sigma\circ \tau|_\cH$ and $\sigma_\tau(B_i)=B_{\tau(i)}$ for all $i\in I\setminus X$.
\end{cor}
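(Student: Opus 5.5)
This follows by transport of structure through the Letzter map, combined with Theorem \ref{thm:sigmatau-anti}. By Lemma \ref{lem:Letzter-props} and the definition \eqref{eq:ast-def}, the map $\psi:\cB_\bc\rightarrow (\cA,\ast)$ is a $\qfield$-algebra isomorphism. By Theorem \ref{thm:sigmatau-anti}, $\sigma\tau:(\cA,\ast)\rightarrow(\cA,\ast)$ is an algebra anti-automorphism; it is bijective because $\sigma$ and $\tau$ are involutive and commute on $\cA$. Hence the composite $\sigma_\tau=\psi^{-1}\circ\sigma\tau\circ\psi$ is a bijection of $\cB_\bc$. For $b,b'\in\cB_\bc$ we compute
\begin{align*}
  \sigma_\tau(bb')=\psi^{-1}\big(\sigma\tau(\psi(b)\ast\psi(b'))\big)=\psi^{-1}\big(\sigma\tau(\psi(b'))\ast\sigma\tau(\psi(b))\big)=\sigma_\tau(b')\sigma_\tau(b).
\end{align*}
Here the first equality uses that $\psi$ is multiplicative from $\cB_\bc$ to $(\cA,\ast)$, the second uses Theorem \ref{thm:sigmatau-anti}, and the third uses that $\psi^{-1}$ is multiplicative. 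So $\sigma_\tau$ is an algebra anti-automorphism.

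The values on generators follow from the normalisation of $\psi$. First, $\psi|_\cH=\id_\cH$ by Lemma \ref{lem:Letzter-props}.(ii), applied with $b=1$. We also need $\sigma\tau(\cH)\subseteq\cH$; this holds because $\sigma$ fixes $E_j,F_j$ and inverts $K_\beta$, while $\tau$ preserves $X$ and $Q^\theta$. Therefore for $h\in\cH$ we get $\sigma_\tau(h)=\psi^{-1}(\sigma\tau(h))=\sigma\tau(h)$. Next, $\psi(B_i)=F_i$: the term $T_{w_X}(E_{\tau(i)})K_i^{-1}$ lies in $\cJ$, since it lies in $\cR_{X,+}^{+,r}U^0_{\tau,+}$ up to ordering and $\cJ$ is a left ideal. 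Since $\sigma\tau(F_i)=F_{\tau(i)}$, we conclude $\sigma_\tau(B_i)=\psi^{-1}(F_{\tau(i)})=B_{\tau(i)}$. There is no real obstacle at this point; all the substantive work sits in Theorem \ref{thm:sigmatau-anti}, that is, in Theorem \ref{thm:muiR} and ultimately in the shortness result Theorem \ref{thm:short}. The only points that need care are the stability of $\cH$ under $\sigma\tau$ and the identification $\psi(B_i)=F_i$, both of which are already recorded in Section \ref{sec:star-Bc}.
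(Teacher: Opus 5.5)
Your proof is correct and is the same transport-of-structure argument the paper gives: Theorem \ref{thm:sigmatau-anti} pulled back through the algebra isomorphism $\psi:\cB_\bc\rightarrow(\cA,\ast)$, using $\psi|_\cH=\id_\cH$ and $\psi(B_i)=F_i$.

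One slip is in your side justification of $\psi(B_i)=F_i$. The element $T_{w_X}(E_{\tau(i)})K_i^{-1}$ involves no factor from $U^0_{\tau,+}$. It equals $T_{w_X}(E_{\tau(i)})K^{-1}_{w_X(\alpha_{\tau(i)})}\cdot K_{w_X(\alpha_{\tau(i)})-\alpha_i}$, and the last factor lies in $U^0_\theta\subset\cH$. Hence the element lies in $\cH\,\cR^{+,r}_{X,+}\subset\cJ$, because $G^+=G^+_X\cR^{+,r}_X$. Since this fact is already recorded in the paper, you can simply cite it.
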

%%%%%%%%%%%%%%%%%%%%%%%%%%%%%%%%%%%%%%
\begin{proof}
  This follows from Theorem \ref{thm:sigmatau-anti} and the fact that the Letzter map $\psi:\cB_\bc\rightarrow (\cA,\ast)$ is an isomorphism of algebras which satisfies $\psi(B_i)=F_i$ for all $i\in I\setminus X$.
\end{proof}  
%%%%%%%%%%%%%%%%%%%%%%%%%%%%%%%%%%%%%%
%%%%%%%%%%%%%%%%%%%%%%%%%%%%%%%%%%%%%%
\subsection{The bar involution for $\cB_\bc$}\label{sec:Bbar}
%%%%%%%%%%%%%%%%%%%%%%%%%%%%%%%%%%%%%%
The existence of the bar involution for $\cB_\bc$ for most general parameters was established in \cite{a-Kolb22} building on the improvement \cite[Section 7]{a-AppelVlaar22} of the formulation of the quasi $K$-matrix in \cite[Section 6]{a-BalaKolb19}. Using the anti-automorphism $\sigma_\tau$ from Corollary \ref{cor:sigmatauB} we can now give a new proof of the existence of the bar involution for $\cB_\bc$, which does not rely on the prior construction of the quasi $K$-matrix.

Recall that the Lusztig braid group automorphisms $T_i$ for $i\in I$ are given in four variants $T_{i,e}', T''_{i,e}$ for $e\in \{\pm 1\}$, see \cite[37.1]{b-Lusztig94}, and in our conventions $T_i=T''_{i,1}$. In the sequel we will make use of the relations
\begin{align}
  \sigma \circ T''_{i,e}&=T'_{i,-e}\circ \sigma, \label{eq:sTTs}\\
  \obar \circ T''_{i,e} &= T''_{i,-e} \circ \obar \label{eq:obTTob}
\end{align}
which hold for all $i\in I$, $e\in \{\pm 1\}$, see \cite[37.2.4]{b-Lusztig94}. Recall that $X\subset I$ is a subset of finite type. Let $\Phi_X\subset Q$ be the corresponding finite root system, and let 
\begin{align*}
  \rho_X=\frac{1}{2} \sum_{\beta\in \Phi_X\cap Q^+} \beta, \qquad 
    \rho_X^\vee=\frac{1}{2} \sum_{\beta\in \Phi_X\cap Q^+} \beta^\vee
\end{align*}
be the corresponding half sum of positive roots and coroots, respectively. We will use the relation
\begin{align}\label{eq:T21}
  T''_{w_X,e}(E_i) = (-1)^{2\alpha_i(\rho_X^\vee)} q^{e(2\rho_X,\alpha_i)} T'_{w_X,e}(E_i) \qquad \mbox{for all $i\in I\setminus X$, $e\in \{\pm 1\}$}
\end{align}
which was established in the proof of \cite[Lemma 2.9]{a-BalaKolb15} building on \cite[37.2.4]{b-Lusztig94}. Recall that $\theta=-w_X\circ\tau$ on $Q$.
%%%%%%%%%%%%%%%%%%%%%%%%%%%%%%%%%%%%%%%
\begin{lem}\label{lem:BBt}
  For any $i\in I\setminus X$ we have $(\sigma\circ \tau\circ \obar)(B_i)=B_{\tau(i)}$ if and only if 
  \begin{align}\label{eq:Bbar-condition}
    \overline{c_i}=(-1)^{2\alpha_i(\rho_X^\vee)} q^{(2\rho_X-\theta(\alpha_i),\alpha_i)} c_{\tau(i)}.
  \end{align}  
\end{lem}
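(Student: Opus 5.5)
The plan is a direct computation: apply the three maps $\obar$, $\tau$, $\sigma$ to the defining expression \eqref{eq:Bi-def} of $B_i$ one after another, and compare the result with $B_{\tau(i)}=F_{\tau(i)}-c_{\tau(i)}T_{w_X}(E_i)K_{\tau(i)}^{-1}$.

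First I apply the bar involution. Since $\overline{F_i}=F_i$, $\overline{K_i^{-1}}=K_i$ and $T_{w_X}=T''_{w_X,1}$, relation \eqref{eq:obTTob} gives
\begin{align*}
  \overline{B_i}=F_i-\overline{c_i}\,T''_{w_X,-1}(E_{\tau(i)})K_i .
\end{align*}
Next I apply the diagram automorphism $\tau$. Because $\tau(X)=X$ and $\tau w_X\tau=w_X$, the map $\tau$ commutes with $T''_{w_X,-1}$. This turns the expression into $F_{\tau(i)}-\overline{c_i}\,T''_{w_X,-1}(E_i)K_{\tau(i)}$.

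Then I apply the anti-automorphism $\sigma$. It reverses the order of the two factors, sends $K_{\tau(i)}$ to $K_{\tau(i)}^{-1}$, and by \eqref{eq:sTTs} satisfies $\sigma\circ T''_{w_X,-1}=T'_{w_X,1}\circ\sigma$. For this last step I use that $w_X^{-1}=w_X$, so reversing a reduced word for $w_X$ again gives a reduced word for $w_X$. Now \eqref{eq:T21} with $e=1$ rewrites $T'_{w_X,1}(E_i)$ as $(-1)^{2\alpha_i(\rho_X^\vee)}q^{-(2\rho_X,\alpha_i)}T_{w_X}(E_i)$. This yields
\begin{align*}
 (\sigma\circ\tau\circ\obar)(B_i)=F_{\tau(i)}-\overline{c_i}\,(-1)^{2\alpha_i(\rho_X^\vee)}q^{-(2\rho_X,\alpha_i)}K_{\tau(i)}^{-1}T_{w_X}(E_i).
\end{align*}

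The remaining step is to move $K_{\tau(i)}^{-1}$ to the right. The element $T_{w_X}(E_i)$ has weight $w_X(\alpha_i)=-\theta(\alpha_{\tau(i)})$, using $\theta=-w_X\circ\tau$. Hence
\begin{align*}
K_{\tau(i)}^{-1}T_{w_X}(E_i)=q^{(\alpha_{\tau(i)},\theta(\alpha_{\tau(i)}))}\,T_{w_X}(E_i)K_{\tau(i)}^{-1}.
\end{align*}
Since $\tau$ is an isometry commuting with $\theta$, the exponent equals $(\alpha_i,\theta(\alpha_i))$.

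Comparing with $B_{\tau(i)}$, I expect the two elements to agree exactly when
\begin{align*}
c_{\tau(i)}=\overline{c_i}\,(-1)^{2\alpha_i(\rho_X^\vee)}q^{-(2\rho_X-\theta(\alpha_i),\alpha_i)},
\end{align*}
which is equivalent to \eqref{eq:Bbar-condition}. For the ``only if'' direction I note that $T_{w_X}(E_i)K_{\tau(i)}^{-1}\neq 0$ and that it is linearly independent of $F_{\tau(i)}$, by the triangular decomposition of $\Uq$. So equality forces the two coefficients to coincide.

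The main obstacle is sign and $q$-power bookkeeping. I need to check carefully that $\tau$ commutes with $T_{w_X}$, that \eqref{eq:sTTs} extends from $T''_{i,e}$ to $T''_{w_X,e}$ with the correct variant, and that the weight exponent from commuting $K_{\tau(i)}^{-1}$ has the right sign. All of these are routine verifications.
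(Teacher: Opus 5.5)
Your proposal is correct and is essentially the paper's own computation: apply $\obar$, commute $\tau$ past $T_{w_X}$, use \eqref{eq:obTTob}, \eqref{eq:sTTs} and \eqref{eq:T21} to reach $T'_{w_X,1}(E_i)$ and then $T_{w_X}(E_i)$, and finally move $K_{\tau(i)}^{-1}$ to the right using that $T_{w_X}(E_i)$ has weight $w_X(\alpha_i)$. The remark about reversing a reduced word is unnecessary but harmless: pushing $\sigma$ through $T''_{i_1,e}\cdots T''_{i_k,e}$ via \eqref{eq:sTTs} gives $T'_{i_1,-e}\cdots T'_{i_k,-e}\circ\sigma$ with the order unchanged.
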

%%%%%%%%%%%%%%%%%%%%%%%%%%%%%%%%%%%%%%%
\begin{proof}
    For $i\in I\setminus X$ we calculate
    \begin{align*}
      (\sigma\circ \tau\circ \obar)(B_i) &\stackrel{\phantom{\eqref{eq:obTTob}}}{=} F_{\tau(i)} - \overline{c_i} K_{\tau(i)}^{-1}(\sigma\circ \tau\circ \obar\circ T_{w_X})(E_{\tau(i)})\\
      &\stackrel{\phantom{\eqref{eq:obTTob}}}{=} F_{\tau(i)} - \overline{c_i} K_{\tau(i)}^{-1}\sigma(\overline{T_{w_X}(E_{i})})\\
      &\stackrel{\eqref{eq:obTTob}}{=}F_{\tau(i)} - \overline{c_i} K_{\tau(i)}^{-1} \sigma({T''_{w_X,-1}(E_{i})}))\\
      &\stackrel{\eqref{eq:sTTs}}{=}F_{\tau(i)} - \overline{c_i} K_{\tau(i)}^{-1} T'_{w_X,1}(E_{i})\\
      &\stackrel{\eqref{eq:T21}}{=}F_{\tau(i)} - \overline{c_i} (-1)^{2\alpha_i(\rho_X^\vee)} q^{-(2\rho_X,\alpha_i)-(\alpha_{\tau(i)}, w_X(\alpha_i))} T_{w_X}(E_{i})K_{\tau(i)}^{-1}
    \end{align*}
    which proves the desired equivalence.
\end{proof}
%%%%%%%%%%%%%%%%%%%%%%%%%%%%%%%%%%%%%%
With this preparation we can give a quasi $K$-matrix free proof of the existence of the bar involution on $\cB_\bc$.
%%%%%%%%%%%%%%%%%%%%%%%%%%%%%%%%%%%%%%
\begin{cor}\label{cor:barB}
  Assume that $\overline{c_i}=(-1)^{2\alpha_i(\rho_X^\vee)} q^{(2\rho_X-\theta(\alpha_i),\alpha_i)} c_{\tau(i)}$ for all $i\in I\setminus X$. Then the map
  \begin{align}
      \obar^B:=\sigma_\tau \circ \sigma\circ \tau\circ \obar: \cB_\bc\rightarrow \cB_\bc, \quad b\mapsto \overline{b}^B
  \end{align}
  defines a $k$-algebra automorphism of $\cB_\bc$ such that $\obar^B|_\cH=\obar|_\cH$ and $\overline{B_i}^B=B_i$ for all $i\in I\setminus X$.
\end{cor}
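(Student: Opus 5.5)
The plan is to compose two maps that are already available. The first is the $k$-linear map $\sigma\circ\tau\circ\obar:\Uq\rightarrow\Uq$. Since $\obar$ is a $k$-algebra automorphism, $\sigma$ is a $\qfield$-algebra anti-automorphism and $\tau$ is an algebra automorphism, the composite $\sigma\circ\tau\circ\obar$ is a $k$-algebra anti-automorphism of $\Uq$. We first check that it maps $\cB_\bc$ into itself, and for this it suffices to look at generators.

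On $\cH$, note that $\tau$ preserves $X$ and $Q^\theta$, because $\theta$ commutes with $\tau$. The maps $\obar$ and $\sigma$ fix $E_j,F_j$ and send $K_\beta\mapsto K_{-\beta}$. Hence $\sigma\circ\tau\circ\obar(\cH)=\cH$, and explicitly
\[
\sigma\circ\tau\circ\obar|_\cH=\sigma\circ\tau\circ\obar|_\cH .
\]
On the generators $B_i$, Lemma \ref{lem:BBt} combined with the hypothesis on the parameters gives $(\sigma\circ\tau\circ\obar)(B_i)=B_{\tau(i)}$. Therefore $\sigma\circ\tau\circ\obar$ restricts to a $k$-algebra anti-automorphism of $\cB_\bc$; it is bijective because it is an involution, $\sigma,\tau,\obar$ being mutually commuting involutions on generators.

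Next we compose with $\sigma_\tau$. By Corollary \ref{cor:sigmatauB}, $\sigma_\tau$ is a $\qfield$-algebra anti-automorphism of $\cB_\bc$, and in particular a $k$-algebra anti-automorphism. The composite of two anti-automorphisms is an automorphism, so $\obar^B=\sigma_\tau\circ\sigma\circ\tau\circ\obar$ is a $k$-algebra automorphism of $\cB_\bc$.

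Finally we evaluate $\obar^B$ on generators. For $h\in\cH$, Corollary \ref{cor:sigmatauB} gives $\sigma_\tau|_\cH=\sigma\circ\tau|_\cH$, so
\[
\overline{h}^B=\sigma\tau\sigma\tau(\overline{h})=\overline{h},
\]
using $(\sigma\circ\tau)^2=\id$. Note that $\sigma$ and $\tau$ commute, since both act diagonally on generator labels. For $B_i$ we get
\[
\overline{B_i}^B=\sigma_\tau(B_{\tau(i)})=B_{\tau^2(i)}=B_i .
\]

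The only point needing care is that the hypothesis of Lemma \ref{lem:BBt} must hold for every $i\in I\setminus X$ simultaneously, which is exactly the assumption of the corollary. One also has to make sure the $\qfield$-semilinearity of $\obar$ does not interfere: it does not, because $\sigma_\tau$ is $\qfield$-linear, so the composite is simply $k$-linear and twists scalars by $q\mapsto q^{-1}$.
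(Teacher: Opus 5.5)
Your proposal is correct and follows essentially the same route as the paper. You restrict the anti-automorphism $\sigma\circ\tau\circ\obar$ to $\cB_\bc$ using the invariance of $\cH$ and Lemma \ref{lem:BBt}, compose with the anti-automorphism $\sigma_\tau$ from Corollary \ref{cor:sigmatauB}, and evaluate on generators via $\sigma_\tau|_\cH=\sigma\circ\tau|_\cH$ and $\sigma_\tau(B_{\tau(i)})=B_i$. You also supply details the paper leaves implicit ($\tau(Q^\theta)=Q^\theta$, bijectivity via involutivity); your displayed identity $\sigma\circ\tau\circ\obar|_\cH=\sigma\circ\tau\circ\obar|_\cH$ is vacuous, presumably a typo, but nothing depends on it.
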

%%%%%%%%%%%%%%%%%%%%%%%%%%%%%%%%%%%%%%%
\begin{proof}
   By Corollary \ref{cor:sigmatauB} and Lemma \ref{lem:BBt} the map $\sigma_\tau\circ \sigma\circ\tau\circ \obar$ defines a $k$-algebra automorphism of $\cB_\bc$ with $\sigma_\tau\circ \sigma\circ\tau\circ \obar(B_i)=B_i$ for all $i\in I\setminus X$. As $\sigma_\tau|_\cH=\sigma\circ\tau|_\cH$ we obtain $\obar^B|_\cH=\obar|_\cH$.  
\end{proof}
%%%%%%%%%%%%%%%%%%%%%%%%%%%%%%%%%%%%%%
\subsection{A noteworthy consequence of Theorem \ref{thm:muiR}}\label{sec:noteworthy}
%%%%%%%%%%%%%%%%%%%%%%%%%%%%%%%%%%%%%
As a further application of Theorem \ref{thm:muiR} we give a new proof of \cite[Conjecture 2.7]{a-BalaKolb15}.
%%%%%%%%%%%%%%%%%%%%%%%%%%%%%%%%%%%%%
  \begin{prop}\label{prop:fundamental}
  For all $i\in I\setminus X$ we have
  \begin{align*}
     \sigma \circ \tau(\partial^L_i(T_{w_X}^{-1}(F_i)))=\partial^L_i(T_{w_X}^{-1}(F_i)).
  \end{align*}  
\end{prop}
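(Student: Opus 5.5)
The plan is to show that a certain degree-zero element of $\cA$ is fixed by $\sigma\tau$ because it arises as both a left and a right lower order term, and then to unwind the explicit formulas. Fix $i\in I\setminus X$ and look at the star product $F_{\tau(i)}\ast F_i$. By the definition \eqref{eq:mui-def} we have
\begin{align*}
  \mu^L_{\tau(i)}(F_i)=F_{\tau(i)}\ast F_i-F_{\tau(i)}F_i=\mu^R_i(F_{\tau(i)}),
\end{align*}
so the two lower order terms coincide tautologically. By Theorem \ref{thm:muiR} we have $\mu^R_i=\mutil^R_i=\sigma\tau\circ\mu^L_{\tau(i)}\circ\sigma\tau$. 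Since $\sigma\tau(F_{\tau(i)})=F_i$, this gives
\begin{align*}
  x:=\mu^L_{\tau(i)}(F_i)=\sigma\tau(x).
\end{align*}
By Corollary \ref{cor:muiR}, or directly from \eqref{eq:muiL}, $x$ has degree $0$ and hence lies in $\cH$.

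The second step is to make $x$ explicit. Since $F_i\in\cR_X^{-,r}$, formula \eqref{eq:muiL} with $\ell=\tau(i)$ gives
\begin{align*}
  x=-c_{\tau(i)}\frac{q^{-(\alpha_i,\theta(\alpha_i))}}{q_i-q_i^{-1}}\,K_{-\alpha_{\tau(i)}-\theta(\alpha_{\tau(i)})}\,T_{w_X}\big(\partial^L_i(T_{w_X}^{-1}(F_i))\big),
\end{align*}
where I use $(\alpha_{\tau(i)},\theta(\alpha_{\tau(i)}))=(\alpha_i,\theta(\alpha_i))$. In parallel, $F_{\tau(i)}\in T_{w_X}^{-1}(\cR_X^{-,r})$, so Lemma \ref{lem:mutilRi}.(1) expresses the same element as a scalar multiple of $T_{w_X}^{-1}\big(\partial^R_{\tau(i)}(T_{w_X}(F_{\tau(i)}))\big)K_{\alpha_i+\theta(\alpha_i)}$. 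Equivalently, one can apply $\sigma\tau$ to the displayed formula using \eqref{eq:sigmaTwX} and \eqref{eq:sigma-tau-partial}. Set
\begin{align*}
  y:=\partial^L_i(T_{w_X}^{-1}(F_i)).
\end{align*}
Comparing the two expressions for $x$, the scalar prefactors cancel, and I expect to obtain
\begin{align*}
  K_{\beta}\,T_{w_X}(y)=T_{w_X}^{-1}(\sigma\tau(y))\,K_{-\tau(\beta)}
\end{align*}
for a suitable $\beta\in Q$. Here $\sigma\tau(y)$ comes out directly from \eqref{eq:sigma-tau-partial}.

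The final step, which I expect to be the main obstacle, is to strip off the Cartan factors and the braid group operators to conclude $\sigma\tau(y)=y$. The element $y$ has weight $\alpha_i-w_X(\alpha_i)\in Q_X^+$ up to sign, so $y\in U^-_X$, while $T_{w_X}(y)$ lies in $U^+_XU^0$. To compare $T_{w_X}(y)$ with $T_{w_X}^{-1}(\sigma\tau(y))$ I would use that on $\cL_X$ the element $T_{w_X}^2$ acts by conjugation by a Cartan-type element. Concretely, $T_{w_X}^{2}(u)=q^{\pm(\mu,2\rho_X)}$ times a $K$-conjugate of $u$ for $u$ of weight $\mu\in Q_X$, via \cite[37.2.4]{b-Lusztig94} and \eqref{eq:T21}-type identities. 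I would then move all $K$'s to one side using the weight of $y$, and check that the resulting power of $q$ is $1$. The check uses $\theta(\alpha_i)=-w_X(\alpha_{\tau(i)})$ and $(\rho_X,\alpha_i-w_X\alpha_i)$-type identities.

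If this bookkeeping becomes unwieldy, the fallback is to use Lemma \ref{lem:Letzter-props}.(iii), which says $T_{w_X}$ is an automorphism of $(\cA,\ast)$. With it I would transport the identity $F_{\tau(i)}\ast F_i$ to $T_{w_X}^{-1}(F_{\tau(i)})\ast T_{w_X}^{-1}(F_i)$, mirroring Lemma \ref{lem:muLR-1}. This should give a version of the fixed-point relation in which $y$ appears without the outer $T_{w_X}$.
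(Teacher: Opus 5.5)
Your first two steps are the paper's own argument. The paper also compares $\mu^L_{\tau(i)}(F_i)$ and $\mu^R_i(F_{\tau(i)})$ inside $F_{\tau(i)}\ast F_i$, evaluates them via \eqref{eq:muiL} and Theorem \ref{thm:muiR}, and arrives at exactly your relation. That relation is \eqref{eq:main-step}, with $\beta=w_X(\alpha_i)-\alpha_{\tau(i)}$ and $\sigma\tau(y)=\partial^R_{\tau(i)}(T_{w_X}(F_{\tau(i)}))$; phrasing it as $\sigma\tau(x)=x$ only repackages it.

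The gap is in the identity you plan to use in the last step. $T_{w_X}^2$ does not act on $U^-_X$ as $q^{\pm(\mu,2\rho_X)}$ times a $K$-conjugate; a $K$-conjugate would only rescale $u$. From $T_{w_X}(F_j)=-K_{\tau(j)}^{-1}E_{\tau(j)}$ and $T_{w_X}(E_j)=-F_{\tau(j)}K_{\tau(j)}$ for $j\in X$ one gets $T_{w_X}^2(F_j)=K_jF_jK_j$. Since $u\mapsto K_\alpha uK_\alpha$ is multiplicative on weight vectors of $U^-$, it follows that $T_{w_X}^2(u)=K_\alpha uK_\alpha$ for all $u\in U^-_X$ of weight $-\alpha$. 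The extra factor $K_{2\alpha}$ is exactly what makes the Cartan parts on the two sides match, and there is no $\rho_X$-factor. If there were one, your check would fail: $(\alpha,2\rho_X)>0$ for $0\neq\alpha\in Q_X^+$, nothing else in the relation produces a compensating power of $q$, and you would end up with $y=q^{c}\,\sigma\tau(y)$ for some $c\neq 0$.

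With the correct identity the step closes quickly. Put $\alpha=w_X(\alpha_i)-\alpha_i\in Q_X^+$. You need $\tau(\alpha)=\alpha$, i.e.\ $w_X(\alpha_i)-\alpha_i=w_X(\alpha_{\tau(i)})-\alpha_{\tau(i)}$, which follows from $\tau|_{Q_X}=-w_X|_{Q_X}$. This gives two things:
\begin{itemize}
\item $\sigma\tau(y)$ lies in $U^-_X$ with the same weight $-\alpha$ as $y$;
\item applying $T_{w_X}$ to your relation, the exponents collect to give $K_{\alpha_i-\alpha_{\tau(i)}}\,y=\sigma\tau(y)\,K_{\alpha_i-\alpha_{\tau(i)}}$.
\end{itemize}
Since $(\alpha_i-\alpha_{\tau(i)},\alpha)=(\alpha_i,\alpha-\tau(\alpha))=0$, this Cartan factor commutes with $y$, so $y=\sigma\tau(y)$.

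The paper does the same bookkeeping differently. It writes $T_{w_X}^{\pm1}(f)$, for monomials $f\in U^-_X$, as $K_{-\tau(\alpha)}\,\omega\tau(f)$ and $\omega\tau(f)\,K_{\tau(\alpha)}$ up to a common scalar. This is equivalent to the formula for $T_{w_X}^2$ above.

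Your fallback via Lemma \ref{lem:Letzter-props}.(iii) would not avoid any of this. Since $\sigma\tau(T_{w_X}^{-1}(x))=T_{w_X}(x)$, you would again have to compare $T_{w_X}$ and $T_{w_X}^{-1}$ on $\cH$.
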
  
%%%%%%%%%%%%%%%%%%%%%%%%%%%%%%%%%%%%%%%
\begin{proof}
  By Equation \eqref{eq:muiL} we have
  \begin{align}
    F_{\tau(i)}\ast F_i &= F_{\tau(i)} F_i + \mu_{\tau(i)}^L(F_i)\nonumber\\
    &=F_{\tau(i)}F_i  -c_{\tau(i)}\frac{q^{-(\alpha_i,\theta(\alpha_i))}}{q_i-q_i^{-1}} K_{-\alpha_{\tau(i)}-\theta(\alpha_{\tau(i)})} T_{w_X} \circ \partial^L_{i}\circ T^{-1}_{w_X}(F_i).\label{eq:FFL}
  \end{align}
  On the other hand, Theorem \ref{thm:muiR} and Equation \eqref{eq:mutilRi} give us
   \begin{align}
     F_{\tau(i)}\ast F_i &=F_{\tau(i)}F_i + \mu_i^R(F_{\tau(i)})\nonumber\\
     &=F_{\tau(i)}F_i  -c_{\tau(i)}\frac{q^{-(\alpha_i,\theta(\alpha_i))}}{q_i-q_i^{-1}}  T_{w_X}^{-1} \circ \partial^R_{\tau(i)}\circ T_{w_X}(F_{\tau(i)})K_{\alpha_{i}+\theta(\alpha_{i})}.\label{eq:FFR}
   \end{align}
   Comparison of Equation \eqref{eq:FFL} and Equation \eqref{eq:FFR} gives us
   \begin{align}\label{eq:main-step}
    K_{-\alpha_{\tau(i)}-\theta(\alpha_{\tau(i)})} T_{w_X} \circ \partial^L_{i}\circ T^{-1}_{w_X}(F_i)=  T_{w_X}^{-1} \circ \partial^R_{\tau(i)}\circ T_{w_X}(F_{\tau(i)})K_{\alpha_{i}+\theta(\alpha_{i})}.  
   \end{align}
   As $w_X(\alpha_i)-\alpha_i=w_X(\alpha_{\tau(i)})-\alpha_{\tau(i)}$ both $\partial^L_{i}\circ T^{-1}_{w_X}(F_i)$ and $\partial^R_{\tau(i)}\circ T_{w_X}(F_{\tau(i)})$ lie in $U^-_{-(w_X(\alpha_i)-\alpha_i)}\subset U^-_X$. Now, for any monomial $f=F_{j_1}F_{j_2}\dots F_{j_k} \in U^-_{-\alpha}$ with $\alpha=\sum_{n=1}^k\alpha_{j_n}\in Q_X^+$ we have
   \begin{align*}
     T_{w_X}(f)&=(-1)^k K_{\tau(j_1)}^{-1}E_{\tau(j_1)} K_{\tau(j_2)}^{-1}E_{\tau(j_2)}\dots  K_{\tau(j_k)}^{-1}E_{\tau(j_k)}\\
     &=q^{\frac{1}{2}((\alpha,\alpha)-\sum_{n=1}^k(\alpha_{j_n},\alpha_{j_n}))} K_{-\tau(\alpha)} \omega \circ \tau(f),\\
     T_{w_X}^{-1}(f)&=(-1)^k E_{\tau(j_1)}K_{\tau(j_1)} E_{\tau(j_2)}K_{\tau(j_2)}\dots E_{\tau(j_k)}K_{\tau(j_k)}\\
     &= q^{\frac{1}{2}((\alpha,\alpha)-\sum_{n=1}^k(\alpha_{j_n},\alpha_{j_n}))} \omega \circ \tau(f)  K_{\tau(\alpha)}.
   \end{align*}  
   Hence property \eqref{eq:main-step} is equivalent to
   \begin{align*}
      K_i K_{\tau(i)}^{-1}\partial_i^L (T_{w_X}^{-1}(F_i)) =   \partial^R_{\tau(i)}( T_{w_X}(F_{\tau(i)})) K_i K_{\tau(i)}^{-1}.
   \end{align*}  
   As $(\alpha_i-\alpha_{\tau(i)},w_X(\alpha_i)-\alpha_i)=0$ we obtain
   \begin{align*}
      \partial_i^L (T_{w_X}^{-1}(F_i)) =   \partial^R_{\tau(i)}( T_{w_X}(F_{\tau(i)})).
   \end{align*}
   Now the relation $\sigma\circ \tau\big( \partial^R_{\tau(i)}( T_{w_X}(F_{\tau(i)}))\big) = \sigma\big( \partial^R_{i}( T_{w_X}(F_{i}))\big)=\partial^L_i(T_{w_X}^{-1}(F_i))$ implies the claim of the proposition.
\end{proof}  
%%%%%%%%%%%%%%%%%%%%%%%%%%%%%%%%%%%%%%%
Recall that $\omega\circ \partial^L_i=-\partial^L_i\circ \omega$ where on the right hand side $\partial^L_i:U^+\rightarrow U^+$ while on the left hand side $\partial^L_i:U^-\rightarrow U^-$. Moreover, $\tau \circ\omega=\omega\circ \tau$, $\sigma \circ \omega=\omega \circ \sigma$ and
\begin{align*}
  \omega \circ T_{w_X}^{-1}(u) = q_{X,\mu} T_{w_X}^{-1}\circ \omega(u) \qquad \mbox{for all $u\in (\cR_X^-)_{\-\mu}$}
\end{align*}
where $q_{X,\mu}\in \qfield$ is a scalar factor which only depends on $X$ and the weight of $u$, see \cite[37.2.4]{b-Lusztig94}. Hence, applying $\omega$ and $\sigma\circ \omega$ to the formula in Proposition \ref{prop:fundamental} we obtain the following result.
%%%%%%%%%%%%%%%%%%%%%%%%%%%%%%%%%%%%%%%
\begin{cor}\label{cor:fundamental}
  For all $i\in I\setminus X$ the following relations hold:
  \begin{align}
    \sigma \circ \tau(\partial^L_i(T_{w_X}^{-1}(E_i))) &=\partial^L_i(T_{w_X}^{-1}(E_i)),\label{eq:fund1}\\
    \sigma \circ \tau(\partial^R_i(T_{w_X}(E_i))) &=\partial^R_i(T_{w_X}(E_i)).\label{eq:fund2}
  \end{align}  
\end{cor}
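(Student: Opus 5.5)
The plan is to transport the identity of Proposition \ref{prop:fundamental} from $U^-$ to $U^+$. For \eqref{eq:fund1} we use the Chevalley involution $\omega$. For \eqref{eq:fund2} we use $\sigma\circ\omega$ together with the relation $\sigma\circ\partial^L_i\circ\sigma=\partial^R_i$ and \eqref{eq:sigmaTwX}.

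\emph{Step 1: relation \eqref{eq:fund1}.} Set $y=\partial^L_i(T_{w_X}^{-1}(F_i))\in U^-_X$. Since $T_{w_X}^{-1}(F_i)\in U^-$ has weight $-w_X(\alpha_i)$, we get $\omega(T_{w_X}^{-1}(F_i))=c\,T_{w_X}^{-1}(\omega(F_i))=-c\,T_{w_X}^{-1}(E_i)$ for a nonzero scalar $c=q_{X,\mu}$ depending only on the weight. This uses the compatibility of $\omega$ with $T_{w_X}^{-1}$ recalled before the corollary. Combining with $\omega\circ\partial^L_i=-\partial^L_i\circ\omega$, we obtain $\omega(y)=c\,\partial^L_i(T_{w_X}^{-1}(E_i))$.

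Next apply $\omega$ to the identity $\sigma\tau(y)=y$ of Proposition \ref{prop:fundamental}. Since $\omega$ commutes with $\sigma$ and with $\tau$, this gives $\sigma\tau(\omega(y))=\omega(y)$. The map $\sigma\tau$ is $\qfield$-linear, so we may divide by the scalar $c$, which yields \eqref{eq:fund1}.

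\emph{Step 2: relation \eqref{eq:fund2}.} Apply $\sigma$ to \eqref{eq:fund1}. The relation $\sigma\circ T_i\circ\sigma=T_i^{-1}$ gives $\sigma(T_{w_X}^{-1}(E_i))=T_{w_X}(E_i)$, because $\sigma$ fixes $E_i$. Since $\sigma$ conjugates $\partial^L_i$ to $\partial^R_i$, we get $\sigma(\partial^L_i(T_{w_X}^{-1}(E_i)))=\partial^R_i(T_{w_X}(E_i))$. Finally, $\sigma$ commutes with $\sigma\tau$: both $\sigma$ and $\tau$ are involutions, and $\tau$ commutes with $\sigma$ on generators. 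Hence applying $\sigma$ to both sides of \eqref{eq:fund1} gives \eqref{eq:fund2}.

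The only delicate point is the scalar bookkeeping in Step 1. One has to check that the factor $q_{X,\mu}$ and the sign coming from $\omega$ appear identically on both sides, so that they cancel. They do, because both sides are obtained by applying the same linear map $\omega$ to the two sides of one equality. Since $\sigma\tau$ is $\qfield$-linear rather than semilinear, no conjugation of scalars intervenes.
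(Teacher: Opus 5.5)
Your proposal is correct and follows the paper's argument: the paper obtains \eqref{eq:fund1} by applying $\omega$ to Proposition \ref{prop:fundamental}, using $\omega\circ\partial^L_i=-\partial^L_i\circ\omega$, the commutation of $\omega$ with $\sigma$ and $\tau$, and the weight-dependent scalar $q_{X,\mu}$ relating $\omega\circ T_{w_X}^{-1}$ and $T_{w_X}^{-1}\circ\omega$. It obtains \eqref{eq:fund2} by applying $\sigma\circ\omega$, which is the same as your Step 2 of applying $\sigma$ to \eqref{eq:fund1}; your scalar and sign bookkeeping checks out.
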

%%%%%%%%%%%%%%%%%%%%%%%%%%%%%%%%%%%%%%%%
\begin{rema}
  Relation \eqref{eq:fund2} is the statement of \cite[Conjecture 2.7]{a-BalaKolb15}. We refer back to Section \ref{sec:intro-fundamental} in the Introduction for the historical context and relevance of this relation.
\end{rema}
%%%%%%%%%%%%%%%%%%%%%%%%%%%%%%%%%%%%%%%
\section{The quasi $K$-matrix for QSP-subalgebras}\label{sec:quasiK}
%%%%%%%%%%%%%%%%%%%%%%%%%%%%%%%%%%%%%
Our final application of the star-product interpretation of QSP-subalgebras is the non-inductive construction of the quasi $K$-matrix, which will be presented in Section \ref{sec:RtoK}. Sections \ref{sec:relative-pairing} -- \ref{sec:alt-mu} provide preparatory results, which hold in more generality.
%%%%%%%%%%%%%%%%%%%%%%%%%%%%%%%%%%%%%
\subsection{The relative skew-Hopf pairing}\label{sec:relative-pairing}
%%%%%%%%%%%%%%%%%%%%%%%%%%%%%%%%%%%%%
Recall that $\Uq^\ge=U^0 U^+$ and $\Uq^\le=U^-U^0$ are Hopf subalgebras of $\Uq$.
By \cite[Proposition 1.2.3]{b-Lusztig94} there exists a $\qfield$-bilinear pairing
\begin{align}\label{eq:pairing}
  \langle\,,\,\rangle: U^{\le}\otimes U^\ge \rightarrow \qfield
\end{align}
such that for all $x,x'\in U^\ge, y,y'\in U^\le$ and $i,j\in I$ the following relations hold:
\begin{align}
\left<y, xx' \right>&=\left<\Delta(y), x'\otimes x \right>, &  \left<yy', x \right>&=\left<y\otimes y', \Delta(x)\right>, \label{eq:form-kow} \\
\left<K_i,K_j \right>&=q^{-(\alpha_i,\alpha_j)}, &  \left<F_i, E_j\right>&=\delta_{ij} \frac{-1}{q_i-q_i^{-1}},  \nonumber\\
\left<K_j,E_i\right>&=0,  &  \left<F_i, K_j\right>&=0.  \nonumber
\end{align}  
Here we follow the conventions used in the finite case in \cite[6.12]{b-Jantzen96}. The restriction of $\langle \, ,\, \rangle$ to $U^-_{-\mu}\ot U^+_\mu$ is non-degenerate for any $\mu\in Q^+$, while the restriction of  $\langle \, ,\, \rangle$ to $U^-_{-\mu}\ot U^+_\nu$ vanishes if $\mu\neq \nu$. As the Chevalley involution $\omega$ is an algebra homomorphism, coalgebra anti-homomorphism, the defining relations of the pairing imply that
\begin{align}\label{eq:pairing-omega}
  \langle y,x\rangle=\langle \omega(x),\omega(y)\rangle \qquad \mbox{for all $x\in U^+, u\in U^-$,}
\end{align}
see also \cite[Lemma 6.16]{b-Jantzen96}. The pairing is also invariant under the algebra anti-automorphism $\sigma$, that is 
\begin{align}\label{eq:pairing-sigma}
  \langle y,x\rangle=\langle \sigma(y),\sigma(x)\rangle \qquad \mbox{for all $x\in U^+, u\in U^-$,}
\end{align}
see \cite[Lemma 1.2.8]{b-Lusztig94}. The relations \eqref{eq:cop-partial-L}-\eqref{eq:cop-partial2-R} and \eqref{eq:form-kow} imply that
\begin{align}
  \langle F_i y,x\rangle &= \langle F_i, E_i \rangle \langle y, \partial_i^L(x)\rangle,\nonumber\\
  \langle y F_i,x\rangle &= \langle F_i, E_i \rangle \langle y, \partial_i^R(x)\rangle,\nonumber\\
   \langle y,E_ix\rangle &= \langle F_i, E_i \rangle \langle \partial_i^L(y),x\rangle,\label{eq:yEx}\\
  \langle y ,x E_i\rangle &= \langle F_i, E_i \rangle \langle \partial_i^R(y), x\rangle\label{eq:yxE}
\end{align}  
for all $x\in U^+, y\in U^-$. 

Throughout Sections \ref{sec:relative-pairing}, \ref{sec:relative-quasiR} let $X\subset I$ be any proper subset. Recall the definition of the Levi factor $\cL_X$ given in \eqref{eq:LX-def} and set $\cL_X^\ge=\cL_X\cap \Uq^\ge$ and $\cL_X^\le=\cL_X\cap \Uq^\le$.
The pairing \eqref{eq:pairing} is compatible with the triangular decompositions given in \eqref{eq:triang_Upm}.
%%%%%%%%%%%%%%%%%%%%%%%%%%%%%%%%%%%%%%%%%%%%%%%
\begin{prop}\label{prop:rhrh}
  The pairing \eqref{eq:pairing} satisfies the relation
  \begin{align}\label{eq:rhrh}
    \langle r^- h^-, r^+ h^+ \rangle = \langle r^-,r^+ \rangle \langle h^-,h^+ \rangle
  \end{align}
  for all $r^-\in \cR_X^{-,r}$,  $r^+\in \cR_X^{+,l}$, $h^-\in \cL_X^\le$ and $h^+\in \cL_X^\ge$. In particular, the restriction of the pairing \eqref{eq:pairing} to $\cR_X^{-,r}\ot \cR_X^{+,l}$ is non-degenerate. 
  Moreover, the property
  \begin{align}\label{eq:lr-adjoint}
     \langle y,\ad_l(h)(x)\rangle=\langle \ad_r(h)(y),x\rangle
  \end{align}
  holds for all $y\in \cR_X^{-,r}, x\in \cR_X^{+,l}$ and $h\in \cL_X$.
\end{prop}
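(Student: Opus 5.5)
The plan is to evaluate the left hand side of \eqref{eq:rhrh} with the Hopf pairing axioms \eqref{eq:form-kow}, which give
\begin{align*}
  \langle r^-h^-, r^+h^+\rangle = \langle r^-\ot h^-, \kow(r^+h^+)\rangle = \langle r^-\ot h^-, \kow(r^+)\kow(h^+)\rangle.
\end{align*}
We then analyse $\kow(r^+)$ for $r^+\in \cR_X^{+,l}$. By \eqref{eq:kow-adhE} and Corollary \ref{cor:adH-gen}.(1), $\cR_X^{+,l}$ is a left coideal subalgebra of $U^+U^0$. The defining property $(\id\ot\pi^+_X)\kow(r^+)=r^+\ot 1$ says that, after projecting the second tensor factor to $\cL_X$, only the term $r^+\ot 1$ survives. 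Decomposing the second tensor factor via $U^+U^0\cong \cR_X^{+,l}\ot U_X^+ U^0$ (using \eqref{eq:triang_Upm}), we write
\begin{align*}
  \kow(r^+)=r^+\ot 1 + \sum a_k\ot b_k, \qquad b_k\in \cR^{+,l}_{X,+}\,U^+_XU^0,
\end{align*}
where $\cR^{+,l}_{X,+}$ is the augmentation ideal, i.e.\ each $b_k$ has positive $\cR$-degree. Then $\langle h^-, b_k h^+_{(1)}\rangle$ must vanish, because $h^-\in \cL_X^\le$ has weight in $-Q_X^+$ while $b_kh^+_{(1)}$ has weight outside $Q_X$. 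More precisely, the pairing vanishes between different weight spaces (the $U^0$ parts only contribute scalars), and the weight of $b_k$ has a positive coefficient of some $\alpha_i$ with $i\notin X$.

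For the remaining term we need $\langle r^-, r^+h^+_{(2)}\rangle$ where $\kow(h^+)=h^+_{(1)}\ot h^+_{(2)}$ with $h^+_{(1)}$ paired against $h^-$. Here I would expand once more,
\begin{align*}
  \langle r^-, r^+ h^+_{(2)}\rangle = \langle \kow(r^-), h^+_{(2)}\ot r^+\rangle,
\end{align*}
and use the analogous structure of $\kow(r^-)$ for $r^-\in\cR_X^{-,r}$. By \eqref{eq:kowRX-} and coinvariance under $\kow^{-,l}_{\cL_X}$, we have $\kow(r^-)\in K\ot r^- + (\text{terms whose first factor lies in } \cR^{-,r}_{X,+}U^0)$. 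Pairing a first factor of positive $\cR$-degree against $h^+_{(2)}\in \cL_X^\ge$ gives zero by the same weight argument. Only the group-like part survives, and pairing it with $h^+_{(2)}$ gives $\vep(h^+_{(2)})$ up to the $K$-factors, which cancel by the weight bookkeeping.

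The main obstacle is tracking the $U^0$ factors and confirming they contribute exactly trivially; to control this I would restrict to homogeneous $r^\pm$ and $h^\pm=y K_\alpha$, $xK_\beta$. Once \eqref{eq:rhrh} holds, non-degeneracy follows: the pairing on $U^-_{-\mu}\ot U^+_\mu$ is non-degenerate, $U^-=\cR_X^{-,r}\ot U^-_X$ and $U^+=\cR_X^{+,l}\ot U^+_X$, and the pairing factorizes with non-degenerate $U_X$-part. Hence a radical vector in $\cR_X^{-,r}$ would lie in the radical of the full pairing.

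For \eqref{eq:lr-adjoint}, it suffices to check $h\in\{E_j,F_j,K_j^{\pm1}\}$. For $K$ it is immediate by weights. For $E_j$ one has $\ad_l(E_j)(x)=E_jx-K_jxK_j^{-1}E_j$; we pair with $y$ using \eqref{eq:yEx}, \eqref{eq:yxE} and $\partial_j^R(y)=0$ for $y\in\cR_X^{-,r}$ (the lemma characterising $\cR_X^{-,r}$). This leaves $\langle F_j,E_j\rangle\langle\partial_j^L(y),x\rangle$-type terms, which match $\langle\ad_r(E_j)(y),x\rangle$ computed via \eqref{eq:EyyE}. The $F_j$ case follows symmetrically using \eqref{eq:FxxF} and $\partial_j^R(x)=0$.
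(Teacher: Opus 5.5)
Your proposal is correct and follows essentially the paper's own route. You expand $\langle r^-h^-,r^+h^+\rangle$ through $\kow(r^+)\kow(h^+)$ and kill the non-scalar part of $\kow(r^+)$ by weights. You then use the right coideal property of $\cR_X^{-,r}$ to get $\langle r^-,r^+h\rangle=\vep(h)\langle r^-,r^+\rangle$ for $h\in\cL_X^\ge$, and you verify \eqref{eq:lr-adjoint} on generators via \eqref{eq:yEx}, \eqref{eq:yxE}. Your non-degeneracy argument, which uses the factorization on both sides instead of $\omega$, is also fine.

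There are two harmless slips. First, the surviving term of $\kow(r^-)$ is exactly $1\ot r^-$, by coinvariance and because $\pi_X^-$ kills $\cR^{-,r}_{X,+}$, so there are no $K$-factors to ``cancel''. Second, your Sweedler legs of $h^+$ are swapped: $h^-$ pairs with $r^+_{(2)}h^+_{(2)}$. This does not matter, because both legs lie in $\cL_X^\ge$.
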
  
%%%%%%%%%%%%%%%%%%%%%%%%%%%%%%%%%%%%%%%%%%%%%%%%%%%%%%%%%
\begin{proof}
  For any $r^-\in \cR^{-,r}_X$ and any $h^+\in U_X^+$ we have $\langle r^-, h^+\rangle=\vep(r^-)\vep(h^+)$ for weight reasons. As $\cR_X^{-,r}$ is a right coideal of $U^{\le}$, we have
\begin{align}\label{eq:rrh1}
  \langle r^-,r^+ h^+\rangle = \langle r^-_{(1)},h^+\rangle \langle r^-_{(2)},r^+\rangle=\vep(h^+)\langle r^{-},r^+\rangle.
\end{align}
Using the fact that $\cR^{+,l}_X$ is a left coideal in the second equation below, we obtain
\begin{align*}
  \langle r^- h^-, r^+ h^+ \rangle & \stackrel{\phantom{\eqref{eq:rrh1}}}{=}\langle r^-, r^+_{(1)}h^+_{(1)}\rangle \langle h^-, r^+_{(2)}h^+_{(2)} \rangle\\
  &\stackrel{\phantom{\eqref{eq:rrh1}}}{=}\langle r^-, r^+ h^+_{(1)}\rangle \langle h^-, h^+_{(2)}\rangle \\
  & \stackrel{\eqref{eq:rrh1}}{=}\langle r^-,r^+ \rangle \langle h^-,h^+ \rangle
\end{align*}
which proves Equation \eqref{eq:rhrh}.
Now the second isomorphism in \eqref{eq:triang_Upm} and the non-degeneracy of the pairing \eqref{eq:pairing} on $U^-\ot U^+$ imply that there exists $r^+\in \cR^+_X$ such that
$\langle r^-,r^+\rangle \neq 0$. This and the invariance properties \eqref{eq:pairing-omega}, \eqref{eq:omegaR} imply that the restriction of the pairing \eqref{eq:pairing} to $\cR^{-,r}_X\otimes \cR^{+,l}_X$ is non-degenerate.

  It suffices to verify Equation \eqref{eq:lr-adjoint} for $h=E_j, F_j, K_i^{\pm 1}$ for $i\in I, j\in X$. Indeed, if the equation holds for $h_1, h_2\in \cL_X$ and all $x\in \cR_X^+,y\in \cR_X^-$ then it also holds for $h=h_1h_2$. For $h=K_i^{\pm 1}$ the equation holds for weight reasons. For $j\in X$ consider $h=E_j$ and let $x\in (\cR_X^{+,l})_\mu$ and $y\in (\cR_X^{-,r})_{-\mu-\alpha_j}$. Using Equations \eqref{eq:yEx}, \eqref{eq:yxE} we calculate
  \begin{align}
    \langle y, \ad_l(E_j)(x)\rangle &=\langle y, E_jx - q^{(\alpha_j,\mu)}x E_j\rangle\nonumber\\
    &=\langle F_j, E_j \rangle \langle \partial^L_j(y)-q^{(\alpha_j,\mu)}\partial^R_j(y),x\rangle\nonumber\\
    &=- \frac{1}{q_j-q_j^{-1}}\langle  \partial^L_j(y)K_j- q^{(\alpha_j,\mu)}\partial^R_j(y)K_j^{-1},x\rangle\nonumber\\
    &=  - q^{(\alpha_j,\mu)}\langle E_j y- y E_j, x \rangle\nonumber\\
    &= \langle -K_j^{-1} E_j y+ K_j^{-1}y E_j, x \rangle\nonumber\\
    &= \langle \ad_r(E_j)(y), x \rangle. \label{eq:Ei-adj}
  \end{align}
  The relation
   \begin{align*}
    \langle y, \ad_l(F_j)(x)\rangle &= \langle \ad_r(F_j)(y), x \rangle
   \end{align*}
   for $j\in X$ can be verified by an analogous calculation or, alternatively, it follows from \eqref{eq:pairing-omega} and \eqref{eq:Ei-adj}.
\end{proof}
%%%%%%%%%%%%%%%%%%%%%%%%%%%%%%%%%%%%%%%%%%%%%%%
There are two right actions of $\cL_X^\ge $ on $\cR_X^{-,r}$. On the one hand, $\cL_X^\ge$ acts on $\cR_X^{-,r}$ by the right adjoint action. On the other hand, as $\cR_X^{-,r}\subset U^\le$ is a right coideal subalgebra, we can define an action $\cR_X^{-,r}\ot \cL_X^\ge \rightarrow \cR_X^-$, $a\ot u\mapsto a\ract u$ by
\begin{align*}
  a \ract u = a_{(1)}\langle a_{(2)},u\rangle \qquad \mbox{for all } a\in \cR_X^{-,r}, u\in \cL_X^\ge.
\end{align*}
By the following lemma, the two actions coincide.
%%%%%%%%%%%%%%%%%%%%%%%%%%%%%%%%%%%%%%%%
\begin{lem}\label{lem:adrua}
  For all $a\in \cR_X^{-,r}$ and $u\in \cL_X^\ge$ we have
  \begin{align}\label{eq:2actions}
     \ad_r(u)(a)=a_{(1)}\langle a_{(2)},u\rangle.
  \end{align}
\end{lem}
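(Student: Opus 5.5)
Both sides of \eqref{eq:2actions} define right actions of the algebra $\cL_X^\ge$ on $\cR_X^{-,r}$, so it suffices to check the identity on the algebra generators $K_i^{\pm1}$ ($i\in I$) and $E_j$ ($j\in X$). For the right-hand side this multiplicativity comes from the relation $\langle y, xx'\rangle=\langle \kow(y),x'\ot x\rangle$ in \eqref{eq:form-kow}. It gives
\begin{align*}
a\ract(uu')=a_{(1)}\langle a_{(2)},u\rangle\langle a_{(3)},u'\rangle=(a\ract u)\ract u'.
\end{align*}
For the left-hand side, $\ad_r$ is a right action of $\Uq$, so $\ad_r(uu')=\ad_r(u')\circ\ad_r(u)$. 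Both expressions are also linear in $u$, and both preserve $\cR_X^{-,r}$: the left side because $\cR_X^{-,r}$ is $\ad_r(\cL_X)$-stable, the right side because $\cR_X^{-,r}$ is a right coideal of $U^\le$.

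For $u=K_i$ I take $a\in(\cR_X^{-,r})_{-\mu}$. Since $\langle F_j,K_i\rangle=0$, only the component of $\kow(a)$ of the form $a\ot K_{-\mu}$ contributes; by \eqref{eq:cop-partial2-L} this component is exactly $a\ot K_{-\mu}$. Hence $a\ract K_i=a\,\langle K_{-\mu},K_i\rangle=q^{(\mu,\alpha_i)}a$. On the other side, $\ad_r(K_i)(a)=K_i^{-1}aK_i=q^{(\alpha_i,\mu)}a$, so the two agree. The case $K_i^{-1}$ is identical.

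For $u=E_j$ with $j\in X$, the pairing $\langle a_{(2)},E_j\rangle$ only sees the component of $\kow(a)$ in $U^-_{-(\mu-\alpha_j)}\ot F_jK_{-(\mu-\alpha_j)}$. By \eqref{eq:cop-partial2-L} this component is $\partial_j^L(a)\ot F_jK_{-(\mu-\alpha_j)}$. This gives $a\ract E_j=\langle F_j,E_j\rangle\,\partial_j^L(a)$, with a possible $K$-scalar that must be checked to be trivial. For the left side I expand $\ad_r(E_j)(a)=K_j^{-1}(aE_j-E_ja)$ using \eqref{eq:EyyE}:
\begin{align*}
\ad_r(E_j)(a)=-K_j^{-1}\frac{K_j\partial_j^L(a)-\partial_j^R(a)K_j^{-1}}{q_j-q_j^{-1}}.
\end{align*}
The key input is that $a\in\cR_X^{-,r}$ satisfies $\partial_j^R(a)=0$ for $j\in X$, by the characterisation of $\cR_X^{-,r}$ via skew derivations stated earlier. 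Hence $\ad_r(E_j)(a)=-\partial_j^L(a)/(q_j-q_j^{-1})=\langle F_j,E_j\rangle\partial_j^L(a)$, which matches the right-hand side.

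The main obstacle is the bookkeeping of $K$-factors in the $E_j$ case: one must check that the pairing $\langle F_jK_{-(\mu-\alpha_j)},E_j\rangle$ equals exactly $\langle F_j,E_j\rangle$. I would verify this using $\langle yK,x\rangle=\langle y\ot K,\kow(x)\rangle$ and $\langle F_j,K\rangle=0$. Alternatively, the correct constants can be cross-checked against the computation \eqref{eq:Ei-adj} together with Proposition~\ref{prop:rhrh}, using the non-degeneracy of the pairing on $\cR_X^{-,r}\ot\cR_X^{+,l}$.
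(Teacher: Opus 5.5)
Your proposal is correct and is essentially the paper's own proof. You reduce to the generators $K_\nu$ and $E_j$ ($j\in X$) because both sides are right actions, read off $a\ract u$ from \eqref{eq:cop-partial2-L}, and use $\partial_j^R(a)=0$ to remove the second term of \eqref{eq:EyyE}; the $K$-factor check $\langle F_jK_\lambda,E_j\rangle=\langle F_j,E_j\rangle$ goes through exactly as you indicate.

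One small slip: from $\langle y,xx'\rangle=\langle\kow(y),x'\ot x\rangle$, the middle term of your multiplicativity display should be $a_{(1)}\langle a_{(2)},u'\rangle\langle a_{(3)},u\rangle$. As written it equals $(a\ract u')\ract u$, although your conclusion $a\ract(uu')=(a\ract u)\ract u'$ is correct.
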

%%%%%%%%%%%%%%%%%%%%%%%%%%%%%%%%%%%%%%
\begin{proof}
  As we are comparing two right actions, it suffices to verify Equation \eqref{eq:2actions} for $u=K_\mu$ with $\mu\in Q$ and for $u=E_j$ with $j\in X$.
  For $\nu, \mu\in Q$ and $a\in (\cR_X^{-,r})_{-\mu}$ we have
  \begin{align*}
    a\ract K_\nu=a_{(1)}\langle a_{(2)},K_\nu\rangle
       \stackrel{\eqref{eq:cop-partial2-L}}{=} a\langle K_{-\mu},K_{\nu} \rangle = q^{(\mu,\nu)} a = \ad_r(K_\nu)(a).
  \end{align*}
  For $u=E_j$ with $j\in X$ and $a\in (\cR_X^{-,r})_{-\mu}$ we calculate
  \begin{align*}
    a\ract E_j=a_{(1)}\langle a_{(2)},E_j\rangle 
    \stackrel{\eqref{eq:cop-partial2-L}}{=} \partial_j^L(a)\langle F_j K_{-(\mu-\alpha_j)}, E_j\rangle= \partial_j^L(a)\langle F_j, E_j\rangle=\frac{-\partial_j^L(a)}{q_j-q_j^{-1}}.
  \end{align*}
  On the other hand, the relation $\partial^R_j(a)=0$ for $j\in X$ implies that
  \begin{align*}
    \ad_r(E_j)(a)=-K_j^{-1}(E_j a-a E_j)\stackrel{\eqref{eq:EyyE}}{=}\frac{-\partial_j^L(a)}{q_j-q_j^{-1}}.
  \end{align*}
  Comparison with the previous equation yields $a\ract E_j=\ad_r(E_j)(a)$, as desired.
\end{proof}
%%%%%%%%%%%%%%%%%%%%%%%%%%%%%%%%%%%%%%
\begin{cor}\label{cor:RL}
  For all $a\in \cR_X^{-,r}$ and $m\in \cL_X^\ge$ we have
  \begin{align}
     a m = m_{(1)} a_{(1)}\langle a_{(2)},m_{(2)}\rangle.
  \end{align}  
\end{cor}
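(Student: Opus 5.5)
The plan is to derive Corollary \ref{cor:RL} from Lemma \ref{lem:adrua} by rewriting the right adjoint action of $\cL_X^\ge$ on $\cR_X^{-,r}$ as a commutation rule. Recall that for $m\in\cL_X^\ge$ and $a\in \Uq$ we have
\begin{align*}
  \ad_r(m)(a)=S(m_{(1)})\,a\,m_{(2)}.
\end{align*}
This is the convention consistent with $\ad_r(E_j)(a)=-K_j^{-1}E_ja+K_j^{-1}aE_j$ used above. Inverting this relation in the Hopf algebra $\Uq$ gives
\begin{align*}
  a\,m = m_{(1)}\,\ad_r(m_{(2)})(a) \qquad \mbox{for all } a\in\Uq,\ m\in \Uq.
\end{align*}
This identity holds in any Hopf algebra with this adjoint action. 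Indeed,
\begin{align*}
  m_{(1)} S(m_{(2)})\, a\, m_{(3)}=\vep(m_{(1)})\,a\,m_{(2)}=am.
\end{align*}

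The main step is then to apply this identity with $m\in \cL_X^\ge$ and $a\in\cR_X^{-,r}$. Since $\cL_X^\ge$ is a Hopf subalgebra of $\Uq^\ge$, we have $\kow(m)\in \cL_X^\ge\ot\cL_X^\ge$. Hence $m_{(2)}\in\cL_X^\ge$, and Lemma \ref{lem:adrua} applies to it:
\begin{align*}
  \ad_r(m_{(2)})(a)=a_{(1)}\langle a_{(2)},m_{(2)}\rangle.
\end{align*}
Substituting this into the inverted relation gives $am=m_{(1)}a_{(1)}\langle a_{(2)},m_{(2)}\rangle$, which is the claim.

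The only points that need care are bookkeeping ones. First, we must confirm that the adjoint-action convention, and hence the inversion formula, matches the paper's. In particular we should check that it is $S(m_{(1)})\,a\,m_{(2)}$ and not the variant with $S^{-1}$. This can be verified directly on $m=E_j$ and $m=K_\mu$ using $\kow(E_j)=E_j\ot1+K_j\ot E_j$. Second, we must confirm that the coproduct on $\cL_X^\ge$ lands in $\cL_X^\ge\ot\cL_X^\ge$, which is immediate from \eqref{eq:kow-def}. Once these are settled, no further obstacle is expected.
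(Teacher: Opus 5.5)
Your proposal is correct and is essentially the paper's own proof: it combines the Hopf-algebra identity $am=m_{(1)}\ad_r(m_{(2)})(a)$ with Lemma~\ref{lem:adrua} applied to $m_{(2)}\in\cL_X^\ge$. Your check that $\ad_r(m)(a)=S(m_{(1)})\,a\,m_{(2)}$ matches the paper's formula for $\ad_r(E_j)$ is also right, a point the paper leaves implicit.
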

%%%%%%%%%%%%%%%%%%%%%%%%%%%%%%%%%%%%%%
\begin{proof}
  By the previous lemma we have
  \begin{align*}
      a m = m_{(1)}\ad_r(m_{(2)})(a) = m_{(1)} (a\ract m_{(2)})
  \end{align*}
  which proves the corollary.
\end{proof}
%%%%%%%%%%%%%%%%%%%%%%%%%%%%%%%%%%%%%%%%%%%%%%%
\subsection{The relative quasi $R$-matrix}\label{sec:relative-quasiR}
%%%%%%%%%%%%%%%%%%%%%%%%%%%%%%%%%%%%%%%%%%%%%%%
For $\mu\in Q^+$ set $r(\mu)=\dim(U^+_\mu)$ and let $\{u^+_{\mu,i}\,|\,i=1,\dots,r(\mu)\}$ be a basis of $U^+_\mu$ with dual basis $\{u^-_{\mu,i}\,|\,i=1,\dots,r(\mu)\}\subset U^-_{-\mu}$. Define $\Theta_\mu=\sum_{i=1}^{r(\mu)} u^-_{\mu,i}\ot u^+_{\mu,i}$. The quasi $R$-matrix of $\Uq$ is defined by
\begin{align}\label{eq:Theta-def}
  \Theta=\sum_{\mu\in Q^+} \Theta_\mu
\end{align}
as an element of the algebra $\mathscr{U}^{(2)}$, see \eqref{eq:scrUn}.
By \cite[Theorem 4.1.2]{b-Lusztig94}, the quasi $R$-matrix satisfies the relation
\begin{align}\label{eq:Theta-intertwiner}
  \kow(\ubar)\cdot \Theta=\Theta\cdot (\obar \ot \obar)\circ\kow(u) \qquad \mbox{for all $u\in \Uq$,}
\end{align}  
see also \cite[Lemma 7.1]{b-Jantzen96}. Define $\overline{\Theta}=(\obar\ot\obar)(\Theta)$. By construction, $\Theta$ is invertible in $\mathscr{U}^{(2)}$, and $\Theta$ is uniquely determined by the intertwiner property \eqref{eq:Theta-intertwiner} and the normalization $\Theta_0=1\ot 1$, see \cite[Theorem 4.1.2]{b-Lusztig94}. These two properties together imply that $\Theta^{-1}=\overline{\Theta}$.

For any subset $X\subset I$ let $\Theta_X$ denote the quasi $R$-matrix of $\uqgX\subset \uqg$ and denote its inverse by $\overline{\Theta}_X=(\obar\ot\obar)(\Theta_X)$.

Recall Proposition \ref{prop:rhrh}. For any $\mu\in Q^+$ set $s(\mu)=\dim((\cR_X^{+,l})_\mu)=\dim((\cR_X^{-,r})_{-\mu})$ and let $\{e_{\mu,i}\,|\,i=1\dots,s(\mu)\}$ be a basis of $(\cR_X^{+,l})_\mu$ with dual basis  $\{f_{\mu,i}\,|\,i=1\dots,s(\mu)\}\subset(\cR_X^{-,r})_{-\mu}$ for the pairing \eqref{eq:pairing}. Define $\Lambda_{X,\mu}=\sum_{i=1}^{s(\mu)} f_{\mu,i}\ot e_{\mu,i}$ and set
\begin{align*}
  \Lambda_X=\sum_{\mu\in Q^+} \Lambda_{X,\mu}\in \mathscr{U}^{(2)}.
\end{align*}  
We refer to $\Lambda_X$ as the relative quasi $R$-matrix corresponding to the subset $X\subset I$.
%%%%%%%%%%%%%%%%%%%%%%%%%%%%%%%%%%%%%%%%%%%%%%%%%%%%%%%%
\begin{lem}\label{lem:rel-quasi}
\hfill
  \begin{enumerate}
    \item $\Lambda_X=\Theta \overline{\Theta}_X$.
    \item For all $h\in \cL_X$ the relation $\kow(h)\Lambda_X=\Lambda_X \kow(h)$
      holds in $\mathscr{U}^{(2)}$.
  \end{enumerate}  
\end{lem}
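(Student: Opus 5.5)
For part (1) I will show that $\Theta = \Lambda_X\Theta_X$ in $\mathscr{U}^{(2)}$. Then (1) follows, since $\Theta_X^{-1}=\overline{\Theta}_X$.

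The first step is the factorization $U^-\cong \cR_X^{-,r}\ot U^-_X$ and $U^+\cong \cR_X^{+,l}\ot U^+_X$ from \eqref{eq:triang_Upm}. Both sides respect the $Q$-grading, so I fix the weight $\mu$. By Proposition \ref{prop:rhrh} with $h^\pm\in U^\pm_X$ we have
\begin{align*}
  \langle f h^-, e h^+\rangle=\langle f,e\rangle\langle h^-,h^+\rangle
\end{align*}
for $f\in\cR_X^{-,r}$, $e\in \cR_X^{+,l}$, $h^-\in U^-_X$ and $h^+\in U^+_X$. Hence the pairing on $U^-_{-\mu}\ot U^+_\mu$ is the tensor product of the pairing on $\cR_X^{-,r}\ot\cR_X^{+,l}$ and the pairing on $U^-_X\ot U^+_X$. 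Let $\{f_{\nu,i}\}$, $\{e_{\nu,i}\}$ be dual bases of the $\cR$-parts and $\{y_{\gamma,k}\}$, $\{x_{\gamma,k}\}$ dual bases of $U^\mp_X$. Then the products $f_{\nu,i}y_{\gamma,k}$ and $e_{\nu,i}x_{\gamma,k}$ with $\nu+\gamma=\mu$ form dual bases of $U^-_{-\mu}$ and $U^+_\mu$. Since $\Theta_\mu$ does not depend on the choice of dual bases, I obtain
\begin{align*}
  \Theta_\mu=\sum_{\nu+\gamma=\mu}\sum_{i,k} f_{\nu,i}y_{\gamma,k}\ot e_{\nu,i}x_{\gamma,k}=\sum_{\nu+\gamma=\mu}\Lambda_{X,\nu}\Theta_{X,\gamma}.
\end{align*}
Summing over $\mu$ gives $\Theta=\Lambda_X\Theta_X$. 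The sum is locally finite on modules in $\Oint$, so this is a legitimate identity in $\mathscr{U}^{(2)}$.

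For part (2) I use the intertwiner property \eqref{eq:Theta-intertwiner} for both $\Theta$ and $\Theta_X$; the latter is the quasi $R$-matrix of $\uqgX$, whose coproduct is the restriction of $\kow$. For $u\in\cL_X$ this gives
\begin{align*}
  \kow(\ubar)\Theta=\Theta\,\overline{\kow}(u),\qquad \kow(\ubar)\Theta_X=\Theta_X\,\overline{\kow}(u),
\end{align*}
where I write $\overline{\kow}(u)=(\obar\ot\obar)\kow(u)$. The second identity a priori holds for $u\in U_q(\gfrak_X)$. For the extra Cartan elements $K_i$ with $i\notin X$ I check it by weight: the components of $\Theta_X$ have weight $(-\gamma,\gamma)$, so they commute with $K_i\ot K_i$.

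From the second identity, $\overline{\Theta}_X=\Theta_X^{-1}$ satisfies $\overline{\kow}(u)\overline{\Theta}_X=\overline{\Theta}_X\kow(\ubar)$. Combining with the first identity,
\begin{align*}
  \kow(\ubar)\Theta\overline{\Theta}_X=\Theta\,\overline{\kow}(u)\overline{\Theta}_X=\Theta\overline{\Theta}_X\kow(\ubar).
\end{align*}
Setting $h=\ubar$, which runs over all of $\cL_X$ because $\cL_X$ is bar-stable, and applying (1) gives $\kow(h)\Lambda_X=\Lambda_X\kow(h)$.

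The main obstacle is the dual-basis step in (1). I have to make sure the multiplication maps are isomorphisms compatible with the weight grading, and that the factorization in Proposition \ref{prop:rhrh} applies with $h^\pm\in U^\pm_X$, rather than all of $\cL_X^{\le}$, $\cL_X^{\ge}$, so that the Cartan parts do not interfere. I also need the convention for the pairing on $\uqgX$ to agree with the restriction of \eqref{eq:pairing}. This is immediate from the defining relations \eqref{eq:form-kow}.
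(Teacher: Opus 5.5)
Your proposal is correct and follows the paper's proof. Part (1) is the factorization $\Theta=\Lambda_X\Theta_X$ obtained from \eqref{eq:triang_Upm} and Proposition \ref{prop:rhrh}, and part (2) applies the intertwiner property \eqref{eq:Theta-intertwiner} to both $\Theta$ and $\Theta_X$. You add details the paper leaves implicit: the dual-basis bookkeeping and the weight check for the Cartan elements $K_i$ with $i\notin X$.
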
  
%%%%%%%%%%%%%%%%%%%%%%%%%%%%%%%%%%%%%%%%%%%%%%%%%%%%%%%%
\begin{proof}
  (1) The isomorphisms \eqref{eq:triang_Upm} together with Proposition \ref{prop:rhrh} imply that $\Lambda_X\Theta_X=\Theta$. We obtain the desired formula by multiplication by $\overline{\Theta}_X$ from the right.\\
  (2) This statement follows from Equation \eqref{eq:Theta-intertwiner}.
\end{proof}  
%%%%%%%%%%%%%%%%%%%%%%%%%%%%%%%%%%%%%%%%%%%%%%%%%%%%%%%%

%%%%%%%%%%%%%%%%%%%%%%%%%%%%%%%%%%%%%
\subsection{An alternative description of $\mu_i^L$}\label{sec:alt-mu}
%%%%%%%%%%%%%%%%%%%%%%%%%%%%%%%%%%%%%
The calculations in this section are valid for any pair $(X,\tau)$ where $X\subset I$ is of finite type and $\tau:I\rightarrow I$ is a diagram automorphism satisfying properties (1), (2) of a generalized Satake diagram given in Section \ref{sec:QSP}. In this generality, we take \eqref{eq:muiL} as the definition of $\mu^L_i$ and we set $\mu_i^R=\sigma \tau\circ \mu^L_{\tau(i)}\circ \sigma \tau$ for $i\in I\setminus X$. In the following we rewrite formula \eqref{eq:muiL} for $\mu_i^L(a)$ where $a\in \cR_X^{-,r}$. We first consider the case that $a\in \cR_{X,1}^{-,r}$ is of degree 1.
%%%%%%%%%%%%%%%%%%%%%%%%%%%%%%%%%%%%%
\begin{lem}\label{lem:TETE-left}
  For all $a\in \cR_{X,1}^{-,r}$ and all $i\in I\setminus X$ the following relation holds:
  \begin{align*}
      T_{w_X}(E_i)_{(1)} \langle a, T_{w_X}(E_i)_{(2)}\rangle = -\frac{1}{q_i-q_i^{-1}} K_{w_X(\alpha_i)} T_{w_X} \circ \partial^L_i \circ T_{w_X}^{-1}(a).
  \end{align*}  
\end{lem}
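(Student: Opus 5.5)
The plan is to exploit linearity in $a$ together with the fact that $\cR_{X,1}^{-,r}$ is generated from the elements $F_j$, $j\in I\setminus X$, under the right adjoint action of $\cL_X$. Since $F_j$ is a highest weight vector for $\ad_r$ of $U_X$ (as in the proof of Lemma \ref{lem:adr}), Corollary \ref{cor:adH-gen}.(4) gives $\cR_{X,1}^{-,r}=\sum_{j\in I\setminus X}\ad_r(U_X^-)(F_j)$. I would therefore prove the identity in two steps. First I treat $a=F_j$. Then I show that both sides transform in the same way when $a$ is replaced by $\ad_r(F_k)(a)$ for $k\in X$.

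\emph{Base case.} Write $x=T_{w_X}(E_i)\in\cR_X^{+,l}$, which has weight $w_X(\alpha_i)=\alpha_i+\gamma$ with $\gamma\in Q_X^+$. Pairing the second tensor factor of \eqref{eq:cop-partial-R} with $F_j$ kills every term except the one whose second leg is $E_j$. This term only occurs for $j=i$. The left hand side is therefore $\delta_{ij}\langle F_i,E_i\rangle\,\partial_i^R(T_{w_X}(E_i))K_i$. On the right hand side, $T_{w_X}^{-1}(F_j)$ has weight $-w_X(\alpha_j)$, so $\partial_i^L$ annihilates it unless $j=i$. Hence both sides vanish for $j\neq i$. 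For $j=i$, both $\partial_i^R(T_{w_X}(E_i))$ and $\partial^L_i(T^{-1}_{w_X}(F_i))$ lie in $U^\pm_X$. The required identity becomes a comparison of these two elements, which I would carry out using the explicit monomial formulas for $T_{w_X}^{\pm1}$ on $U^-_X$ from the proof of Proposition \ref{prop:fundamental}, together with the relations $\omega\circ\partial^{L}_i=-\partial^{L}_i\circ\omega$ and $\sigma\circ\partial_i^L\circ\sigma=\partial_i^R$. The $K$-factors should match as $K_iK_{\gamma}=K_{w_X(\alpha_i)}$ after commuting through.

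\emph{Inductive step.} For the left hand side I would use $\langle \ad_r(F_k)(a),x_{(2)}\rangle=\langle a,\ad_l(F_k)(x_{(2)})\rangle$ from \eqref{eq:lr-adjoint}. The second legs $x_{(2)}$ lie in $\ad_l(\cL_X)(E_i)\subset\cR^{+,l}_X$ by \eqref{eq:kow-adhE}. Combining this with coassociativity of $\kow$ on $x$, I expect to rewrite the left hand side for $\ad_r(F_k)(a)$ as an explicit $\cL_X$-twisted expression in the left hand side for $a$; a relation of the shape of Corollary \ref{cor:RL} should appear here. For the right hand side, the analogue of \eqref{eq:adrEjTwX-1} (conjugating $\ad_r(F_k)$ by $T_{w_X}$ turns it into $\ad_r(E_{\tau(k)})$ up to $K$-factors) moves the action inside. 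Then $\partial_i^L$ commutes, up to a $q$-power, with $\ad_r(E_{\tau(k)})$ on elements annihilated by $\partial^R_{\tau(k)}$, by \eqref{eq:EyyE}.

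\emph{Main obstacle.} I expect the hardest part to be matching the two transformation rules exactly, including the $q$-powers and the $K_{w_X(\alpha_i)}$ prefactor. If this becomes unwieldy, I would switch to a nondegeneracy argument. Both sides lie in $U^0U_X^+$ with a fixed weight, so it suffices to pair them against arbitrary $y\in U_X^-$. By \eqref{eq:form-kow}, pairing the left hand side with $y$ gives $\langle ya,T_{w_X}(E_i)\rangle$. Pairing the right hand side with $y$ can be rewritten using \eqref{eq:yEx} as $\langle T^{-1}_{w_X}(a),E_i x'\rangle$ for some $x'$ determined by $y$. The remaining step would then be Lusztig's compatibility of the pairing with $T_{w_X}$ on $\cR_X^{\pm}$, in the spirit of \cite[38.2]{b-Lusztig94}.
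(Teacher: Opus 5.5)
\textbf{Gap in the base case.} Your reduction to $a=\ad_r(u)(F_j)$ is fine, and so is the vanishing of both sides for $j\neq i$. The case $a=F_i$, however, does not follow from the tools you list, and it carries essentially all the content of the lemma. There you need
$\partial_i^R(T_{w_X}(E_i))K_i=K_{w_X(\alpha_i)}T_{w_X}\big(\partial_i^L(T_{w_X}^{-1}(F_i))\big)$.
The monomial formula writes $T_{w_X}$ on $U^-_X$ as a scalar times $K\cdot\omega\circ\tau$. Combined with $\sigma\circ\partial_i^L\circ\sigma=\partial_i^R$, $\omega\circ\partial_i=-\partial_i\circ\omega$ and $T_{w_X}(F_i)\in\qfield\,\omega(T_{w_X}(E_i))$, this only turns the right hand side into a multiple of $K_{\alpha_i}\sigma\tau(Z)$, where $Z=\partial_i^R(T_{w_X}(E_i))$.

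So the identity you need becomes $\sigma\tau(Z)=\lambda Z$ for an explicit scalar $\lambda$. That is the fundamental lemma \eqref{eq:fund2}, plus an exact normalisation of the constant relating $\omega(T_{w_X}(E_i))$ and $T_{w_X}(F_i)$, which the paper never determines. Your formal symmetries only transport $Z$ to $\sigma\tau(Z)$; they cannot compare the two. This is exactly the role the monomial formulas play in the proof of Proposition \ref{prop:fundamental}: there the real input \eqref{eq:main-step} comes from shortness via Theorem \ref{thm:muiR}. Moreover, Section \ref{sec:alt-mu} assumes only properties (1), (2), so Corollary \ref{cor:fundamental} is not available there anyway.

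\textbf{How to repair it, and the remaining gap.} The base case can be rescued by a different elementary idea. Apply the automorphism $T_{w_X}$ to \eqref{eq:EyyE} with $y=T_{w_X}^{-1}(F_i)\in U^-$, and compare with \eqref{eq:FxxF} for $x=T_{w_X}(E_i)$:
\begin{align*}
\partial_i^R(x)K_i-K_i^{-1}\partial_i^L(x)=K_{w_X(\alpha_i)}T_{w_X}(\partial_i^L(y))-T_{w_X}(\partial_i^R(y))K_{w_X(\alpha_i)}^{-1}.
\end{align*}
Set $\gamma=w_X(\alpha_i)-\alpha_i$ and note $\tau(\gamma)=\gamma$. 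Since $\partial_i^{L,R}(y)$ lie in the weight $-\gamma$ part of $U^-_X$, which $T_{w_X}$ maps into $K_{-\gamma}U^+_X$, the four summands lie in $U^+K_{\alpha_i}$, $U^+K_{-\alpha_i}$, $U^+K_{\alpha_i}$ and $U^+K_{-\alpha_i-2\gamma}$. Comparing $K_{\alpha_i}$-components gives exactly the case $a=F_i$.

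Your inductive step is still only a plan. On the left, \eqref{eq:lr-adjoint} produces $\ad_l(F_k)$ on the second legs of $\kow(x)$, and you must convert this back into an operator on the first legs (using $\partial_k^R(\cR_X^{+,l})=0$ for $k\in X$ and coassociativity). On the right you need $T_{w_X}^{-1}\circ\ad_r(F_{\tau(j)})=\langle F_j,E_j\rangle\,\partial_j^R\circ T_{w_X}^{-1}$ on $\cR_X^{-,r}$. The two transformation rules, including their scalars, still have to be matched.

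\textbf{Comparison with the paper.} The paper avoids both the base case and the induction. It expands $\kow(T_{w_X}(E_i))$ via $\kow(T_{w_X})=(T_{w_X}\ot T_{w_X})\Theta_X^{-1}$. It then uses \eqref{eq:lr-adjoint} and Lusztig's $T_{w_X}$-invariance of the pairing to reduce everything to $\langle T_{w_X}^{-1}(a),E_i\Theta_{X,2}\rangle$. Finally it reassembles the right hand side from the dual bases in $\Theta_X$, for all $a\in\cR_{X,1}^{-,r}$ at once. Your fallback pairing argument is an outline of that proof, not yet carried out.
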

%%%%%%%%%%%%%%%%%%%%%%%%%%%%%%%%%%%%%
\begin{proof}
  Recall that we may consider $T_{w_X}$ as an element of the algebra $\mathscr{U}=\End({\mathcal For})$ and that as such $T_{w_X}(E_i)=T_{w_X}\cdot E_i\cdot T_{w_X}^{-1}$ and
  \begin{align*}
     \kow(T_{w_X})=(T_{w_X}\ot T_{w_X})\cdot \Theta_X^{-1},
  \end{align*}
  see for example \cite[Lemma 3.9]{a-BalaKolb19}. Hence
  \begin{align*}
    \kow(T_{w_X}(E_i))&=(T_{w_X}\ot T_{w_X})\big( \Theta_X^{-1}(E_i\ot 1 + K_i\ot E_i)\Theta_X\big)\\
    &=T_{w_X}(E_i)\ot 1 + T_{w_X}\ot T_{w_X}\big(\Theta_X^{-1}(K_i\ot E_i) \Theta_X\big).
  \end{align*}
  Writing formally $\Theta_X=\Theta_{X,1}\ot \Theta_{X,2}$ and $\Theta_X^{-1}=\Theta^-_{X,1}\ot \Theta^-_{X,2}$ we obtain
  \begin{align*}
    & T_{w_X} \ot T_{w_X}(\Theta_X^{-1}(K_i\ot E_i)\Theta_X)=T_{w_X}(\Theta^-_{X,1} K_i \Theta_{X,1})\ot T_{w_X}(\Theta^-_{X,2} E_i \Theta_{X,2})\\
    &=T_{w_X}(\Theta^-_{X,1} K_i \Theta_{X,1})\ot \underbrace{T_{w_X}(\Theta^-_{X,2}) (T_{w_X}(\Theta_{X,2}))_{(2)}}_{\in \cH^\le} \underbrace{\ad_l\big(S^{-1}\big((T_{w_X}(\Theta_{X,2}))_{(1)}\big)(T_{w_X}(E_i))}_{\in \cR_X^{+,l}}.
  \end{align*} 
  As $\kow(T_{w_X}(E_i))\in U^\ge \ot \cR_X^{+,l}$ the first factor in the second tensor leg must be trivial, and we obtain
  \begin{align*}
    T_{w_X}\ot T_{w_X}\big(\Theta_X^{-1} (K_i\ot E_i) \Theta_X\big)
   &=T_{w_X}(K_i \Theta_{X,1}) \ot \ad_l\big(S^{-1}(T_{w_X}(\Theta_{X,2}))\big)(T_{w_X}(E_i)).
  \end{align*}
  For $a\in \cR_{X,1}^{-,r}$ we hence obtain
  \begin{align}
    T_{w_X}(E_i)_{(1)}\langle a,T_{w_X}(E_i)_{(2)}\rangle
    &=T_{w_X}(K_i \Theta_{X,1}) \langle a, \ad_l\big(S^{-1}(T_{w_X}(\Theta_{X,2}))\big)(T_{w_X}(E_i))\rangle \nonumber\\
     &=T_{w_X}(K_i \Theta_{X,1}) \langle T_{w_X}^{-1}\big(\ad_r(S^{-1}\circ T_{w_X}(\Theta_{X,2}))(a)\big),E_i\rangle \label{eq:sig-om-tau}
  \end{align}
  where we used Equation \eqref{eq:lr-adjoint} and \cite[Proposition 38.2.1 a)]{b-Lusztig94} in the final step. Note that $S^{-1}\circ T_{w_X}:\Uq_X\rightarrow\Uq_X$ is an algebra anti-automorphism defined by 
  \begin{align*}
     S^{-1}\circ T_{w_X}(E_j)=F_{\tau(j)}, \quad  S^{-1}\circ T_{w_X}(F_j)=E_{\tau(j)}, \quad   S^{-1}\circ T_{w_X}(K_j)=K_{\tau(j)}
  \end{align*}
  for all $j\in X$. Now observe that for $u\in\cR^{-,r}_{X}$ and $j\in X$ we have
  \begin{align*}
    T_{w_X}^{-1}\big(\ad_r(F_{\tau(j)})(u)\big)&=T_{w_X}^{-1}\big(-F_{\tau(j)}K_{\tau(j)}u K_{\tau(j)}^{-1}+uF_{\tau(j)}\big)\\
    &=[E_j,T^{-1}_{w_X}(u)]K_j\\
    &=\langle F_j,E_j\rangle \partial_j^R(T_{w_X}^{-1}(u)).
  \end{align*}
  Inserting this into \eqref{eq:sig-om-tau} we obtain inductively
  \begin{align*}
    T_{w_X}(E_i)_{(1)}\langle a,  T_{w_X}(E_i)_{(2)}\rangle &\stackrel{\eqref{eq:yxE}}{=} T_{w_X}(K_i \Theta_{X,1}) \langle T_{w_X}^{-1}(a), E_i\Theta_{X,2}\rangle\\
    &\stackrel{\eqref{eq:yEx}}{=} T_{w_X}(K_i \Theta_{X,1})\langle F_i, E_i \rangle \langle \partial_i^L T_{w_X}^{-1}(a),\Theta_{X,2}\rangle\\
    &\stackrel{\phantom{\eqref{eq:yEx}}}{=} -\frac{1}{q_i-q_i^{-1}} K_{w_X(\alpha_i)} T_{w_X} \circ\partial^L_i\circ T_{w_X}^{-1}(a)
  \end{align*}
  as desired. 
\end{proof}  
%%%%%%%%%%%%%%%%%%%%%%%%%%%%%%%%%%%%%%
To extend Lemma \ref{lem:TETE-left} to higher degree, recall the element
$M_i$ defined by \eqref{eq:Mi-def}. It is convenient to write the element $M_i$ for $i\in I$ formally as
\begin{align}\label{eq:Mi-notation}
  M_i=\sum_{j} m_{i,1}^j\ot m_{i,2}^j.
\end{align}
With this notation we can now give the desired alternative description of the element $\mu_i^L(a)$ for $a\in \cR_X^{-,r}$.
%%%%%%%%%%%%%%%%%%%%%%%%%%%%%%%%%%%%%
\begin{thm}\label{thm:muiLa-new}
  For all $a\in \cR_X^{-,r}$ and all $i\in I\setminus X$ the following relation holds:
  \begin{align}\label{eq:muiLa-new}
     \mu_i^L(a) = c_i \sum_j m^j_{i,1} a_{(1)} \langle a_{(2)},m^j_{i,2}\rangle.
  \end{align}  
\end{thm}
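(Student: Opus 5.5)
Write $\Phi_i(a):=c_i\sum_j m^j_{i,1}a_{(1)}\langle a_{(2)},m^j_{i,2}\rangle$ for the right hand side of \eqref{eq:muiLa-new}. The plan is to show that $\Phi_i$ agrees with $\mu_i^L$ on $\cR_{X,1}^{-,r}$ and obeys the same skew-derivation rule \eqref{eq:muiL-skew}. Since $\cR_X^{-,r}$ is generated by $\cR_{X,1}^{-,r}$ (Corollary \ref{cor:adH-gen}.(4)), induction on degree then gives $\Phi_i=\mu_i^L$ on all of $\cR_X^{-,r}$. Note that $\Phi_i(a)$ makes sense. By Lemma \ref{lem:Mi}.(1), $m^j_{i,2}\in U^+$, so only the $U^-U^0$ part of $\kow(a)$ pairs nontrivially. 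Moreover $\cR_X^{-,r}$ is a right coideal of $U^-U^0$.

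\emph{Degree one.} Take $a\in\cR_{X,1}^{-,r}$. In $\langle a_{(2)},\kow(T_{w_X}(E_{\tau(i)}))-T_{w_X}(E_{\tau(i)})\ot 1\rangle$ the subtracted term contributes $\vep$-terms that vanish for weight reasons, since $a$ has nonzero weight. Hence
\[
\Phi_i(a)=c_iq^{-(\theta(\alpha_i),\alpha_i)}K_i^{-1}\,T_{w_X}(E_{\tau(i)})_{(1)}\langle a,T_{w_X}(E_{\tau(i)})_{(2)}\rangle ,
\]
after moving the pairing onto $a$ itself. I expect a small extra step here to justify replacing $a_{(1)}\langle a_{(2)},\cdot\rangle$ by a scalar multiple. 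The idea is that the second leg of $M_i$ lies in $\ad_l(\cH)(E_{\tau(i)})\subset\cR^{+,l}_{X,1}$. By \eqref{eq:rhrh} and degree reasons, only the component of $\kow(a)$ in $K_{-\mu}\ot a$ pairs with it, and in the ordering of Corollary \ref{cor:RL} this gives $a$ times a $K$-factor. Then apply Lemma \ref{lem:TETE-left} with $i$ replaced by $\tau(i)$. This yields $-\frac{c_i q^{-(\theta(\alpha_i),\alpha_i)}}{q_i-q_i^{-1}}K_i^{-1}K_{w_X(\alpha_{\tau(i)})}T_{w_X}\partial^L_{\tau(i)}T_{w_X}^{-1}(a)$. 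Since $w_X(\alpha_{\tau(i)})=-\theta(\alpha_i)$, the $K$-factor is $K_{-\alpha_i-\theta(\alpha_i)}$, which matches \eqref{eq:muiL}. Getting the precise $q$-powers and the placement of the $K$-factors to agree is the tedious part.

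\emph{Skew-derivation.} For $a\in(\cR_X^{-,r})_{-\mu}$ and $b\in\cR_X^{-,r}$ we have $\kow(ab)=a_{(1)}b_{(1)}\ot a_{(2)}b_{(2)}$. Pairing with $m_{i,2}$ uses $\langle yy',x\rangle=\langle y\ot y',\kow(x)\rangle$, so we need $\kow$ on the second leg of $M_i$. This is exactly what Lemma \ref{lem:Mi}.(2) provides:
\[
(\id\ot\kow)(M_i)=M_i\ot1+(1\ot K_i\ot1)(\kow\ot\id)(M_i).
\]
The term $M_i\ot 1$ gives $\Phi_i(a)\,b_{(1)}\langle b_{(2)},1\rangle=\Phi_i(a)b$. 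In the second term, $K_i$ paired against $a_{(2)}$ contributes a scalar and leaves $m_{i,1(1)}a_{(1)}m_{i,1(2)}$-type expressions paired with $b$. Corollary \ref{cor:RL} is the tool for commuting $a$ past the $\cL_X^\ge$-factor $m_{i,1(2)}$, using $m^j_{i,1}\in\cH^\ge$. This collapses the term to $q^{(\alpha_i,\mu)}a\,\Phi_i(b)$, giving \eqref{eq:muiL-skew} for $\Phi_i$.

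This reordering step is the main obstacle. The Sweedler legs must be tracked carefully, and the $q^{(\alpha_i,\mu)}$ factor must emerge from $\langle K_{-\mu},K_i\rangle$ together with the weight of $m_{i,1}$. If the direct computation becomes messy, I would instead first rewrite $\Phi_i(a)=c_i\sum_j m^j_{i,1}\,(a\ract m^j_{i,2})$-style using Lemma \ref{lem:adrua}, and then verify the Leibniz rule at the level of the coproduct identity.
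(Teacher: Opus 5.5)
Your proposal is correct and is essentially the paper's proof: the degree-one case collapses $a_{(1)}\langle a_{(2)},m^j_{i,2}\rangle$ to $\langle a,m^j_{i,2}\rangle$ (after which the $T_{w_X}(E_{\tau(i)})\ot 1$ part of $M_i$ drops out because $\vep(a)=0$) and applies Lemma~\ref{lem:TETE-left} at $\tau(i)$, the Leibniz rule comes from Lemma~\ref{lem:Mi}.(2) and Corollary~\ref{cor:RL}, and the argument concludes by generation of $\cR_X^{-,r}$ in degree one. One small correction: the summand of $\kow(a)$ that pairs nontrivially with $\cR_{X,1}^{+,l}$ is $1\ot a$, not $K_{-\mu}\ot a$, so it gives the bare scalar $\langle a,m^j_{i,2}\rangle$ with no $K$-factor (as in your displayed formula), and the factor $q^{(\alpha_i,\mu)}$ in the Leibniz step comes from $\langle K_{-\mu},K_i\rangle$ alone, independent of the weight of $m^j_{i,1}$.
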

%%%%%%%%%%%%%%%%%%%%%%%%%%%%%%%%%%%%%
\begin{proof}
  If $a\in \qfield$ then $\mu_i^L(a)=0$, and the right hand side of Equation \eqref{eq:muiLa-new} also vanishes by Lemma \ref{lem:Mi}.(1). For $a\in \cR_{X,1}^{-,r}$ we may assume that $a=\ad_r(h)(F_\ell)$ for some $h\in \cH$ and $\ell\in I\setminus X$. Then, similarly to Equation \eqref{eq:kow-adhE}, we have
  \begin{align}\label{eq:kowa1}
    \kow(a)= \ad_r(h_{(2)})(F_\ell) \ot S(h_{(1)}) K_{\ell}^{-1} h_{(3)} + 1 \ot \ad_r(h)(F_\ell).
  \end{align}
  Using this, Equation \eqref{eq:muiL}, and Lemmas \ref{lem:TETE-left} and \ref{lem:Mi}.(1) we obtain
  \begin{align*}
    \mu_i^L(a) &\stackrel{\phantom{\eqref{eq:kowa1}}}{=} c_i q^{-(\alpha_i,\theta(\alpha_i))} K_{-\alpha_i} T_{w_X}(E_{\tau(i)})_{(1)}\langle a, T_{w_X}(E_{\tau(i)})_{(2)} \rangle\\
    &\stackrel{\eqref{eq:kowa1}}{=} c_i \sum_j  m^j_{i,1} a_{(1)} \langle a_{(2)},m^j_{i,2}\rangle
  \end{align*}
  which completes the proof of Equation \eqref{eq:muiLa-new} for $a\in \cR_{X,1}^{-,r}$.\\
  For the general case, consider the map $\mutil^L_i:\cR_X^{-,r}\rightarrow \cA$ defined by
  \begin{align*}
    \mutil^L_{i}(a)= c_i \sum_j m^j_{i,1} a_{(1)} \langle a_{(2)},m^j_{i,2}\rangle.
  \end{align*}  
  As the algebra $\cR_X^{-,r}$ is generated in degree one, it suffices to establish the skew derivation property \eqref{eq:muiL-skew} also for the map $\mutil^L_i$. To this end, let  $a\in (\cR_X^{-,r})_{-\mu}$ and $b\in \cR_X^{-,r}$ and use the second statement of Lemma \ref{lem:Mi} to obtain 
  \begin{align*}
    \mutil_i^L(ab) &= c_i\sum_j m^j_{i,1} a_{(1)}b_{(1)} \langle a_{(2)}b_{(2)},m^j_{i,2}\rangle\\
    &= c_i\sum_j m^j_{i,1} a_{(1)}b \langle a_{(2)},m^j_{i,2}\rangle +
       c_i\sum_j (m^j_{i,1})_{(1)} a_{(1)} \langle a_{(2)},K_i(m^j_{i,1})_{(2)} \rangle b_{(1)} \langle b_{(2)},m^j_{i,2}\rangle.
  \end{align*}
  By Corollary \ref{cor:RL} and Lemma \ref{lem:adrua} this can be rewritten as
  \begin{align*}
    \mutil_i^L(ab)&=  c_i\sum_j m^j_{i,1} a_{(1)}b \langle a_{(2)},m^j_{i,2}\rangle +
    c_i K_i^{-1}a K_i\sum_j m^j_{i,1} b_{(1)} \langle b_{(2)},m^j_{i,2}\rangle\\
    &=\mutil_i^L(a) b + q^{(\alpha_i,\mu)} a \mutil^L_i(b).
  \end{align*}
  Hence, $\mutil_i^L$ does indeed satisfy the skew-derivation property \eqref{eq:muiL-skew}.
\end{proof}
%%%%%%%%%%%%%%%%%%%%%%%%%%%%%%%%%%%%%
Equation \eqref{eq:muiLa-new} can be compactly rewritten in terms of the quasi $R$-matrix.
%%%%%%%%%%%%%%%%%%%%%%%%%%%%%%%%%%%%%
\begin{cor}\label{cor:muiTheta}
  For all $i\in I\setminus X$ the following relations hold:
  \begin{align}
    (\mu_i^L\ot \id)(\Theta) &= c_i M_i\cdot\Theta,\label{eq:muiL-Theta}\\
    (\mu_i^R\ot \id)(\Theta) &=c_{\tau(i)} \Theta\cdot (\sigma \ot \sigma)(M_i).\label{eq:muiR-Theta}
  \end{align}  
\end{cor}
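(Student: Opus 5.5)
The plan is to prove \eqref{eq:muiL-Theta} by rewriting the left-hand side with the relative quasi $R$-matrix, and then to obtain \eqref{eq:muiR-Theta} from it by applying $\sigma\tau$. By Lemma \ref{lem:rel-quasi}.(1) we have $\Theta=\Lambda_X\Theta_X$. Writing $\Theta_X=\sum y_k\ot x_k$ with $y_k\in U^-_X\subset\cH$, the $0$-equivariance of the star product gives
\[
(\mu_i^L\ot\id)(\Theta)=\sum_{\mu,r,k}\mu_i^L(f_{\mu,r})y_k\ot e_{\mu,r}x_k=(\mu_i^L\ot\id)(\Lambda_X)\cdot\Theta_X .
\]
It therefore suffices to show $(\mu_i^L\ot\id)(\Lambda_X)=c_iM_i\Lambda_X$.

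For this step I insert Theorem \ref{thm:muiLa-new}. This gives
\[
(\mu_i^L\ot\id)(\Lambda_X)=c_i\sum_{j,\mu,r} m^j_{i,1}(f_{\mu,r})_{(1)}\langle (f_{\mu,r})_{(2)},m^j_{i,2}\rangle\ot e_{\mu,r}.
\]
The key point is the duality identity
\[
\sum_{\mu,r} (f_{\mu,r})_{(1)}\langle (f_{\mu,r})_{(2)},x\rangle\ot e_{\mu,r}=\sum_{\mu,r} f_{\mu,r}\ot x\,e_{\mu,r}
\]
for $x\in\ad_l(\cH)(E_{\tau(i)})\subset\cR_X^{+,l}$. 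Granting this, the right-hand side becomes $c_i\sum_j m^j_{i,1}f_{\mu,r}\ot m^j_{i,2}e_{\mu,r}=c_iM_i\Lambda_X$, which is what we want.

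To prove the duality identity I pair the second legs with an arbitrary $y\in\cR_X^{-,r}$, using the non-degeneracy in Proposition \ref{prop:rhrh}. On the left the pairing produces $y_{(1)}\langle y_{(2)},x\rangle$, using that $\sum_r f_{\mu,r}\langle f_{\mu,r}',\cdot\rangle$ reconstructs elements. On the right it produces $\sum f_{\mu,r}\langle y,xe_{\mu,r}\rangle=\sum f_{\mu,r}\langle y_{(1)},e_{\mu,r}\rangle\langle y_{(2)},x\rangle$ by \eqref{eq:form-kow}. The subtle part is that $\kow(y)$ is not in $\cR_X^{-,r}\ot\cR_X^{-,r}$. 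Since $\cR_X^{-,r}$ is a right coideal, however, $y_{(1)}\in\cR_X^{-,r}$ and $y_{(2)}\in U^-U^0$. Only components with $y_{(1)}$ in the right weight contribute, and the factor $y_{(2)}$ pairs with $x\in\cR_X^{+,l}$. I expect that Proposition \ref{prop:rhrh} (the factorisation $\langle r^-h^-,r^+h^+\rangle$) lets me split $y_{(2)}$ into its $\cR_X^{-,r}$ and Levi parts, so both sides agree. Verifying this bookkeeping, including the placement of the $K$-factors coming from \eqref{eq:cop-partial2-L}, is the step I expect to be the main obstacle. Here I would first check it in degree one directly against Lemma \ref{lem:TETE-left}.

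For \eqref{eq:muiR-Theta}, I use $\mu_i^R=\sigma\tau\circ\mu^L_{\tau(i)}\circ\sigma\tau$, from Theorem \ref{thm:muiR} or by definition in the general setting. The pairing is $\sigma$-invariant by \eqref{eq:pairing-sigma}, and also $\tau$-invariant, so $(\sigma\tau\ot\sigma\tau)(\Theta)=\Theta$. Hence
\[
(\mu_i^R\ot\id)(\Theta)=(\sigma\tau\ot\sigma\tau)\big((\mu^L_{\tau(i)}\ot\id)(\Theta)\big)=c_{\tau(i)}(\sigma\tau\ot\sigma\tau)(M_{\tau(i)}\Theta),
\]
using \eqref{eq:muiL-Theta} for $\tau(i)$. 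Since $\sigma\tau$ is an anti-automorphism, this equals $c_{\tau(i)}\Theta\cdot(\sigma\tau\ot\sigma\tau)(M_{\tau(i)})=c_{\tau(i)}\Theta\cdot(\sigma\ot\sigma)(M_i)$, because $\tau$ maps $M_{\tau(i)}$ to $M_i$. That last identity follows from $\tau\circ T_{w_X}=T_{w_X}\circ\tau$ and $\theta\tau=\tau\theta$.
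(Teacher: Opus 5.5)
Your argument is essentially the paper's proof. Both reduce to $\Lambda_X$ by right $\cH$-linearity of $\mu_i^L$, insert Theorem \ref{thm:muiLa-new}, and expand in the dual bases using that $\cR_X^{-,r}$ is a right coideal and that $m^j_{i,2}e_\mu\in\cR_X^{+,l}$. Both then conjugate by $\sigma\tau$, using $(\sigma\tau\ot\sigma\tau)(\Theta)=\Theta$ and $(\tau\ot\tau)(M_{\tau(i)})=M_i$.

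The step you flag as the main obstacle is not one. Once the right-hand side reads $\sum_{\mu,r} f_{\mu,r}\langle y_{(1)},e_{\mu,r}\rangle\langle y_{(2)},x\rangle$, the fact that $y_{(1)}\in\cR_X^{-,r}$ already gives $\sum_{\mu,r} f_{\mu,r}\langle y_{(1)},e_{\mu,r}\rangle=y_{(1)}$, exactly as on the left. So no splitting of $y_{(2)}$ via Proposition \ref{prop:rhrh} and no $K$-factor bookkeeping is needed. The paper makes this expansion $a_{(1)}=\sum_\mu f_\mu\langle a_{(1)},e_\mu\rangle$ directly, without pairing against an auxiliary $y$.
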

%%%%%%%%%%%%%%%%%%%%%%%%%%%%%%%%%%%%%
\begin{proof}
  We first prove Equation \eqref{eq:muiL-Theta}. Recall Lemma \eqref{lem:rel-quasi}. As $\mu_i^L$ is right $\cH$-linear, it suffices to prove that the relative quasi $R$-matrix $\Lambda_X=\Theta \overline{\Theta}_X$ satisfies the relation
  \begin{align}
     (\mu_i^L\ot \id)(\Lambda_X) &= c_i M_i\cdot\Lambda_X.\label{eq:muiL-Lambda}
  \end{align}
   Suppressing one summation, we write the relative quasi $R$-matrix formally as $\Lambda_X=\sum_{\mu\in Q^+} f_\mu\ot e_\mu$.
  Using Theorem \ref{thm:muiLa-new} and the fact that $\cR_X^{-,r}$ is a right coideal subalgebra of $U^\le$ we may calculate for any $a\in \cR_X^{-,r}$ as follows  
  \begin{align*}
    \mu_i^L(a) &= c_i\sum_j m_{i,1}^j a_{(1)}\langle a_{(2)},m_{i,2}^j\rangle\\
    &= c_i\sum_{j,\mu} m_{i,1}^j f_\mu \langle a_{(1)},e_\mu\rangle \langle a_{(2)},m_{i,2}^j\rangle\\
    &=c_i \sum_{j,\mu} m_{i,1}^j f_\mu \langle a, m_{i,2}^j e_\mu\rangle.
  \end{align*}  
  As $m_{i,2}^j\in \cR_X^{+,l}$ we obtain
  \begin{align*}
    \sum_{\mu'}\mu_i^L(f_{\mu'})\ot e_{\mu'} = c_i\sum_{j,\mu,\mu'} m_{i,1}^j f_\mu \langle f_{\mu'}, m_{i,2}^j e_\mu\rangle\ot e_{\mu'} =c_i \sum_{j,\mu} m_{i,1}^j f_\mu \ot m_{i,2}^j e_\mu
  \end{align*}  
  which proves Equation \eqref{eq:muiL-Lambda}.

  To prove Equation \eqref{eq:muiR-Theta}, recall from Theorem \ref{thm:muiR} that $\mu_i^R=\sigma\tau \circ \mu_{\tau(i)}^L\circ \sigma\tau$ and from Equation \eqref{eq:pairing-sigma} that $(\sigma\circ\sigma)(\Theta)=\Theta$. Hence, for any $i\in I\setminus X$, we obtain
  \begin{align*}
    (\mu_i^R\ot \id)(\Theta) &\stackrel{\phantom{\eqref{eq:muiL-Theta}}}{=} (\mu_i^R\ot \id)\big((\sigma\tau\ot\sigma\tau)(\Theta)\big)\\
 &\stackrel{\phantom{\eqref{eq:muiL-Theta}}}{=}(\sigma\tau\ot\sigma\tau)\circ (\mu_{\tau(i)}^L \ot \id)(\Theta)\\
    &\stackrel{\eqref{eq:muiL-Theta}}{=} (\sigma\tau\ot \sigma\tau)\big(c_{\tau(i)}M_{\tau(i)}\Theta\big)\\
     &\stackrel{\phantom{\eqref{eq:muiL-Theta}}}{=} c_{\tau(i)} \Theta\cdot\big((\sigma\ot \sigma)(M_i)\big)
  \end{align*}
  which gives the desired formula.
\end{proof}  
%%%%%%%%%%%%%%%%%%%%%%%%%%%%%%%%%%%%%
\subsection{The pullback of the quasi $R$-matrix}\label{sec:RtoK}
%%%%%%%%%%%%%%%%%%%%%%%%%%%%%%%%%%%%%
We now return to the setting of generalized Satake diagrams $(X,\tau)$ with corresponding QSP-subalgebra $\cB_\bc$ depending on a set of parameters $\bc$. Recall that the Letzter map $\psi:\cB_\bc\rightarrow \cA$ defined by \eqref{eq:Letzter-def} is a linear isomorphism. Recall the quasi $R$-matrix $\Theta\in \prod_{\mu\in Q^+} U^-_{-\mu}\ot U^+_\mu$ defined by \eqref{eq:Theta-def} and set
\begin{align}\label{eq:TB-def}
  \Theta^B=(\psi^{-1}\ot \id)(\Theta) \in \prod_{\mu\in Q^+} \cB_\bc\ot U^+_\mu.
\end{align}
Similarly to the quasi $R$-matrix, we write $\Theta^B$ as an infinite sum $\Theta^B=\sum_{\mu\in Q^+} \Theta^B_\mu$ with $\Theta^B_\mu\in \cB_\bc\ot U^+_\mu$. Note that by construction $\Theta^B_0=1\ot 1$.
We now show that $\Theta^B$ satisfies the defining property of the tensor quasi $K$-matrix for $\cB_\bc$. Recall that we consider $\prod_{\mu\in Q^+} \cB_\bc\ot U^+_\mu$ as a subalgebra of $\mathscr{U}^{(2)}$. Also recall the algebra anti-automorphism $\sigma_\tau$ of $\cB_\bc$ defined in Corollary \ref{cor:sigmatauB}.
%%%%%%%%%%%%%%%%%%%%%%%%%%%%%%%%%%%%%
\begin{thm}\label{thm:intertwiner2}
  For all $b\in \cB_\bc$ the relation
  \begin{align}\label{eq:intertwiner2}
    \kow(\sigma_\tau(b)) \cdot\Theta^B = \Theta^B\cdot (\sigma_\tau\ot(\sigma\circ\tau))\circ \kow(b)
  \end{align}
  holds in $\mathscr{U}^{(2)}$.
\end{thm}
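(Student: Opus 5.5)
Both sides of \eqref{eq:intertwiner2} are multiplicative in $b$ in the same way. The map $b\mapsto \kow(\sigma_\tau(b))$ is an algebra anti-homomorphism $\cB_\bc\to\mathscr U^{(2)}$, and so is $b\mapsto (\sigma_\tau\ot\sigma\tau)\kow(b)$, by Corollary \ref{cor:sigmatauB} and since $\sigma\tau$ is an anti-automorphism of $\Uq$. Hence if \eqref{eq:intertwiner2} holds for $b_1$ and $b_2$, it holds for $b_1b_2$:
\[
\kow(\sigma_\tau(b_1b_2))\Theta^B=\kow(\sigma_\tau(b_2))\kow(\sigma_\tau(b_1))\Theta^B=\Theta^B(\sigma_\tau\ot\sigma\tau)\kow(b_2)\,(\sigma_\tau\ot\sigma\tau)\kow(b_1).
\]
It therefore suffices to check the relation on the generators $h\in\cH$ and $B_i$, $i\in I\setminus X$.

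For $h\in\cH$ I would use $\sigma_\tau(h)=\sigma\tau(h)$ and $\kow(h)\in\cH\ot\cH$. By the $0$-equivariance in Lemma \ref{lem:Letzter-props}(ii), $\psi^{-1}$ is $\cH$-bilinear, so $(\psi^{-1}\ot\id)$ intertwines left and right multiplication by $\cH\ot\cH$. The relation then follows by applying $\psi^{-1}\ot\id$ to the analogous identity for $\Theta$. That identity is obtained from \eqref{eq:Theta-intertwiner} with $u=\overline{\sigma\tau(h)}$, combined with \eqref{eq:sigmakow} and the $\tau$-invariance $(\tau\ot\tau)\Theta=\Theta$, which gives $\kow(\sigma\tau(h))\Theta=\Theta(\sigma\tau\ot\sigma\tau)\kow(h)$.

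The main case is $b=B_i$. Here $\sigma_\tau(B_i)=B_{\tau(i)}$ and $\kow(B_{\tau(i)})$ is given by \eqref{eq:kow-Bi}. On the right-hand side I would compute $(\sigma_\tau\ot\sigma\tau)\kow(B_i)=B_{\tau(i)}\ot K_{\tau(i)}+1\ot F_{\tau(i)}-c_i(\sigma_\tau\ot\sigma\tau)((1\ot K_i^{-1})M_i)$. Note that $\sigma_\tau=\sigma\tau$ on the first leg of $M_i$, which lies in $\cH^\ge$ by Lemma \ref{lem:Mi}(1). Now translate everything to $\cA$ via $\psi\ot\id$, using the following rules:
\begin{enumerate}
\item left multiplication by $B_{\tau(i)}$ in the first leg becomes $F_{\tau(i)}\ast(\cdot)=F_{\tau(i)}\cdot{}+\mu^L_{\tau(i)}$;
\item right multiplication by $B_{\tau(i)}$ becomes $(\cdot)\ast F_{\tau(i)}=(\cdot)F_{\tau(i)}+\mu^R_{\tau(i)}$;
\item multiplication by elements of $\cH$ passes through unchanged.
\end{enumerate}
Corollary \ref{cor:muiTheta} evaluates the $\mu$-terms on $\Theta$ as $c_{\tau(i)}M_{\tau(i)}\Theta$ and $c_i\Theta(\sigma\ot\sigma)(M_{\tau(i)})$, respectively.

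After this substitution, the terms without $c$ are exactly $\kow(F_{\tau(i)})$-type products appearing in \eqref{eq:ThetaF}, written in the form $(F_{\tau(i)}\ot K^{-1}_{\tau(i)}+1\ot F_{\tau(i)})\Theta=\Theta(F_{\tau(i)}\ot K_{\tau(i)}+1\ot F_{\tau(i)})$; I need to check that the $K$-factors in the second leg match. The $c$-terms must cancel pairwise. On the left, the $\mu^L$ contribution $c_{\tau(i)}(1\ot K_{\tau(i)}^{-1})M_{\tau(i)}\Theta$ should cancel against $-c_{\tau(i)}(1\ot K_{\tau(i)}^{-1})M_{\tau(i)}\Theta$ coming from $\kow(B_{\tau(i)})$. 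On the right, $c_i\Theta(\sigma\ot\sigma)(M_{\tau(i)})(1\ot K_{\tau(i)})$ should cancel against $c_i\Theta(\sigma\tau\ot\sigma\tau)((1\ot K_i^{-1})M_i)$. I expect this bookkeeping to be the main obstacle: matching $(\sigma\tau\ot\sigma\tau)(M_i)$ with $(\sigma\ot\sigma)(M_{\tau(i)})$ and the Cartan factors $K_i^{\pm1}$ placed on the correct side. I would handle it via $\tau\circ T_{w_X}=T_{w_X}\circ\tau$, so that $(\tau\ot\tau)M_i=M_{\tau(i)}$ (with $\theta\tau=\tau\theta$), and by commuting $K$'s past $\Theta$ and $M$ by weight considerations. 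Convergence of all manipulations in $\mathscr U^{(2)}$ is harmless, since everything is graded by $Q^+$ in the second leg.
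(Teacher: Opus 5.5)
Your proposal is correct and follows essentially the same route as the paper: reduce to generators via the anti-homomorphism property, handle $h\in\cH$ via \eqref{eq:sigmakow}, \eqref{eq:Theta-intertwiner} and the $\cH$-bilinearity of $\psi^{-1}$, and handle the $B$-generators by transporting the $F$-intertwiner \eqref{eq:ThetaF} through the Letzter map and evaluating the $\mu^L$, $\mu^R$ terms with Corollary \ref{cor:muiTheta}. The only difference is that you test on $b=B_i$ rather than $b=B_{\tau(i)}$, which is just a relabeling. The identity $(\tau\ot\tau)(M_i)=M_{\tau(i)}$ you flag does hold. No commuting of Cartan elements is needed, because the anti-homomorphism property already gives $(\sigma\tau\ot\sigma\tau)\big((1\ot K_i^{-1})M_i\big)=(\sigma\ot\sigma)(M_{\tau(i)})(1\ot K_{\tau(i)})$, with the $K$-factor on the correct side.
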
  
%%%%%%%%%%%%%%%%%%%%%%%%%%%%%%%%%%%%%
\begin{proof}
  Both $\kow\circ \sigma_\tau$ and $(\sigma_\tau\ot(\sigma\circ\tau))\circ \kow$ are algebra anti-homomorphisms $\cB_\bc\rightarrow \cB_\bc\ot \Uq$. Hence, it suffices to verify the relation \eqref{eq:intertwiner2} for $b=h\in \cH$ and for $b=B_{\tau(i)}$ where $i\in I\setminus X$. For $h\in \cH$ the $0$-equivariance of the star product implies that
  \begin{align*}
    \Theta^B\cdot (\sigma_\tau\ot(\sigma\circ\tau))\circ \kow(h)& \stackrel{\phantom{\eqref{eq:sigmakow}}}{=}\Theta^B \cdot(\sigma \ot \sigma)\circ \kow(\tau(h))\\
    &\stackrel{\eqref{eq:sigmakow}}{=}(\psi^{-1}\ot \id)\left(\Theta \cdot (\barx\ot \barx) \circ \kow(\sigma \circ \tau(\overline{h}))\right)\\
    &\stackrel{\eqref{eq:Theta-intertwiner}}{=}(\psi^{-1} \ot \id) \left( \kow(\sigma\circ \tau(h))\cdot\Theta \right)\\
    &\stackrel{\phantom{\eqref{eq:sigmakow}}}{=}\kow(\sigma_\tau(h))\cdot \Theta^B
  \end{align*}
  which proves \eqref{eq:intertwiner2} for $b=h\in \cH$.

  To prove Equation \eqref{eq:intertwiner2} for $b=B_{\tau(i)}$ where $i\in I\setminus X$, note that the intertwiner property \eqref{eq:Theta-intertwiner} of $\Theta$ gives us
  \begin{align*}
     (F_i\ot K_i^{-1} + 1 \ot F_i)\cdot\Theta = \Theta \cdot(F_i\ot K_i+1\ot F_i).
\end{align*}
Application of the inverse of the Letzter map to the first tensor factor then gives
\begin{align}
  (B_i\ot K_i^{-1} + 1 \ot F_i)&\cdot\Theta^B - (1\ot K_i^{-1})(\psi^{-1}\circ \mu_i^L\ot \id)(\Theta) \label{eq:kowB-intertwine}\\
  &= \Theta^B \cdot(B_i\ot K_i+1\ot F_i) - (\psi^{-1}\circ \mu_i^R\ot \id)(\Theta)(1\ot K_i).\nonumber
\end{align}
By Corollary \ref{cor:muiTheta}, Lemma \ref{lem:Mi}.(1) and the $\cH_{X,\tau}$-bilinearity of the Letzter map we have
\begin{align*}
  (\psi^{-1}\circ \mu_i^L\ot \id)(\Theta)&=c_i M_i\cdot \Theta^B,\\
  (\psi^{-1}\circ \mu_i^R\ot \id)(\Theta)&=c_{\tau(i)} \Theta^B \cdot(\sigma\ot\sigma)(M_i).
\end{align*}
Inserting these equations into Equation \eqref{eq:kowB-intertwine} we obtain
\begin{align*}
  \big(B_i\ot K_i^{-1} + 1 \ot F_i &- c_i(1\ot K_i^{-1})M_i\big)\cdot\Theta^B =\\
  &\Theta^B \cdot\big(B_i\ot K_i + 1 \ot F_i - c_{\tau(i)}(\sigma\ot \sigma)(M_i)(1\ot K_i)\big).
\end{align*}
This equation can be rewritten as
\begin{align*}
  \kow\big(\sigma_\tau(B_{\tau(i)})\big)\cdot\Theta^B =\Theta^B\cdot(\sigma_\tau\ot(\sigma\circ\tau))\circ \kow(B_{\tau(i)})
\end{align*}
which is Equation \eqref{eq:intertwiner2} for $b=B_{\tau(i)}$.
\end{proof}  
%%%%%%%%%%%%%%%%%%%%%%%%%%%%%%%%%%%%%%
We refer to the element $\Theta^B$ defined by \eqref{eq:TB-def} as the tensor quasi $K$-matrix corresponding to the QSP-subalgebra $\cB_\bc$. The tensor quasi $K$-matrix is uniquely determined by the intertwiner property \eqref{eq:intertwiner2} and the normalization $\Theta^B_0=1\ot 1$. The proof of the following result is an adaption of \cite[Proposition 3.10, Remark 3.11]{a-Kolb20} to the present setting.
%%%%%%%%%%%%%%%%%%%%%%%%%%%%%%%%%%%%%
\begin{prop}\label{prop:quasiK-unique}
  Let $\Gamma=\sum_{\mu\in Q^+}\Gamma_\mu \in \prod_{\mu\in Q^+} \Uq\ot \Uq^+_\mu$ be an element with $\Gamma_0=1\ot 1$ satisfying the relation
  \begin{align}\label{eq:intertwiner-Gamma}
    \kow(\sigma_\tau(b)) \cdot\Gamma = \Gamma\cdot (\sigma_\tau\ot(\sigma\circ\tau))\circ \kow(b) \qquad \mbox{for all $b\in \cB_\bc$}
  \end{align}
  in $\mathscr{U}^{(2)}$. Then $\Gamma=\Theta^B$.
\end{prop}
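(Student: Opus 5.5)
We compare $\Gamma$ with $\Theta^B$ directly. Since $\Theta^B_0=1\ot 1$, the element $\Theta^B$ is invertible in $\prod_{\mu}\Uq\ot\Uq^+_\mu\subset\mathscr{U}^{(2)}$, with inverse computed recursively degree by degree. Set $\Xi=\Gamma\cdot(\Theta^B)^{-1}$. Then $\Xi=\sum_\mu\Xi_\mu$ with $\Xi_\mu\in\Uq\ot\Uq^+_\mu$ and $\Xi_0=1\ot 1$. Combining \eqref{eq:intertwiner-Gamma} with Theorem \ref{thm:intertwiner2}, the right hand sides agree, so
\begin{align*}
  \kow(\sigma_\tau(b))\cdot\Xi=\Xi\cdot\kow(\sigma_\tau(b)) \qquad \mbox{for all $b\in\cB_\bc$.}
\end{align*}
Since $\sigma_\tau$ is bijective, $\Xi$ commutes with $\kow(b)$ for every $b\in\cB_\bc$. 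It therefore suffices to show that an element $\Xi\in\prod_\mu\Uq\ot\Uq^+_\mu$ with $\Xi_0=1\ot1$ commuting with $\kow(\cB_\bc)$ must equal $1\ot1$.

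We argue by induction on the height of $\mu$. Suppose $\Xi_\nu=0$ for all $0\neq\nu$ with $\mathrm{ht}(\nu)<\mathrm{ht}(\mu)$, and take $\mu\neq0$ of minimal height with $\Xi_\mu\neq0$.

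First, commutation with $\kow(K_\beta)=K_\beta\ot K_\beta$ for $\beta\in Q^\theta$ forces a weight condition: the total weight of $\Xi_\mu$ lies in $Q\cap(Q^\theta)^\perp$.

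Next, consider commutation with $\kow(B_i)=B_i\ot K_i^{-1}+1\ot F_i-c_i(1\ot K_i^{-1})M_i$ and with $\kow(F_j),\kow(E_j)$ for $j\in X$. Extract the component lying in $\Uq\ot\Uq^+_{\mu-\alpha_i}$. The terms $B_i\ot K_i^{-1}$ and $M_i$ either preserve the second-leg degree or raise it; by Lemma \ref{lem:Mi}.(1), the second leg of $M_i$ has positive weight. Only $1\ot F_i$ lowers the second-leg degree by $\alpha_i$. By minimality of $\mu$, all the other contributions to this component involve $\Xi_\nu$ with $\nu=0$, and these cancel. What remains is
\begin{align*}
  (1\ot F_i)\Xi_\mu-\Xi_\mu(1\ot F_i)=0 \qquad \mbox{for all $i\in I\setminus X$,}
\end{align*}
and the analogous relation holds for $j\in X$ using $\kow(F_j)=F_j\ot K_j^{-1}+1\ot F_j$. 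Hence the second leg of $\Xi_\mu$ commutes with every $F_i$ in $\Uq$. By \eqref{eq:EyyE}, this means its images under $\partial_i^L$ and $\partial_i^R$ both vanish for every $i$, which forces it to be a scalar. So $\mu=0$, a contradiction, and thus $\Xi=1\ot1$ and $\Gamma=\Theta^B$.

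The main obstacle is bookkeeping the degree extraction. The cross terms $\Xi_0\cdot(\ldots)$ must cancel against $(\ldots)\cdot\Xi_0$, and the $M_i$ term must involve only lower $\Xi_\nu$. This is exactly the argument of \cite[Proposition 3.10]{a-Kolb20}, which we follow, taking care that $\Uq$ in the first leg is allowed rather than just $\cB_\bc$.
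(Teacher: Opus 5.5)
Your proof is correct, but it is organized differently from the paper's.

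\textbf{The paper's route.} The paper never forms a quotient. It rewrites the intertwiner relation for $b=B_{\tau(i)}$ (and for $F_j$, $j\in X$, with $B_j=F_j$, $M_j=0$) as the recursion $(1\ot\partial^L_i)(\Gamma)=-(q_i-q_i^{-1})(B_i\ot 1-c_iM_i)\cdot\Gamma$. This comes from expanding $[1\ot F_i,\Gamma]$ via \eqref{eq:FxxF} and separating the $K_i^{-1}U^+$ and $U^+K_i$ parts. Injectivity of $u\mapsto\sum_i\partial^L_i(u)$ on $U^+_\mu$ then shows that each $\Gamma_\mu$ is determined by the lower components. Only afterwards does existence (Theorem \ref{thm:intertwiner2}) identify the unique solution with $\Theta^B$.

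\textbf{Your route.} You use Theorem \ref{thm:intertwiner2} and the invertibility of $\Theta^B$ in $\prod_\mu\Uq\ot\Uq^+_\mu$ at the start. What remains is to show that $\Xi=\Gamma(\Theta^B)^{-1}$, which commutes with $\kow(\cB_\bc)$ and has $\Xi_0=1\ot 1$, is trivial. At the lowest nonzero degree only $[1\ot F_i,\Xi_\mu]$ survives, so you never need the exact form of the recursion. Your bookkeeping here is right: the $B_i\ot K_i^{-1}$, $F_j\ot K_j^{-1}$ and $M_i$ contributions only meet $\Xi_\nu$ of strictly smaller height, hence only $\Xi_0=1\ot 1$, which is central.

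\textbf{Comparison.} Both routes end in the same fact: an element of $U^+_\mu$ with $\mu\neq 0$ annihilated by all $\partial^L_i$ vanishes. The paper's version gives uniqueness independently of existence, a recursive formula for $\Gamma_\mu$, and the by-product $\Gamma_\mu\in\cB_\bc\ot U^+$. Yours is shorter and needs less precise information about $M_i$ (only that its second leg has nonzero positive weight), but it depends on the existence result from the outset.

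\textbf{Two small corrections.} For $x\in U^+$ the relation you need is \eqref{eq:FxxF}, not \eqref{eq:EyyE}. You also need the triangular decomposition to separate $K_i^{-1}\partial^L_i(x)$ from $\partial^R_i(x)K_i$. Finally, the weight condition you derive from $K_\beta\ot K_\beta$ is never used and can be dropped.
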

%%%%%%%%%%%%%%%%%%%%%%%%%%%%%%%%%%%%%%
\begin{proof}
    For $i\in I\setminus X$ and $b=B_{\tau(i)}$ the relation \eqref{eq:intertwiner-Gamma} can be rewritten as
    \begin{align}\label{eq:int-Gam-explicit}
        \big(B_i\ot K_i^{-1} + 1 \ot F_i &- c_i(1\ot K_i^{-1})M_i\big)\cdot\Gamma =\\
  &\Gamma \cdot\big(B_i\ot K_i + 1 \ot F_i - c_{\tau(i)}(\sigma\ot \sigma)(M_i)(1\ot K_i)\big).\nonumber
    \end{align}
    This relation also holds for $i\in X$ if we set $B_i=F_i$ and $M_i=0$ in this case. The relations \eqref{eq:int-Gam-explicit} for $i\in I$ can be rewritten as 
    \begin{align}
        (1\ot \partial^L_i)(\Gamma) &= -(q_i-q_i^{-1}) (B_i\ot 1 - c_i M_i)\cdot \Gamma,\label{eq:partialL-Gam}\\
        (1\ot \partial^R_i)(\Gamma) &= -(q_i-q_i^{-1}) \Gamma\cdot (B_i\ot 1 - c_{\tau(i)}(\sigma\ot \sigma)(M_i)).\nonumber
    \end{align}
    By \cite[Lemma 1.2.15]{b-Lusztig94} the map 
    \begin{align*}
      \partial_\mu^L:U_\mu^+\rightarrow \bigoplus_{i\in I}U^+_{\mu-\alpha_i}, \qquad u\mapsto \sum_{i\in I} \partial^L_i(u)
    \end{align*}
    is injective for any $\mu>0$. By induction over $\mu\in Q^+$, using \eqref{eq:partialL-Gam} and the injectivity of $\partial^L_\mu$, we obtain that $\Gamma_\mu$ is uniquely determined and that $\Gamma_\mu\in \cB_\bc\ot U^+$ for all $\mu\in Q^+$. The uniqueness together with Theorem \ref{thm:intertwiner2} then imply that $\Gamma=\Theta^B$.
\end{proof}
%%%%%%%%%%%%%%%%%%%%%%%%%%%%%%%%%%%%%
  We now deduce from Theorem \ref{thm:intertwiner2} the existence of the ordinary quasi $K$-matrix, as proposed in \cite[2.3]{a-BaoWang18a} and established in \cite[Theorem 6.10]{a-BalaKolb19}, \cite[Theorem 7.3]{a-AppelVlaar22}, in the reformulation of \cite[Theorem 3.16]{a-WangZhang23}. Define $\Xfrak=(\vep\ot \id)(\Theta^B)\in \scrU$. By \eqref{eq:TB-def}, the element $\Xfrak$ can be written as an infinite sum $\Xfrak=\sum_{\mu\in Q^+}\Xfrak_\mu\in \prod_{\mu\in Q^+}U^+_\mu$.
%%%%%%%%%%%%%%%%%%%%%%%%%%%%%%%%%%%%%
\begin{cor}\label{cor:X-intertwine}
   The element $\Xfrak=(\vep\ot 1)(\Theta^B)=\sum_{\mu\in Q^+}\Xfrak_\mu$ satisfies the relation $\Xfrak_0=1$ and
   \begin{align}\label{eq:X-intertwine}
       \sigma_\tau(b) \Xfrak = \Xfrak (\sigma\circ \tau)(b) \qquad \mbox{for all $b\in \cB_\bc$}.
   \end{align}
\end{cor}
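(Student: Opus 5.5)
We obtain the corollary by applying $\vep\ot\id$ to the intertwiner relation \eqref{eq:intertwiner2} of Theorem \ref{thm:intertwiner2}. The normalization is immediate: by construction $\Theta^B_0=1\ot 1$, hence $\Xfrak_0=\vep(1)\cdot 1=1$.

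For the intertwiner property, fix $b\in\cB_\bc$ and work componentwise in $\mu\in Q^+$. All terms of \eqref{eq:intertwiner2} lie in $\prod_\mu \Uq\ot U^+_\mu$, and the first tensor legs involve only genuine elements of $\Uq$. Hence $\vep\ot\id$ extends to an algebra homomorphism on the relevant completion, because $\vep$ is an algebra homomorphism on $\Uq$.

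On the left-hand side we have
\begin{align*}
(\vep\ot\id)\big(\kow(\sigma_\tau(b))\cdot\Theta^B\big)=\vep(\sigma_\tau(b)_{(1)})\sigma_\tau(b)_{(2)}\cdot\Xfrak=\sigma_\tau(b)\,\Xfrak,
\end{align*}
using the counit axiom. On the right-hand side we obtain
\begin{align*}
\Xfrak\cdot \vep(\sigma_\tau(b_{(1)}))\,\sigma\tau(b_{(2)}).
\end{align*}
Here $b_{(1)}\in\cB_\bc$ by the coideal property \eqref{eq:kowB}, so $\sigma_\tau(b_{(1)})$ is defined.

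The key remaining point is the identity $\vep\circ\sigma_\tau=\vep$ on $\cB_\bc$. Given it, the right-hand side becomes $\Xfrak\,\sigma\tau(\vep(b_{(1)})b_{(2)})=\Xfrak\,\sigma\tau(b)$, which is \eqref{eq:X-intertwine}.

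To prove $\vep\circ\sigma_\tau=\vep|_{\cB_\bc}$, note that both maps are algebra anti-homomorphisms into the commutative field $\qfield$, hence algebra homomorphisms. It therefore suffices to check them on generators. On $\cH$ we have $\sigma_\tau=\sigma\tau$ by Corollary \ref{cor:sigmatauB}, and $\vep\circ\sigma\circ\tau=\vep$ holds on $E_j$, $F_j$ and $K_\beta$. On $B_i$ we have $\sigma_\tau(B_i)=B_{\tau(i)}$, so we need $\vep(B_i)=0$ for all $i\in I\setminus X$. This holds because $\vep(F_i)=0$ and $\vep(T_{w_X}(E_{\tau(i)}))=0$: the latter element lies in $U^+$ with nonzero weight $w_X(\alpha_{\tau(i)})$.

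The only step needing care is the justification of applying $\vep\ot\id$ to products in $\mathscr U^{(2)}$. We handle it by comparing the $U^+_\mu$-components: each is a finite sum in $\Uq\ot U^+_\mu$, so multiplicativity of $\vep\ot\id$ follows from the multiplicativity of $\vep$ on $\Uq$.
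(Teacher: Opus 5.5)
Your proposal is correct and follows the paper's proof, which is just the one-line instruction to apply $\vep\ot\id$ to \eqref{eq:intertwiner2}; you usefully make explicit the identity $\vep\circ\sigma_\tau=\vep|_{\cB_\bc}$, which that step needs but the paper leaves unstated, and your check of it on generators is right. One minor inaccuracy: the second legs of $\kow(\sigma_\tau(b))\cdot\Theta^B$ do not lie in $U^+_\mu$ (e.g.\ $\kow(B_i)$ contains $1\ot F_i$), so multiplicativity of $\vep\ot\id$ is more cleanly justified by viewing it as the algebra map $\scrU^{(2)}\rightarrow\scrU$ obtained by restricting natural transformations to $\mathbf{1}\ot M\cong M$, where $\mathbf{1}$ is the trivial module in $\Oint$.
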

%%%%%%%%%%%%%%%%%%%%%%%%%%%%%%%%%%%%%
\begin{proof}
  The intertwiner relation \eqref{eq:X-intertwine} is obtained by applying $\vep\ot \id$ to the intertwiner relation \eqref{eq:intertwiner2}.
\end{proof}
%%%%%%%%%%%%%%%%%%%%%%%%%%%%%%%%%%%%%
The tensor quasi $K$-matrix $\Theta^B$ can be expressed in terms of the ordinary quasi $K$-matrix $\Xfrak$. This is how the tensor quasi $K$-matrix was originally defined in \cite[Definition 3.1]{a-BaoWang18a}. The calculation in the proof of the following result is an adaption of the calculation in the proof of \cite[Proposition 3.2]{a-BaoWang18a} to the present setting. Note that the element $\Xfrak=\sum_{\mu\in Q^+} \Xfrak_\mu$ is invertible in $\scrU$ as $\Xfrak_0=1$.
%%%%%%%%%%%%%%%%%%%%%%%%%%%%%%%%%%%%%%
\begin{cor}
    The relation $\Theta^B= \kow(\Xfrak)\cdot \Theta\cdot(\Xfrak^{-1}\ot 1)$ holds in $\scrU^{(2)}$.
\end{cor}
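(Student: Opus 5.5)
Rather than verify the identity directly, we show that the right-hand side $\Gamma:=\kow(\Xfrak)\cdot \Theta\cdot(\Xfrak^{-1}\ot 1)$ satisfies the hypotheses of Proposition \ref{prop:quasiK-unique}; the uniqueness statement there then gives $\Gamma=\Theta^B$.

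\textbf{Normalization and support.} Since $\Xfrak\in\prod_{\mu}U^+_\mu$, we have $\kow(\Xfrak)\in\prod U^+K_\nu\ot U^+$ by \eqref{eq:kow-def}. Also $\Theta\in\prod U^-\ot U^+$ and $\Xfrak^{-1}\ot 1$ has trivial second leg. Hence $\Gamma$ lies in $\prod_{\mu\in Q^+}\Uq\ot U^+_\mu$, graded by the weight of the second tensor leg. Its weight-zero part is $(\Xfrak\ot 1)\cdot(1\ot 1)\cdot(\Xfrak^{-1}\ot 1)=1\ot 1$. One point needs care here: $\kow(\Xfrak)$ contributes $\Xfrak\ot 1$ in second-leg weight zero, but also terms in higher weights. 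So the degree-zero component of $\Gamma$ is exactly $\Xfrak\Xfrak^{-1}\ot 1=1\ot1$. I will check that the completion $\prod_\mu \Uq\ot U^+_\mu$ makes these products well defined; this is the same setting as in Proposition \ref{prop:quasiK-unique}.

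\textbf{Intertwiner property.} Fix $b\in\cB_\bc$. By Corollary \ref{cor:X-intertwine}, $\sigma_\tau(b)\Xfrak=\Xfrak\,\sigma\tau(b)$, and applying the algebra map $\kow$ gives $\kow(\sigma_\tau(b))\kow(\Xfrak)=\kow(\Xfrak)\kow(\sigma\tau(b))$. Next we use the intertwiner property \eqref{eq:Theta-intertwiner} with $u=\overline{\sigma\tau(b)}$ together with \eqref{eq:sigmakow}. These give
\[
\kow(\sigma\tau(b))\,\Theta=\Theta\,(\obar\ot\obar)\kow(\overline{\sigma\tau(b)})=\Theta\,(\sigma\ot\sigma)\kow(\tau(b))=\Theta\,(\sigma\tau\ot\sigma\tau)\kow(b),
\]
where $\tau$ commutes with $\kow$. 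Finally, write $\kow(b)=b_{(1)}\ot b_{(2)}$ with $b_{(1)}\in\cB_\bc$ by \eqref{eq:kowB}. Corollary \ref{cor:X-intertwine} applied to $b_{(1)}$ gives $\Xfrak\,\sigma\tau(b_{(1)})=\sigma_\tau(b_{(1)})\Xfrak$, and therefore
\[
(\sigma\tau\ot\sigma\tau)\kow(b)\cdot(\Xfrak^{-1}\ot1)=(\Xfrak^{-1}\ot 1)\cdot(\sigma_\tau\ot\sigma\tau)\kow(b).
\]
Chaining the three identities yields \eqref{eq:intertwiner-Gamma} for $\Gamma$, and Proposition \ref{prop:quasiK-unique} gives $\Gamma=\Theta^B$.

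\textbf{Main obstacle.} The algebra is formal; the delicate part is bookkeeping. We must check that \eqref{eq:sigmakow} and \eqref{eq:Theta-intertwiner} combine with exactly the right bars, namely $\overline{\overline{\sigma\tau(b)}}=\sigma\tau(b)$ and $\sigma(\overline{\overline{\,\cdot\,}})$. We must also confirm that all identities are legitimate in $\scrU^{(2)}$, in particular that $\kow$ extends to the completion $\scrU$ so that $\kow(\Xfrak)$ makes sense. If uniqueness via Proposition \ref{prop:quasiK-unique} ran into issues with the first leg not lying in $\cB_\bc$ a priori, I would note that the proposition only assumes $\Gamma\in\prod\Uq\ot U^+_\mu$, so no such issue arises.
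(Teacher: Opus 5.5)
Your proposal is correct and follows the paper's proof essentially step for step: you set $\Gamma=\kow(\Xfrak)\cdot\Theta\cdot(\Xfrak^{-1}\ot 1)$, check that $\Gamma_0=1\ot 1$, obtain the intertwiner relation by chaining Corollary \ref{cor:X-intertwine}, \eqref{eq:Theta-intertwiner} and \eqref{eq:sigmakow}, and conclude with the uniqueness statement of Proposition \ref{prop:quasiK-unique}. The only caveat, which the paper shares, is that the first legs of the $\Gamma_\mu$ a priori lie in the completion $\scrU$ rather than in $\Uq$; the inductive uniqueness argument, which uses injectivity of $\partial^L_\mu$ on the second leg, goes through unchanged in that setting.
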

%%%%%%%%%%%%%%%%%%%%%%%%%%%%%%%%%%%%%%
\begin{proof}
    Set $\Gamma=\kow(\Xfrak)\cdot \Theta\cdot(\Xfrak^{-1}\ot 1)$. We can write $\Gamma=\sum_{\mu\in Q^+}\Gamma_\mu$ with $\Gamma_\mu\in \Uq\ot U^+_\mu$. By construction we have $\Gamma_0=1\ot 1$. For any $b\in \cB_\bc$ we calculate
    \begin{align*}
        \kow(\sigma_\tau(b)) \cdot\Gamma &  \stackrel{\phantom{\eqref{eq:X-intertwine}}}{=}\kow(\sigma_\tau(b)) \cdot  \kow(\Xfrak)\cdot \Theta\cdot(\Xfrak^{-1}\ot 1)\\
        &\stackrel{\eqref{eq:X-intertwine}}{=} \kow(\Xfrak) \cdot \kow(\sigma\circ \tau(b))\cdot \Theta \cdot (\Xfrak^{-1}\ot 1)\\
        &\stackrel{\eqref{eq:Theta-intertwiner}}{=} \kow(\Xfrak) \cdot \Theta \cdot (\obar\ot \obar)(\kow(\overline{\sigma\circ \tau(b)})\cdot (\Xfrak^{-1}\ot 1)\\
        &\stackrel{\eqref{eq:sigmakow}}{=} \kow(\Xfrak) \cdot \Theta \cdot (\sigma\ot \sigma)(\kow(\tau(b))\cdot (\Xfrak^{-1}\ot 1)\\
        &\stackrel{\eqref{eq:X-intertwine}}{=} \kow(\Xfrak) \cdot \Theta \cdot (\Xfrak^{-1}\ot 1)\cdot (\sigma_\tau\ot (\sigma\circ \tau))(\kow(b))\\
        &  \stackrel{\phantom{\eqref{eq:X-intertwine}}}{=}\Gamma \cdot (\sigma_\tau\ot (\sigma\circ \tau))(\kow(b)).
    \end{align*}
    Now Proposition \ref{prop:quasiK-unique} implies that $\Gamma=\Theta^B$.
\end{proof}
%%%%%%%%%%%%%%%%%%%%%%%%%%%%%%%%%%%%%%%
\begin{rema}
  Assume that the parameters $\bc=(c_i)_{i\in I\setminus X}\in (\qfield^\times)^{I\setminus X}$ satisfy condition \eqref{eq:Bbar-condition} and hence $\cB_{\bc}$ has a bar involution $\obar^B$ as discussed in Section \ref{sec:Bbar}. In this case the quasi $K$-matrices $\Theta^B$ and $\Xfrak$ coincide with the quasi $K$-matrices considered in \cite{a-BaoWang18a}, \cite{a-BalaKolb19}. Indeed, in this case the intertwiner properties
  \begin{align*}
    \kow(\overline{b}^B) \cdot\Theta^B = \Theta^B\cdot (\obar^B \ot \obar)\circ \kow(b), \qquad \mbox{and} \qquad \overline{b}^B \Xfrak = \Xfrak \overline{b}
  \end{align*}
  follow from Theorem \ref{thm:intertwiner2}, Corollary \ref{cor:X-intertwine} and the relation $\obar^B=\sigma_\tau\circ \sigma\circ\tau\circ \obar$ which holds for such a choice of parameters.
\end{rema}
%%%%%%%%%%%%%%%%%%%%%%%%%%%%%%%%%%%%%%%
Finally, we comment on the quasi $K$-matrix for nonstandard quantum symmetric pairs as discussed at the end of Section \ref{sec:QSP}. Let $\bs=(s_i)_{i\in I\setminus X}\in \qfield^{I\setminus X}$ and let $\chi^\bc_\bs:\cB_\bc\rightarrow \qfield$ be a character satisfying \eqref{eq:chics}. As explained in Section \ref{sec:QSP}, the character $\chi^\bc_\bs$ gives rise to an isomorphism of right $\Uq$-comodule algebras $\rho=\rho^\bc_\bs:\cB_\bc\rightarrow \cB_{\bc,\bs}$. We define a $\qfield$-algebra anti-automorphism $\sigma_\tau^{\bc,\bs}:\cB_{\bc, \bs}\rightarrow \cB_{\bc, \bs}$ by
\begin{align*}
  \sigma_\tau^{\bc,\bs}
= \rho\circ \sigma_\tau\circ \rho^{-1}.
\end{align*}
Moreover, we define $\Theta^B_{\bc,\bs}=(\rho \ot \id)(\Theta^B)$ and $\Xfrak_{\bc,\bs}=(\vep\ot \id)(\Theta^B_{\bc,\bs})$. These elements are the quasi $K$-matrices for nonstandard quantum symmetric pairs. The following corollary is a reformulation of \cite[Proposition 3.5]{a-DobKol19} in the setting of the present paper.
%%%%%%%%%%%%%%%%%%%%%%%%%%%%%%%%%%%%%%%
\begin{cor}
   For all $b\in \cB_{\bc, \bs}$ the relations
   \begin{align*}
       \kow(\sigma_\tau^{\bc,\bs}(b))\cdot \Theta^B_{\bc,\bs} = \Theta^B_{\bc,\bs}\cdot \big(\sigma_\tau^{\bc,\bs}\ot (\sigma\circ \tau)\big)(\kow(b)) \qquad \mbox{and}\qquad 
       \sigma_\tau^{\bc,\bs}(b)\Xfrak_{\bc,\bs} = \Xfrak_{\bc,\bs}(\sigma\circ \tau)(b)
   \end{align*}
   hold.
\end{cor}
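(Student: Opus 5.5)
The plan is to transport Theorem \ref{thm:intertwiner2} and Corollary \ref{cor:X-intertwine} through the comodule algebra isomorphism $\rho=\rho^\bc_\bs$. The key input is that $\rho$ is a right $\Uq$-comodule map:
\begin{align*}
\kow\circ\rho=(\rho\ot\id)\circ\kow \qquad \mbox{on } \cB_\bc.
\end{align*}
This identity follows from coassociativity and the definition $\rho(b)=\chi^\bc_\bs(b_{(1)})b_{(2)}$. Indeed,
\begin{align*}
\kow(\rho(b))=\chi^\bc_\bs(b_{(1)})b_{(2)}\ot b_{(3)}=(\rho\ot\id)(\kow(b)).
\end{align*}
It also extends to the completions in $\scrU^{(2)}$, since $\rho\ot\id$ acts only on the first tensor leg and respects the grading in the second leg.

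For the first relation, let $b\in\cB_{\bc,\bs}$ and write $b=\rho(b')$ with $b'\in\cB_\bc$. Then $\sigma_\tau^{\bc,\bs}(b)=\rho(\sigma_\tau(b'))$, and the comodule property gives
\begin{align*}
\kow(\sigma_\tau^{\bc,\bs}(b))=(\rho\ot\id)\kow(\sigma_\tau(b')).
\end{align*}
Likewise,
\begin{align*}
(\sigma_\tau^{\bc,\bs}\ot\sigma\tau)\kow(b)=(\rho\sigma_\tau\ot\sigma\tau)\kow(b')=(\rho\ot\id)\big((\sigma_\tau\ot\sigma\tau)\kow(b')\big).
\end{align*}
Since $\rho\ot\id$ is an algebra homomorphism on $\prod_\mu \cB_\bc\ot U^+_\mu$, we apply it to the identity of Theorem \ref{thm:intertwiner2} for $b'$. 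This yields exactly the first claimed relation with $\Theta^B_{\bc,\bs}=(\rho\ot\id)(\Theta^B)$. One point needs care: all products involved, $\kow(\sigma_\tau(b'))\cdot\Theta^B$ and $\Theta^B\cdot(\ldots)$, live in $\prod_\mu\cB_\bc\ot \Uq$ with only finitely many contributions per weight component. So applying $\rho\ot\id$ componentwise is legitimate and multiplicative.

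For the second relation, apply $\vep\ot\id$ to the first one. The left side becomes
\begin{align*}
(\vep\ot\id)\big(\kow(\sigma^{\bc,\bs}_\tau(b))\Theta^B_{\bc,\bs}\big)=\sigma_\tau^{\bc,\bs}(b)\,\Xfrak_{\bc,\bs},
\end{align*}
because $\vep\ot\id$ is an algebra map and $(\vep\ot\id)\kow=\id$. The right side becomes $\Xfrak_{\bc,\bs}\,(\vep\circ\sigma_\tau^{\bc,\bs})(b_{(1)})\,\sigma\tau(b_{(2)})$.

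This last expression is where I expect the main obstacle. We need $\vep\circ\sigma^{\bc,\bs}_\tau=\vep$ on $\cB_{\bc,\bs}$; it does not follow formally the way it did in Corollary \ref{cor:X-intertwine}. There it held because $\vep\circ\sigma_\tau=\vep$ on generators: $\vep(B_{\tau(i)})=0=\vep(B_i)$, and $\vep\circ\sigma\tau=\vep$ on $\cH$. Here $\vep\circ\rho=\chi^\bc_\bs$, so
\begin{align*}
\vep\circ\sigma_\tau^{\bc,\bs}=\chi^\bc_\bs\circ\sigma_\tau\circ\rho^{-1} \qquad \mbox{and} \qquad \vep|_{\cB_{\bc,\bs}}=\chi^\bc_\bs\circ\rho^{-1}.
\end{align*}
The obstacle therefore reduces to the identity $\chi^\bc_\bs\circ\sigma_\tau=\chi^\bc_\bs$. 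Both sides are characters, because an anti-automorphism composed with a character into a commutative field is again a character. It therefore suffices to compare them on generators. On $\cH$ both sides give $\vep$. On the generators $B_i$ the left side gives $\chi^\bc_\bs(B_{\tau(i)})=s_{\tau(i)}$, while the right side gives $s_i$.

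So I need $s_i=s_{\tau(i)}$. I would first try to deduce this from the existence of the character $\chi^\bc_\bs$ itself. The approach is to apply it to relations of $\cB_\bc$ such as $K_iK_{\tau(i)}^{-1}B_i=q^{\ast}B_iK_iK_{\tau(i)}^{-1}$, which force $s_i=0$ unless $\tau(i)=i$ and $\alpha_i(\text{coweights of } X)$ vanish appropriately. This is the constraint recorded in \cite[Section 10]{a-Kolb23}, cf.\ \cite[Definition 5.6]{a-Kolb14}. Once $s_i\neq0$ forces $\tau(i)=i$, the identity $\chi^\bc_\bs\circ\sigma_\tau=\chi^\bc_\bs$ holds, and the second relation follows.
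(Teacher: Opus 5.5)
Your proposal is correct and follows the paper's proof: transport Theorem \ref{thm:intertwiner2} through the comodule algebra isomorphism $\rho$, then apply $\vep\ot\id$. You correctly isolate the step $\vep\circ\sigma_\tau^{\bc,\bs}=\vep$ on $\cB_{\bc,\bs}$, which the paper's one-line argument uses without comment, and your plan for it works: $\chi^\bc_\bs$ sends $K_iK_{\tau(i)}^{-1}B_i=q^{-(\alpha_i-\alpha_{\tau(i)},\alpha_i)}B_iK_iK_{\tau(i)}^{-1}$ to $s_i=q^{-d_i(2-a_{i\tau(i)})}s_i$, so $s_i=0$ whenever $\tau(i)\neq i$, hence $s_i=s_{\tau(i)}$ and $\chi^\bc_\bs\circ\sigma_\tau=\chi^\bc_\bs$ (no condition involving $X$ is needed).
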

%%%%%%%%%%%%%%%%%%%%%%%%%%%%%%%%%%%%%%%
\begin{proof}
    The first relation follows from Theorem \ref{thm:intertwiner2} by application of $\rho$ to the first tensor factor, using the fact that $\rho$ is a isomorphism of right $\Uq$-comodules. The second relation then follows by application of $\vep\ot \id$ to the first relation.
\end{proof}
%%%%%%%%%%%%%%%%%%%%%%%%%%%%%%%%%%%%%%%
\providecommand{\bysame}{\leavevmode\hbox to3em{\hrulefill}\thinspace}
\providecommand{\MR}{\relax\ifhmode\unskip\space\fi MR }
% \MRhref is called by the amsart/book/proc definition of \MR.
\providecommand{\MRhref}[2]{%
  \href{http://www.ams.org/mathscinet-getitem?mr=#1}{#2}
}
\providecommand{\href}[2]{#2}

%%%%%%%%%%%%%%%%%%%%%%%%%%%%%%%%%%%%%%%
\end{document}